\documentclass[letterpaper, 11pt]{article}
\usepackage{bbm}
\usepackage{amsfonts}
\usepackage{amsmath}
\usepackage{amsthm,mathrsfs}
\usepackage{amsfonts,amssymb}
\usepackage{float}
\usepackage{pdfsync}
\usepackage{graphicx}
\usepackage{caption}
\usepackage{hyperref}
\allowdisplaybreaks[4]

\raggedbottom \headheight 0pt \columnsep=0.8cm \raggedbottom
\textwidth=15cm \textheight=22cm \topmargin=-0.9cm \hoffset=-1 cm
\parskip=2mm

\usepackage{amsthm}
\newtheorem{theorem}{Theorem}[section]

\newtheorem{proposition}{Proposition}[section]
\newtheorem{lemma}{Lemma}[section]
\newtheorem{definition}{Definition}[section]

\newtheorem{remark}{Remark}[section]
\theoremstyle{remark}
\theoremstyle{corollary}
\newtheorem{corollary}{Corollary}[section]
\theoremstyle{remark}
\theoremstyle{step}

\numberwithin{equation}{section}

\def\dsum{\displaystyle\sum}
\def\dint{\displaystyle\int}
\def\dfrac{\displaystyle\frac}
\def\dsup{\displaystyle\sup}
\def\dinf{\displaystyle\inf}
\def\dlim{\displaystyle\lim}
\def\r{\right}
\def\l{\left}
\def\rz{\rr^n}

\def\fz{\infty}
\def\fz{\infty}
\def\az{\alpha}
\def\supp{{\mathop\mathrm{\,supp\,}}}

\def\loc{{\mathop\mathrm{\,loc\,}}}
\def\lz{\lambda}
\def\dz{\delta}

\def\ez{\epsilon}

\def\bz{\beta}

\def\gz{{\gamma}}
\def\vz{\varphi}
\def\tz{\theta}
\def\sz{\sigma}

\def\wt{\widehat}

\def\oz{\Omega}

\def\pz{{\prime}}

\def\p{\partial}

\def\Xint#1{\mathchoice	{\XXint\displaystyle\textstyle{#1}}	{\XXint\textstyle\scriptstyle{#1}}	{\XXint\scriptstyle\scriptscriptstyle{#1}} {\XXint\scriptscriptstyle\scriptscriptstyle{#1}}	\!\int}\def\XXint#1#2#3{{\setbox0=\hbox{$#1{#2#3}{\int}$}	 \vcenter{\hbox{$#2#3$}}\kern-.5\wd0}}\def\dashint{\Xint-}

\def\supp{{\mathop\mathrm{\,supp\,}}}
\def\rr{{\mathbb R}}
\def\rn{{{\rr}^n}}

\def\nn{{\mathbb N}}

\def\pz{\partial}

\def\nn{{\mathbb N}}

\begin{document}
\title{{ Wiener's criterion for degenerate parabolic  equations}
\footnote
{Supported by the National Natural Science Foundation of China (No.11771023).}}

\date{}\maketitle
\vspace{-2cm}
\begin{center}
Xi Hu and Lin Tang

\end{center}
{\bf Abstract:}  In this paper, we prove  Wiener's criterion for parabolic  equations  with singular and degenerate coefficients. To be precise,
we study the problem of the regularity of boundary points for the Dirichlet problem for  degenerate parabolic  equations, and give a geometric characterization
 of those boundary points that are regular.
\vspace{0.2cm}\\
{\bf Keywords:} Degenerate parabolic  equation, Dirichlet problem, Wiener's criterion.
\vspace{0.2cm}\\
{\bf 2020 Mathematics Subject Classification:} 35A08, 35B05, 35K65.

\section{Introduction and main results}
$\mathcal{L}$
A long-standing open problem in degenerate  parabolic potential theory has been that of
the convergence of the solution of the Dirichlet problem to the prescribed boundary value.  For Laplace operator, such a question was settled by Wiener
in his celebrated 1924 paper \cite{WI}. Later, Littman, Stapacchia and Weinberger \cite{LSW} generalized  Wiener's result to elliptic equations with discontinuous coefficients, and
Fabes,  Jerison and  Kenig \cite{FJK} gave the Wiener test for degenerate elliptic equations. For heat operator,
in 1982, Wiener's criterion for the heat equation proved by Evans and Gariepy in \cite{EG}. The Evans-Gariepy Wiener test was extend to
parabolic operators in divergence form  with smooth coefficients by Garofalo and Lanconelli in \cite{GL}, and with $C^1$-Dini continuous coefficients
by Fabes, Garofalo and Lanconelli in \cite{FGL}. We also mention \cite{KL} (and reference therein) for some recent developments in quasilinear parabolic settings.

The main aim of this paper is to prove Wiener's criterion for parabolic operators with singular and degenerate coefficients.
More precisely,  we consider boundary regularity for the degenerate parabolic  equation
$$\mathscr{L}_au:=D_t(|y|^au)-\text{div}_{Y}(|y|^a\nabla_{Y}u),\quad a\in (-1,1),\eqno{(1.1)}$$
where $\nabla_{Y}=\nabla_{y',y},~Y=(y',y)\in \rr^{n-1}\times \rr,  n\geq 2$.

The degenerate parabolic  equation $\mathscr{L}_a$ has been studied  by many authors; see \cite{A,BD1, BD2,D,Fe,I}.
In particular, this is the so-called extension operator the fractional heat equation, $(\partial_t-\Delta )^su=0~\text{for}~0<s<1$ is a space-time nonlocal equation,
it admits an extension procedure based on which the fractional heat operator can be studied through a local  degenerate parabolic operator $\mathscr{L}_a$  with $a=1-2s$, see \cite{NS,ST}.

A bounded open set $U\subset \rr^{n+1}$ is said to be $\mathscr{L}_a$-regular if for any continuous $f:\p U\to\rr$ there exists a (unique) function $H^U_f$, such that $H^U_f$ satisfies (1.1) in $U$, and for which for every $(Y_0,t_0)\in \p U$ $$\mathop{\lim_{(Y,t)\to(Y_0,t_0)}}_{(Y,t)\in U}H^U_f(Y,t)=f(Y_0,t_0).$$

Given an open subset $\Omega \subset \rr^{n+1}$, a function $w:\Omega\to \overline{\rr}$ is said to be $\mathscr{L}_a$-superparabolic in $\Omega$ if : (i) $-\infty<w\leq +\infty, w<+\infty$ in a dense subset of $\Omega;$ (ii) $w$ is lower semicontinuous; (iii) if $U\subset\overline{U}\subset\Omega$ is a $\mathscr{L}_a$-regular open set, $f\in C(\p U;\rr)$, and $f\leq u$ on $\p U$, then $H^U_f\leq u$ in $U.$
Similarly, $u$ is $\mathscr{L}_a$-subparabolic in $\Omega$ if $-w$ is $\mathscr{L}_a$-superparabolic. We say that $w$ is $\mathscr{L}_a$-parabolic if $w$ is both $\mathscr{L}_a$-subparabolic and $\mathscr{L}_a$-superparabolic.

For an arbitrary bounded open subset $\Omega\subset \rr^{n+1}$, and an arbitrary $f\in C(\p \Omega)$, a generalized solution $u$ satisfying (1.1) in $\Omega$ and $u=f$ on $\p\Omega$,
is provided by the Perron-Wiener-Brelot-Bauer method as $$u=\inf\{w~|~w~\text{is}~\mathscr{L}_a\text{-superparabolic}~\text{in}~\Omega~\text{and}~\liminf w\geq f~\text{on}~\p\Omega\}.\eqno(1.2)$$
It is not true in general that $u$ attains continuously the boundary value $f$. A point $(Y_0,t_0)\in \p \Omega$ is said to be $\mathscr{L}_a$-regular if
$$\lim_{(Y,t)\to(Y_0,t_0)}u(Y,t)=f(Y_0,t_0)$$
for any $f\in C(\p\Omega)$, where $u$ is defined as in (1.2).

The main result is to obtain a ``geometric'' characterization of regular boundary points $\xi_0=(X_0,t_0)$ in this paper. Theorem 1.1 below provides such a criterion, the convergence or divergence of a certain series involving the relative capacity of $\Omega^c$ (complement of $\Omega$) within the regions between various level surfaces of the fundamental solution with pole at $\xi_0=(X_0,t_0)$. We need more definitions to make this precise.

Let $\Gamma(X,t;Y,\tau)$  be the fundamental solution of the degenerate equation $\mathscr{L}_a$; see (2.8) in Section 2.

If $K$ is a closed subset of $\rr^{n+1}$, denote by $M^+(K)$ the collection of all nonnegative Radon measures on $\rr^{n+1}$, with support in $K$. For $\mu\in M^+(\rr^{n+1})$ we may set $$P_\mu(X,t):=\int_{\rr^{n+1}}\Gamma(X,t;Y,\tau)d\mu(Y,\tau),\qquad  (X,t)\in\rr^{n+1};\eqno(1.3)$$
$P_\mu$ is the 
potential of $\mu.$

Let now $K$ denote a compact subset of $\rr^{n+1}$. The $\mathscr{L}_a$-capacity of $K$ is $$\mathrm{cap}(K):=\sup\{\mu(\rr^{n+1})~|~\mu\in M^+(K), P_\mu\leq 1~\text{in}~ \rr^{n+1}\}.\eqno(1.4)$$
Finally for $(X_0,t_0)=(x'_0,x_0,t_0)$
and define for each $\lambda>0,$
$$
\Omega_\lambda(X_0,t_0):=\{(Y,\tau)\in \rr^{n+1}|~\Gamma(X_0,t_0;Y,\tau)>(4\pi \lambda)^{-\frac{n+a}{2}}(1+\frac{x_0^2}\lambda)^{-\frac a2}\}.\eqno(1.5)$$
Set
$$\begin{array}{cl}
&A((X_0,t_0),\lz^k):=\\
&\left\{(Y,\tau)\in\rr^{n+1}|~( 4\pi\lz^{k+1})^{-\frac{n+a}{2}}(1+\frac{x_0^2}{\lz^{k+1}})^{-\frac a2}
\geq \Gamma(X_0,t_0;Y,\tau)\geq (4\pi\lz^{k})^{-\frac{n+a}{2}}(1+\frac{x_0^2}{\lz^{k}})^{-\frac a2}\right\};\end{array}\eqno(1.6)$$
this is the set of points between the level surfaces $\Gamma=(4\pi\lz^{k+1})^{-\frac{n+a}{2}}(1+\frac{x_0^2}{\lz^{k+1}})^{-\frac a2}$ and $\Gamma=(4\pi\lz^{k})^{-\frac{n+a}{2}}(1+\frac{x_0^2}{\lz^{k}})^{-\frac a2}$.

Our criterion for boundary regularity is the following
\begin{theorem}\label{1.1}
Given a bounded open subset $\Omega\subset \rr^{n+1}$, a point $\xi_0=(x'_0,x_0,t_0)\in \p\Omega$ is $\mathscr{L}_a$-regular if and only if  for every $0<\lambda<1$ $$\sum^\infty_{k=1}\lz^{-k(n+a)/2}\l(1+\frac{x_0^2}{\lz^k}\r)^{-\frac a2}\mathrm{cap}(\Omega^c\cap A (\xi_0,\lz^{k}))=+\infty,\eqno(1.7)$$
where $\Omega^c=\rr^{n+1}\setminus\oz$. The behavior of the series in $(1.7)$ does not depend on $\lz\in(0,1)$.
\end{theorem}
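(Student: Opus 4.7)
The plan is to follow the Evans--Gariepy / Garofalo--Lanconelli blueprint, adapted to the degenerate weight $|y|^{a}$ through the potential-theoretic framework set up via $\Gamma$ and the capacity (1.4). The first step is to reformulate regularity in terms of potentials. Using the Perron--Wiener--Brelot--Bauer construction (1.2) together with a Kellogg-type argument for $\mathscr{L}_{a}$-superparabolic functions, one reduces the proof to a dichotomy between regularity of $\xi_{0}$ and $\mathscr{L}_{a}$-\emph{thinness} of $\Omega^{c}$ at $\xi_{0}$, the latter meaning the existence of an $\mathscr{L}_{a}$-superparabolic function $V\ge 0$ in a neighborhood of $\xi_{0}$ with $V \ge 1$ on $\Omega^{c}$ near $\xi_{0}$ but $\liminf V(\xi) < 1$ as $\xi \to \xi_{0}$. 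The task then splits cleanly into the two implications relating thinness and the Wiener series (1.7).

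For the sufficient direction (divergence $\Rightarrow$ regularity), fix $0<\lambda<1$ and let $\mu_{k}$ be the $\mathscr{L}_{a}$-equilibrium measure of the compact ``annular'' piece $K_{k} := \Omega^{c}\cap A(\xi_{0},\lambda^{k})$, so that $P_{\mu_{k}}\le 1$ on $\rr^{n+1}$, $P_{\mu_{k}}\equiv 1$ on $K_{k}$ up to an $\mathscr{L}_{a}$-polar set, and $\mu_{k}(\rr^{n+1}) = \mathrm{cap}(K_{k})$. By construction on the level annulus $A(\xi_{0},\lambda^{k})$ the fundamental solution satisfies the two-sided bound
\begin{equation*}
\Gamma(\xi_{0};Y,\tau)\;\asymp\;(4\pi\lambda^{k})^{-\frac{n+a}{2}}\Bigl(1+\tfrac{x_{0}^{2}}{\lambda^{k}}\Bigr)^{-\frac{a}{2}},
\end{equation*}
so $P_{\mu_{k}}(\xi_{0})$ is comparable to the $k$-th term of (1.7). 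Summing the truncated potentials produces a nonnegative $\mathscr{L}_{a}$-superparabolic function $V=\sum_{k\ge 1}P_{\mu_{k}}$; the essential geometric separation of the regions $A(\xi_{0},\lambda^{k})$ and the bound $P_{\mu_{k}}\le 1$ keep $V$ uniformly bounded away from $\xi_{0}$, while divergence of (1.7) forces $V(\xi_{0})=+\infty$. A standard Brelot-type comparison with the balayage $R^{\Omega^{c}}_{1}$ then shows that $\Omega^{c}$ fails to be thin at $\xi_{0}$, giving regularity.

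For the necessity (convergence $\Rightarrow$ irregularity), the aim is to build an explicit barrier. Using the same equilibrium measures $\mu_{k}$, I would form the potential $W=\sum_{k\ge k_{0}}P_{\mu_{k}}$. The assumption that the series in (1.7) converges, combined with the lower bound on $\Gamma$ inside $A(\xi_{0},\lambda^{k})$, gives $W(\xi_{0})<1$ (after choosing $k_{0}$ large). On the other hand, using the upper bound of $\Gamma$ \emph{outside} $A(\xi_{0},\lambda^{k})$ and a maximum principle argument on each $K_{k}$, $W$ is at least a fixed constant $c>0$ on $\Omega^{c}$ in a neighborhood of $\xi_{0}$. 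Rescaling $W$ and combining with a continuous function $f\in C(\partial\Omega)$ adapted to a shrinking neighborhood of $\xi_{0}$ (via Urysohn-type cutoffs) produces a boundary datum whose PWB solution fails to attain $f$ continuously at $\xi_{0}$; this is exactly irregularity.

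The main technical obstacle, and the core of the proof, is establishing sharp two-sided pointwise bounds for $\Gamma(\xi_{0};\cdot)$ on the level annuli $A(\xi_{0},\lambda^{k})$, and controlling the integrated tails of $P_{\mu_{k}}$ at points far from $K_{k}$. The anisotropic weight $|y|^{a}$ makes the geometry of $\{\Gamma>\mathrm{const}\}$ depend on the distance from $\xi_{0}$ to the degenerate set $\{y=0\}$, which is precisely the role of the correction factor $(1+x_{0}^{2}/\lambda^{k})^{-a/2}$: the cases $x_{0}^{2}\ll \lambda^{k}$ and $x_{0}^{2}\gg \lambda^{k}$ must be treated separately and then pasted together. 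Once these kernel estimates are in hand, the $\lambda$-independence of the divergence of (1.7) is an easy consequence of the monotonicity and subadditivity of $\mathrm{cap}(\cdot)$ together with the scaling properties of $\Gamma$, since passing from $\lambda$ to $\lambda'$ only groups finitely many consecutive annuli and multiplies each term by factors bounded from above and below.
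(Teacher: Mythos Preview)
Your treatment of the necessity direction (convergence $\Rightarrow$ irregularity) is essentially the paper's argument: one bounds $V_{\Omega^{c}\cap C_{r}}(\xi_{0})$ from above by the sum of equilibrium potentials of the annular pieces and invokes Proposition~5.3 and Corollary~5.1. That part is fine.

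The sufficiency direction, however, has a genuine gap. You form $V=\sum_{k}P_{\mu_{k}}$, observe $V(\xi_{0})=+\infty$, and then assert that ``a standard Brelot-type comparison with the balayage $R^{\Omega^{c}}_{1}$'' gives non-thinness. But the comparison runs the wrong way: $V$ is merely one member of the upper class defining the balayage, so $V\geq R^{\Omega^{c}}_{1}$, and $V(\xi_{0})=+\infty$ yields only the trivial upper bound on $R^{\Omega^{c}}_{1}(\xi_{0})$. To prove regularity you must show the \emph{lower} bound $V_{\Omega^{c}\cap C_{r}}(\xi_{0})=1$, and no amount of stacking superparabolic functions from above produces that. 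This is precisely the obstruction that forced Evans--Gariepy to abandon the elliptic template for the heat equation, and the same obstruction is present here.

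What the paper actually does for sufficiency is substantially different and constitutes the heart of the argument. One sets $W=1-V_{E}$ for a suitable compact $E\subset\Omega^{c}$, approximates $W$ by smooth $\mathscr{L}_{a}$-superparabolic functions $W^{\varepsilon}_{j}\in\mathcal{H}_{a}$ via the averaging operators of Section~7 (Theorem~7.1), and on each modified heat ball $Q(2r)$ decomposes $W^{\varepsilon}_{j}=P^{\varepsilon}_{j}-S^{\varepsilon}_{j}$ with $P^{\varepsilon}_{j}$ $\mathscr{L}_{a}$-parabolic. The key estimate couples three ingredients you do not mention: (i) the \emph{strong Harnack inequality} of Theorems~6.1--6.2, which controls $\inf_{\Omega(3\sigma r/4)}P^{\varepsilon}_{j}$ by an average over a lower time-slice; (ii) a duality/integration-by-parts step against the \emph{backward} equilibrium potential $V^{*}_{j}$ of $E_{4k}$, giving
\[
\Bigl(\inf_{\Omega(3\sigma r/4)}P^{\varepsilon}_{j}\Bigr)^{2}\nu^{*}_{j}\bigl(\Omega(\tfrac{3\sigma r}{4})\bigr)\;\le\;C\int_{\Omega(2r)}(-\mathscr{L}_{a}W^{\varepsilon}_{j})\,dX\,dt;
\]
and (iii) the mean-value monotonicity formula (Corollary~6.1) converting the right-hand side into a telescoping difference of averages $W^{\varepsilon}_{j,\lambda^{4k-4}}-W^{\varepsilon}_{j,\lambda^{4k}}$. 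Summing in $k$ yields $\sum_{k}\beta_{k}^{2}\cdot(\text{$k$-th Wiener term})<\infty$, so divergence of the series forces $\liminf_{k}\beta_{k}=0$ and hence $W(\xi_{0})=0$. None of this machinery---the strong Harnack inequality on $\Omega_{r}(\xi_{0})$, the smoothing in $\mathcal{H}_{a}$, or the backward-potential duality---appears in your outline, and the kernel estimates you highlight, while necessary, are not the crux.
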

We remark that if  (1.7) with $a=0$, which is just the Wiener's criterion for the heat equation proved by Evans and Gariepy in \cite{EG}. In particular, it is worth pointing out that the special case of $x_0=0$ in Theorem 1.1 will give an important  application in \cite{ht}.

The paper is organized as follows. In  Section 2, we  give the fundamental solution of the degenerate equation $\mathscr{L}_a$, which is a basic result. In Section 3,
we  discuss  the Dirichlet problem on space-time boxes, which plays a crucial role  in whole paper. In Section 4, we   introduce and study Perron methods and  barriers.
We  show the potential theory of the operator $\mathscr{L}_a$ in Section 5. In Section 6, we  give  Green functions and establish mean value formulas and a strong Harnack inequality. We  study the
smooth approximation of $\mathscr{L}_a$-superparabolic functions in Section 7.
Finally, we  prove Theorem 1.1 in Section 8.

Throughout this paper, we let $C$ denote  constants that are
independent of the main parameters involved but whose value may
differ from line to line.

\section{Fundamental solution of  $\mathscr{L}_a$}

As in \cite{AC}, we set
$$u(x,y,t)=t^{-\frac {a+1}2}e^{-\frac{(x-y)^2}{4t}}h(\frac{xy}t)$$
is a solution of
$$u_t-u_{yy}-\frac ay u_y=0\quad {\rm for }\ y\not= 0, t>0.\eqno(2.1)$$
Let $z=\frac {xy}t$,   from (2.1), we know that $h(z)$  satisfies the following equation
$$zh''+(z+a)h'+\frac a{2}h=0.\eqno(2.2)$$
From page 100 in \cite{WO}, we know that the equation (2.2)  have two solutions, that is,
$$h_1(z)=F(\frac a2,a,-z)=e^{-\frac z2}\dsum_{k=0}^\fz\frac{(\frac z4)^{2k}}{k!\Gamma(k+\nu+1)},$$
where and in what follows, $\nu=\frac{a-1}2$, and
$$h_2(z)=(\frac z4)^{1-a}F(1-\frac a2,2-a,-z)=(\frac z4)^{1-a}e^{-\frac z2}\dsum_{k=0}^\fz\frac{(\frac z4)^{2k}}{k!\Gamma(k-\nu+1)}.$$
Inspired by these above, by a direct computation, we can show that
$$\displaystyle\wt u(x,y,t)=(4t)^{-\frac {a+1}2}e^{-\frac{(x-y)^2}{4t}}\l(e^{-\frac{xy}{2t}}\dsum_{k=0}^\fz\frac{(\frac{xy}{4t})^{2k}}{k!\Gamma(k+\nu+1)}
+\frac{xy}{4t}\l(\l|\frac{xy}{4t}\r|\r)^{-a}e^{-\frac{xy}{2t}}\dsum_{k=0}^\fz\frac{(\frac{xy}{4t})^{2k}}{k!\Gamma(k-\nu+1)}\r)\eqno(2.3)$$ is a solution of (2.1).

\begin{remark} We   compute directly to show that
$\wt u_x(x,y,t)$ is also  a  solution of $(2.1)$. By symmetry, we know that $\wt u_y(x,y,t)$ is a  solution of the following equation
$$u_t-u_{xx}-\frac ax u_x=0\quad {\rm for }\ x\not= 0, t>0.$$
This fact  will be used in next section.
\end{remark}

Now we estimate $\wt u$. We first recall  the definition of the modified Bessel function of the first kind and order $\nu$, that is,
$$I_\nu(z)=\dsum_{k=0}^\fz\frac{(\frac z2)^{2k+\nu}}{k!\Gamma(k+\nu+1)},\quad z>0.$$
Let $z=|\frac{xy}{t}|$, $\wt u$ can be written as follows
$$\begin{array}{cl}
&\wt u=(4t)^{-\frac {a+1}2}e^{-\frac{(x-y)^2}{4t}}e^{-\frac{xy}{2t}}\l(\dsum_{k=0}^\fz\frac{(\frac{xy}{4t})^{2k}}{k!\Gamma(k+\nu+1)}
+\text{sgn}\l(\frac{xy}{4t}\r)\l(\l|\frac{xy}{4t}\r|\r)^{1-a}\dsum_{k=0}^\fz\frac{(\frac{xy}{4t})^{2k}}{k!\Gamma(k-\nu+1)}\r)\\
&\quad =(4t)^{-\frac {a+1}2}e^{-\frac{x^2+y^2}{4t}}\l(\frac{z}{4}\r)^{-\nu}\l[I_\nu(\frac z2)+\text{sgn}(\frac{xy}{4t})I_{-\nu}(\frac z2)\r].
\end{array}$$
From page 181 in  \cite{WO}, and note that $-1<\nu=\frac {a-1}2<0$, we then show that for $z>0$,
\begin{align*}
 I_\nu(z)-I_{-\nu}(z)&=\dfrac 1\pi\dint_0^\pi e^{z\cos\tz}\cos\nu\tz d\tz-\frac {\sin \nu\pi}\pi\dint_0^{+\fz}e^{-z cht-\nu t}dt\\
 & \quad-\l[\dfrac 1\pi\dint_0^\pi e^{z\cos\tz}\cos \nu\tz d\tz+\frac {\sin \nu\pi}\pi\dint_0^{+\fz}e^{-z cht+\nu t}dt\r]\\
 &\geq 0.
 \end{align*}
From this, we know that  for $x, y\in\rr$ and $t>0$
$$\wt u(x,t;y)\ge 0.$$
From page 203 in \cite{WO}, we know that
$$I_\nu(z)+I_{-\nu}(z)\sim \sqrt{\frac 2\pi} z^{-\frac 12}e^z, \quad {\rm if}\ z \to+\fz$$ and
$$(\frac z2)^{-\nu}(I_\nu(z)+I_{-\nu}(z))\to 1/\Gamma(\frac{\nu+1}2), \quad {\rm if}\ z \to 0^+.$$
In addition, from page 202 in \cite{WO}, we know that
$$I_\nu(z)-I_{-\nu}(z)\sim \sin(-\nu\pi)\sqrt{\frac 2\pi} z^{-\frac 12}e^{-z}, \quad {\rm if}\ z \to+\fz$$ and
$$(\frac z2)^{-\nu}(I_\nu(z)-I_{-\nu}(z))\to 1/\Gamma(\frac{\nu+1}2), \quad {\rm if}\ z \to 0^+.$$
From these, we have for any $x,y\in\rr$ and $t>0$
$$c t^{-\frac {1+a}2}e^{-\frac {|x-y|^2}{4t}}\l(1+\l|\frac{xy}{t}\r|\r)^{-\frac a2}\le\wt u\le C t^{-\frac {1+a}2}e^{-\frac {|x-y|^2}{4t}}\l(1+\l|\frac{xy}{t}\r|\r)^{-\frac a2},\eqno(2.4)$$
where $C, c$ depend only on $ a$.

From (2.4), we can get that
$$\begin{array}{cl}
&C_1 t^{-\frac {1+a}2}e^{-\frac {|x-y|^2}{2t}}\max\{(1+|\frac{x^2}{t}|)^{-\frac a2},(1+|\frac{y^2}{t}|)^{-\frac a2}\}\\
&\qquad\qquad\qquad\le \wt u\le C_2 t^{-\frac {1+a}2}e^{-\frac {|x-y|^2}{6t}}\min\{(1+|\frac{x^2}{t}|)^{-\frac a2},(1+|\frac{y^2}{t}|)^{-\frac a2}\},\end{array}\eqno(2.5)$$
where $C_1, C_2$ depend only on $ a$.

Next, we show the semigroup property of $\wt u$.

From Proposition 2.3 in \cite{G}, let $a>-1$, for any $(x,t)\in \rr\times(0,+\fz)$ one has
$$P_t^{(a)}(x):=\int_{-\fz}^{+\fz}\wt u(x,y,t)|y|^ady=1.\eqno(2.6)$$

Adapting the same argument in  Proposition 2.4 in \cite{G}, let $a>-1$, for every $x,\eta\in\rr$ and every $0<s,t<+\fz$ one has
$$\wt u(x,\eta,t+s)=\int_{-\fz}^{+\fz}\wt u(x,y,t)\wt u(y,\eta,s)|y|^ady.\eqno(2.7)$$

An immediately consequence of (2.7), for $a>-1$ and every $0<s,t<+\fz$ we have the following semigroup property:
$$P_t^{(a)}\circ P_s^{(a)}=P_{t+s}^{(a)}.$$

Let us turn to introduce the fundamental solution of  $\mathscr{L}_a$:

let $X=(x',x), Y=(y',y)\in\rr^n$, we set \begin{align*}
\Gamma(X,t;Y,\tau)&=(4\pi(t-\tau))^{-\frac{n-1}2}e^{-\frac{|x'-y'|^2}{4(t-\tau)}}(4(t-\tau))^{-\frac {a+1}2}e^{-\frac{(x-y)^2}{4(t-\tau)}}e^{-\frac{xy}{2(t-\tau)}}
\\
&\quad\times\l(\dsum_{k=0}^\fz\frac{\l(\frac{xy}{4(t-\tau)}\r)^{2k}}{k!\Gamma(k+\nu+1)}
+\frac{xy}{4(t-\tau)}\l(\frac{|xy|}{4(t-\tau)}\r)^{-a}\dsum_{k=0}^\fz\frac{\l(\frac{xy}{4(t-\tau)}\r)^{2k}}{k!\Gamma(k-\nu+1)}\r)\\\tag{2.8}
&=2^{-1-a}(4\pi)^{-\frac{n-1}2}(t-\tau)^{-\frac{n+a}2}e^{-\frac{|X-Y|^2}{4(t-\tau)}}e^{-\frac{xy}{2(t-\tau)}}\\
&\quad\times \l(\dsum_{k=0}^\fz\frac{\l(\frac{xy}{4(t-\tau)}\r)^{2k}}{k!\Gamma(k+\nu+1)}
+\frac{xy}{4(t-\tau)}\l(\frac{|xy|}{4(t-\tau)}\r)^{-a}\dsum_{k=0}^\fz\frac{\l(\frac{xy}{4(t-\tau)}\r)^{2k}}{k!\Gamma(k-\nu+1)}\r)\\
&:=C_{n,a}(t-\tau)^{-\frac{n+a}2}e^{-\frac{|X-Y|^2}{4(t-\tau)}}F\l(\dfrac{xy}{t-\tau}\r),\end{align*}
for  $t>\tau$, otherwise it is 0. Here and in what follows,  $C_{n,a}=2^{-1-a}(4\pi)^{-\frac{n-1}2}$.

From (2.3) and (2.8), it is easy to see that $\Gamma(X,t;Y,\tau)$ satisfies
$$|y|^aD_\tau\Gamma +\text{div}(|y|^a\nabla_Y\Gamma)=|y|^a(D_\tau\Gamma+\Delta_Y\Gamma+\frac a yD_y\Gamma)=0,\ y\not= 0.\eqno(2.9)$$

\begin{theorem}\label{2.1}
 $\Gamma(X,t;Y,\tau)$ defined in $(2.8)$ is the fundamental solution of  $\mathscr{L}_a$, that is, $\Gamma(X,t;Y,\tau)$ satisfies the following Cauchy problem:
\begin{enumerate}
\item[$\mathrm{(i)}$] $\Gamma$ is a weak solution of $\mathscr{L}_a$, that is,  if $\Gamma$ satisfies  $$\dint_{-\infty}^t\dint_\rn|y|^a(\Gamma D_\tau\vz+\nabla_Y\Gamma_Y\nabla\vz)dYd\tau=0$$
for every test function $\vz\in C_0^1(\rz\times(-\fz,t))$, where $C_0^1(E)$ denotes the set of $C^1$ functions with compact support in $E$, and
\item[$\mathrm{(ii)}$]$$\dlim_{t\to\tau^+}\dint_\rn\Gamma(X,t;Y,\tau)g(Y)|y|^adY=g(X)$$
for all $g\in C(\rz)\cap L^\fz(\rn)$.

\end{enumerate}

\end{theorem}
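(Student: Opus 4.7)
Fix $\vz\in C_0^1(\rn\times(-\fz,t))$ and decompose the domain of integration as $\{|y|>\ez\}\cup\{|y|\le\ez\}$. Since $a>-1$, the measure $|y|^a\,dY$ is locally finite on $\rn$; combined with the pointwise estimate (2.5) and the analogous bound for $\nabla_Y\Gamma$ (obtained by differentiating the series in (2.8) and invoking standard Bessel asymptotics), this forces the contribution of $\{|y|\le\ez\}$ to vanish as $\ez\to 0^+$. On $\{|y|>\ez\}$ the function $\Gamma$ is $C^\infty$ in $(Y,\tau)$ on the support of $\vz$, so I would integrate by parts in $\tau$ (no temporal boundary terms, by compact support of $\vz$ in $(-\fz,t)$) and in $Y$; the classical pointwise identity (2.9) kills the bulk integrals and leaves only the two slice integrals
\[
-\int_{\{y=\ez\}}|y|^a\p_y\Gamma\,\vz\,dx'd\tau+\int_{\{y=-\ez\}}|y|^a\p_y\Gamma\,\vz\,dx'd\tau,
\]
whose signs reflect the outward normals of $\{|y|>\ez\}$. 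The decisive step is to verify, directly from the explicit form (2.8), that $|y|^a\p_y\Gamma$ extends continuously across $\{y=0\}$ with a single two-sided limit: the singular summand $\mathrm{sgn}(xy)|xy/(4(t-\tau))|^{1-a}$ in $F$ differentiates to $(1-a)|y|^{-a}$ times a factor symmetric under $y\mapsto -y$, and the $|y|^a$ weight exactly cancels the $|y|^{-a}$, while the ``regular'' series contributes only $O(|y|^a)$ on each side. Hence the two slice integrals cancel in the limit $\ez\to 0^+$, and (i) follows.

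\textbf{Plan for (ii).} Setting $s=t-\tau$ and using the product structure
\[
\Gamma(X,t;Y,\tau)=(4\pi s)^{-\frac{n-1}{2}}e^{-\frac{|x'-y'|^2}{4s}}\wt u(x,y,s),
\]
Fubini gives
\[
\int_{\rn}\Gamma(X,t;Y,\tau)g(Y)|y|^a\,dY=\int_{\rr^{n-1}}(4\pi s)^{-\frac{n-1}{2}}e^{-\frac{|x'-y'|^2}{4s}}\Phi(y',s)\,dy',
\]
with $\Phi(y',s):=\int_\rr\wt u(x,y,s)g(y',y)|y|^a\,dy$. The mass identity (2.6) yields $\int_\rr\wt u(x,y,s)|y|^a\,dy=1$ for every $s>0$, and the Gaussian upper bound in (2.5) forces the measures $\wt u(x,y,s)|y|^a\,dy$ to concentrate at $y=x$ as $s\to 0^+$. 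Combined with the usual concentration of the Euclidean heat kernel in the $x'$-variable, a standard approximation-of-the-identity argument---splitting a small ball around $(x',x)$ off from its complement and using continuity of $g$ on the former with $\|g\|_\fz$ and Gaussian tail bounds on the latter---delivers the pointwise limit $g(X)$.

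\textbf{Main obstacle.} The genuinely delicate step is the flux-matching at $\{y=0\}$ in (i). Because $\mathscr{L}_a$ is singular (when $a<0$) or degenerate (when $a>0$) on this hyperplane, one must rule out a jump in $|y|^a\p_y\Gamma$ across it that would create a nonzero surface contribution. The explicit expansion (2.8) shows that the second summand, carrying the $\mathrm{sgn}(xy)|xy|^{1-a}$ factor, is precisely what symmetrizes this flux so that no jump occurs. Once this has been verified by direct differentiation, the remaining ingredients of the proof---local integrability of $|y|^a$, smoothness of $\Gamma$ away from $\{y=0\}$, Gaussian decay, and the mass identity---are routine.
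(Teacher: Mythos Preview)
Your proposal is correct and follows essentially the same route as the paper. For (i) the paper performs exactly your $\{|y|\le\ez\}\cup\{|y|>\ez\}$ split, uses the pointwise identity (2.9) and the divergence theorem on the outer region, and then checks that the two flux terms at $y=\pm\ez$ match in the limit; for (ii) the paper proves the slightly stronger diagonal limit (2.12) by the same approximation-of-the-identity mechanism (mass identity (2.13), near/far split, Gaussian tails via (2.5)), though it works directly with the full $n$-dimensional integral rather than your product-structure factorization and carries out the tail estimate by an explicit case split $a\ge 0$ versus $a<0$.
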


\begin{proof}
To prove (i). Note that for $\forall\ \ez>0$
\begin{align*}
\dint_{-\fz}^t\dint_\rn|y|^a(\Gamma D_\tau\vz+\nabla_Y\Gamma\nabla_Y\vz)dYd\tau&=\dint_{-\fz}^t\dint_{|y|\leq\ez}|y|^a(\Gamma D_\tau\vz+\nabla_Y\Gamma\nabla_Y\vz)dYd\tau\\
&\quad
+\dint_{-\fz}^t\dint_{|y|>\ez}|y|^a(\Gamma D_\tau\vz+\nabla_Y\Gamma\nabla_Y\vz)dYd\tau\\
&=I_1^\ez+I_2^\ez.
\end{align*}
Obviously,
$$\dlim_{\ez\to 0^+}I_1^\ez=0.\eqno(2.10)$$
For $I_2^\ez$, by (2.9) and divergence theorem, we have
\begin{align*}
I_2^\ez &=\dint_{-\fz}^t\dint_{|y|>\ez}-(|y|^a D_\tau\Gamma +{\rm div}(|y|^a\nabla_Y\Gamma))\vz+{\rm div}(|y|^a\nabla_Y\Gamma\vz)dYd\tau\\
&=\dint_{-\fz}^t\dint_{|y|>\ez}{\rm div}(|y|^a\nabla_Y\Gamma\vz)dYd\tau\\
&=\dint_{-\fz}^t\dint_{y=\ez}|y|^a D_y\Gamma\vz dy'd\tau-\dint_{-\fz}^t\dint_{y=-\ez}|y|^a D_y\Gamma\vz dy'd\tau.
\end{align*}
Notice that
$$\lim_{\ez\to 0^+}|\ez|^a D_y\Gamma(X,t;\ez,\tau)\vz(X,t;\ez,\tau)=\lim_{\ez\to 0^+}|\ez|^a D_y\Gamma(X,t;-\ez,\tau)\vz(X,t;-\ez,\tau).$$
Hence,
$$\dlim_{\ez\to 0^+}I_2^\ez=0.\eqno(2.11)$$
Combining (2.10) and (2.11), we prove (i).

Let us turn to prove (ii).  In fact, we will give a slight stronger result as follows:
$$\dlim_{t\to\tau^+,~X\to X_0}\dint_\rn\Gamma(X,t;Y,\tau)g(Y)|y|^adY=g(X_0).\eqno(2.12)$$
By (2.6), we have
$$\dint_\rn\Gamma(X,t;Y,\tau)|y|^adY=1.\eqno(2.13)$$
Let $u(X,t,\tau)=\int_\rn\Gamma(X,t;Y,\tau)g(Y)|y|^adY$, $B(X_0,r)=\{X\in\rn: |X-X_0|<r\}$ and $B(X_0,r)^c=\rn\setminus B(X_0,r)$.
Fix $X_0\in\rn,\ \ez>0$, choose $\dz>0$ such that if $|Y-X_0|<\dz$, then $ |g(Y)-g(X_0)|<\ez$. If $|X-X_0|<\frac\dz 2$, by (2.13), we then have

\begin{align*}
|u(X,t,\tau)-g(X_0)|&=\l|\dint_\rn\Gamma(X,t;Y,\tau)[g(X_0)-g(Y)]|y|^adY\r|\\
&\le \dint_{B(X_0,\dz)}\Gamma(X,t;Y,\tau)|g(X_0)-g(Y)||y|^adY\\
&\quad+\dint_{B(X_0,\dz)^c}\Gamma(X,t;Y,\tau)|g(X_0)-g(Y)||y|^adY\\
&:=J_1+J_2.
\end{align*}
For $J_1$, obviously, we have
$$J_1\le\ez \dint_\rn\Gamma(X,t;Y,\tau)|y|^adY=\ez.$$
For $J_2$, note that $|Y-X|\geq \frac12|Y-X_0|$ if $Y\in B(X_0,\dz)^c$. Consequently, by (2.5), we have
\begin{align*}
J_2&\le 2\|g\|_{L^\fz(\rn)}\dint_{B(X_0,\dz)^c}\Gamma(X,t;Y,\tau)|y|^adY\\
&\le \dfrac C{(t-\tau)^{\frac {n+a}2}}\dint_{B(X_0,\dz)^c}e^{-\frac {|X-Y|^2}{6(t-\tau)}}\l(1+\frac{y^2}{t-\tau}\r)^{-\frac a2}|y|^adY.
\end{align*}
If $a\ge 0$, then
$$
J_2\le \frac C{(t-\tau)^{\frac {n}2}}\dint_{B(X_0,\dz)^c}e^{-\frac {|X-Y|^2}{6(t-\tau)}}dY\le C\dint_{B(X_0,\frac\dz{\sqrt{t-\tau}})^c}e^{-\frac {|X-Y|^2}{6}}dY.
$$
So,
$$J_2\to 0,\quad {\rm if}\ t\to \tau^+.$$
If $a<0$, we have
\begin{align*}
J_2&\le \dfrac C{(t-\tau)^{\frac {n+a}2}}\dint_{B(X_0,\dz)^c}e^{-\frac {|X-Y|^2}{6(t-\tau)}}\l(1+\frac{y^2}{t-\tau}\r)^{-\frac a2}|y|^adY\\
&=\dfrac C{(t-\tau)^{\frac {n+a}2}}\dint_{B(X_0,\dz)^c\cap\{y^2\le t-\tau\}}e^{-\frac {|X-Y|^2}{6(t-\tau)}}\l(1+\frac{y^2}{t-\tau}\r)^{-\frac a2}|y|^adY\\
&\quad+\dfrac C{(t-\tau)^{\frac {n+a}2}}\dint_{B(X_0,\dz)^c\cap\{y^2> t-\tau\}}e^{-\frac {|X-Y|^2}{6(t-\tau)}}\l(1+\frac{y^2}{t-\tau}\r)^{-\frac a2}|y|^adY\\
&:=J_2^1+J_2^2.
\end{align*}
For $J_2^1$, we have
\begin{align*}
J_2^1&\le \dfrac C{(t-\tau)^{\frac {n+a}2}}\dint_{B(X_0,\dz)^c\cap\{y^2\le t-\tau\}}e^{-\frac {|X-Y|^2}{6(t-\tau)}}|y|^adY\\
&\le \dfrac {Ce^{-\frac {\dz^2}{48(t-\tau)}}}{(t-\tau)^{\frac {n+a}2}}\dint_{\{y^2\le t-\tau\}}e^{-\frac {|X-Y|^2}{12(t-\tau)}}|y|^adY\\
&\le \dfrac {Ce^{-\frac {\dz^2}{48(t-\tau)}}}{(t-\tau)^{\frac {n+a}2}}\dint_{\{y^2\le t-\tau\}}|y|^ady\int_{\rr^{n-1}}e^{-\frac {|x'-y'|^2}{12(t-\tau)}}dY'\\
&\le Ce^{-\frac {\dz^2}{(t-\tau)}}(t-\tau)^{-\frac {n-1}2}\dint_{\rr^{n-1}}e^{-\frac {|x'-y'|^2}{12(t-\tau)}}dY'\le Ce^{-\frac {\dz^2}{48(t-\tau)}}.
\end{align*}
So,
$$J^1_2\to 0,\quad {\rm if}\ t\to \tau^+.$$
For $J_2^2$, we have
$$
J_2^2\le \frac C{(t-\tau)^{\frac {n}2}}\dint_{B(X_0,\dz)^c}e^{-\frac {|X-Y|^2}{6(t-\tau)}}dY\le C\dint_{B(X_0,\frac\dz{\sqrt{t-\tau}})^c}e^{-\frac {|X-Y|^2}{6}}dY.
$$
So,
$$J_2^2\to 0,\quad {\rm if}\ t\to \tau^+.$$
Thus, (2.12) is proved, so (ii) is also proved.
\end{proof}

We finally prove the uniqueness of weak solutions within the restricted class of energy solutions, i.e., weak solutions satisfying the additional assumption for any $T_1<T_2$ and $\oz\in\rn$
$$u\in L^2((T_1,T_2); H^1(\oz,|x|^a)),\eqno(2.14)$$
that is, if $u$ satisfies
$$
\l(\dint_{T_1}^{T_2}\dint_\oz |u(X,t)|^2|x|^adXdt\r)^{\frac12}+\l(\dint_{T_1}^{T_2}\dint_\oz |\nabla_X u(X,t)|^2|x|^adXdt\r)^{\frac12}<\fz
.$$

\begin{theorem}\label{2.2}
Let $g\in L^1(\rz)$. Cauchy problem in Theorem $2.1$ has at most one weak solution in the class of functions satisfying hypothesis $(2.14)$ with $T_1=0,T_2=T$ and $\oz=\rn$.
\end{theorem}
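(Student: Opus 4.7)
The plan is a weighted energy method with Steklov averaging in time and spatial cutoffs in the unbounded direction. By linearity of $\mathscr{L}_a$, it suffices to show that if $u$ satisfies Theorem 2.1(i) on $\rn\times(0,T)$, belongs to $L^2((0,T);H^1(\rn,|x|^a))$, and has zero initial datum in the sense of Theorem 2.1(ii) with $g\equiv 0$, then $u\equiv 0$.

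First I would introduce the Steklov average
$$u_h(X,t):=\dfrac1h\dint_t^{t+h}u(X,s)\,ds,\qquad 0<h<T,\ t\in(0,T-h),$$
and rewrite the weak equation as the pointwise-in-$t$ identity
$$\dint_{\rn}|x|^a\partial_tu_h(\cdot,t)\,\vz\,dX+\dint_{\rn}|x|^a\nabla_Xu_h(\cdot,t)\cdot\nabla_X\vz\,dX=0$$
valid for a.e.\ $t\in(0,T-h)$ and every compactly supported $\vz\in H^1(\rn,|x|^a)$. Choose a smooth spatial cutoff $\chi_R$ with $\chi_R\equiv1$ on $B(0,R)$, $\supp\chi_R\subset B(0,2R)$, and $|\nabla\chi_R|\le 2/R$. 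Testing against $\vz=u_h(\cdot,t)\chi_R^2$, integrating over $t\in(0,s)$ for $s\in(0,T-h)$, and absorbing the mixed cutoff-gradient term by Young's inequality yields
$$\frac12\dint_{\rn}|x|^au_h^2(X,s)\chi_R^2\,dX+\frac12\dint_0^s\dint_{\rn}|x|^a|\nabla_Xu_h|^2\chi_R^2\,dXdt\le \frac12\dint_{\rn}|x|^au_h^2(X,0)\chi_R^2\,dX+\frac{C}{R^2}\dint_0^s\dint_{B(0,2R)}|x|^au_h^2\,dXdt.$$

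Next I would pass to the limit in two stages. As $h\to 0^+$, standard properties of Steklov averages together with the energy hypothesis give $u_h\to u$ in $L^2((0,T-\dz);H^1(\rn,|x|^a))$, while the initial-trace term $u_h(\cdot,0)$ converges to $g\equiv 0$ in $L^2(\rn,|x|^a)$. Then letting $R\to\fz$, the global bound $u\in L^2((0,T);L^2(\rn,|x|^a))$ supplied by (2.14) makes the cutoff-error term $CR^{-2}\|u\|_{L^2L^2}^2$ vanish. Consequently, for a.e.\ $s\in(0,T)$ one obtains $\int_{\rn}|x|^au^2(X,s)\,dX=0$, which forces $u\equiv 0$.

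The main obstacle is the strong convergence $u_h(\cdot,0)\to g=0$ in $L^2(\rn,|x|^a)$, i.e.\ upgrading the attainment of the initial datum provided by Theorem 2.1(ii) to continuity in time with values in $L^2(\rn,|x|^a)$. This I would obtain by extracting $\partial_tu\in L^2((0,T);H^{-1}(\rn,|x|^a))$ from the weak equation and combining it with the hypothesis $u\in L^2((0,T);H^1(\rn,|x|^a))$ through a weighted Aubin--Lions-type embedding. Since $|x|^a$ with $a\in(-1,1)$ is an $A_2$ Muckenhoupt weight, the density, duality and compactness arguments needed transfer from the unweighted parabolic theory with only notational modifications.
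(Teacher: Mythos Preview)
Your approach is correct in outline, but the paper takes a genuinely different and much shorter route. Instead of Steklov averaging and testing with $u_h\chi_R^2$, the paper plugs in the backwards-in-time test function
\[
\varphi(X,t)=\int_t^T (u-\tilde u)(X,s)\,ds,\qquad \varphi\equiv 0\ \text{for }t>T,
\]
directly into the weak formulation for $w=u-\tilde u$. Since $\varphi_t=-w$ and $\nabla_X\varphi=\int_t^T\nabla_X w\,ds$, this produces in one line an identity of the form $\int_0^T\!\int_{\rn}|x|^a w^2 + \tfrac12\int_{\rn}|x|^a\big|\int_0^T\nabla_X w\,ds\big|^2=0$, and both nonnegative terms must vanish. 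This is the classical Oleinik--Ladyzhenskaya device.

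The trade-off is clear. The paper's test function already lies in the energy space by hypothesis (2.14), so there is no need for Steklov regularization, no extraction of $\partial_t u\in L^2H^{-1}$, and no Aubin--Lions embedding to recover the initial trace; the argument fits in four lines. Your route is the standard parabolic energy method: it is more robust (it survives when the equation is nonlinear or when the Oleinik trick is unavailable), but it forces you through exactly the machinery you flagged as the ``main obstacle,'' and the spatial cutoff step is also superfluous here since the global $L^2((0,T);H^1(\rn,|x|^a))$ bound makes the problem effectively compact from the start.
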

\begin{proof}
Let $u$ and $\widetilde{u}$ be two weak solutions to Cauchy problem in Theorem 2.1, we take the following as a test function in the weak formulation:
$$\vz(X,t)=\int_t^T(u-\widetilde{u})(X,s)ds,\quad 0\le t\le T,$$ with $\vz=0$ for $t>T$. Then
$$\begin{array}{cl}
&\dint_0^T\dint_\rn|x|^a(u-\widetilde{u})^2(X,s)dXdt\\
&\qquad+\dint_0^T\dint_\rn|x|^a\nabla_X(u-\widetilde{u})(X,t)\int_t^T\nabla_X(u-\widetilde{u})(X,s)dsdXdt=0,
\end{array}$$
which turns into
$$\dint_0^T\dint_\rn|x|^a(u-\widetilde{u})^2(X,s)dXdt+\frac 12\dint_0^T\dint_\rn|x|^a|\nabla_X(u-\widetilde{u})(X,t)|^2dXdt=0.
$$
Since both integrands are nonnegative, they must be identically 0, and so $u=\widetilde{u}$.
\end{proof}

\section{The Dirichlet problem on   the space-time box}
 We consider a cube $Q=(a_1,b_1)\times(a_2, b_2)\times\cdots\times(a_n,b_n)$ in $\rn$, and a bounded time interval $[0,T]$. We denote  $\oz$ by the space-time box
  $Q_T=Q\times[0,T]$, by $L$ the lateral surface $\pz Q\times (0,T]$, and $I$ the initial surface $ \overline{Q}\times\{0\}$, so that the normal boundary $\pz_p Q_T=L\cup I$.
 The Dirichlet problem on $Q$ consists of showing
that, for any continuous function $f$ on $\pz_p Q_T$, there is a $\mathscr{L}_a$-parabolic  $ u$ on $Q_T$ which
has a continuous extension by $f$ to $\pz_p Q_T$.

To state the main result in this section, we first give the definition of the $\mathscr{L}_a$-parabolic   on the general domain $\oz$.

\begin{definition}
Let $\oz$ be a domain in $\rr^{n+1}$ and suppose that $u\in L^2((t_1,t_2),H^1(Q,|x|^a))$ whenever the closure of $Q\times(t_1,t_2)$ is contained in $\oz$.
Then $u$ is called a weak solution in $\oz$ of the $\mathscr{L}_a$-parabolic equation if
$$\iint_\oz|x|^a(-u\vz_t+\nabla_X u\cdot \nabla \vz)dtdX=0$$
whenever $\vz\in C_0^1(\oz)$. If, in addition, $u$ is continuous, then $u$ is called $\mathscr{L}_a$-parabolic in $\oz$. Further, we say that $u$ is a supersolution of $\mathscr{L}_a$ if
the above integral is  nonnegative whenever $\vz\in C_0^1(\oz)$ is nonnegative. A function $v$ is a subsolution of $\mathscr{L}_a$ if $-v$ is a supersolution.
\end{definition}
We remark that the definition  of $\mathscr{L}_a$-parabolic here coincides to that defined in Section 1, we will show this in Theorem 4.1 and Proposition 4.3 later.

The main result in this section is as follows.
\begin{theorem}
Let $f\in C(\pz_p Q_T)$. Then there is a function $u\in C(\overline{ Q_T})$ such that $u$ is a $\mathscr{L}_a$-parabolic in $\overline{\oz}\setminus\pz_p\oz$ and $u=f$ on $\pz_p Q_T$.
\end{theorem}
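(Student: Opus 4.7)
The plan is to combine a weak-solution existence theorem with a barrier argument for boundary continuity, following the classical Perron approach adapted to this degenerate setting. First I would reduce to smooth data: extend $f$ continuously from $\pat_p Q_T$ to $\overline{Q_T}$ by Tietze and approximate uniformly by smooth $f_k\in C^\infty(\overline{Q_T})$. If the problem can be solved for each $f_k$ with solution $u_k\in C(\overline{Q_T})$, a weak maximum principle (obtained by testing the difference equation against a truncation of $(u_k-u_j)_+$) gives $\|u_k-u_j\|_{L^\infty(Q_T)}\le \|f_k-f_j\|_{L^\infty(\pat_p Q_T)}$, so $u_k$ converges uniformly to a continuous $u$ which inherits $\mathscr{L}_a$-parabolicity by passing to the limit in the weak formulation and agrees with $f$ on $\pat_p Q_T$.

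For smooth $f_k$ I would write $u_k=v_k+f_k$ and solve for $v_k$ with zero data on $\pat_p Q_T$ and forcing $-\mathscr{L}_a f_k$, which lies in the natural weighted dual space. Since $a\in(-1,1)$, $|x|^a$ is a Muckenhoupt $A_2$ weight, so the weighted Sobolev space $H^1(Q,|x|^a)$ admits Poincar\'e and trace inequalities and the Gelfand triple $H^1_0(Q,|x|^a)\hookrightarrow L^2(Q,|x|^a)\hookrightarrow H^{-1}(Q,|x|^a)$ is available. A standard Galerkin / Lions procedure together with the energy identity (testing against $v_k$) produces a weak solution $v_k$, unique in the class of Theorem~2.2. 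Interior regularity then follows from the De Giorgi--Nash--Moser theory in the $A_2$-weighted form of Chiarenza--Serapioni and Fabes--Kenig--Serapioni, which applies to $\mathscr{L}_a$ and yields $u_k\in C^\az_{\loc}(Q\times(0,T])$ along with a local Harnack inequality.

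The main obstacle is boundary continuity at each $\xi_0=(X_0,t_0)\in\pat_p Q_T$. For this I would construct a local $\mathscr{L}_a$-superparabolic barrier $w\ge 0$ with $w(\xi_0)=0$ and $w>0$ elsewhere in a neighborhood of $\xi_0$ inside $\overline{Q_T}$. At points with $x_0\ne 0$ the operator is locally uniformly parabolic and classical barriers (exterior-ball, polynomial, or a translated heat-kernel) suffice. The delicate subcase is $x_0=0$: here I would exploit the even symmetry $x\mapsto -x$, under which $\mathscr{L}_a$ is invariant and the apparent degeneracy set $\{x=0\}$ becomes an interior hyperplane, reducing the question to non-degenerate boundary regularity on a lateral face of $Q$; alternatively, one can use as a barrier the fundamental solution of Section~2 with a pole $(Y_1,\tau_1)$ placed just outside $\overline{Q_T}$ along a normal through $\xi_0$, controlling the approach to $\xi_0$ via the sharp two-sided bound (2.5). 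With a barrier in hand, the standard comparison $f_k(\xi_0)-\ez-Mw\le u_k\le f_k(\xi_0)+\ez+Mw$ forces $u_k(Y,t)\to f_k(\xi_0)$ at $\xi_0$, which combined with the first paragraph completes the proof.
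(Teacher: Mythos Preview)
Your route is genuinely different from the paper's. The paper solves Theorem~3.1 by the method of double-layer potentials: it reduces to data vanishing on the initial surface (Lemma~3.1), writes the candidate solution as the double-layer potential (3.1), proves the delicate boundedness and jump relations for this potential across $\partial Q\times(0,T]$ (Lemmas~3.2--3.5, including the degenerate faces $\{y=0\}$), and then solves the resulting boundary integral equation for the density $\varphi$ by a contraction-mapping argument. No barriers and no Perron machinery enter; those come only later in Section~4 and logically rest on Theorem~3.1.

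Your variational/Galerkin existence, the $A_2$-weighted interior regularity, and the weak maximum principle are all sound and would indeed bypass the long layer-potential computations. The gap is in the barrier step at boundary points lying on a face $\{x=0\}$ of $Q$ (i.e.\ when $a_n=0$ or $b_n=0$). Your reflection argument does not apply: the Dirichlet datum on $\{x=0\}$ is \emph{prescribed}, so the even extension of the solution to the doubled box has no reason to match the given $f$ on that face; after reflection $\{x=0\}$ is interior and carries no boundary condition. Your fundamental-solution alternative is also not a barrier as stated: placing the pole of $\Gamma$ just outside along the normal does not produce a function vanishing only at $\xi_0$ with the right sign. Note too that the paper's later Lemma~4.7 (exterior-ball barriers) does not cover this configuration either---in its Case~2 the admissible neighborhood $|x-x_1|<|x_1|/2$ excludes $x_0=0$.

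The fix is to write down an explicit barrier adapted to the degeneracy. For $\xi_0=(x_0',0,t_0)$ on the face $\{x=a_n=0\}$ (so $Q\subset\{x>0\}$), the function
\[
w(X,t)=x^{1-a}+|x'-x_0'|^2+(t-t_0)^2
\]
works in a small neighborhood: $x^{1-a}$ is $\mathscr{L}_a$-parabolic in $\{x>0\}$, and since $\mathscr{L}_a\big(|x'-x_0'|^2+(t-t_0)^2\big)=2|x|^a\big((t-t_0)-(n-1)\big)\le 0$ for $|t-t_0|<n-1$, the sum is a weak supersolution with $w(\xi_0)=0$ and $w>0$ elsewhere nearby. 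With this (or a similar) barrier your scheme goes through; but you should supply it explicitly rather than rely on reflection or on the bare fundamental solution.
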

\begin{remark}Using Theorem $3.1$ repeatedly, one easily extends the existence results:
suppose that $\oz$ is a union of finitely many boxes $Q_i\times(t_i,T_i)$ and that $f$ is a continuous function on the parabolic boundary ${\cal L}$ of $\oz$. Then there is a unique
$\mathscr{L}_a$-parabolic function $u$, continuous on $ \overline{\oz}$. To verify this, one just has to begin with the earliest boxes.
 Here the parabolic boundary ${\cal L}$ is understood to be that
part of the Euclidean boundary $\pz\oz$ of $\oz$ that lies in the union of the parabolic boundaries of the boxes $Q_i\times(t_i,T_i)$.
\end{remark}

The proof of Theorem 3.1 is long and complicated, so we need to establish some lemmas.

We  first show that it is enough to prove the first part of the theorem with the addition hypothesis that
$f=0$ on $I$.

\begin{lemma}
 Suppose that, given any function $g\in(\pz_p Q_T)$ such that $g=0$ on $I$, there is a $\mathscr{L}_a$-parabolic $u_g$ on $Q_T$ such that
 $$ \dlim_{(X,t)\to (Y,s)}u_g(X,t)=g(Y,s),\quad (Y,s)\in\pz_p Q_T.  $$
 Then, given an arbitrary function $f\in C(\pz_p Q_T)$, there is a $\mathscr{L}_a$-parabolic $u_f$ on $Q_T$ such that
 $$ \dlim_{(X,t)\to (Y,s)}u_f(X,t)=f(Y,s),\quad (Y,s)\in\pz_p Q_T.  $$
\end{lemma}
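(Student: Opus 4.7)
The plan is to reduce to the vanishing-initial-data hypothesis by subtracting off an explicit $\mathscr{L}_a$-parabolic function built from the fundamental solution that absorbs $f|_I$. First extend $f|_I\in C(\overline Q)$ to a bounded continuous $\tilde f:\rn\to\rr$ by the Tietze extension theorem, and define
$$v(X,t):=\int_{\rn}\Gamma(X,t;Y,0)\,\tilde f(Y)\,|y|^a\,dY,\qquad (X,t)\in\rn\times(0,T].$$
Boundedness of $\tilde f$ together with (2.13) makes $v$ well defined and bounded. The pointwise identity (2.9) combined with the Gaussian-type estimates (2.5) justify differentiation under the integral, so $v$ is smooth in $\rn\times(0,T]$ and solves $\mathscr{L}_a v=0$ classically there; in particular it qualifies as $\mathscr{L}_a$-parabolic in the sense of Definition~3.1. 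The approximation-to-the-identity statement (2.12) gives $v(X,t)\to\tilde f(X_0)=f(X_0,0)$ as $(X,t)\to(X_0,0)$ for every $X_0\in\overline Q$, so $v$ extends continuously to $\overline{Q_T}$ with the correct trace on $I$.

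Next, set $g:=f-v|_{\pz_p Q_T}$. Then $g\in C(\pz_p Q_T)$, and since $\tilde f$ agrees with $f|_I$ by construction, $g\equiv 0$ on $I$. The standing hypothesis of the lemma supplies a $\mathscr{L}_a$-parabolic function $u_g$ on $Q_T$ with continuous boundary values $g$ on $\pz_p Q_T$. Define
$$u_f:=u_g+v\quad\text{on } Q_T.$$
By linearity of $\mathscr{L}_a$, $u_f$ is $\mathscr{L}_a$-parabolic in $Q_T$, and for any $(Y,s)\in\pz_p Q_T$,
$$\lim_{(X,t)\to(Y,s)}u_f(X,t)=g(Y,s)+v(Y,s)=f(Y,s),$$
which is exactly the desired boundary behavior.

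The only delicate ingredient is verifying that the convolution $v$ genuinely qualifies as an $\mathscr{L}_a$-parabolic function up to $\overline{Q_T}\setminus I$ and attains the initial data continuously along $I$; both points follow directly from the kernel estimates and semigroup properties of Section~2, in particular (2.5), (2.9), (2.12) and (2.13). Everything else is the linearity of $\mathscr{L}_a$ and Tietze's theorem, so no real obstacle arises.
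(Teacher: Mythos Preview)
Your proposal is correct and is precisely the ``standard'' argument the paper has in mind: the authors say only that the proof follows from (ii) of Theorem 2.1 and omit the details, and your construction of $v$ as the $\Gamma$-convolution of a Tietze extension of $f|_I$, followed by applying the hypothesis to $g=f-v$, is exactly that route. The only cosmetic point is that calling $v$ ``smooth'' and a ``classical'' solution overstates matters at $x=0$; it suffices, as you note, that $v$ is $\mathscr{L}_a$-parabolic in the weak sense of Definition~3.1 and continuous up to $\overline{Q_T}$, which the cited estimates (2.5), (2.12), (2.13) indeed give.
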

\begin{proof}
The proof of the lemma is standard by using (ii) of Theorem 2.1. We omit the details here.
\end{proof}

In order to deal with the boundary condition we will need the double-layer potential is given by
$$u(X,t):=\dint_0^t\dint_{\pz Q}\frac{\pz \Gamma(X,t;Y,\tau)}{\pz v(Y)}\vz(Y,\tau)|y|^ad\sz(Y)d\tau.\eqno(3.1)$$
The double-layer potential is well defined  for $X\in S(0,T]=\rn\times(0,T]$ with the time integral to understood
as an improper integral with respect to the upper limit (see Lemma 3.2), and we assume the
unit normal vector $v$ to the boundary $\pz Q$ to be directed into the exterior of $Q$.

\begin{lemma}
 The double-layer potential $u$ with $\vz\in C{(\pz Q\times[0,T])}$ is  well define and finite on $S(0,T]$. More precisely, there exists a constant $C$ depending only on $n,a$ such that $|u(X,t)|\le C$ for any $(X,t)\in S(0,T]$.
\end{lemma}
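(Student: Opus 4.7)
The plan is to decompose $\partial Q$ into its $2n$ coordinate faces and, on each face, reduce the bound for $|\partial\Gamma/\partial v||y|^a$ to a one-dimensional Gaussian in the normal variable $x_i-c_i$ multiplied by the factor $|x_i-c_i|/r$ that comes out of differentiating $\Gamma$ in $y_i$ (here I write $r=t-\tau$ and $c_i$ for the constant value of $y_i$ on the face). Once the integrand has this common shape, a single change of variables in $\tau$ closes the argument.

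I would start from the factorisation
$$
\Gamma(X,t;Y,\tau)=(4\pi r)^{-(n-1)/2}\,e^{-|x'-y'|^2/(4r)}\,\widetilde u(x_n,y_n,r)
$$
implicit in (2.8). On a face $\{y_i=c_i\}\cap\partial Q$ with $i<n$, only the $y'$-Gaussian depends on $y_i$, so $\partial_{y_i}\Gamma=\Gamma\cdot(x_i-c_i)/(2r)$. Integrating first in the $(n-2)$ free $y'$-coordinates produces a Gaussian mass $(4\pi r)^{(n-2)/2}$, and the remaining $y_n$-integral of $\widetilde u(x_n,y_n,r)|y_n|^a$ equals $1$ by (2.6). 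The result is the clean bound
$$
\int_{\{y_i=c_i\}\cap\partial Q}\Bigl|\frac{\partial\Gamma}{\partial v}\Bigr||y|^a\,d\sigma\le\frac{C|x_i-c_i|}{r^{3/2}}\,e^{-(x_i-c_i)^2/(4r)}.
$$
For $i=n$, the same Gaussian integration in $y'$ leaves $|c_n|^a|\widetilde u_y(x_n,c_n,r)|$, and a pointwise bound on $\widetilde u_y$ of matching shape $C|x_n-c_n|\,r^{-(a+3)/2}\,e^{-(x_n-c_n)^2/(4r)}(1+|x_nc_n|/r)^{-a/2}$ lets the $|c_n|^a$ combine with $(1+|x_nc_n|/r)^{-a/2}$ into a uniformly bounded weight.

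Once every face has been put in this common form, time integration is uniform in $X\in\mathbb{R}^n$ via the substitution $s=(x_i-c_i)^2/(4(t-\tau))$, which turns $\int_0^t|x_i-c_i|(t-\tau)^{-3/2}e^{-(x_i-c_i)^2/(4(t-\tau))}\,d\tau$ into a constant multiple of $\int_{(x_i-c_i)^2/(4t)}^\infty s^{-1/2}e^{-s}\,ds$, bounded by a universal constant independent of $X$ and $t$. Summing over the $2n$ faces and factoring out $\|\varphi\|_\infty$ yields $|u(X,t)|\le C\|\varphi\|_\infty$ with $C$ depending only on $n$ and $a$ (and the size of $Q$).

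The main obstacle is the face in the $y_n$-direction, because the mass-one identity (2.6) is not directly available in that normal direction and $\widetilde u_y$ is not itself a nonnegative fundamental solution, so extra work is required to extract the crucial factor $|x_n-c_n|/\sqrt r$. Two routes suggest themselves: differentiate the series (2.3) termwise and control the resulting modified-Bessel-type sums by the same large- and small-$z$ asymptotics of $I_\nu$ and $I_{-\nu}$ used in Section 2 to derive (2.4); or invoke Remark 2.1, which identifies $\widetilde u_y$ as a solution of the degenerate equation obtained from (2.1) by swapping $x$ and $y$, and run the Gaussian estimate of (2.4) in that swapped setting. Either way, once the Gaussian bound on $\widetilde u_y$ is available, the remainder of the argument is mechanical.
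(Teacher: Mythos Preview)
Your treatment of the faces $\{y_i=c_i\}$ with $i<n$ is correct and in fact cleaner than the paper's: by extending the $y_n$-integration to all of $\rr$ and invoking the mass identity (2.6), you bypass the sign-of-$a$ case split that the paper carries out for $J_1$.

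The gap is on the face $\{y_n=c_n\}$. The pointwise bound you claim,
\[
\bigl|\widetilde u_y(x_n,c_n,r)\bigr|\le C\,|x_n-c_n|\,r^{-(a+3)/2}e^{-(x_n-c_n)^2/(4r)}\bigl(1+|x_nc_n|/r\bigr)^{-a/2},
\]
is false whenever $a\neq 0$. Because $\widetilde u$ depends on $x_ny_n/r$ and not merely on $(x_n-y_n)^2/r$, its $y$-derivative does \emph{not} vanish at $x_n=y_n$: by the symmetry $\widetilde u(x,y,r)=\widetilde u(y,x,r)$ one has $2\widetilde u_y(c,c,r)=\tfrac{d}{dc}\widetilde u(c,c,r)$, and since $\widetilde u(c,c,r)=2^{-1-a}r^{-(a+1)/2}F(c^2/r)$ with $F(z)\asymp(1+z)^{-a/2}$ (cf.\ (2.4)), this derivative is nonzero for $c\neq0$. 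Your proposed bound vanishes at $x_n=c_n$, so it cannot hold, and the time integral therefore cannot be closed by the substitution $s=(x_n-c_n)^2/(4r)$. Neither suggested route produces the missing factor: termwise differentiation of (2.3) yields exactly the decomposition (3.2)--(3.3), whose second piece carries $|x|/r$ rather than $|x-c_n|/r$; and Remark~2.1 only asserts that $\widetilde u_y$ solves a PDE in the $x$-variable, which gives no pointwise bound (and $\widetilde u_y$ is sign-changing, so (2.4) has no direct analogue for it).

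What actually works is to keep the two pieces of (3.5) separate. The first, carrying $|x-c_n|/r$, succumbs to your substitution. The second, carrying $\frac{|x|}{r}|H|$, requires the small-$z$/large-$z$ dichotomy for $H$ together with a further split on $|x|\lessgtr 2|c_n|$, and the case $c_n=0$ must be handled via the limiting formula (3.6); this is precisely the content of the paper's estimates $J_{21}$, $J_{22}$, $J_{31}$, $J_{32}$ and Case~2.
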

\begin{proof}
We first recall the definition of $\Gamma(X,t;Y,\tau)$ (see (2.8)):
$$\Gamma(X,t;Y,\tau)=C_{n,a}(t-\tau)^{-\frac{n+a}2}e^{-\frac{|X-Y|^2}{4(t-\tau)}}F\l(\dfrac{xy}{t-\tau}\r),$$
where
\begin{align*}
F\l(\dfrac{xy}{t-\tau}\r)&=e^{-\frac{xy}{2(t-\tau)}} \l(\dsum_{k=0}^\fz\frac{\l(\frac{xy}{4(t-\tau)}\r)^{2k}}{k!\Gamma(k+\nu+1)}
+\frac{xy}{4(t-\tau)}\l(\frac{|xy|}{4(t-\tau)}\r)^{-a}\dsum_{k=0}^\fz\frac{\l(\frac{xy}{4(t-\tau)}\r)^{2k}}{k!\Gamma(k-\nu+1)}\r)\\
&:=\displaystyle e^{-\frac{xy}{2(t-\tau)}}\displaystyle\l(\frac {z}{4}\r)^{-\nu}\l(I_\nu\l(\frac z2\r)+\text{sgn}(xy)I_{-\nu}\l(\frac z2\r)\r),
\end{align*}
where $z=|\frac{xy}{t-\tau}|$ and $\nu=\frac {a-1}2$.

Thus,  we have
\begin{align*}
\nabla_Y\Gamma(X,t;Y,\tau)&=C_{n,a}(t-\tau)^{-\frac{n+a}2}\l(\dfrac{X-Y}{2(t-\tau)}\r)e^{-\frac{|X-Y|^2}{4(t-\tau)}}F\l(\dfrac{xy}{t-\tau}\r)\\\tag{3.2}
&\quad+C_{n,a}(t-\tau)^{-\frac{n+a}2}e^{-\frac{|X-Y|^2}{4(t-\tau)}}F'\l(\dfrac{xy}{t-\tau}\r),
\end{align*}
where $F'=D_yF$ for $z>0$, and
\begin{align*}
F'\l(\dfrac{xy}{t-\tau}\r)&=-\dfrac x{2(t-\tau)}e^{-\frac{xy}{2(t-\tau)}}\l(\frac z4\r)^{-\nu}\l(I_\nu\l(\frac z2\r)+\text{sgn}(xy)I_{-\nu}\l(\frac z2\r)\r)\\\tag{3.3}
&\quad+\dfrac x{2(t-\tau)}e^{-\frac{xy}{2(t-\tau)}}\l(\frac z4\r)^{-\nu}\l(\text{sgn}(xy)I_{\nu+1}\l(\frac z2\r)+I_{-\nu-1}\l(\frac z2\r)\r)\\
&:=-\dfrac x{2(t-\tau)}H.
\end{align*}
Next, we only consider the case $x\not=0$, if $x=0$, then it will be more simple.

Recall $Q=(a_1,b_1)\times(a_2, b_2)\times\cdots\times(a_n,b_n)$ in $\rn$. Set $\pz Q_i=\{y_i=a_i,\ b_i\}\cap\overline{ Q}$ for $i=1,2,\cdots,n-1$, and
$\pz Q_n=\{y=a_n,\ b_n\}\cap \overline{Q}$, and $d=\text{diam}(Q)$.

Let $(X,t)\in \rn\times(0,T]$, $X$ projects onto hyperplane $\pz Q_i$ is $Z_i$, and  let $v_i(Y)$ denote out normal vector of $\pz Q_i$ with $i=1,2,\cdots,n$. Applying (3.2) and (3.3), it is easy to show that
$$\l|\dfrac{\pz \Gamma(X,t;Y,\tau)}{\pz v_i(Y)}\r|\le C(t-\tau)^{-\frac{n+a}2}\l(\dfrac{|X-Z_i|}{t-\tau}\r)e^{-\frac{|X-Y|^2}{4(t-\tau)}}F\l(\dfrac{xy}{t-\tau}\r)
:=I_1,\quad i=1,2,\cdots,n-1,\eqno(3.4)$$ and
\begin{align*}\tag{3.5}
\l|\dfrac{\pz \Gamma(X,t;Y,\tau)}{\pz v_n(Y)}\r|&\le C(t-\tau)^{-\frac{n+a}2}\l(\dfrac{|X-Z_n|}{t-\tau}\r)e^{-\frac{|X-Y|^2}{4(t-\tau)}}F\l(\dfrac{xy}{t-\tau}\r)\\
&\quad +C(t-\tau)^{-\frac{n+a}2}\dfrac {|x|}{(t-\tau)}e^{-\frac{|X-Y|^2}{4(t-\tau)}}|H|:=I_2
\end{align*}
for $y\not=0$,  we define for $y=0$ as follows

\begin{align*}\tag{3.6}\dfrac{\pz \Gamma(X,t;Y,\tau)|y|^a}{\pz v_n(Y)}\bigg|_{y=0}&:=\dlim_{y\to 0}\l[\frac{\pz \Gamma(X,t;Y,\tau)}{\pz v_n(Y)}
|y|^a\r]\\
&=C_{n,a}(1-a)(t-\tau)^{-\frac {n+a}2}\dfrac x{t-\tau}\l(\dfrac {|x|}{t-\tau}\r)^{-a}e^{-\frac{|x'-y'|^2}{4(t-\tau)}}e^{-\frac{x^2}{4{(t-\tau)}}}.
\end{align*}

We first consider the case  $i=1,2,\cdots, n-1$. Then, by (3.4), we have
$$\dint_0^t\dint_{\pz Q_i}\l|\frac{\pz \Gamma(X,t;Y,\tau)}{\pz v_i(Y)}\vz(Y,\tau)\r||y|^ad\sz(Y)d\tau\le C
\dint_0^t\dint_{\pz Q_i}I_1|y|^ad\sz(Y)d\tau:=J_1.$$

For $J_1$, we consider two cases of $a$.

Case 1:  $0\le a<1$. By (2.5),  note that
$|X-Y|^2=|X-Z_i|^2+|Z_i-Y|^2$, we obtain
\begin{align*}
J_1&\le C\dint_0^T\dint_{\pz Q_i} {|X-Z_i|}{}t^{-\frac {n+2}2}e^{-\frac{|X-Y|^2}{6t}}\l(\frac{y^2}t\r)^{\frac a2}\l(1+\frac{y^2}t\r)^{-\frac a2}d\sz(Y)dt\\
&\le C\dint_0^T\dint_{\pz Q_i} {|X-Z_i|}{}t^{-\frac {n+2}2}e^{-\frac{|X-Y|^2}{6t}}d\sz(Y)dt\\
&\le C\dint_0^{|X-Z_i|^2}\dint_{\pz Q_i}\frac {|X-Z_i|}{|X-Y|^{n+2}}d\sz(Y)dt\\
&\quad+ C\dint_{|X-Z_i|^2}^Tt^{-\frac 54}\dint_{\pz Q_i}\frac {|X-Z_i|}{|X-Y|^{n-\frac 12}}d\sz(Y)dt\\
&\le C\dint_0^{|X-Z_i|^2}\dint_0^{d}\frac {|X-Z_i|\rho^{n-2}}{(|X-Z_i|^2+\rho^2)^{\frac{n+2}2}}d\rho dt\\
&\quad+ C\dint_{|X-Z_i|^2}^Tt^{-\frac 54}\dint_0^{d}\frac {|X-Z_i|\rho^{n-2}}{(|X-Z_i|^2+\rho^2)^{n-\frac 12}}d\rho dt\\
&\le C\dint_0^{+\fz}\dfrac{\lz^{n-2}d\lz}{(1+\lz^2)^{\frac{n+2}2}}+C\dint_0^{+\fz}\dfrac{\lz^{n-2}d\lz}{(1+\lz^2)^{\frac{n-\frac 12}2}}\le C.
\end{align*}

Case 2:  $-1<a<0$.  By (2.5), we have
\begin{align*}
J_1&\le C\dint_0^T\dint_{\pz Q_i}{|X-Z_i|}{}t^{-\frac {n+2}2}e^{-\frac{|X-Y|^2}{6t}}\l(\frac{y^2}t\r)^{\frac a2}\l(1+\frac{y^2}t\r)^{-\frac a2}d\sz(Y)dt\\
&\le C\dint_0^T\dint_{\{y^2<t\}\cap\pz Q_i}{|X-Z_i|}{}t^{-\frac {n+2}2}e^{-\frac{|X-Y|^2}{6t}}\l(\frac{y^2}t\r)^{\frac a2}d\sz(Y)dt\\
&\quad+C\dint_0^T\dint_{\{y^2\ge t\}\cap\pz Q_i}{|X-Z_i|}{}t^{-\frac {n+2}2}e^{-\frac{|X-Y|^2}{6t}}d\sz(Y)dt\\
&:=J_{11}+J_{12}.
\end{align*}
For $J_{11}$, note that
$|X-Y|^2=|X-Z_i|^2+|Z_i-Y|^2$, we get
\begin{align*}
J_{11}&\le C|X-Z_i|\dint_{|X-Z_i|^2}^T\dint_{\{y^2<t\}\cap\pz Q_i}t^{-\frac {n+a+2}2}e^{-\frac{|X-Y|^2}{6t}}|y|^{a}d\sz(Y)dt\\
&\quad + C|X-Z_i|\dint_0^{|X-Z_i|^2}\dint_{\{y^2<t\}\cap\pz Q_i}t^{-\frac {n+a+2}2}e^{-\frac{|X-Y|^2}{6t}}|y|^{a}d\sz(Y)dt\\
&\le C|X-Z_i|\dint_{|X-Z_i|^2}^Tt^{-\frac 32}dt+ C|X-Z_i|\dint_0^{|X-Z_i|^2}t^{-\frac 12}|X-Z_i|^{-2}dt\\
&\le C.
\end{align*}
For $J_{12}$, similar to the proof of Case 1 in $J_1$, we obtain
$$J_{12}\le C\dint_0^T\dint_{\pz Q_i}{|X-Z_i|}{}t^{-\frac {n+2}2}e^{-\frac{|X-Y|^2}{6t}}d\sz(Y)dt\le C.$$

We now consider the case $i=n$. We need to consider two cases about $y$.

Case 1:  $y\not= 0$. Without of loss generality, we assume $y=a_n\not= 0$.

Subcase 1: $|x|\le 2|a_n|$.
By (3.5), we have
$$\dint_0^t\dint_{\pz Q_n}\l|\frac{\pz \Gamma(X,t;Y,\tau)}{\pz v_n(Y)}\vz(Y,\tau)\r||y|^ad\sz(Y)d\tau\le C
\dint_0^t\dint_{\pz Q_n}I_2|y|^ad\sz(Y)d\tau:=J_2.$$
Thus,
\begin{align*}
J_2&\le C\dint_0^t\dint_{\pz Q_n}\l|\dfrac{\pz \Gamma(X,t;Y,\tau)}{\pz v_n(Y)}\r||y|^{a}d\sz(Y)d\tau\\
&\le C\dint_0^t\dint_{\pz Q_n}(t-\tau)^{-\frac{n+a}2}\l(\dfrac{|X-Z_n|}{t-\tau}\r)e^{-\frac{|X-Y|^2}{4(t-\tau)}}F\l(\dfrac{xy}{t-\tau}\r)|y|^{a}d\sz(Y)d\tau\\
&\quad +C\dint_0^t\dint_{\pz Q_n}(t-\tau)^{-\frac{n+a}2}\dfrac {|x|}{(t-\tau)}e^{-\frac{|X-Y|^2}{4(t-\tau)}}|H||y|^{a}d\sz(Y)d\tau\\
&:=J_{21}+J_{22}.
\end{align*}
 For $J_{21}$, by (2.5),  we then  have
\begin{align*}
J_{21}&\le C\dint_0^T\dint_{\pz Q_n}{|X-Z_n|}{}t^{-\frac {n+2}2}e^{-\frac{|X-Y|^2}{6t}}\l(\frac{a_n^2}t\r)^{\frac a2}\l(1+\frac{a_n^2}t\r)^{-\frac a2}d\sz(Y)dt\\
&\le C\dint_0^{a_n^2}\dint_{ \pz Q_n}{|X-Z_n|}{}t^{-\frac {n+2}2}e^{-\frac{|X-Y|^2}{6t}}d\sz(Y)dt\\
&\quad+C\dint_{a_n^2}^{T}\dint_{\pz Q_n}{|X-Z_n|}{}t^{-\frac {n+2}2}e^{-\frac{|X-Y|^2}{6t}}\l(\frac{a_n^2}t\r)^{\frac a2}d\sz(Y)dt\\
&:=J_{21}^1+J_{21}^2.
\end{align*}
Similar to the proof of Case 1 in $J_1$, we have
$$J_{21}^1\le C\dint_0^{T}\dint_{ \pz Q_n}{|X-Z_n|}{}t^{-\frac {n+2}2}e^{-\frac{|X-Y|^2}{6t}}d\sz(Y)dt\le C.$$
If $0\le a<1$ for  $J^2_{21}$ ,  similar to the proof of Case 1 in $J_1$, we also get
$$J^2_{21}\le C\dint_0^T\dint_{\pz Q_n}{|X-Z_n|}{}t^{-\frac {n+2}2}e^{-\frac{|X-Y|^2}{6t}}d\sz(Y)dt\le C.$$
If $-1<a< 0$ for  $J^2_{21}$, let $\ez=\frac {1+a}2>0$ such that $-1<a-\ez=\frac {a-1}2< 0$, note that $|X-Z_n|=|x-a_n|\le 3|a_n|$, we then have
\begin{align*}
J^2_{21}&\le C|a_n|^a\dint_{a_n^2}^T\dint_{\pz Q_n}{|X-Z_n|}{}t^{-\frac {n+2+a}2}e^{-\frac{|X-Y|^2}{6t}}d\sz(Y)dt\\
&\le C |a_n|^a\dint_{a_n^2}^T\dint_0^{d}\frac {t^{-1-\frac \ez2}|X-Z_n|\rho^{n-2}}{(|X-Z_n|^2+\rho^2)^{\frac{n-\ez+a}2}}d\rho dt\\
&\le C |a_n|^{a-\ez}|X-Z_n|^{\ez-a}\le  C |a_n|^{a-\ez}|3a_n|^{\ez-a}\le C.
\end{align*}

Now estimate $J_{22}$. Notice that (see page 202 in \cite{WO}),
$$|H|\le C\l[1+\l(\frac{|xy|}{t-\tau}\r)^{-a}\r],\quad {\rm if}\quad  \frac{|xy|}{t-\tau}\le 1, $$
and
$$|H|\le C\l(1+\frac{|xy|}{t-\tau}\r)^{-1-\frac a2},\quad {\rm if}\quad  \frac{|xy|}{t-\tau}\ge 1. $$

From these and (2.5),  we have
\begin{align*}
J_{22}&\le C\dint_{|xa_n|}^T\dint_{\pz Q_n}t^{-\frac n2}e^{-\frac{|X-Y|^2}{6t}}\frac{|x|}t\l(\frac{a_n^2}t\r)^{\frac a2}d\sz(Y)dt\\
&\quad+C\dint_{|xa_n|}^T\dint_{\pz Q_n}t^{-\frac n2}e^{-\frac{|X-Y|^2}{6t}}\frac{|x|}t\l(\frac{a_n^2}t\r)^{\frac a2}\l(\frac{|xa_n|}{t}\r)^{-a}d\sz(Y)dt\\
&\quad+C\dint_0^{|xa_n|}\dint_{\pz Q_n}t^{-\frac n2}e^{-\frac{|X-Y|^2}{6t}}\frac{|x|}t\l(\frac{a_n^2}t\r)^{\frac a2}\l(1+\frac{|xa_n|}t\r)^{-1-\frac a2}d\sz(Y)dt\\
&:=J^1_{22}+J^2_{22}+J^3_{22}.
\end{align*}

For $J^1_{22}$, note that $|a_n|\ge|x|/2$,  we have
\begin{align*}
J^1_{22}&\le C\dint_{|xa_n|}^T\dint_{\pz Q_n}t^{-\frac n2}e^{-\frac{|X-Y|^2}{6t}}\frac{|x|}t\l(\frac{a_n^2}t\r)^{\frac a2}d\sz(Y)dt\\
&\le C|x||a_n|^a\dint_{|xa_n|}^T t^{-\frac{3+a}2}dt\le C|x||a_n|^a|xa_n|^{-\frac{1+a}2}\le C\l(\frac{|x|}{|a_n|}\r)^{\frac {1-a}2}\le C.
\end{align*}

For $J^2_{22}$, note that $|a_n|\ge|x|/2$, we have
\begin{align*}
J^2_{22}&\le C\dint_{|xa_n|}^T\dint_{\pz Q_n}t^{-\frac n2}e^{-\frac{|X-Y|^2}{6t}}\frac{|x|}t\l(\frac{a_n^2}t\r)^{\frac a2}\l(\frac{|xa_n|}t\r)^{-a}d\sz(Y)dt\\
&\le C|x|^{1-a}\dint_{|x|^2/2}^Tt^{-1+\frac{a-n}2}\dint_{\pz Q_n}e^{-\frac{|X-Y|^2}{6t}}d\sz(Y)dt\\
&\le C|x|^{1-a}\dint_{|x|^2/2}^Tt^{-1+\frac{a-1}2}dt\le C.
\end{align*}

For $J^3_{22}$, note that $|a_n|\ge|x|/2$, we then have
\begin{align*}
J^3_{22}&\le C|x|\dint_0^{|xa_n|}\dint_{\pz Q_n}t^{-1-\frac n2}e^{-\frac{|X-Y|^2}{8t}}\l(\frac{a_n^2}t\r)^{\frac a2}\l(1+\frac{|a_n^2|}t\r)^{-1-\frac a2}d\sz(Y)dt\\
&\le C|x|\dint_0^{|xa_n|}\dint_{\pz Q_n}t^{-\frac n2}e^{-\frac{|X-Y|^2}{8t}}|a_n|^{-2}d\sz(Y)dt\\
&\le C|x|\dint_0^{|xa_n|}t^{-\frac 12}|a_n|^{-2}dt\le C.
\end{align*}

Subcase 2: $2|a_n|\le |x|$. Applying (3.2) and (3.3), it is easy to show that
\begin{align*}\tag{3.7}
\l|\dfrac{\pz \Gamma(X,t;Y,\tau)}{\pz v_n(Y)}\r|&\le C(t-\tau)^{-\frac{n+a}2}\l(\dfrac{|y|}{t-\tau}\r)e^{-\frac{|X-Y|^2}{4(t-\tau)}}\l(1+\dfrac{y^2}{t-\tau}\r)^{\frac a2}\\
&\quad+C(t-\tau)^{-\frac{n+a}2}\l(\dfrac{|x|}{t-\tau}\r)e^{-\frac{|X-Y|^2}{4(t-\tau)}}|\widehat{H}|,
\end{align*}
where
$$|\widehat{H}|\le C\l(\frac{|xy|}{t-\tau}\r)^{-a},\quad {\rm if}\quad  \frac{|xy|}{t-\tau}\le 1, $$
and
$$|\widehat{H}|\le C\l(1+\frac{|xy|}{t-\tau}\r)^{-\frac a2},\quad {\rm if}\quad  \frac{|xy|}{t-\tau}\ge 1. $$

By (3.7), we have
$$\begin{array}{cl}
&\dint_0^t\dint_{\pz Q_n}\l|\frac{\pz \Gamma(X,t;Y,\tau)}{\pz v_n(Y)}\vz(Y,\tau)\r||y|^ad\sz(Y)d\tau\\
&\le C\dint_0^t\dint_{\pz Q_n}\l|\dfrac{\pz \Gamma(X,t;Y,\tau)}{\pz v_n(Y)}\r||y|^{a}d\sz(Y)d\tau\\
&\le C\dint_0^t\dint_{\pz Q_n}(t-\tau)^{-\frac{n+a}2}\l(\dfrac{|y|}{t-\tau}\r)e^{-\frac{|X-Y|^2}{4(t-\tau)}}\l(1+\dfrac{y^2}{t-\tau}\r)^{\frac a2}|y|^ad\sz(Y)d\tau\\
&\quad+C\dint_0^t\dint_{\pz Q_n}(t-\tau)^{-\frac{n+a}2}\l(\dfrac{|x|}{t-\tau}\r)e^{-\frac{|X-Y|^2}{4(t-\tau)}}|\widehat{H}||y|^ad\sz(Y)d\tau\\
&:=J_{31}+J_{32}.
\end{array}$$
 For $J_{31}$,  note that
$2|a_n|\le |x|$, so $|x-a_n|\ge |x|/2$,  we then get
\begin{align*}
J_{31}&\le C|a_n|\dint_0^T\dint_{\pz Q_n}{t^{-\frac {n+2}2}}e^{-\frac{|X-Y|^2}{6t}}\l(\frac{a_n^2}t\r)^{\frac a2}\l(1+\frac{a_n^2}t\r)^{-\frac a2}d\sz(Y)dt\\
&\le C|a_n|\dint_0^{a_n^2}\dint_{\pz Q_n}{t^{-\frac {n+2}2}}e^{-\frac{|X-Y|^2}{6t}}d\sz(Y)dt\\
&\quad+C|a_n|\dint_{a_n^2}^T\dint_{\pz Q_n}{t^{-\frac {n+2}2}}e^{-\frac{|X-Y|^2}{6t}}\l(\frac{a_n^2}t\r)^{\frac a2}d\sz(Y)dt\\
&\le C|a_n||x|^{-2}\dint_0^{a_n^2}t^{-\frac 12}dt+C|a_n|^{1+a}\dint_{a_n^2}^Tt^{-\frac{3+a}2}dt\le C.
\end{align*}
Now estimate $J_{32}$.
From (3.7),  we have
\begin{align*}
J_{32}&\le C\dint_{|xa_n|}^T\dint_{\pz Q_n}t^{-\frac n2}e^{-\frac{|X-Y|^2}{6t}}\frac{|x|}t\l(\frac{a_n^2}t\r)^{\frac a2}\l(\frac{|xa_n|}{t}\r)^{-a}d\sz(Y)dt\\
&\quad+C\dint_0^{|xa_n|}\dint_{\pz Q_n}t^{-\frac n2}e^{-\frac{|X-Y|^2}{6t}}\frac{|x|}t\l(\frac{a_n^2}t\r)^{\frac a2}\l(1+\frac{|xa_n|}t\r)^{-\frac a2}d\sz(Y)dt\\
&:=J^1_{32}+J^2_{32}.
\end{align*}
For $J^1_{32}$, note that $|x-a_n|\ge |x|/2$, we have
\begin{align*}
J^1_{32}&\le C\dint_{0}^T\dint_{\pz Q_n}t^{-\frac n2}e^{-\frac{|X-Y|^2}{6t}}\frac{|x|}t\l(\frac{a_n^2}t\r)^{\frac a2}\l(\frac{|xa_n|}t\r)^{-a}d\sz(Y)dt\\
&\le C\dint_0^{|x|^2}\dint_{\pz Q_n}t^{-\frac n2}e^{-\frac{|X-Y|^2}{6t}}\frac{|x|}t\l(\frac{a_n^2}t\r)^{\frac a2}\l(\frac{|xa_n|}t\r)^{-a}d\sz(Y)dt\\
&\quad + C\dint_{|x|^2}^T\dint_{\pz Q_n}t^{-\frac n2}e^{-\frac{|X-Y|^2}{6t}}\frac{|x|}t\l(\frac{a_n^2}t\r)^{\frac a2}\l(\frac{|xa_n|}t\r)^{-a}d\sz(Y)dt\\
&\le C|x|^{1-a}\l(|x|^{-2}\dint_0^{|x|^2}t^{-\frac {1-a}2}dt+\dint_{|x|^2}^Tt^{-1+\frac{a-1}2}dt\r)\le  C.
\end{align*}
For $J^2_{32}$, by (3.7), note that $2|a_n|\le |x|$ and  $|x-a_n|\ge |x|/2$, we then get
\begin{align*}
J^2_{32}&\le C|x|^{1-\frac a2}|a_n|^{\frac a2}\dint_0^{|xa_n|}\dint_{\pz Q_n}t^{-\frac {n}2}e^{-\frac{|X-Y|^2}{8t}}|x|^{-2}d\sz(Y)dt\\
&\le C|x|^{-1-\frac a2}|a_n|^{\frac a2}\dint_0^{|xa_n|}t^{-\frac 12}dt\le C.
\end{align*}

Case 2: $y=0$. Note that if $y=0$, then by (3.6), we have
\begin{align*}
u(X,t)&=\dint_0^t\dint_{\pz Q_n}\dlim_{y\to 0}\l[\frac{\pz \Gamma(X,t;Y,\tau)}{\pz v_n(Y)}\vz(Y,\tau)|y|^a\r]d\sz(Y)d\tau\\
&=C_{n,a}(1-a)\dint_0^t\dint_{\pz Q_n}s^{-\frac {n+a}2}\frac xs\l(\frac {|x|}s\r)^{-a}e^{-\frac{|x'-y'|^2}{4s}}e^{-\frac{x^2}{4s}}\vz(y',0,t-s)dy'ds.
\end{align*}
Therefore,
\begin{align*}
|u(X,t)|&\le C\dint_0^T\dint_{\pz Q_n}t^{-\frac {n+a}2}\l(\frac {|x|}t\r)^{1-a}e^{-\frac{|x'-y'|^2}{4t}}e^{-\frac{x^2}{4t}}dy'dt\\
&\le C|x|^{1-a}\dint_{x^2}^T t^{-1+\frac {a-1}2}dt+C|x|^{-1-a}\dint_0^{x^2}t^{\frac{a-1}2}dt\le C.
\end{align*}
\end{proof}

\begin{lemma}
The double-layer potential $u$ with $\vz\in C{(\pz Q\times [0,T])}$ is continuous on $S(0,T]\setminus (\pz Q\times[0,T])$.

\end{lemma}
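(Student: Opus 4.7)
The plan is to fix a point $\xi_0 = (X_0, t_0) \in S(0,T] \setminus (\partial Q \times [0,T])$, observe that $d_0 := \dist(X_0, \partial Q) > 0$, and prove continuity of $u$ at $\xi_0$ by dominated convergence. Concretely, I would choose a neighborhood $N$ of $\xi_0$ such that $\dist(X, \partial Q) \geq d_0/2$ and $t \geq t_0/2$ for every $(X,t) \in N$; then, writing
$$u(X,t) = \int_0^T \int_{\partial Q} \chi_{\{\tau < t\}}\, \frac{\partial \Gamma(X,t;Y,\tau)}{\partial v(Y)}\, \varphi(Y,\tau)\, |y|^a\, d\sigma(Y)\, d\tau,$$
I will show that the integrand converges a.e.\ and admits a fixed integrable dominating function on $\partial Q \times [0,T]$.

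The pointwise convergence is the easy half: for $\tau < t_0$ and $(X_j, t_j) \to \xi_0$, the formulas (3.2)--(3.3) (and (3.6) when $y=0$) make the kernel $\partial_{v(Y)} \Gamma \cdot |y|^a$ jointly continuous in $(X, t, Y, \tau)$ on the region $t > \tau$, so the integrand converges at every $(Y, \tau)$ with $\tau \neq t_0$, which is a.e.\ on $\partial Q \times [0, T]$. Also $\chi_{\{\tau < t_j\}} \to \chi_{\{\tau < t_0\}}$ at every such $\tau$.

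The work lies in producing a \emph{uniform} dominating function. For $(X,t) \in N$ and $Y \in \partial Q$ we have $|X-Y| \geq d_0/2$, so the Gaussian factor in (3.2) is controlled by $\exp(-d_0^2/(16(t-\tau)))$. Combining this with the Bessel asymptotics used in Lemma 3.2 (i.e., the estimates for $H$, $\widehat{H}$, and the cases $|xy|/(t-\tau) \lessgtr 1$), together with the boundedness of $|y|$ and $|x|$ on the relevant sets, I will bound the non-exponential factors of $\partial_{v(Y)} \Gamma \cdot |y|^a$ by a polynomial in $1/(t-\tau)$ of some fixed order $M = M(n,a)$. Hence there exists $C = C(d_0, T, n, a, Q)$ such that
$$\left| \frac{\partial \Gamma(X,t;Y,\tau)}{\partial v(Y)} \right| |y|^a \leq C\, (t-\tau)^{-M}\, e^{-d_0^2/(16(t-\tau))}, \qquad (X,t) \in N,
$$
and the right-hand side, viewed as a function of $s = t-\tau \in (0, 2T]$, is uniformly bounded (indeed continuous and vanishing as $s \to 0^+$). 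Multiplying by $\|\varphi\|_{L^\infty}$ and by the finite surface measure of $\partial Q$, this supplies the required integrable majorant on $\partial Q \times [0, T]$.

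The main obstacle will be verifying this uniform kernel bound across the different regimes of the Bessel functions, in particular near $\partial Q_n$ where $y = 0$ and the formula (3.6) must be used; but the computations are strictly easier than those of Lemma 3.2 since here the exponential $\exp(-d_0^2/(16(t-\tau)))$ is a \emph{free} damping factor (we do not need to integrate against $|y|^a$ in a delicate way to get convergence), so every polynomial growth in $(t-\tau)^{-1}$ is absorbed. Once the uniform bound is in hand, the Lebesgue dominated convergence theorem immediately gives $u(X_j, t_j) \to u(X_0, t_0)$, completing the proof.
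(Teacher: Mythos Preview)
Your approach is correct and essentially identical to the paper's: both fix a point at positive distance $\delta$ from $\partial Q\times[0,T]$, bound the kernel $|y|^a\,\partial_{v(Y)}\Gamma$ pointwise using the explicit formulas (3.2)--(3.6) together with the Gaussian factor $e^{-c\delta^2/(t-\tau)}$, and then invoke Lebesgue's dominated convergence theorem.

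One small correction to your majorant: when $-1<a<0$ the claimed bound
\[
\bigl|\partial_{v(Y)}\Gamma(X,t;Y,\tau)\bigr|\,|y|^a \le C\,(t-\tau)^{-M}e^{-d_0^2/(16(t-\tau))}
\]
is not uniform in $Y$ on the lateral faces $\partial Q_i$ ($i=1,\dots,n-1$), because there $y$ ranges over the interval $(a_n,b_n)$ and the factor $|y|^a F(xy/(t-\tau))\sim c\,|y|^a$ as $y\to 0$ blows up. The paper's bound for these faces is accordingly $C(\delta^{-(n+1)/2}+\delta^{-(n+1+a)/2}|y|^a)$, retaining the $|y|^a$ weight. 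The fix for your argument is trivial: replace your majorant by $C(1+|y|^a)(t-\tau)^{-M}e^{-d_0^2/(16(t-\tau))}$, which is still integrable on $\partial Q\times[0,T]$ since $\int_{a_n}^{b_n}|y|^a\,dy<\infty$ for $a>-1$. With that adjustment your proof goes through unchanged.
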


\begin{proof} For any fixed $(X,t)\in S(0,T]\setminus (\pz Q\times[0,T])$, we can assume ${\rm dist}\{(X,t); \pz Q\times[0,T]\}>\dz>0$.
 Let $Q_i$ be the same as Lemma 3.2 for $i=1,2,\cdots,n$.

  By (3.2), we have for $(Y,\tau)\in \pz Q\times[0,T]$ and  $i=1,2,\cdots,n-1$,
\begin{align*}\l|\dfrac{\pz \Gamma(X,t;Y,\tau)}{\pz v_i(Y)}\r||y|^a&\le C(t-\tau)^{-\frac{n+a}2}|y|^a\l(\dfrac{|X-Y|}{t-\tau}\r)e^{-\frac{|X-Y|^2}{4(t-\tau)}}F\l(\dfrac{xy}{t-\tau}\r)\\
&\le C(\dz^{-\frac{n+1}2}+\dz^{-\frac{n+1+a}2}|y|^a).\end{align*}

 Then for  $i=1,2,\cdots,n-1$,
$$\dint_0^t\dint_{\pz Q_i}\dz^{-\frac{n+1}2}+\dz^{-\frac{n+1+a}2}|y|^ad\sz(Y)d\tau\le C(\dz^{-\frac{n+1}2}++\dz^{-\frac{n+1+a}2}).$$
Similarly, by (3.3), we have for $(Y,\tau)\in \pz Q\times[0,T]$ and  $y=a_n\not=0$,
\begin{align*}
\l|\dfrac{\pz \Gamma(X,t;Y,\tau)}{\pz v_n(Y)}\r||y|^a&\le C(t-\tau)^{-\frac{n+a}2}|y|^a\l(\dfrac{|X-Y|}{t-\tau}\r)e^{-\frac{|X-Y|^2}{4(t-\tau)}}F\l(\dfrac{xy}{t-\tau}\r)\\
&\quad +C(t-\tau)^{-\frac{n+a}2}\dfrac {|x|}{(t-\tau)}|y|^ae^{-\frac{|X-Y|^2}{4(t-\tau)}}|H|\\
&\le C(\dz^{-\frac{n+1}2}+\dz^{-\frac{n+1+a}2}|a_n|^a)+C(\dz^{-\frac{n+2+a}2}+\dz^{-\frac{n+2-a}2})\\
&\quad+ C|a_n|^a(\dz^{-\frac{n+1+a}2}+\dz^{-\frac{n+1}2}):=C_\dz,
\end{align*}
 and for $y=0$  by (3.6), we get
$$\l|\dfrac{\pz \Gamma(X,t;Y,\tau)}{\pz v_n(Y)}\r||y|^a=\l|\dlim_{y\to 0}\l[\frac{\pz \Gamma(X,t;Y,\tau)}{\pz v_n(Y)}
|y|^a\r]\r|\le C\dz^{-\frac{n+1}2}.$$
Then

$$\dint_0^t\dint_{\pz Q_n}C_\dz d\sz(Y)d\tau\le CC_\dz,$$
and
$$\dint_0^t\dint_{\pz Q_n}\dz^{-\frac{n+2-a}2}d\sz(Y)d\tau\le C\dz^{-\frac{n+2-a}2}.$$
It should be pointed out that here all $C$ depend only on $n,a,T,\text{diam}(Q)$. From these, and Lebesgue's Dominated Convergence Theorem show
that $u$ is continuous at $(X, t)$.
\end{proof}

\begin{remark}From Lemma  $3.3$, we know that the double-layer potential  is continuous in $Q\times(0,T]$. In addition it can be continuously extended into $Q\times[0,T]$ with
initial values $u(\cdot,0)=0$ in $Q$. Furthermore, for points $(X,t)\not\in \pz Q\times[0,T]$ and $x\not=0$ we can interchange differentiation and integration,
and from Remark $2.1$, we know that the double-layer potential is a weak solution to $\mathscr{L}_a$.
\end{remark}

Before  considering the boundary continuity of  the double-layer potential, we need the following lemma.
\begin{lemma}
Let ${\cal P}$  denote  the set of corners of $Q\times[0,T]$, then
\begin{align*}
u_0(X,t)&=\dint_{-\fz}^t\dint_{\pz Q}\frac{\pz \Gamma(X,t;Y,\tau)}{\pz v(Y)}|y|^ad\sz(Y)d\tau\\
&=
\left\{ \begin{aligned}
&-1,\quad\ (X,t)\in Q\times[0,T];\\
&-\frac 12,\quad (X,t)\in \pz Q\times[0,T]\setminus {\cal P};\\
&-\frac 14,\quad (X,t)\in {\cal P};\\
& 0,\qquad\ (X,t)\in\rz\setminus  \overline{Q}\times[0,T].
\end{aligned}\right.
\end{align*}

\end{lemma}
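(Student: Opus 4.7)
The plan is to convert the surface integral in $u_0(X,t)$ into a volume integral via the divergence theorem, then exploit the adjoint relation (2.9) and the delta-mass property (2.12) of $\Gamma$. Concretely, fix $(X,t)$ and, for $\tau_1<\tau_2<t$, I will show the following key identity:
\begin{equation*}
\int_{\tau_1}^{\tau_2}\int_{\partial Q}\frac{\partial\Gamma(X,t;Y,\tau)}{\partial v(Y)}|y|^a\,d\sigma(Y)\,d\tau
\;=\;\int_Q |y|^a\Gamma(X,t;Y,\tau_1)\,dY-\int_Q |y|^a\Gamma(X,t;Y,\tau_2)\,dY.
\end{equation*}
For $\tau<t$ the function $\Gamma$ is smooth in $Y$ away from the hyperplane $\{y=0\}$, and equation (2.9) gives $\mathrm{div}_Y(|y|^a\nabla_Y\Gamma)=-|y|^aD_\tau\Gamma$ there. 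To apply divergence theorem across $\{y=0\}$ when this plane meets $Q$, I will split $Q$ into $Q\cap\{y>\delta\}$ and $Q\cap\{y<-\delta\}$, apply divergence theorem in each, and let $\delta\to0^+$; the contributions from the slices $\{y=\pm\delta\}$ cancel because, by (3.6), $\lim_{y\to0^+}|y|^aD_y\Gamma=\lim_{y\to0^-}|y|^aD_y\Gamma$ (the limit depends only on $x$, not on the sign of $y$). Integrating the adjoint identity in $\tau$ then yields the displayed formula.

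With the key identity in hand, I send $\tau_1\to-\infty$ (so that $\int_Q|y|^a\Gamma(X,t;Y,\tau_1)\,dY\to0$ by the polynomial decay of the prefactor $(t-\tau_1)^{-(n+a)/2}$) and analyze $\tau_2\to t^-$ in four separate cases. If $X\notin\overline{Q}$, the Gaussian decay in (2.5) shows $\int_Q|y|^a\Gamma(X,t;Y,\tau_2)\,dY\to0$ and we may take $\tau_2=t$ outright, giving $u_0=0$. If $X\in Q$, I take $\tau_2=t-\epsilon$ and let $\epsilon\to0^+$; approximating $\mathbf{1}_Q$ from above and below by continuous compactly supported functions and applying (2.12) yields $\int_Q|y|^a\Gamma(X,t;Y,t-\epsilon)\,dY\to1$, hence $u_0=-1$. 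The existence of the limit $\epsilon\to0^+$ on the left-hand side is guaranteed by the absolute integrability established in Lemma 3.2.

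For $X\in\partial Q$, the same formula applies, but the limit of the right-hand side now measures the relative density of $Q$ at $X$ against the approximate identity $|y|^a\Gamma(X,t;\cdot,t-\epsilon)$. When $X$ lies on a single smooth face $\partial Q_i$, I will localize $\mathbf{1}_Q$ to a half-space $H$ tangent to that face, then exploit reflection symmetry of $\Gamma$ across the hyperplane $\{y_i=x_i\}$ (for $i<n$ the standard Gaussian symmetry; for $i=n$, the analogous symmetry of the factor $F(xy/(t-\tau))$ together with $|y|^a$), to deduce $\int_H|y|^a\Gamma\,dY\to\tfrac12$ and hence $u_0=-\tfrac12$. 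When $X$ lies in $\mathcal P$, i.e.\ at a corner where two faces meet, the same localization produces a quarter-space, and the same symmetry argument (applied twice in perpendicular directions) gives limit $\tfrac14$, hence $u_0=-\tfrac14$.

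The main technical obstacles are twofold. First, making the divergence theorem rigorous in the presence of the singular weight $|y|^a$ and the possible intersection of $Q$ with $\{y=0\}$: this is settled by the cancellation from (3.6) described above, provided one verifies that the one-sided traces $|y|^aD_y\Gamma|_{y=0^\pm}$ are integrable on $\partial Q\cap\{y=0\}$ in $(Y',\tau)$, which follows from the estimates (3.4)–(3.6) used already in Lemma 3.2. Second, justifying the boundary-density limit for $X\in\partial Q$ requires some care because $\mathbf 1_Q$ is discontinuous; the approximation argument is routine but needs the uniform estimate (2.5) on $\Gamma$ to dominate the tails so that one may pass (2.12) through the indicator.
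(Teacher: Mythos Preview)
Your reduction via the divergence theorem to
\[
u_0(X,t)=-\lim_{\epsilon\to 0^+}\int_Q |y|^a\Gamma(X,t;Y,t-\epsilon)\,dY
\]
is correct and is exactly what the paper does (the paper just says ``it is easy to see''; you have supplied the details, including the cancellation at $\{y=0\}$). Your treatment of the interior and exterior cases via (2.12) also matches the paper.

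The gap is in the boundary case when $X$ lies on one of the two $y$-faces $\{y=a_n\}$ or $\{y=b_n\}$ with $x=a_n\neq 0$ (resp.\ $b_n\neq 0$). You claim an ``analogous symmetry of the factor $F(xy/(t-\tau))$ together with $|y|^a$'' under the reflection $y\mapsto 2x-y$, but no such symmetry exists: already the weight $|y|^a$ is not invariant under that reflection, and neither is $F$. So the half-space integral cannot be evaluated as $\tfrac12$ by symmetry alone. The limit \emph{is} $\tfrac12$, but this requires a genuine argument. The paper supplies it by comparing $|y|^a\Gamma$ to the classical heat kernel $H$: from the large-argument asymptotics of $I_\nu$ one has, for $xy>0$,
\[
|y|^a\Gamma(X,t;Y,t-\epsilon)=\Big(\tfrac{y}{x}\Big)^{a/2}H(X,t;Y,t-\epsilon)\,\big(1+O(\epsilon/(xy))\big),
\]
and since the mass concentrates at $y=x$ where $(y/x)^{a/2}\to1$, the limit equals the classical heat-kernel half-space limit $\tfrac12$. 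Your symmetry idea \emph{does} work in the special subcase $x=0$ (face at $\{y=0\}$), since then $|y|^a\Gamma(X,t;Y,t-\epsilon)$ is genuinely even in $y$; the paper treats that subcase by a separate direct computation. The same remark applies to corners touching a $y$-face.
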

\begin{proof}
From \cite{WO} and \cite{K}, we know that
\begin{align*}
v_0(X,t)&=\dint_{-\fz}^t\dint_{\pz Q}\frac{\pz H(X,t;Y,\tau)}{\pz v(Y)}d\sz(Y)d\tau\\
&=-\dlim_{\ez\to 0^+}\dint_Q H(X,t;Y,t-\ez)dY\\
&=
\left\{ \begin{aligned}
&-1,\quad\ (X,t)\in Q\times[0,T];\\
&-\frac 12,\quad (X,t)\in \pz Q\times[0,T]\setminus {\cal P};\\
&-\frac 14,\quad (X,t)\in {\cal P};\\
& 0,\qquad\ (X,t)\in\rz\setminus  \overline{Q}\times[0,T],
\end{aligned}\right.
\end{align*}
where $H(X,t;Y,\tau)$ is a classical heat kernel, that is,
$$H(X,t;Y;\tau)=(4\pi)^{-\frac n2}e^{-\frac {|X-Y|^2}{4(t-\tau)}}$$
for $t>\tau$, otherwise it is $0$.

In addition, it is easy to see that
$$u_0(X,t)=-\dlim_{\ez\to 0^+}\dint_Q \Gamma(X,t;Y,t-\ez)|y|^adY.$$
From this and  (ii) in Theorem 2.1, we get that
$$
u_0(X,t)=
\left\{ \begin{aligned}
&-1,\quad (X,t)\in Q\times[0,T];\\
&0,\qquad\ (X,t)\in\rz\setminus  \overline{Q}\times[0,T].
\end{aligned}\right.
$$
It remains to  consider the case of the boundary $\pz Q\times[0,T]$. Note that if
$(X,t)\in \pz Q\times[0,T]$, then
$$\begin{array}{cl}
\dlim_{\ez\to 0^+}\dint_QH(X,t;Y,t-\ez)dY&=
\left\{ \begin{aligned}

&-\frac 12,\quad (X,t)\in \pz Q\times[0,T]\setminus {\cal P};\\
&-\frac 14,\quad (X,t)\in {\cal P}.
\end{aligned}\right.
\end{array}\eqno(3.8)$$

We next consider two cases about $X=(x',x)$.

Case 1. $x\not=0$. For $0<\dz<1/2$, we have for $\forall\ \ez>0$
\begin{align*}
&\dint_Q||y|^a\Gamma(X,t;Y,t-\ez)-H(X,t;Y,t-\ez)|dY\\
&\le \dint_{\{|y-x|<\dz |x|\}\cap Q}||y|^aG(X,t;Y,t-\ez)-H(X,t;Y,t-\ez)|dY\\
&\quad+\dint_{\{|y-x|\ge\dz |x|\}\cap Q}||y|^aG(X,t;Y,t-\ez)-H(X,t;Y,t-\ez)|dY\\
&:=I_1+I_2.
\end{align*}
For $I_1$, since $|y-x|<\dz |x|$, so $xy\ge |x|^2/2>0$. Notice that
$$|y|^a\Gamma(X,t;Y,t-\ez)=\l(\frac yx\r)^{\frac a2}H(X,t;Y,t-\ez)\times\l(1+O\l(\frac{\ez}{xy}\r)\r),\quad {\rm if}\ \ez\to 0^+.\eqno(3.9)$$
In fact, from \cite {WO, G}, we know that
$$\Gamma(X,t;Y,t-\ez)=(4\pi)^{-\frac {n}2}\ez^{-\frac{n+a}2}e^{-\frac {|X-Y|^2}{4\ez}}\l(\frac {xy}{\ez}\r)^{-\frac a2}\l(1+O\l(\frac{\ez}{xy}\r)\r), \ \ez\to 0^+,$$
so
$$|y|^a\Gamma(X,t;Y,t-\ez)=\l(\frac yx\r)^{\frac a2}(4\pi)^{-\frac {n}2}\ez^{-\frac{n}2}e^{-\frac {|X-Y|^2}{4\ez}}\l(1+O\l(\frac{\ez}{xy}\r)\r), \ \ez\to 0^+.$$
By (3.9), if $\dz$ is small enough, we then get that
\begin{align*}
I_1&\le \dint_{\{|y-x|<\dz |x|\}}\l|\l(\frac yx\r)^{\frac a2}-1\r|H(X,t;Y,t-\ez)dY\\
&\quad +C\dint_{\{|y-x|<\dz |x|\}}\l(\frac yx\r)^{\frac a2}\frac{\ez}{xy}H(X,t;Y,t-\ez)dY\\
&\le C\l(\dz+\frac{\ez}{x^2}\r),
\end{align*}
where $C$ depends only on $a$.

For $I_2$, we have
\begin{align*}
I_2&\le \dint_{\{|y-x|\ge\dz |x|\}\cap Q}|y|^aG(X,t;Y,t-\ez)dY+\dint_{\{|y-x|\ge\dz |x|\}\cap Q}H(X,t;Y,t-\ez)dY\\
&:=I_{21}+I_{22}.
\end{align*}
For $I_{21}$, we get
\begin{align*}
I_{21}&\le C\dint_{\{|y-x|\ge\dz |x|\}\cap Q}\ez^{-\frac{n+a}2}e^{-\frac {|X-Y|^2}{6\ez}}\l(1+\frac{y^2}\ez\r)^{-\frac a2}|y|^adY\\
&\le C\dint_{\{|y-x|\ge\dz |x|,y^2<\ez\}\cap Q}\ez^{-\frac{n+a}2}e^{-\frac {|X-Y|^2}{6\ez}}|y|^adY\\
&\quad+ C\dint_{\{|y-x|\ge\dz |x|,y^2\ge \ez\}\cap Q}\ez^{-\frac{n}2}e^{-\frac {|X-Y|^2}{6\ez}}dY\\
&\le Ce^{-\frac {(\dz x)^2}{12\ez}}\dint_{\{y^2<\ez\}}\ez^{-\frac{n+a}2}e^{-\frac {|X-Y|^2}{12\ez}}|y|^adY\\
&\quad+ Ce^{-\frac {(\dz x)^2}{12\ez}}\dint_{\rn}\ez^{-\frac{n}2}e^{-\frac {|X-Y|^2}{12\ez}}dY\le Ce^{-\frac {(\dz x)^2}{12\ez}}.
\end{align*}
Similarly,
$$I_{22}\le Ce^{-\frac {(\dz x)^2}{12\ez}}.$$
From these, taking $\dz=\ez^{\frac 14}$, then
$$I_1+I_2\le C\l(\dz+\frac{\ez}{x^2}+e^{-\frac {(\dz x)^2}{12\ez}}\r)=C\l(\ez^{\frac 14}+\frac{\ez}{x^2}+e^{-\frac { x^2}{12\sqrt{\ez}}}\r).$$
Hence,
$$I_1+I_2\to0,\qquad \ez\to 0^+.$$
From these and (3.8), we get
\begin{align*}
\dlim_{\ez\to 0^+}\dint_Q |y|^a\Gamma(X,t;Y,t-\ez)dY&=
\dlim_{\ez\to 0^+}\dint_Q H(X,t;Y,t-\ez)dY\\
&=\left\{ \begin{aligned}
&-\frac 12,\quad (X,t)\in \pz Q\times[0,T]\setminus {\cal P};\\
&-\frac 14,\quad (X,t)\in {\cal P}.
\end{aligned}\right.
\end{align*}

Case 2: $x=0$, that is, $X=(x',0)$. We have
\begin{align*}
u_0(X,t)&=\dint_{-\fz}^t\dint_{\pz Q}\frac{\pz \Gamma(X,t;Y,\tau)}{\pz v(Y)}|y|^ad\sz(Y)d\tau\\
&=\dfrac {C_{n,a}}{2\Gamma {(\frac {1+a}2)}}\dint_{-\fz}^t\dint_{\pz Q}\dfrac{-|X-Y|}{(t-\tau)^{\frac{n+a}2+1}}e^{-\frac {|X-Y|^2}{4(t-\tau)}}<\frac{Y-X}{|Y-X|},v(Y)>|y|^ad\sz(Y)d\tau\\
&=-\dfrac {C_{n,a}}{2\Gamma {(\frac {1+a}2)}}\dint_{-\fz}^t\dint_{\pz Q}\dfrac{|X-Y|^{n+a}}{(t-\tau)^{\frac{n+a}2+1}}e^{-\frac {|X-Y|^2}{4(t-\tau)}}dw(Y)d\tau,
\end{align*}
where
$$dw(Y)=<\frac{Y-X}{|Y-X|},v(Y)>\frac {|y|^ad\sz(Y)}{|X-Y|^{n-1+a}}.$$
Let $\eta=\frac{|X-Y|}{2\sqrt{t-\tau}}$ and $d\eta=\frac{|X-Y|}{4(t-\tau)^{\frac32}}$, we obtain
\begin{align*}
&\dint_{-\fz}^t\dfrac{|X-Y|^{n+a}}{(t-\tau)^{\frac{n+a}2+1}}e^{-\frac {|X-Y|^2}{4(t-\tau)}}d\tau\\
&=2^{n+1+a}\dint_{-\fz}^t\l(\frac{|X-Y|}{2\sqrt{t-\tau}}\r)^{n-1+a}e^{-(\frac{|X-Y|}{2\sqrt{t-\tau}})^2}\frac{|X-Y|}{4(t-\tau)^{\frac32}}d\tau\\
&=2^{n+1+a}\dint_0^\fz \eta^{n-1+a}e^{-\eta^2}d\eta\\
&=2^{n+a}\Gamma(\frac{n+a}2).
\end{align*}
Notice that
\begin{align*}
&\dint_{\{
|X-Y|>r\}\cap \pz Q}<\frac{Y-X}{|Y-X|},v(Y)>\frac {|y|^ad\sz(Y)}{|X-Y|^{n-1+a}}\\
&\quad+\dint_{\{
|X-Y|=r\}\cap  Q}<\frac{Y-X}{|Y-X|},v(Y)>\frac {|y|^ad\sz(Y)}{|X-Y|^{n-1+a}}\\
&=\dlim_{\ez\to 0}\dint_{\{|y|>\ez,
|X-Y|>r\}\cap \pz Q}<\frac{Y-X}{|Y-X|},v(Y)>\frac {|y|^ad\sz(Y)}{|X-Y|^{n-1+a}}\\
&\quad+\dlim_{\ez\to 0}\dint_{\{|y|>\ez,
|X-Y|=r\}\cap  Q}<\frac{Y-X}{|Y-X|},v(Y)>\frac {|y|^ad\sz(Y)}{|X-Y|^{n-1+a}}\\
&=c_a\dlim_{\ez\to 0+}\dint_{\{|y|>\ez,
|X-Y|>r\}\cap  D}\text{div}(|y|^a\nabla_Y|X-Y|^{-(n-2+a)})dY\\
&\quad+\dlim_{\ez\to 0}\dint_{\{y=\ez,
|X-Y|\ge r\}\cap  Q}|\ez|^a\nabla_y|X-Y|^{-(n-2+a)}dy'\\
&\quad-\dlim_{\ez\to 0}\dint_{\{y=-\ez,
|X-Y|\ge r\}\cap  Q}|\ez|^a\nabla_y|X-Y|^{-(n-2+a)}dy'\\
&=0,
\end{align*}
where $c_a=-1/(n-2+a)$, in the last equality, we use the fact that
$$\dlim_{y\to 0}|y|^a\nabla_y|X-Y|^{-(n-2+a)}=0,$$ and
$u=|X-Y|^{-(n-2+a)}$ is a solution of $\text{div}(|y|^a\nabla_Y u)=0$ if $y\not= 0$; see \cite{CS}.

Let $\wt C_{n,a}=\frac {C_{n,a}}{2\Gamma {(\frac {1+a}2)}}2^{n+a}\Gamma(\frac{n+a}2)=\pi^{-\frac {n-1}2}\Gamma(\frac{n+a}2)/2\Gamma {(\frac {1+a}2)}$. It follows that
\begin{align*}
u_0(X,t)&=-\wt C_{n,a}\dint_{\pz Q}<\frac{Y-X}{|Y-X|},v(Y)>\frac {|y|^ad\sz(Y)}{|X-Y|^{n-1+a}}\\
&=-\wt C_{n,a}\dlim_{r\to 0}
\dint_{\{
|X-Y|>r\}\cap \pz Q}<\frac{Y-X}{|Y-X|},v(Y)>\frac {|y|^ad\sz(Y)}{|X-Y|^{n-1+a}}\\
&=\wt C_{n,a}\dlim_{r\to 0}\dint_{\{|X-Y|=r\}\cap  Q}<\frac{Y-X}{|Y-X|},v(Y)>\frac {|y|^ad\sz(Y)}{|X-Y|^{n-1+a}}\\
&=-\wt C_{n,a}\dlim_{r\to 0}\dfrac 1{r^{n-1+a}}\dint_{\{|X-Y|=r\}\cap Q}|y|^ad\sz(Y).
\end{align*}

Note that
\begin{align*}
\dint_{\{|X-Y|=r\}}|y|^ad\sz(Y)&=2\dint_{|x'-y'|\le r}(r^2-|x'-y'|^2)^{\frac {a-1}2}dy'\\
&=2r^{n-1+a}\dfrac{2\pi^{\frac {n-1}2}}{\Gamma (\frac {n-1}2)}\dint_0^1(1-s^2)^{\frac {a-1}2}s^{n-2}ds\\
&=2r^{n-1+a}\pi^{\frac {n-1}2}\dfrac{\Gamma {(\frac {1+a}2)}}{\Gamma(\frac{n+a}2)}.
\end{align*}
So,
\begin{align*}
u_0(X,t)
&=\left\{ \begin{aligned}
&-\frac 12,\quad (X,t)\in \pz Q\times[0,T]\setminus {\cal P};\\
&-\frac 14,\quad (X,t)\in {\cal P}.
\end{aligned}\right.
\end{align*}
\end{proof}

\begin{lemma}
 If $\vz\in C{(\pz Q\times[0,T])}$, then
 $$u_{\pm}(X_0,t_0)=\dint_{0}^t\dint_{\pz Q}\frac{\pz \Gamma(X_0,t_0;Y\tau)}{\pz v(Y)}\vz(Y,\tau)|y|^ad\sz(Y)d\tau-\frac 12\dz_{\pm}\vz(X_0,t_0),\eqno(3.10)$$
 for $X_0\in\pz Q,\ t_0\in(0,T]$, where $\dz_{+}=\frac 12$ and $\dz_{-}=\frac 32$ if $(X_0,t_0)$ is a corner point, otherwise $\dz_{\pm}=\pm 1$. Here  the integral exits as in
 proper integral
 $$u_{+}(X_0,t_0):=\dlim_{(X,t)\to (X_0,t_0)}u(X,t),\quad {\rm if }\ (X,t)\in \rr^{n+1}\setminus \overline{ Q}\times [0,T];$$ and
 $$u_{-}(X_0,t_0):=\dlim_{(X,t)\to (X_0,t_0)}u(X,t),\quad {\rm if}\  (X,t)\in  Q\times (0,T].$$
\end{lemma}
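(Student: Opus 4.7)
The plan follows the classical jump-relation strategy for double-layer potentials: reduce to the constant-density case already handled by Lemma~3.4, using a local continuity argument. Decompose
$$u(X,t) = \vz(X_0,t_0)\,\widetilde{u}_0(X,t) + R(X,t),$$
where
\begin{align*}
\widetilde{u}_0(X,t) &:= \dint_0^t \dint_{\pz Q} \dfrac{\pz \Gamma(X,t;Y,\tau)}{\pz v(Y)}\, |y|^a\, d\sz(Y)\, d\tau,\\
R(X,t) &:= \dint_0^t \dint_{\pz Q} \dfrac{\pz \Gamma(X,t;Y,\tau)}{\pz v(Y)}\bigl[\vz(Y,\tau) - \vz(X_0,t_0)\bigr] |y|^a\, d\sz(Y)\, d\tau.
\end{align*}
Next I connect $\widetilde{u}_0$ with the $u_0$ of Lemma~3.4 by writing $\widetilde{u}_0(X,t) = u_0(X,t) - h(X,t)$ with
$$h(X,t) := \dint_{-\fz}^0 \dint_{\pz Q} \dfrac{\pz \Gamma(X,t;Y,\tau)}{\pz v(Y)}\, |y|^a\, d\sz(Y)\, d\tau.$$
Since $t_0 > 0$, for $(X,t)$ in a small neighbourhood of $(X_0,t_0)$ and $\tau \le 0$ the quantity $t-\tau$ is bounded below by $t_0/2$, so the integrand defining $h$ is uniformly bounded and jointly continuous; dominated convergence makes $h$ continuous at $(X_0,t_0)$, and the one-sided limits of $\widetilde{u}_0$ at $(X_0,t_0)$ are obtained from those of $u_0$ (known explicitly from Lemma~3.4) by subtracting the common continuous shift $h(X_0,t_0)$.

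The main technical step is to show that $R$ is continuous at $(X_0,t_0)$ regardless of the direction of approach. Given $\eta > 0$, uniform continuity of $\vz$ on the compact set $\pz Q \times [0,T]$ furnishes $\dz_0 > 0$ with $|\vz(Y,\tau) - \vz(X_0,t_0)| < \eta$ whenever $|(Y,\tau) - (X_0,t_0)| < 2\dz_0$. I split $R = R^{\mathrm{near}} + R^{\mathrm{far}}$ according to whether $(Y,\tau)$ lies in the parabolic ball $B_{\dz_0}(X_0,t_0)$ or not. The estimates in the proof of Lemma~3.2 only use an $L^\fz$ bound on the density together with the integrability of the kernel $\frac{\pz \Gamma}{\pz v(Y)} |y|^a$ against $\pz Q \times (0,T)$; applied to the density $(\vz - \vz(X_0,t_0))\chi_{\mathrm{near}}$ of sup-norm at most $\eta$, this yields $|R^{\mathrm{near}}(X,t)| \le C\eta$ uniformly in $(X,t)$ in a fixed neighbourhood of $(X_0,t_0)$. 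On the far region $\{|(Y,\tau) - (X_0,t_0)| \ge \dz_0\}$ the kernel is uniformly bounded and jointly continuous as $(X,t) \to (X_0,t_0)$ from either side (the estimates underlying Lemma~3.3 apply uniformly), so Lebesgue's dominated convergence gives $R^{\mathrm{far}}(X,t) \to R^{\mathrm{far}}(X_0,t_0)$. Combining, $\limsup|R(X,t) - R(X_0,t_0)| \le C\eta$, and letting $\eta \to 0$ establishes the continuity of $R$.

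Putting the two ingredients together,
$$u_\pm(X_0,t_0) - u(X_0,t_0) = \vz(X_0,t_0)\,\bigl(u_0^\pm(X_0,t_0) - u_0(X_0,t_0)\bigr),$$
where $u_0^\pm$ denote the interior/exterior limits of $u_0$ at $(X_0,t_0)$. From Lemma~3.4 the relevant values are $(u_0^-, u_0^+, u_0)=(-1, 0, -\tfrac12)$ at a non-corner boundary point and $(-1, 0, -\tfrac14)$ at a corner, producing one-sided jumps $\pm\tfrac12\vz(X_0,t_0)$ in the non-corner case and $\tfrac14\vz(X_0,t_0)$, $-\tfrac34\vz(X_0,t_0)$ at corners---exactly $-\tfrac12\dz_\pm\vz(X_0,t_0)$ with $\dz_\pm$ as specified in the statement. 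The main obstacle is the uniform estimate on $R^{\mathrm{near}}$: it requires the $L^\fz$ bound of Lemma~3.2 to survive cutting the density off by an indicator, which in turn depends on the case-by-case kernel analysis in the proof of Lemma~3.2 being about integrability of the kernel rather than continuity of the density.
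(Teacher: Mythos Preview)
Your proof is correct and follows essentially the same route as the paper: decompose $u-\vz(X_0,t_0)u_0$ into the remainder $R$ (the paper's $V_1$) plus a tail correction, then show $R$ is continuous at $(X_0,t_0)$ by a near/far splitting that invokes exactly Lemmas~3.2 and~3.3, and read off the jump from Lemma~3.4. The only difference is cosmetic: the paper converts your tail $h(X,t)=\int_{-\infty}^0\!\int_{\pz Q}\frac{\pz\Gamma}{\pz v}|y|^a\,d\sz\,d\tau$ into the spatial integral $\int_Q\Gamma(X,t;Y,0)|y|^a\,dY$ before proving continuity, whereas you argue directly on $h$; your argument is fine, but since the $\tau$-domain is unbounded you should say explicitly that the dominating function is integrable (the kernel decays like $(t-\tau)^{-(n+a+2)/2}$ as $\tau\to-\infty$), not merely that the integrand is bounded.
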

\begin{proof}
Let $u_0$ be as in Lemma 3.4, $\vz\in C{(\pz Q\times[0,T])}$ and $(X_0,t_0)\in \pz Q\times(0,T]$. Then for any $(X,t)\in S(0,T]$, we have
\begin{align*}
u(X,t)-\vz(X_0,t_0)u_0(X,t)&=\dint_{0}^t\dint_{\pz Q}\frac{\pz \Gamma(X,t;Y,\tau)}{\pz v(Y)}\l[\vz(Y,\tau)-\vz(X_0,t_0)\r]|y|^ad\sz(Y)d\tau\\
&\quad -\vz(X_0,t_0)\dint_{-\fz}^0\dint_{\pz Q}\frac{\pz \Gamma(X,t;Y,\tau)}{\pz v(Y)}|y|^ad\sz(Y)d\tau\\
&=\dint_{0}^t\dint_{\pz Q}\frac{\pz \Gamma(X,t;Y,\tau)}{\pz v(Y)}\l[\vz(Y,\tau)-\vz(X_0,t_0)\r]|y|^ad\sz(Y)d\tau\\
&\quad -\vz(X_0,t_0)\dint_{ Q} \Gamma(X,t;Y,0)|y|^adY\\
&:=V_1(X,t)+V_2(X,t).
\end{align*}
We will show that the function $u(X,t)-\vz(X_0,t_0)u_0(X,t)$ is defined and finite on $S(0,T]$, and is continuous at $(X_0,t_0)$.

In fact, if $u(X,t)-\vz(X_0,t_0)u_0(X,t)$  is continuous at $(X_0,t_0)$, then by Lemma 3.4, we immediately get (3.10). In addition, if we write
 $$\widetilde{u}(X,t)=u(X,t)- \dz(X,t)\vz(X,t),\eqno(3.11)$$
 where $\dz(X,t)=\frac 14$ if $(X,t)$ is a corner point, otherwise $\dz(X,t)=0$, then it is easy to see that the restriction of $\widetilde{u}(X,t)$ to $\pz Q\times(0, T]$
 is continuous. Furthermore, due to the absolute of the integral (3.1) as a set function, $u(X,t)\to 0$ as $t\to 0^+$.
 So the restriction of $u$ to $L$ has a continuous extension by $0$ to
 $\bar L$.

 It remains to prove $u(X,t)-\vz(X_0,t_0)u_0(X,t)$  is continuous at $(X_0,t_0)$.

We begin with $V_2$. Notice that
\begin{align*}
\dint_{ Q} \Gamma(X,t;Y,0)|y|^adY&\le C\dint_Q t^{-\frac {n+a}2}e^{-\frac{|X-Y|^2}{6t}}\l(1+\frac {y^2}{t}\r)^{-\frac a2}|y|^adY\\
&= C\dint_{\{|y|^2\le {t}\}\cap Q} {t}^{-\frac {n+a}2}e^{-\frac{|X-Y|^2}{6{t}}}|y|^adY\\
&\quad+ C\dint_{\{|y|^2> {t}\}\cap Q} {t}^{-\frac {n}2}e^{-\frac{|X-Y|^2}{6{t}}}dY\\
&\le C.
\end{align*}
Therefore $V_2$ is finite on that any neighbourhood in $S(0,T]$.  In addition, for any $(Y_0,s_0)\in S(0,T]$, then whenever $|t-s_0|<s_0/2$ we have $t>s_0/2$, so
$$ \Gamma(X,t;Y,0)\le Ct^{-\frac {n+a}2}e^{-\frac{|X-Y|^2}{6t}}\l(1+\frac {y^2}{t}\r)^{-\frac a2}\le Cs_0^{-\frac {n+a}2}\l(1+\frac {y^2}{s_0}\r)^{-\frac a2}.$$
Thus,
\begin{align*}
\dint_{ Q} s_0^{-\frac {n+a}2}\l(1+\frac {y^2}{s_0}\r)^{-\frac a2}|y|^adY&=\dint_{\{|y|^2\le {s_0}\}\cap Q} s_0^{-\frac {n+a}2}\l(1+\frac {y^2}{s_0}\r)^{-\frac a2}|y|^adY\\
&\quad+ \dint_{\{|y|^2> {s_0}\}\cap Q} s_0^{-\frac {n+a}2}\l(1+\frac {y^2}{s_0}\r)^{-\frac a2}|y|^adY\\
&\le  C\dint_{\{|y|^2\le {s_0}\}\cap Q} s_0^{-\frac {n+a}2}|y|^adY+ C\dint_{\{|y|^2> {s_0}\}\cap Q} {s_0}^{-\frac {n}2}dY\\
&\le C({s_0}^{-\frac {n}2}+{s_0}^{-\frac {n-1}2}).
\end{align*}
From these, and the Lebesgue's Dominated Convergence Theorem show
that $V_2$ is continuous at $(Y_0, s_0)$.

We now consider $V_1$. Since $\vz$ is continuous on $L$,
given $\ez>0$, we can find a relative neighbourhood $N$ of $(X_0, t_0)$ in $L$ such that for $(Y,\tau),\ (X,t)\in { N}$
$$|\vz(Y,\tau)-\vz(X,t)|<\ez.\eqno(3.12)$$
By Lemma 3.2, we have
$$
V_1(X,t)\le C\ez\dint\dint_{N}\l|\frac{\pz \Gamma(X,t;Y,\tau)}{\pz v(Y)}\r||y|^ad\sz(Y)d\tau\le C\ez.\eqno(3.13)$$
For brevity, we write $\xi=(X,t)$, $\zeta=(Y,\tau)$, $\xi_0=(X_0,t_0)$ and $\sz(\zeta)=d\sz(Y)d\tau$,  $\phi(\xi,\zeta)$ is defined by
$$\phi(\xi,\zeta):=\frac{\pz \Gamma(X_0,t_0;Y,\tau)}{\pz v(Y)}\l[\vz(Y,\tau)-\vz(X,t)\r]|y|^a\eqno(3.14)$$
if $\tau<t$, otherwise it is $0$.

 From  the proof of Lemma 3.3, we know that the integral
$$\Phi(\xi)=\dint_{L\setminus N}\phi(\xi,\zeta)d\sz(\zeta)$$
is continuous at $\xi_0$. So, we can find $\gz>0$ such that
${\cal B}(\xi_0, \gz)\cap L \subset N$(${\cal B}(\xi_0, \gz)=\{\xi\in\rr^{n+1}: |\xi-\xi_0|<\gz \}$) and
$$|\Phi(\xi)-\Phi(\xi_0)|\le \ez.\eqno(3.15)$$
By (3.12)-(3.15), if $|\xi-\xi_0|<\gz$, we then obtain
\begin{align*}
|V_1(\xi)-V_1(\xi_0)|&\le \l|\dint_L\phi(\xi,\zeta)d\sz(\zeta)-\dint_L\phi(\xi_0,\zeta)d\sz(\zeta)\r|\\
&\le \l|\dint_N\phi(\xi,\zeta)d\sz(\zeta)\r|+\l|\dint_N\phi(\xi_0,\zeta)d\sz(\zeta)\r|\\
&\quad+\l|\dint_{L\setminus N}[\phi(\xi,\zeta)-\phi(\xi_0,\zeta)]d\sz(\zeta)\r|\\
&\le 3\ez.
\end{align*}
Thus,  $V_1$ is continuous at $(X_0,t_0)$.
\end{proof}
We are now in a position to prove Theorem 3.1.

\begin{proof}
It suffices to prove that there is a function $U\in C( \overline{Q_T})$ such that
$$ \dlim_{(X,t)\to (Y,s)}u(X,t)=f(Y,s),\quad (Y,s)\in\pz_p Q_T.  $$
For if the weaker statement has been proved, we can extend $Q_T=Q\times(0,T)$ to $\oz^*=Q\times(0,T^*)$ with $T^*>T$,
and $f$ and $f^*\in C(\pz Q\times(0,T^*)\cup I)$ and apply that result to $Q_T^*$ and $f^*$.

Let $g$ be a continuous function on $\pz_p Q_T$ such that $g=0$ on $I$. We seek a $\mathscr{L}_a$-parabolic $u=u_g$ on $Q_T$ such that
$$ \dlim_{(X,t)\to (Y,s)}u_g(X,t)=g(Y,s),\quad (Y,s)\in\pz_p Q_T \eqno(3.16) $$
in the form of a double-layer potential $u$ in (3.1) . From Lemma 3.3 and Remark 3.2, we know that $u$ is a $\mathscr{L}_a$-parabolic on $Q_T$.

In order to satisfy (3.16) at a point $(X_0,t_0)\in L$,   Lemma 3.5  shows that
$$g(X_0,t_0)=u(X_0,t_0)-\frac 12\vz(X_0,t_0)-\dz(X_0,t_0)\vz(X_0,t_0), \eqno(3.17)$$
 where $\dz(X_0,t_0)=\frac 14$ if $(X_0,t_0)$ is a corner point, otherwise $\dz(X_0,t_0)=0$.

In addition,  (3.17) can be turned into that
 $$\vz(X_0,t_0)=2\dint_0^t\dint_{\pz Q}\frac{\pz \Gamma(X_0,t_0;Y,\tau)}{\pz v(Y)}\vz(Y,\tau)|y|^ad\sz(Y)d\tau-2\dz(X_0,t_0)\vz(X_0,t_0)- 2g(X_0,t_0).$$
 As \cite{A}, we will show that this integral equation for $\vz$ has a solution using the contraction mapping principle.

 For any $\vz\in C( \pz Q\times[0, T])$ define
 $$({\cal F}\vz)(X,t)=2\dint_0^t\dint_{\pz Q}\frac{\pz \Gamma(X,t;Y,\tau)}{\pz v(Y)}\vz(Y,\tau)|y|^ad\sz(Y)d\tau-2\dz(X,t)\vz(X,t)- 2g(X,t)$$
 for any $(X,t)\in \pz Q\times[0, T]$.

Since the restriction of $u(X,t)-\dz(X,t)\vz(X,t)$ to $\pz Q\times[0, T]$
 is continuous, so (3.17) implies that $ {\cal F}$ maps $C( \pz Q\times[0, T])$ into itself. The space $C( \pz Q\times[0, T])$ with the weighted sup-norm
 $$\|\vz\|=\dsup_{(X,t)\in \pz Q\times[0, T]}|\vz(X,t)|e^{-4l(X,t)},$$  where
 $$l(X,s)=\dint_0^s\dint_{\pz Q}2\l|\frac{\pz \Gamma(X,t;Y,\tau)}{\pz v(Y)}\r||y|^ad\sz(Y)d\tau,$$
 is a Banach space. Note that for each $t\in (0,T]$ the function $l(X,s)$ is differentiable in $(0,t)$ with
 $$dl(X,s)=\dint_{\pz Q}2\l|\frac{\pz \Gamma(X,t;Y,s)}{\pz v(Y)}\r||y|^ad\sz(Y)ds.$$
 For $t\in [0,T]$ and $\vz,\wt\vz\in C( \pz Q\times[0, T])$, we have
 $$\begin{array}{cl}
 &|({\cal F}\vz)(X,t)-({\cal F}\wt\vz)(X,t)|e^{-4l(X,t)}\\
 &\le  e^{-4l(X,t)}
 \dint_0^t\dint_{\pz Q}2\l|\frac{\pz \Gamma(X,t;Y,\tau)}{\pz v(Y)}\r||\vz(Y,\tau)-\wt\vz(Y,\tau)||y|^ae^{-4l(X,\tau)}e^{4l(X,\tau)}d\sz(Y)d\tau\\
 &\quad+\frac 12 \l|\vz(Y,t)-\wt\vz(Y,t)\r|e^{-4l(X,t)}\\
  &\le  e^{-4l(X,t)}\|\vz-\wt\vz\|\dint_0^t\dint_{\pz Q}2\l|\frac{\pz \Gamma(X,t;Y,\tau)}{\pz v(Y)}\r|e^{4l(X,\tau)}d\sz(Y)d\tau+\frac 12 \|\vz-\wt\vz\|\\
 &\le \frac 14 (1- e^{-4l(X,t)})\|\vz-\wt\vz\|+\frac 12 \|\vz-\wt\vz\|\le \frac 34 \|\vz-\wt\vz\|.
  \end{array}$$
 It follows that there is a unique continuous function $\vz$ on $\pz Q\times[0, T]$ such that ${\cal F}\vz=\vz$. For this function $\vz$, the double-layer potential
 $$u_g=\dint_0^t\dint_{\pz Q}\frac{\pz \Gamma(X,t;Y;\tau)}{\pz v(Y)}\vz(Y,\tau)|y|^ad\sz(Y)d\tau$$
 is a $\mathscr{L}_a$-parabolic on $Q_T$ which satisfies
 $$ \dlim_{(X,t)\to (Y,s)}u_g(X,t)=g(Y,s),\quad (Y,s)\in\pz_p Q_T, $$
 and so solves the Dirichlet problem on $Q_T$ for the function $g$.
\end{proof}

\section{Perron method and barrier functions }
\subsection{Perron process}
We first give a preliminary version of the comparison principle.
\begin{lemma}
Suppose that $u$ is a supersolution and $v$ is a subsolution to  $\mathscr{L}_a$ in $Q_T=Q\times(0,T]$. If $u$ and $-v$ are lower
semicontinuous on $ \overline{Q_T}$ and $v\le u$ on $\pz_p Q_T$, then $v\le u$ a.e. in  $Q_T$.

\end{lemma}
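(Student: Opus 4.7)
The plan is to carry out the classical energy/test-function comparison argument, adapted to the weighted space $L^2\!\bigl((0,T); H^1(Q,|x|^a)\bigr)$ in which sub-- and supersolutions live by Definition~3.1. Set $w=v-u$. Subtracting the defining inequalities for a subsolution $v$ and a supersolution $u$ (tested against the same nonnegative $\varphi$) gives
\begin{equation*}
\iint_{Q_T} |x|^a\bigl(-w\,\varphi_t + \nabla_X w\cdot\nabla_X\varphi\bigr)\,dX\,dt \le 0
\end{equation*}
for every nonnegative admissible test function $\varphi$. The strategy is to test this inequality against (an approximation of) $\varphi = w_+\,\eta(t)$, where $w_+=\max(w,0)$ and $\eta$ is a suitable temporal cutoff.

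The heuristic computation is the standard one: because $w\,(w_+)_t = \tfrac12\,\partial_t(w_+^2)$ and $\nabla_X w\cdot\nabla_X w_+ = |\nabla_X w_+|^2$, integration by parts in $t$ formally yields
\begin{equation*}
\iint_{Q_T}|x|^a|\nabla_X w_+|^2\,\eta\,dX\,dt \;\le\; \tfrac12\iint_{Q_T}|x|^a w_+^2\,\eta_t\,dX\,dt.
\end{equation*}
Choosing $\eta$ to equal $1$ on $[0,s]$ and to decrease linearly to $0$ on $[s,s+\delta]$ makes the right-hand side $\le 0$ while the left-hand side is $\ge 0$. Both must therefore vanish, so $\int_s^{s+\delta}\!\!\int_Q |x|^a w_+^2\,dX\,dt = 0$; letting $s$ and $\delta$ range through $[0,T)$ gives $w_+ = 0$ a.e.\ in $Q_T$, which is exactly $v\le u$ a.e.

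The two genuine obstacles lie in making the formal test-function computation rigorous. First, one must show that $\varphi = w_+\,\eta$ is an admissible test function for the weak inequality. A priori, test functions are only $C_0^1(Q_T)$, whereas $w_+$ need not be smooth and need not have compact spatial support. Here one uses (i) the boundary hypothesis: since $v\le u$ on $\partial_p Q_T$ and $u,-v$ are lower semicontinuous up to the boundary, $w_+$ has zero trace on the lateral boundary $L$ and tends to $0$ as $t\to 0^+$, so $w_+(\cdot,t)\eta(t)$ belongs to the closure of $C_0^1(Q_T)$ in the weighted $H^1$ norm; and (ii) a density/approximation argument to enlarge the admissible class from $C_0^1(Q_T)$ to this closure. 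Second, since a sub/supersolution need not have a time derivative in the strong sense, one replaces $w$ by its Steklov average $w_h(X,t)=\tfrac1h\!\int_t^{t+h}w(X,\tau)d\tau$, which does possess a pointwise weak time derivative; the identities $w(w_+)_t=\tfrac12\partial_t(w_+^2)$ and the integration by parts become rigorous for $w_h$, and one then passes to the limit $h\to 0^+$ using $w_h\to w$ in $L^2_{\loc}\bigl((0,T);H^1(Q,|x|^a)\bigr)$.

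The main obstacle is the admissibility/approximation step: one must verify that the truncation $w_+$, multiplied by a temporal cutoff vanishing at $t=T$, can be reached by nonnegative $C_0^1(Q_T)$ functions in the weighted topology, and that the Steklov regularization commutes with the truncation $(\cdot)_+$ in the limit. The weight $|x|^a$ with $a\in(-1,1)$ is a Muckenhoupt $A_2$-weight, so the standard mollification and truncation approximations in Sobolev spaces go through in the weighted setting; once this is in place, the energy estimate closes and the conclusion $v\le u$ a.e.\ follows.
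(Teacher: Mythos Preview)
Your proposal is correct and follows essentially the same approach as the paper, which omits the details and refers to Lemma~3.1 in \cite{KL} and Lemma~3.5 in \cite{KKP}; those references carry out precisely the energy argument you describe---test the difference inequality against (a Steklov-regularized) $(v-u)_+\eta(t)$, use the upper semicontinuity of $v-u$ together with the boundary inequality to justify admissibility of the test function, and conclude that the positive part vanishes a.e. One small refinement worth making explicit: to avoid dealing directly with the trace of $w_+$ on $\partial_p Q_T$, it is cleaner (and is what \cite{KL} does) to test with $(w-\varepsilon)_+\eta$ for $\varepsilon>0$, since upper semicontinuity of $w$ and $w\le 0$ on $\partial_p Q_T$ force $\{w\ge\varepsilon\}$ to be compactly contained away from the parabolic boundary, so the test function genuinely has compact spatial support; then let $\varepsilon\downarrow 0$ at the end.
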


\begin{proof}The proof is similar to that of Lemma 3.1 in \cite{KL} or  Lemma 3.5 in \cite{KKP}, we omit the details.\end{proof}

The uniform H\"older estimate (see \cite{CF}) combined Theorem 3.1 and Lemma 4.1 lead a convergence result.

\begin{lemma}
Suppose that $u_k$ is a locally uniformly bounded sequence of $\mathscr{L}_a$-parabolic functions in  the open set $\oz$.
Then it has a subsequence that converges locally uniformly in $\oz$ to $\mathscr{L}_a$-parabolic function.
\end{lemma}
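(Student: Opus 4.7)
The plan is to combine the interior Hölder estimate cited from \cite{CF} with the uniqueness half of Theorem 3.1 via the comparison principle (Lemma 4.1). The compactness gives a candidate limit; the Dirichlet theory identifies it as $\mathscr{L}_a$-parabolic.

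First I would apply the uniform interior Hölder estimate for weak solutions of $\mathscr{L}_a$ from \cite{CF}: on any compact $K\subset\oz$ the functions $u_k$ have a Hölder seminorm bounded in terms of the sup of $|u_k|$ on a slightly larger compact $K'\subset\oz$, with exponent and constant depending only on $K,K',n,a$. Combined with the local uniform bound of the hypothesis, this gives local equicontinuity on every compact subset of $\oz$. Exhausting $\oz$ by compacts and extracting a diagonal subsequence via Arzelà--Ascoli, I obtain a subsequence, still denoted $\{u_k\}$, which converges locally uniformly in $\oz$ to some continuous function $u:\oz\to\rr$.

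Next I would verify that $u$ is $\mathscr{L}_a$-parabolic. Fix any space-time box $Q_T$ with $\overline{Q_T}\subset \oz$. By Theorem 3.1 there is a unique $w\in C(\overline{Q_T})$ which is $\mathscr{L}_a$-parabolic in $Q_T$ and equals $u$ on $\pz_p Q_T$. Set $\ez_k:=\sup_{\pz_p Q_T}|u_k-u|$; the uniform convergence on $\overline{Q_T}$ forces $\ez_k\to 0$. Since $u_k$ is $\mathscr{L}_a$-parabolic, hence both a sub- and a supersolution, the function $u_k-\ez_k$ is a subsolution with $u_k-\ez_k\le w$ on $\pz_p Q_T$. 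Both functions are continuous on $\overline{Q_T}$, so Lemma 4.1 gives $u_k-\ez_k\le w$ in $Q_T$. The symmetric comparison yields $w\le u_k+\ez_k$ in $Q_T$. Passing to the limit $k\to\fz$ and using local uniform convergence on $\overline{Q_T}$ collapses both inequalities to $u=w$ in $Q_T$, so $u$ coincides with the $\mathscr{L}_a$-parabolic function $w$ there. Since $Q_T$ was an arbitrary space-time box compactly contained in $\oz$ and $\mathscr{L}_a$-parabolicity is local, $u$ is $\mathscr{L}_a$-parabolic in $\oz$.

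The main obstacle is ensuring that the Hölder estimate of \cite{CF} is applicable in exactly the form needed here: an interior, quantitative Hölder bound for weak solutions of $\mathscr{L}_a$ that depends only on an $L^\fz$ bound and on $n$ and $a\in(-1,1)$. This is precisely the De Giorgi--Nash--Moser theory for degenerate parabolic operators with the Muckenhoupt $A_2$ weight $|y|^a$, which \cite{CF} supplies; once it is in hand, the rest is a standard Arzelà--Ascoli plus comparison argument, and no further weak-formulation analysis is required because the limit is identified pointwise with a genuine solution constructed by Theorem 3.1.
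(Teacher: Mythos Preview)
Your proposal is correct and follows essentially the same route the paper indicates: the sentence preceding the lemma explicitly says the result comes from combining the uniform H\"older estimate of \cite{CF} with Theorem 3.1 and Lemma 4.1, and the paper's proof simply refers to Lemma 3.4 in \cite{KL} for the details. Your Arzel\`a--Ascoli extraction followed by the comparison sandwich $u_k-\ez_k\le w\le u_k+\ez_k$ against the Dirichlet solution $w$ from Theorem 3.1 is exactly this strategy spelled out.
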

\begin{proof}Ref the proof  of Lemma 3.4 in \cite{KL}.\end{proof}

Next we recall the definition of $\mathscr{L}_a$-superparabolic (subparabolic) functions.

\begin{definition}
A function $u: \oz\to(-\fz,\fz]$ is called $\mathscr{L}_a$-superparabolic if
\begin{enumerate}
\item[$\mathrm{(i)}$] $u$ is lower semicontinuous,
\item[$\mathrm{(ii)}$] $u$ is finite in a dense subset of $\oz$,
\item[$\mathrm{(iii)}$] $u$ satisfies the comparison principle on each box $Q_{t_1;t_2}=Q\times(t_1,t_2)$ with closure in $\oz$: if $h$ is $\mathscr{L}_a$ in $Q_{t_1;t_2}$ and
continuous on $ \overline{Q}_{t_1;t_2}$ and if $h\le u$ on the parabolic boundary of $Q_{t_1;t_2}$, then $h\le u$ in the whole $Q_{t_1;t_2}$.
\end{enumerate}
\end{definition}
We remark that the definition of $\mathscr{L}_a$-superparabolic function coincides to that defined in Section 1; see Proposition 4.3 below.

Now we consider the connection between superparabolic functions and supersolutions to $\mathscr{L}_a$.

\begin{lemma}
Let $u$ be a lower semicontinuous supersolution to $\mathscr{L}_a$ in $\oz$. Then for all $(X_0,t_0)\in\oz$,
$$u(X_0,t_0)=\liminf_{(X,t)\to (X_0,t_0)} u(X,t)={\displaystyle\mathop{{\rm ess\liminf}}_{(X,t)\to (X_0,t_0)} u(X,t)}.$$
\end{lemma}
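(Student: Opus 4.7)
The chain $u(X_0,t_0)\le \liminf_{(X,t)\to (X_0,t_0)} u(X,t)\le \mathop{\mathrm{ess\,liminf}}_{(X,t)\to (X_0,t_0)} u(X,t)$ is immediate: the first inequality is the definition of lower semicontinuity at $(X_0,t_0)$, and the second is the universal bound $\inf\le \mathrm{ess\,inf}$ applied on every shrinking neighborhood. The task is therefore to prove the reverse inequality $\mathop{\mathrm{ess\,liminf}}_{(X,t)\to (X_0,t_0)} u(X,t)\le u(X_0,t_0)$, so that all three quantities coincide.

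The plan is a Dirichlet--comparison argument on small space-time boxes. Suppose for contradiction that $u(X_0,t_0)<\alpha<\mathop{\mathrm{ess\,liminf}}_{(X,t)\to (X_0,t_0)} u(X,t)$, so there is an open neighborhood $V\subset \oz$ of $(X_0,t_0)$ on which $u\ge \alpha$ almost everywhere. I would select a space-time box $Q_{t_1,t_2}\subset V$ with $(X_0,t_0)$ in its parabolic interior (taking $X_0\in Q^\circ$ and $t_1<t_0\le t_2$), arranged to avoid $\{y=0\}$ when $x_0\ne 0$. A Fubini argument applied to the null set $\{u<\alpha\}\cap V$ guarantees that, for almost every choice of the vertices of $Q$ and of $t_1$, the bound $u\ge \alpha$ descends to almost everywhere on $\partial_p Q_{t_1,t_2}$ with respect to the $|y|^a$-weighted surface measure. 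Using lower semicontinuity, approximate $u|_{\partial_p Q_{t_1,t_2}}$ from below by an increasing sequence of continuous functions $\phi_j$, solve the Dirichlet problem via Theorem 3.1 to produce $\mathscr{L}_a$-parabolic $h_j$ with boundary data $\phi_j$, and apply Lemma 4.1 to get $h_j\le u$ almost everywhere in $Q_{t_1,t_2}$. Lemma 4.2 extracts a locally uniform monotone limit $h\le u$ a.e.\ that is $\mathscr{L}_a$-parabolic in $Q_{t_1,t_2}$. The explicit double-layer Poisson representation derived in the proof of Theorem 3.1 shows that the associated parabolic measure on $\partial_p Q_{t_1,t_2}$ is absolutely continuous with strictly positive density relative to the weighted surface measure, so the a.e.\ boundary lower bound transfers to $h(X_0,t_0)\ge \alpha$. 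Combining with $h\le u$ a.e.\ and the lower semicontinuity of $u$ yields $u(X_0,t_0)\ge \alpha$, contradicting $u(X_0,t_0)<\alpha$.

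The principal obstacle is the repeated passage between different notions of ``$u\ge \alpha$'': from an a.e.\ bound on the $(n+1)$-dimensional set $V$, down to an a.e.\ bound on the lower-dimensional $\partial_p Q_{t_1,t_2}$ (handled by Fubini), then up to a pointwise interior bound on the Dirichlet solution $h$ (handled by the absolute continuity of the parabolic measure made explicit by the kernel in Theorem 3.1), and finally back to a pointwise value of $u$ at the single point $(X_0,t_0)$ (exploiting lower semicontinuity of $u$ together with $h\le u$ a.e.). The remainder of the proof is routine Perron/comparison bookkeeping based on Theorem 3.1, Lemma 4.1, and Lemma 4.2.
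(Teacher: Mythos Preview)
Your final step does not go through. From $h\le u$ a.e.\ in the box and $h(X_0,t_0)\ge\alpha$ you want to conclude $u(X_0,t_0)\ge\alpha$, ``exploiting lower semicontinuity of $u$ together with $h\le u$ a.e.'' But lower semicontinuity only gives $u(X_0,t_0)\le\liminf u$, while continuity of $h$ together with $h\le u$ a.e.\ gives only $h(X_0,t_0)\le\operatorname{ess\,liminf}u$, which is the very quantity you already assumed exceeds $\alpha$; neither yields $h(X_0,t_0)\le u(X_0,t_0)$. This is not a technicality: take $u\equiv 0$ and set $\tilde u(X_0,t_0)=-1$, $\tilde u=u$ elsewhere. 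Then $\tilde u$ is still a supersolution in the sense of Definition~3.1 (an a.e.\ notion), it is lower semicontinuous, and your construction produces $h\equiv 0$, yet $\tilde u(X_0,t_0)=-1<0$. The passage from ``$h\le u$ a.e.'' to ``$h(X_0,t_0)\le u(X_0,t_0)$'' is exactly the content of the lemma and cannot be extracted from Lemma~4.1, whose conclusion is only almost everywhere.

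There is a second, independent gap. Theorem~3.1 constructs the Dirichlet solution on a box as a double-layer potential with density $\varphi$ determined by a Fredholm-type integral equation; it does \emph{not} exhibit the parabolic measure $f\mapsto H^{Q_{t_1,t_2}}_f(X_0,t_0)$ as integration against an $L^1$ density on $\partial_pQ_{t_1,t_2}$. The absolute continuity you invoke in step~7 is a separate (nontrivial) statement that the paper nowhere establishes.

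For comparison, the paper itself gives no argument and simply cites Lemma~2.4 of \cite{BBP}. That reference treats the normalized $p$-parabolic equation via \emph{viscosity} supersolutions, a pointwise notion in which lowering the value at a single point genuinely destroys the supersolution property (a smooth test function can then touch from below with the wrong sign), so the one-point modification above is ruled out and a comparison argument can succeed. In the variational framework used here that mechanism is absent; a correct proof must bring in actual interior regularity for $\mathscr{L}_a$-supersolutions (for instance a weak Harnack/De~Giorgi estimate as in \cite{CF}) rather than rely solely on the a.e.\ comparison of Lemma~4.1.
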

\begin{proof}Ref the proof  of Lemma 2.4 in \cite{BBP}.\end{proof}

As a corollary we have the following result.
\begin{corollary}
 Let $u$ be a lower semicontinuous supersolution to $\mathscr{L}_a$ in $\oz$. Then $u$ is a $\mathscr{L}_a$-superparabolic.
\end{corollary}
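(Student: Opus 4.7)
The goal is to verify the three clauses (i)--(iii) of Definition 4.2 for a lower semicontinuous supersolution $u$. Clause (i) is exactly our standing hypothesis, so nothing needs to be done there.

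For clause (ii), note that a supersolution, in the sense of Definition 4.1, belongs locally to $L^{2}(\,\cdot\,,|x|^{a})$, so $u$ is finite almost everywhere in $\Omega$; hence $\{u<+\infty\}$ is dense (in fact of full measure). Alternatively, one can argue directly from Lemma 4.3: the essential $\liminf$ coincides with $u$ pointwise, and the essential $\liminf$ is finite wherever any neighbourhood has positive-measure sublevel sets, which holds on a dense set. Either way, (ii) is immediate.

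The substantive content is clause (iii). Fix a box $Q_{t_{1};t_{2}}=Q\times(t_{1},t_{2})$ with $\overline{Q_{t_{1};t_{2}}}\subset\Omega$, and let $h$ be $\mathscr{L}_{a}$-parabolic in $Q_{t_{1};t_{2}}$ and continuous on $\overline{Q_{t_{1};t_{2}}}$ with $h\le u$ on the parabolic boundary $\partial_{p}Q_{t_{1};t_{2}}$. The plan is to apply the preliminary comparison principle (Lemma 4.1) with $u$ as the supersolution and $v:=h$ as the subsolution. The hypotheses are met: $u$ is lower semicontinuous on the compact set $\overline{Q_{t_{1};t_{2}}}\subset\Omega$, $-v=-h$ is continuous hence lower semicontinuous, and by assumption $v\le u$ on the parabolic boundary. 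Lemma 4.1 then yields $h\le u$ almost everywhere in $Q_{t_{1};t_{2}}$.

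To promote this to a pointwise inequality, fix $(X_{0},t_{0})\in Q_{t_{1};t_{2}}$ and invoke Lemma 4.3, which gives
\[
u(X_{0},t_{0})=\mathop{\mathrm{ess\,liminf}}_{(X,t)\to(X_{0},t_{0})} u(X,t).
\]
Since $h\le u$ a.e.\ near $(X_{0},t_{0})$, one has $\mathop{\mathrm{ess\,liminf}} h \le \mathop{\mathrm{ess\,liminf}} u$, and by continuity of $h$ the left-hand side equals $h(X_{0},t_{0})$; therefore $h(X_{0},t_{0})\le u(X_{0},t_{0})$ everywhere in $Q_{t_{1};t_{2}}$, as required. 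The main (and really the only) subtle point is precisely this last upgrade from the a.e.\ inequality furnished by Lemma 4.1 to the pointwise inequality demanded by Definition 4.2, and it is handled cleanly by Lemma 4.3; the rest is bookkeeping.
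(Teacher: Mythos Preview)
Your argument is correct and follows essentially the same route as the paper: apply Lemma~4.1 to obtain $h\le u$ almost everywhere in the box, then upgrade to a pointwise inequality via the $\mathrm{ess\,liminf}$ identity of Lemma~4.3. The paper omits explicit mention of clauses (i)--(ii) and additionally verifies the inequality at the top time level $t=t_{2}$ (again via Lemma~4.3 and continuity of $h$), but since Definition~4.2 only asks for the comparison on the open box $Q\times(t_{1},t_{2})$, your version already suffices.
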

\begin{proof} It suffices to check the property (iii) of a superparabolic function. Suppose $h$ is a $\mathscr{L}_a$-parabolic function, which continuous up to the closure of $Q\times(t_1,t_2)$,
and $h\le u$ on the parabolic boundary. Since $u$ is lower semicontinuous, the lower semicontinuous relaxation $u^*$ of $u$ with respect to $Q\times(t_1,t_2)$ satisfies
$$u^*\ge u\ {\rm on}\ \pz_p(Q\times(t_1,t_2))$$
and equals $u$ in $Q\times(t_1,t_2)$. By Lemma 4.1, $h\le u$ a.e. in $Q\times(t_1,t_2)$.  So, by Lemma 4.3, $h\le u$  in $Q\times(t_1,t_2)$.
Now given any point $(X,t_2)$, by Lemma 4.3 again, we can take a sequence of points of $(X_n,t_n)\in Q\times(t_1,t_2)$ such that $(X_n,t_n)\to (X,t_2)$ and
$u(X_n,t_n)\to u(X,t_2)$. Then by the continuity of $h$, we have that $h(X,t_2)\to u(X,t_2)$. The conclusion thus follows.
\end{proof}

Now, we show that a bounded $\mathscr{L}_a$-superparabolic function is a supersolution.
\begin{proposition}
 Let $\oz$ is a domain in $\rr^{n+1}$. Suppose $u$ is a $\mathscr{L}_a$-superparabolic function and $u$ is locally bounded or $u\in L^2((t_1,t_2), H^1(Q,|x|^a))$ whenever
 $\overline{Q_{t_1,t_2}}\subset\oz$.  Then $u$ be a  supersolution to $\mathscr{L}_a$ in $\oz$.
 \end{proposition}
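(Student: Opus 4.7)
The aim is to verify
\[
\iint_\Omega |x|^a\bigl(-u\varphi_t + \nabla_X u \cdot \nabla \varphi\bigr)\,dX\,dt \ge 0
\]
for every nonnegative $\varphi\in C_0^1(\Omega)$. I would first reduce to the bounded case by truncation: the functions $u_k:=\min(u,k)$ are again $\mathscr{L}_a$-superparabolic, since constants are $\mathscr{L}_a$-parabolic and the minimum of a superparabolic function and a parabolic one inherits property (iii) of Definition 4.1. Under either hypothesis of the proposition $u_k\to u$ in the appropriate local sense (monotonely pointwise if $u$ is locally bounded, and in the local energy norm under the Sobolev hypothesis), so it suffices to establish the inequality for bounded $\mathscr{L}_a$-superparabolic $u$.

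Fix a box $Q_{s_1,s_2}$ with $\overline{Q_{s_1,s_2}}\subset\Omega$. Since $u$ is lower semicontinuous and bounded, pick continuous functions $\phi_j\nearrow u|_{\partial_p Q_{s_1,s_2}}$. Theorem 3.1 together with Remark 3.1 yields $\mathscr{L}_a$-parabolic $h_j\in C(\overline{Q_{s_1,s_2}})$ with $h_j=\phi_j$ on $\partial_p Q_{s_1,s_2}$. Property (iii) of Definition 4.1 gives $h_j\le u$ in $Q_{s_1,s_2}$, and the comparison principle for parabolic functions (Lemma 4.1) gives $h_j\le h_{j+1}$. By Lemma 4.2 the monotone limit $h:=\lim_j h_j$ is a bounded $\mathscr{L}_a$-parabolic function satisfying $h\le u$ in $Q_{s_1,s_2}$ and $h=u$ on $\partial_p Q_{s_1,s_2}$ in an appropriate boundary sense.

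The heart of the proof is to deduce the supersolution inequality for $w:=u-h\ge 0$. Since $h$ is a weak solution it contributes zero to the weak formulation, so it suffices to show
\[
\iint_{Q_{s_1,s_2}}|x|^a\bigl(-w\varphi_t + \nabla_X w\cdot \nabla\varphi\bigr)\,dX\,dt \ge 0
\]
for $0\le \varphi\in C_0^1(Q_{s_1,s_2})$. The strategy is to view $h$ as a Perron solution with datum $u$ and $w$ as the ``defect''. Testing the approximating parabolic equations for $h_j$ against cutoffs of the form $(u-h_j)_+\eta^2$ produces a Caccioppoli-type estimate showing that $w$ lies in the weighted parabolic energy space $L^2((s_1,s_2);H^1(Q,|x|^a))$ with zero lateral trace; under the Sobolev hypothesis this is automatic. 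The sign of the integral then follows from the obstacle-problem interpretation: $w$ appears as the monotone limit of nonnegative functions whose weak formulation against the cone $\{\varphi\ge 0\}$ is nonnegative, mirroring the classical argument for weak supersolutions arising from variational inequalities.

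\textbf{Main obstacle.} The delicate point is the third stage, namely establishing the weighted Caccioppoli estimate for $w$ and justifying the integration by parts when the weight $|x|^a$, $a\in(-1,1)$, degenerates or blows up on the hyperplane $\{x=0\}$. Because $|x|^a$ is Muckenhoupt $A_2$, the Sobolev theory developed for the extension operator (and the H\"older estimates of \cite{CF} invoked in Lemma 4.2) carries through, but some care is required to handle test functions crossing $\{x=0\}$; this is where the explicit choice of approximants built from the Dirichlet problem of Section 3 becomes essential. Once the energy regularity of $w$ is in place, the sign of the integral is routine, and passing $k\to\infty$ returns the statement for the original $u$.
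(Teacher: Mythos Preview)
Your outline has the right flavor but contains a genuine gap at the ``heart of the proof''. You construct the Dirichlet solution $h$ with boundary data $u$ on $\partial_p Q_{s_1,s_2}$ and then assert that $w=u-h\ge 0$ satisfies the supersolution inequality. But nothing you have written actually establishes this: $h$ is a solution, $w$ is merely a nonnegative lower semicontinuous function vanishing on the parabolic boundary, and there is no mechanism in your argument that forces the weak inequality for $w$. Your appeal to an ``obstacle-problem interpretation'' at the end is too vague to do the work---you have not solved any obstacle problem, and the phrase ``monotone limit of nonnegative functions whose weak formulation against the cone is nonnegative'' does not describe an object you have constructed. Moreover, the proposed Caccioppoli step, testing the equation for $h_j$ with $(u-h_j)_+\eta^2$, is not admissible in the locally bounded case because $u$ is not yet known to lie in the weighted energy space; that regularity is part of what must be proved.

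The approach the paper invokes (Theorem~5.8 of \cite{KKP}) proceeds differently: one solves the \emph{obstacle problem} in $Q_{s_1,s_2}$ with obstacle $u$ and lateral/initial data $u$, obtaining a function $v$ that is a supersolution by construction (solutions of variational inequalities with one-sided constraints are automatically supersolutions) and satisfies $v\ge u$. The superparabolic comparison property~(iii) then forces $v\le u$, hence $v=u$, and one reads off both the energy regularity and the supersolution inequality directly from the obstacle formulation. The point is that the obstacle problem, unlike the Dirichlet problem, builds in the one-sided inequality from the start. Your decomposition $u=h+w$ replaces this with an equation for $h$ plus a bare inequality $w\ge 0$, which is strictly weaker information and does not close. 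To repair your argument you would need to replace $h$ by the obstacle solution throughout.
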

\begin{proof} It follows by a step-by-step imitation of the arguments of Theorem 5.8 in \cite{KKP}, we omit the details.\end{proof}

Next, we  give a elliptic  version of the comparison principle.

\begin{lemma}
Suppose that $u$ is  $\mathscr{L}_a$-superparabolic  and $v$ is  $\mathscr{L}_a$-subparabolic in a bounded open  set $\oz$. If
$$\fz\not=\limsup_{(Y,s)\to (X,t)}v(Y,\tau)\le \liminf_{(Y,s)\to (X,t)} v(Y,\tau)\not=-\fz\eqno(4.1)$$
at each point $(X,t)$ on the Euclidean boundary of $\oz$, then $v\le u$ in $\oz$.
\end{lemma}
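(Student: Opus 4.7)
\emph{Strategy.} The plan is a contradiction argument: I identify a ``first-attainment'' point of the supremum of $w := v-u$ in the time direction, enclose it in a small space-time box $R$, engineer a uniform gap between $w$ and its supremum on the parabolic boundary $\partial_p R$, and then use the box-level Dirichlet solver of Theorem 3.1 together with Definition 4.2(iii) to contradict the attainment.

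\emph{Setup and first-attainment.} Since $-v$ is $\mathscr{L}_a$-superparabolic, it is lower semicontinuous, so $v$ is upper semicontinuous; combined with the lower semicontinuity of $u$, the function $w = v-u$ is upper semicontinuous on $\Omega$. Suppose for contradiction $M := \sup_\Omega w > 0$. The boundary condition (4.1) together with u.s.c.\ of $w$ and compactness of $\partial\Omega$ yields $\delta_0 > 0$ such that $w \leq M/2$ on the $\delta_0$-neighborhood of $\partial\Omega$ inside $\Omega$; hence $E := \{w = M\}$ is a nonempty compact subset of $\Omega$. Setting $t_1 := \inf\{t : (X,t) \in E\}$ and $E_1 := E \cap (\mathbb{R}^n \times \{t_1\})$, the infimum is attained (by compactness of $E$ and u.s.c.\ of $w$), and $E_1$ is a nonempty compact subset of the open slice $\Omega \cap (\mathbb{R}^n \times \{t_1\})$.

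\emph{Box and uniform gap.} Choose a box $Q \subset \mathbb{R}^n$ and $\delta' > 0$ so that $E_1 \subset \mathrm{int}(Q)$ and $\overline{Q} \times [t_1 - \delta', t_1 + \delta'] \subset \Omega$; set $R := Q \times (t_1 - \delta', t_1 + \delta')$. On the bottom $\overline{Q} \times \{t_1 - \delta'\}$ the definition of $t_1$ gives $w < M$ everywhere, so by u.s.c.\ and compactness $w \leq M - \eta_1$ there for some $\eta_1 > 0$. The compact set $\partial Q \times \{t_1\}$ is disjoint from the closed set $E$ (because $E_1 \subset \mathrm{int}(Q)$); a compactness/distance argument using u.s.c.\ of $w$ then provides $\eta_2 > 0$ such that, after possibly shrinking $\delta'$, one has $w \leq M - \eta_2$ on the entire lateral piece $\partial Q \times [t_1 - \delta', t_1 + \delta']$. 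Setting $\eta := \min(\eta_1, \eta_2) > 0$, we conclude $v \leq u + (M - \eta)$ on the full parabolic boundary $\partial_p R$.

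\emph{Box comparison and contradiction.} Since $v$ is u.s.c.\ and $u + (M-\eta)$ is l.s.c.\ on the compact metric space $\partial_p R$ with $v \leq u + (M-\eta)$, the Hahn--Tong insertion theorem (with a standard truncation to handle the possibility of $\pm\infty$ values of $u$ or $v$) produces a continuous function $f$ on $\partial_p R$ with $v \leq f \leq u + (M-\eta)$. Theorem 3.1 then furnishes an $\mathscr{L}_a$-parabolic $h$ on $R$, continuous on $\overline{R}$, with $h = f$ on $\partial_p R$. Applying Definition 4.2(iii) to the $\mathscr{L}_a$-superparabolic function $u + (M-\eta)$ (translation by a constant preserves the superparabolic property) yields $h \leq u + (M-\eta)$ in $R$, while the dual statement for the subparabolic $v$ (applied to the parabolic $h$ with $h \geq v$ on $\partial_p R$) yields $v \leq h$ in $R$. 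Therefore $w \leq M - \eta$ in $R$; but any $(X_1, t_1)$ with $X_1 \in E_1$ lies in $R$ and satisfies $w(X_1, t_1) = M$, the desired contradiction. The main obstacle, which this outline localizes, is engineering the uniform gap $\eta$ on the entire $\partial_p R$ around a first-attainment point --- the bottom is automatic from the definition of $t_1$, but the late-lateral part $\partial Q \times [t_1, t_1 + \delta']$ requires the disjointness of $\partial Q \times \{t_1\}$ from $E$ and a short-time extension via u.s.c.\ and compactness.
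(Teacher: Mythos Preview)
Your overall strategy (first-time attainment plus box comparison via Hahn--Tong insertion and Theorem 3.1) is sound, but there is a genuine gap at the step ``Choose a box $Q \subset \mathbb{R}^n$ and $\delta'>0$ so that $E_1 \subset \mathrm{int}(Q)$ and $\overline{Q} \times [t_1-\delta', t_1+\delta'] \subset \Omega$.'' Such a single box need not exist: the $t_1$-slice $\Omega_{t_1} = \{X : (X,t_1)\in\Omega\}$ is an arbitrary bounded open subset of $\mathbb{R}^n$, possibly non-convex or disconnected, while $E_1$ is merely a compact subset of it. (Think of $\Omega_{t_1}$ an annulus and $E_1$ a concentric circle, or $\Omega_{t_1}$ with two components and $E_1$ meeting both.) Since your entire lateral-gap argument rests on $\partial Q \times \{t_1\}$ being disjoint from $E_1$, the proof breaks here; and you cannot retreat to a small box around a single point $X_1\in E_1$, because $\partial Q$ may then intersect $E_1$.

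The fix is exactly what the paper does: replace the single box by a \emph{finite union} of space-time boxes with closures in $\Omega$ whose interiors cover the relevant compact set, invoke Remark 3.1 to solve the Dirichlet problem on that union, and then apply Definition 4.2(iii) box by box in the order the solution is built. The paper's argument also differs structurally: rather than a contradiction at a first-attainment time, it works directly with the compact level sets $K_\varepsilon=\{v\ge u+\varepsilon\}$, covers $K_\varepsilon$ by such a finite union $D_\varepsilon$, inserts a continuous $\varphi$ with $v\le\varphi\le u+\varepsilon$ on the parabolic boundary, and lets $\varepsilon\to 0$. It also disposes of possible unboundedness of $u,v$ by an explicit truncation $u_M=\min\{u,M\}$, $v_m=\max\{v,m\}$ at the outset, which is cleaner than the parenthetical truncation you invoke inside the Hahn--Tong step.
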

\begin{proof}We first assume that $u$ and $v$ are bounded, so by Proposition 4.1, $u$ is  $\mathscr{L}_a$-supsolution  and $v$ is  $\mathscr{L}_a$-subsolution.

For each $\ez>0$, consider the set
$$K_\ez=\{(Y,\tau)\in\oz ~|~ v(Y,\tau)\ge u(Y,\tau)+\ez\},$$
which is a compact subset of $\oz$ by (4.1) with the bounds of $u$ and $v$. So, there is an open set $D_\ez\subset\oz$ such that $K_\ez\subset D_\ez$,
where $D_\ez$ is a union of finitely many boxes $Q_i\times(t_{i1},t_{i2})$ and $\pz D_\ez\subset \oz\setminus K_\ez$.

Because $v$ is upper semicontinuous, $u$ is lower semicontinuous, and the parabolic boundary $\Gamma_\ez$ of $D_\ez$, we find a continuous function $\vz$ on $\Gamma_\ez$
such that $u\le\vz\le u+\ez$ on $\Gamma_\ez$. From Remark 3.1, we know that there exists a $\mathscr{L}_a$-parabolic function $h$ in $D_\ez$ that coincides with
$\vz$ on $\Gamma_\ez$, then from the definition of $\mathscr{L}_a$-superparabolic and $\mathscr{L}_a$-subparabolic functions, we have
$$v\le h\le u+\ez,\quad{\rm in}\ D_\ez.$$
Hence, $v\le u+\ez$, and the lemma follows by letting $\ez\to 0$.

Now we drop the assumption that $u$ and $v$ are bounded. In fact, by compactness, (4.1) and semicontinuously, $u$ is bounded from below and $v$ is bounded from above.
Let $M=\dsup_{\oz}v, m=\dinf_\oz u, u_M=\min\{u, M\}$ and $v_m=\max\{v,m\}$. Then $u_M$ and $v_m$ satisfy a similar comparison on the boundary as in (4.1). Then from previous proof,
we know that $v_m\le u_M$, hence $v\le u$ in $\oz$.
\end{proof}
Now, we  give a parabolic  version of the comparison principle which is more nature.
\begin{lemma}
Suppose that $u$ is  $\mathscr{L}_a$-superparabolic  and $v$ is  $\mathscr{L}_a$-subparabolic in a  open set $\oz$. Let $T\in\rr$.
\begin{enumerate}
\item[$\mathrm{(i)}$]If $\oz$ is bounded,   and assume that $(4.1)$
holds for all  $(X,t)\in\oz$ with $t<T$, then $v\le u$ in $\oz_{-}=\{(X,t)\in\oz: t<T\}.$
\item[$\mathrm{(ii)}$]If $\oz$ is unbounded,   and assume that $u$ and $v$ are bounded  and the following condition that
$$\limsup_{(Y,\tau)\to (X,t)} (v-u)(Y,\tau)\le 0\eqno(4.2)$$
at each point $(X,t)$ on the bounded Euclidean boundary of $\oz$, and
$$\limsup_{(Y,\tau)\to \fz} (v-u)(Y,\tau)\le 0\eqno(4.3)$$
hold, then  $v\le u$ in $\oz_{-}=\{(X,t)\in\oz: t<T\}.$
\end{enumerate}
\end{lemma}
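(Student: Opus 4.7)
The plan is to adapt the proof of Lemma 4.3 to the time-truncated setting $\Omega_-=\Omega\cap\{t<T\}$, exploiting the parabolic nature of $\mathscr{L}_a$: the Dirichlet problem of Remark 3.1 only prescribes data on the parabolic boundary of a space-time box and leaves the top face free, so values at times $\ge T$ cannot influence the comparison at earlier times. First I would reduce to bounded $u$ and $v$. In case (i), as in the end of the proof of Lemma 4.3, truncate by $u_M=\min(u,M)$ and $v_m=\max(v,m)$; these remain $\mathscr{L}_a$-super/subparabolic and still satisfy (4.1) at boundary points with $t<T$. In case (ii) boundedness is already in the hypotheses. Proposition 4.1 then upgrades bounded super/subparabolic functions to super/subsolutions.

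For $\epsilon,\eta>0$, set
$$A_\eta=\{(X,t)\in\Omega:\ t\le T-\eta,\ v(X,t)-u(X,t)\ge\epsilon\}.$$
Upper semicontinuity of $v$ and lower semicontinuity of $u$ make $A_\eta$ closed in $\Omega$; boundedness of $A_\eta$ in $\mathbb{R}^{n+1}$ is automatic in case (i) and follows from (4.3) in case (ii). Along any sequence in $A_\eta$ converging to a boundary point $(X,t)$ one has $t\le T-\eta<T$, so passing to a subsequence along which $v\to\limsup v$ gives $\liminf u\le\limsup v-\epsilon$, contradicting (4.1) in case (i) or (4.2) in case (ii). Hence both $A_\eta$ and $A_{\eta/2}$ are compact in $\Omega$. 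Following the construction in Lemma 4.3, enclose $A_\eta$ in a finite union $D$ of boxes $Q_i\times(t_{i1},t_{i2})$ with $\overline D\subset\Omega$, all $t_{i2}\le T-\eta/2$, and with parabolic boundary $\Gamma$ (in the sense of Remark 3.1) disjoint from $A_{\eta/2}$. Since $\Gamma\subset\{t\le T-\eta/2\}$, any point of $\Gamma$ with $v-u\ge\epsilon$ would lie in $A_{\eta/2}$, so $v-u<\epsilon$ on $\Gamma$. Pick a continuous $\varphi$ on $\Gamma$ with $v\le\varphi\le u+\epsilon$ and, by Remark 3.1, solve the Dirichlet problem on $D$ to obtain $h\in C(\overline D)$, $\mathscr{L}_a$-parabolic in $D$, equal to $\varphi$ on $\Gamma$. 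Applying Definition 4.1(iii) to each constituent box in chronological order gives $v\le h\le u+\epsilon$ in $D\supset A_\eta$, forcing $A_\eta=\varnothing$. Letting first $\eta\to 0$ and then $\epsilon\to 0$ yields $v\le u$ in $\Omega_-$.

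The main obstacle is that the natural bad set $\{(X,t)\in\Omega_-:v-u\ge\epsilon\}$ need not be compact in $\Omega$: it may accumulate on the top slice $\{t=T\}\cap\Omega$, where no boundary hypothesis is imposed. The $\eta$-truncation recovers compactness, but then $D$ must both cover $A_\eta$ and have its parabolic boundary $\Gamma$ strictly inside $\{v-u<\epsilon\}$. This is exactly why the two-scale pair $A_\eta\subset A_{\eta/2}$ is needed: $\Gamma$ is constrained to avoid $A_{\eta/2}$ while staying in $\{t\le T-\eta/2\}$, so that new bad points appearing between scales $\eta$ and $\eta/2$ do not spoil the Dirichlet comparison on $D$.
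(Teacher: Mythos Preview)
Your compactness argument for $A_\eta$ is fine (the preliminary truncation to bounded $u,v$ is unnecessary and, in case~(i), actually unjustified since (4.1) is not assumed on all of $\partial\Omega$). The real gap is the existence of the covering $D$. Requiring all tops $t_{i2}\le T-\eta/2$ together with $\Gamma\cap A_{\eta/2}=\emptyset$ is generally impossible: the first condition forces $D\subset\{t<T-\eta/2\}$, so any point of $A_{\eta/2}$ at level $t=T-\eta/2$ that meets a lateral face $\partial Q_i\times\{T-\eta/2\}$ lies in $\Gamma$. If instead you let the tops rise above $T-\eta/2$ so as to swallow all of $A_{\eta/2}$, the lateral faces now reach times in $(T-\eta/2,T)$ where nothing prevents $v-u\ge\epsilon$; replacing $\eta/2$ by $\eta/4$ only moves the difficulty up one scale. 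The set $\{v-u\ge\epsilon\}$ may climb all the way to $t=T$, and then no finite box covering with tops strictly below $T$ can seal it off from above.

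The paper (via \cite{BBP}) avoids this with the barrier already visible in Remark~4.1. For $T'<T$ set $g(t)=\epsilon'/(T'-t)$; since $\mathscr{L}_a g\ge 0$, $g$ is $\mathscr{L}_a$-superparabolic on $\{t<T'\}$, and one checks that $u+g$ is as well (for $h$ parabolic with $h\le u+g$ on the parabolic boundary of a box, the Dirichlet solution $\tilde h$ with data $(h-g)|_{\partial_p}$ satisfies $h-g\le\tilde h\le u$, the first inequality because $h-g$ is subparabolic, the second by Definition~4.1(iii)). Now $u+g\to+\infty$ along any approach to the slice $\{t=T'\}$, while $\limsup v$ stays finite there (by upper semicontinuity at interior points, by hypothesis (4.1) at points of $\partial\Omega$ since $T'<T$), so (4.1) for the pair $(v,u+g)$ holds on the \emph{entire} Euclidean boundary of $\Omega\cap\{t<T'\}$. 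Lemma~4.4 applies directly; then let $\epsilon'\to 0$ and $T'\uparrow T$. For~(ii) the paper reduces to~(i) by intersecting $\Omega$ with a large ball chosen via~(4.3); your direct use of~(4.3) to bound $A_\eta$ inherits the same covering gap until~(i) is secured by the barrier argument.
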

\begin{proof}If $\oz$ is bounded, we can prove (i) by adapting the proof of Theorem 2.10 in \cite{BBP} by using Lemma 4.4. In addition, Lemma 4.4 can be deduced by the condition (4.2) holds if $u$ and $v$ are bounded, so (i) also holds under the same condition. If $\oz$ is unbounded, set
$w=u-v$.   By (4.3), for every $\ez>0$ there exists $R_\ez>0$ such that $w(X,t)\ge -\ez $ for every $(X,t)\in\oz$,
$|(X,t)|\ge R_\ez$. We can suppose $R_\ez\to\fz$, as $\ez\to 0$. Now we apply what we have already proved to the function $w_\ez(X,t)=w(X,t)+\ez $ on the bounded open set
$\oz_\ez=\{(X,t)\in\oz:\ |(X,t)|<R_\ez\}$. We obtain $w_\ez\ge 0$ in $\oz_\ez\cap\{t<T\}$. Let $\ez$ go to zero, we obtain $w\ge 0$ in $\oz_{-}$. Thus, (ii) is also proved.
\end{proof}
By Lemma 4.5, we can obtain the following pasting lemma, which is useful when constructing new $\mathscr{L}_a$-superparabolic functions.
\begin{lemma}
Let $U\subset\oz$ be open. Also let $u$ and $v$ be  $\mathscr{L}_a$-superparabolic in $\oz$ and $U$, respectively, and let
$$\begin{array}{cl}
w=\left\{ \begin{aligned}
& \min\{u,v\}\quad \ {\rm in}\ U,\\
&u\qquad\qquad \quad {\rm in}\ \oz\setminus U.
\end{aligned}\right.
\end{array}$$
is lower semicontinuous, it is $\mathscr{L}_a$-superparabolic in $\oz$.

\end{lemma}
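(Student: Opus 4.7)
My plan is to verify the three defining properties of an $\mathscr{L}_a$-superparabolic function given in Definition 4.1. Property (i), lower semicontinuity, is the standing hypothesis. Property (ii), finiteness on a dense subset of $\Omega$, is immediate: $w\le u$ throughout $\Omega$ and $u$ is finite on a dense subset of $\Omega$, while $w>-\infty$ because $u,v>-\infty$ by superparabolicity. Hence $w$ takes finite values on a dense subset of $\Omega$.

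All the substance is in checking the comparison property (iii). Fix a box $Q_{t_1,t_2}=Q\times(t_1,t_2)$ with $\overline{Q_{t_1,t_2}}\subset\Omega$, and let $h$ be $\mathscr{L}_a$-parabolic in $Q_{t_1,t_2}$, continuous on the closure, with $h\le w$ on $\partial_p Q_{t_1,t_2}$. I would proceed in two steps. First, since $w\le u$ on $\Omega$, we have $h\le u$ on $\partial_p Q_{t_1,t_2}$, and superparabolicity of $u$ yields $h\le u$ throughout $Q_{t_1,t_2}$. Second, on the bounded open set $V=Q_{t_1,t_2}\cap U$, where $v$ is $\mathscr{L}_a$-superparabolic and $h$ is $\mathscr{L}_a$-parabolic, I would apply the parabolic comparison principle of Lemma 4.5(i), with $T=t_2$, to conclude $h\le v$ in $V$. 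Combining the two steps gives $h\le \min\{u,v\}=w$ in $V$ and $h\le u=w$ on $Q_{t_1,t_2}\setminus U$, hence $h\le w$ throughout $Q_{t_1,t_2}$, as required.

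Invoking Lemma 4.5(i) on $V$ requires the limsup-liminf boundary condition at every $(X,\tau)\in\partial V$ with $\tau<t_2$. At points of $\overline{U}\cap\partial_p Q_{t_1,t_2}$ this is immediate since $h\le w\le v$ there, with $h$ continuous and $v$ lower semicontinuous. The delicate case is $(X,\tau)\in Q_{t_1,t_2}\cap\partial U$, where the defining formula gives $w(X,\tau)=u(X,\tau)$ rather than an inequality against $v$. Here the standing assumption that $w$ is lower semicontinuous on $\Omega$ is crucial: since $w\le v$ on $U$,
\begin{align*}
u(X,\tau)=w(X,\tau)\;&\le\;\liminf_{(Y,s)\to(X,\tau),\,(Y,s)\in V}w(Y,s)\\
&\le\;\liminf_{(Y,s)\to(X,\tau),\,(Y,s)\in V}v(Y,s),
\end{align*}
and combining with $h(X,\tau)\le u(X,\tau)$ from Step 1 and continuity of $h$ yields the required boundary inequality.

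The main obstacle is precisely this second boundary case: the pasting formula on $\partial U$ produces $u$, but the comparison argument on $V$ demands a bound against $v$. Transporting the bound across $\partial U$ is exactly what the standing lower-semicontinuity hypothesis on $w$ is for, which is why that hypothesis cannot be omitted in the statement.
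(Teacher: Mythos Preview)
Your argument is correct and follows the standard route; the paper itself does not give a proof but only cites Lemma~2.12 in \cite{BBP}, and what you wrote is essentially a faithful reconstruction of that argument, with the parabolic comparison principle (Lemma~4.5(i)) playing the role of the key tool on $V=Q_{t_1,t_2}\cap U$. One small imprecision: your two boundary cases, ``$\overline{U}\cap\partial_p Q_{t_1,t_2}$'' and ``$Q_{t_1,t_2}\cap\partial U$'', are not quite a clean cover of $\partial V\cap\{\tau<t_2\}$, and at points of $\partial U\cap\partial_p Q_{t_1,t_2}$ the first claim ``$w\le v$ there'' is not literally available (since $w=u$ on $\partial U$); however your second argument, using lower semicontinuity of $w$ and $h\le w$ on $\partial_p Q_{t_1,t_2}$, applies verbatim to all of $\partial U\cap\overline{Q_{t_1,t_2}}$, so nothing is lost.
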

\begin{proof}Ref the proof of Lemma 2.12 in \cite{BBP}.
\end{proof}

The main tool in the Perron method is the modification of $\mathscr{L}_a$-superparabolic functions. Let $Q_T=Q\times(0,T)$ be a box with closure in $\oz$. If $u$ is
$\mathscr{L}_a$-superparabolic in $\oz$ and bounded on $Q_T$, we define the  $\mathscr{L}_a$-parabolic modification
$$\begin{array}{cl}
U=\left\{ \begin{aligned}
& u,\quad \ {\rm in}\ \oz\setminus Q\times(0,T],\\
&v,\quad \ {\rm in}\  Q\times(0,T],
\end{aligned}\right.
\end{array}$$
where
$$v(\xi)=\dsup\{h(\xi): h\in C(\overline{Q_T})\ {\rm is}\ \mathscr{L}_a\text{-parabolic\ and}\ h\le u\ {\rm on}\ \pz_p Q_T\}.$$
Then it is clear that $U\le u$ on $\oz$. Moreover, $U$ is $\mathscr{L}_a$-superparabolic in $\oz$  and $\mathscr{L}_a$-parabolic
in $Q_T$. To see this, choose an increasing sequence $\vz_j$ of continuous function on $\pz_p Q_T$ such that
$$u=\dlim_{j\to\fz}\vz_j$$
on $\pz_p Q_T$. Let $h_j$ be the $\mathscr{L}_a$-parabolic function in $Q_T$ that coincides with $\vz_j$ on $\pz_p Q_T$. Then it
follows from Lemma 4.4 that the sequence $h_j$ is increasing on $\overline{Q_T}$ and that the limit function is $v$. Moreover, since the sequence $h_i$ is bounded, so
$v$ is $\mathscr{L}_a$-parabolic by Lemma 4.2. Obviously, it is immediate that $U$ is a $\mathscr{L}_a$-superparabolic in $\oz$.

In what follows, we let $\oz$ be a bounded open set in $\rr^{n+1}$. Let $f:\pz\oz\to\rr$ be any bounded function.
 We now introduce the relevant notions of upper and lower Perron solutions.

A function $u$ is said to belong to the upper class ${\cal U}_f$ if $u$ is $\mathscr{L}_a$-superparabolic in $\oz$  and bounded below
$$\liminf_{\eta\to \xi} u(\eta)\ge f(\xi)$$
at each point $\xi\in\pz\oz$. Notice that the upper class ${\cal U}_f$ is never empty, for $f$ is bounded so that large constants are members of ${\cal U}_f$.

The lower class ${\cal L}_f$ is defined analogously. It consists of $\mathscr{L}_a$-superparabolic functions $v$, bounded above, satisfying
$$\dlim_{\eta\to \xi}\sup u(\eta)\le f(\xi)$$
at each point $\xi\in\pz\oz$; also, the constant $\displaystyle{\min_{\pz \oz}} f$ is in ${\cal L}_f$.

Next, the upper solution $ \overline{H}_f$ and the lower solution $\underline{H}_f$ are defined by
$$ \overline{H}_f(\xi)=\inf\{u(\xi): u\in {\cal U}_f\}$$ and
$$\underline{H}_f(\xi)=\sup\{u(\xi): u\in {\cal L}_f\}.$$
Notice that $\min\{u,\|f\|_\fz\}\in {\cal U}_f$ if $u\in  {\cal U}_f$, and $\max\{v,-\|f\|_\fz\}\in {\cal L}_f$ if $v\in  {\cal L}_f$.
Since $f$ is bounded, we can take the infimum over bounded $u$'s in ${\cal U}_f$  and the supremum over bounded $v$'s in ${\cal L}_f$.
Moreover, $\overline{H}_f, \underline{H}_f$ are bounded by the same constant as $f$.

If there exists a function $h \in  C(\overline{Q}_T)$ solving the boundary value problem to the equation $\mathscr{L}_a$, then
$$h= \overline{H}_f=\underline{H}_f.$$
To see this, simply note that the function $h$ belongs to both the upper class and the lower
class. As we will see, both $\overline{H}_f$ and $\underline{H}_f$ are local weak solutions to the equation $\mathscr{L}_a$.

An
immediate consequence of the comparison principle (Lemma 4.4) is that if $v\in \overline{H}_f$ and $u\in \underline{H}_f$, then $u\le v$. Thus
$$\underline{H}_f\le \overline{H}_f.$$
for the bounded boundary function $f$.

By using the parabolic modification, it is quite classical to show that $\overline{H}_f$ and  $\underline{H}_f$ are solutions to $\mathscr{L}_a$
(ref the proof of Theorem 5.1 in \cite{BBP}).

\begin{theorem}
If the boundary function $f: \pz\oz\to\rr$ is bounded, then the Perron solutions $\overline{H}_f$ and  $\underline{H}_f$ are $\mathscr{L}_a$-parabolic.
\end{theorem}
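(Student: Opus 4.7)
The plan is to follow the standard Perron scheme, adapted to the present degenerate parabolic setting. It suffices to prove the statement for $\overline{H}_f$, since $\underline{H}_f = -\overline{H}_{-f}$, and it is enough to show that $\overline{H}_f$ is $\mathscr{L}_a$-parabolic on every space-time box $Q_T = Q\times(t_1,t_2)$ with $\overline{Q_T}\subset\oz$. As noted in the excerpt, $\overline{H}_f$ is bounded by $\|f\|_\infty$: truncating any $u\in\mathcal{U}_f$ by $\min(u,\|f\|_\infty)$ keeps it in $\mathcal{U}_f$. Two tools carry the argument: the Dirichlet problem on boxes (Theorem 3.1) providing the \emph{parabolic modification} $U$ of any bounded $u\in\mathcal{U}_f$ on $Q_T$ (replace $u$ inside $Q_T$ by the solution with boundary data $u|_{\pz_p Q_T}$, then paste), which by Lemma 4.6 lies in $\mathcal{U}_f$, satisfies $U\le u$, and is $\mathscr{L}_a$-parabolic inside $Q_T$; and the compactness Lemma 4.2 for locally uniformly bounded families of $\mathscr{L}_a$-parabolic functions. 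I also use the elementary fact that the minimum of two $\mathscr{L}_a$-superparabolic functions is again $\mathscr{L}_a$-superparabolic, immediate from the definition.

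Fix such a box $Q_T$ and a countable dense subset $\{\xi_k\}_{k=1}^\infty\subset Q_T$. For each $k$ pick $u_{k,j}\in\mathcal{U}_f$, bounded by $\|f\|_\infty$, with $u_{k,j}(\xi_k)\to\overline{H}_f(\xi_k)$ as $j\to\infty$. Set $v_1:=u_{1,1}$ and $v_j:=\min(v_{j-1},u_{1,j},\dots,u_{j,j})$; then $v_j\in\mathcal{U}_f$ is uniformly bounded, pointwise decreasing, and $v_j(\xi_k)\to\overline{H}_f(\xi_k)$ for every fixed $k$. Let $V_j$ be the parabolic modification of $v_j$ on $Q_T$. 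Then $V_j\in\mathcal{U}_f$, $V_j$ is $\mathscr{L}_a$-parabolic on $Q_T$, $V_j\le v_j$, and $V_{j+1}\le V_j$ there (comparison principle on $Q_T$ using $v_{j+1}\le v_j$ on $\pz_p Q_T$). Lemma 4.2 together with monotonicity yields an $\mathscr{L}_a$-parabolic limit $V$ on $Q_T$ with $V_j\searrow V$ locally uniformly. Since each $V_j\in\mathcal{U}_f$, we get $V\ge\overline{H}_f$ on $Q_T$; from $V(\xi_k)\le v_j(\xi_k)\to\overline{H}_f(\xi_k)$ we get $V=\overline{H}_f$ on the dense set $\{\xi_k\}$.

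The main obstacle is upgrading this equality from the dense set to all of $Q_T$. For an arbitrary $\eta\in Q_T$, I would pick a further sequence $w_j\in\mathcal{U}_f$, bounded by $\|f\|_\infty$, with $w_j(\eta)\to\overline{H}_f(\eta)$, set $\widetilde v_j:=\min(v_j,w_j)\in\mathcal{U}_f$, and repeat the construction: form parabolic modifications $\widetilde V_j\in\mathcal{U}_f$ on $Q_T$, which are decreasing, uniformly bounded, and $\mathscr{L}_a$-parabolic inside $Q_T$, and extract the monotone locally uniform limit $\widetilde V$. Then $\widetilde V\ge\overline{H}_f$, and $\widetilde V(\eta)\le w_j(\eta)\to\overline{H}_f(\eta)$ gives $\widetilde V(\eta)=\overline{H}_f(\eta)$. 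Since $\widetilde v_j\le v_j$ on $\pz_p Q_T$, the comparison principle forces $\widetilde V_j\le V_j$, whence $\widetilde V\le V$ on $Q_T$; combined with $\widetilde V\ge\overline{H}_f=V$ on $\{\xi_k\}$, we get $\widetilde V=V$ on the dense set. Both $V$ and $\widetilde V$ are $\mathscr{L}_a$-parabolic, hence continuous by Definition 3.1, so $\widetilde V\equiv V$ on all of $Q_T$; evaluating at $\eta$ gives $V(\eta)=\overline{H}_f(\eta)$. As $\eta\in Q_T$ was arbitrary, $\overline{H}_f\equiv V$ on $Q_T$, and as $Q_T$ was an arbitrary box with $\overline{Q_T}\subset\oz$, $\overline{H}_f$ is $\mathscr{L}_a$-parabolic in $\oz$. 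The delicate point — and the one warranting the explicit dense-subset device above — is precisely that $\overline{H}_f$ is a priori only an infimum of lower semicontinuous functions, not continuous, so the continuity of the parabolic candidate $V$ must do the work of propagating equality from a dense set to everywhere.
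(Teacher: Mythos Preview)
Your proof is correct and follows exactly the classical Perron argument via parabolic modification that the paper invokes without spelling out (it merely refers the reader to Theorem~5.1 in \cite{BBP}). One small slip: to guarantee that $\widetilde V_j$ is decreasing you should first replace $w_j$ by $\min(w_1,\dots,w_j)$, which remains in $\mathcal{U}_f$ and still satisfies $w_j(\eta)\to\overline{H}_f(\eta)$; with that routine fix the argument is complete.
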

\begin{remark}Let $\oz=Q\times(0,T)$ and suppose that $f: \pz\oz\to\rr$ is continuous. Then the upper and lower Perron solutions coincide and
$$H_f=\underline{H}_f= \overline{H}_f$$ is the  $\mathscr{L}_a$-parabolic function that coincides with $f$ on $\pz_p \oz$. As anticipated, the values of $f$ at the top
of the box $\oz$ do note have any influence on the solution.This is so since if $h$ is $\mathscr{L}_a$-parabolic in $\oz$ in Theorem $3.1$ corresponding to
boundary $f$, then in a routine way one can show that the function $h+\frac \ez{T-t}$ belong to ${\cal U}_f$ for $\ez>0$, and to ${\cal L}_f$ for $\ez<0$.
By letting $\ez\to 0$, one see that $\overline{H}_f=\underline{H}_f$.
\end{remark}

\subsection{Barriers}
We shall define the barrier function for the boundary value problem as in classical theory.

\begin{definition}
A function $\omega$ is a barrier in $\oz$ at the point $\xi_0\in \pz\oz$ if:
\begin{enumerate}
\item[$\mathrm{(i)}$] $\omega$ is a positive  $\mathscr{L}_a$-superparabolic in $\oz$;
\item[$\mathrm{(ii)}$]$\displaystyle\liminf_{\zeta\to\xi}\omega(\zeta)>0$ if $\xi\in\oz$ and $\xi\not=\xi_0$;
\item[$\mathrm{(iii)}$]$\dlim_{\xi\to\xi_0}\omega(\xi)=0.$
\end{enumerate}
\end{definition}

As for heat equation, the existence of a barrier is a completely local question. Suppose there exists a neighborhood $N$ of $\xi_0=(X_0,t_0)$ such that a barrier at $\xi_0$
can be found in $N\cap\oz$. Then, as in \cite {BG}, we can define a barrier in $\oz$ as follows. Let $B=B_r(X_0)\times(t_0-r,t_0+r)$ be compactly contained in $N$.
Let $m=\dinf_{(N\setminus B)\cap\oz}\omega$. Without loss of generality, we can assume that $m>0$. If we define
$$\begin{array}{cl}
v=\left\{ \begin{aligned}
& \min\{\omega,m\},\quad \ {\rm in}\ \oz\cap B,\\
& m ,\qquad\qquad \quad\ {\rm in}\  \oz\setminus B,
\end{aligned}\right.
\end{array}$$
then it is easy to see that $v$ is a barrier in $\oz$ at $\xi_0$ by Lemma 4.6.

\begin{proposition}
Suppose $f: \pz\oz\to\rr$ is bounded and continuous at $\xi_0\in\pz\oz$. If there exists a barrier in $\oz$ at $\xi_0$, then
$$\dlim_{\xi\to \xi_0}\underline{H}_f(\xi)=f(\xi_0)= \dlim_{\xi\to \xi_0}\overline{H}_f(\xi). $$
\end{proposition}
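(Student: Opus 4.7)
The strategy is the classical Perron-barrier argument: construct functions of the form $f(\xi_0) \pm \varepsilon \pm k\omega$ which sit in the upper (resp.\ lower) Perron class, then squeeze. Since $\underline{H}_f \leq \overline{H}_f$ holds automatically, it suffices to prove
\[
\limsup_{\xi \to \xi_0} \overline{H}_f(\xi) \leq f(\xi_0) \leq \liminf_{\xi \to \xi_0} \underline{H}_f(\xi).
\]

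First I would handle the upper solution. Fix $\varepsilon>0$ and use continuity of $f$ at $\xi_0$ to choose an open neighborhood $V$ of $\xi_0$ with $|f(\xi)-f(\xi_0)|<\varepsilon$ on $V\cap\partial\Omega$. Set $M_0 := \|f\|_\infty$. The next step is the key uniform estimate on $\partial\Omega\setminus V$: for each such $\xi$ the barrier satisfies $\liminf_{\zeta\to\xi}\omega(\zeta)>0$, and by lower semicontinuity the function $\xi\mapsto \liminf_{\zeta\to\xi}\omega(\zeta)$ is lower semicontinuous on the compact set $\partial\Omega\setminus V$ (here I use boundedness of $\Omega$), so it attains a positive minimum $m>0$ there. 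Choose $k$ so large that $km \geq 2M_0$, and define
\[
u(\xi) := f(\xi_0) + \varepsilon + k\,\omega(\xi), \qquad \xi\in\Omega.
\]
This $u$ is $\mathscr{L}_a$-superparabolic (adding a constant and multiplying by a positive constant preserves the comparison property (iii) in Definition~4.1) and bounded below. At boundary points $\xi\in V\cap\partial\Omega$, positivity of $\omega$ gives $\liminf_{\eta\to\xi}u(\eta)\geq f(\xi_0)+\varepsilon > f(\xi)$; at boundary points $\xi\in\partial\Omega\setminus V$, the uniform bound gives $\liminf_{\eta\to\xi}u(\eta)\geq f(\xi_0)+\varepsilon+km \geq f(\xi_0)+2M_0\geq f(\xi)$. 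Thus $u\in\mathcal{U}_f$, so $\overline{H}_f\leq u$ throughout $\Omega$. Taking $\limsup$ as $\xi\to\xi_0$ and using (iii) in Definition~4.1, I obtain $\limsup_{\xi\to\xi_0}\overline{H}_f(\xi)\leq f(\xi_0)+\varepsilon$, and letting $\varepsilon\to 0$ finishes this half.

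The lower-solution estimate is entirely symmetric: $v(\xi):=f(\xi_0)-\varepsilon-k\omega(\xi)$ is $\mathscr{L}_a$-subparabolic, bounded above, and satisfies $\limsup_{\eta\to\xi}v(\eta)\leq f(\xi)$ on $\partial\Omega$ by the same two-case analysis, so $v\in\mathcal{L}_f$ and hence $\underline{H}_f\geq v$, giving $\liminf_{\xi\to\xi_0}\underline{H}_f(\xi)\geq f(\xi_0)-\varepsilon$. Combining both estimates with $\underline{H}_f\leq\overline{H}_f$ yields the claimed common limit $f(\xi_0)$.

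The main obstacle is the uniform barrier bound on $\partial\Omega\setminus V$: the barrier definition only guarantees pointwise positivity of the lower-semicontinuous envelope, so one really needs the bounded-$\Omega$ hypothesis (to get compactness of $\partial\Omega\setminus V$) together with lower semicontinuity to extract a positive infimum $m$. Everything else is bookkeeping — verifying that $f(\xi_0)\pm\varepsilon\pm k\omega$ lies in the appropriate Perron class and that constants/positive scalars behave well under the superparabolicity axioms, both of which follow directly from Definition~4.1.
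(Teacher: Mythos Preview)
Your proposal is correct and follows essentially the same classical Perron--barrier argument as the paper: both build the test function $f(\xi_0)+\varepsilon+k\omega$ (resp.\ $f(\xi_0)-\varepsilon-k\omega$), use continuity of $f$ near $\xi_0$ for the nearby boundary points, and use a uniform positive lower bound on (the boundary liminf of) $\omega$ away from $\xi_0$---obtained via lower semicontinuity and compactness---to handle the far boundary points. The paper's write-up is terser but the skeleton is identical.
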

\begin{proof}
Given $\ez>0$ there exists $\dz>0$ such that $|f(\xi)-f(\xi_0)|\le\ez$ if $|\xi-\xi_0|<\dz$. Because of the lower semicontinuity of $\omega$,
then there exists a constant $M>0$ such that
$$M\omega(\xi)\ge 2\dsup|f|$$
for $\xi\in\overline{\oz},\ |\xi-\xi_0|<\dz$. Then the function $M\omega+\ez+f(\xi_0)$ belongs to the upper class ${\cal U}_f$ and has the limit $f(\xi_0)+\ez$ at
$\xi_0$. Similarly, the function $-M\omega-\ez+f(\xi_0)$ belongs to  the lower class ${\cal L}_f$ and has the limit $f(\xi_0)-\ez$ at $\xi_0$, since $\ez$ is arbitrary,
the conclusion follows.
\end{proof}

\begin{definition}
A boundary point $\xi_0$ is called regular if $\dlim_{\xi\to \xi_0}\overline{H}_f(\xi)=f(\xi_0)$ whenever $f:\pz\oz\to\rr$ is continuous. In addition, if any point on $\pz\oz$ is regular, then we call $\oz$ is a regular set.
\end{definition}
Because $\underline{H}_f=-\overline{H}_{-f}$, we could replace $\overline{H}_{f}$ by $\underline{H}_f$ in the definition above.

We have the classical characterization for regularity in terms of barriers.

\begin{theorem}
A boundary point $\xi_0$ is regular if and only if there is a barrier at $\xi_0$.

\end{theorem}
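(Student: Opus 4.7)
The ``if'' direction follows immediately from Proposition 4.2: given a barrier at $\xi_0$, that proposition yields $\lim_{\xi\to\xi_0}\overline{H}_f(\xi)=f(\xi_0)$ for every continuous $f\colon\pz\oz\to\rr$, which is precisely Definition 4.4.

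For the converse, assume $\xi_0$ is regular and construct a barrier by the Perron method. Choose a continuous $\phi\colon\pz\oz\to[0,1]$ with $\phi(\xi_0)=0$ and $\phi(\xi)>0$ for $\xi\neq\xi_0$, for instance $\phi(\xi):=\min\{|\xi-\xi_0|,1\}$, and set $\omega:=\overline{H}_\phi$. By Theorem 4.1, $\omega$ is $\mathscr{L}_a$-parabolic on $\oz$, hence in particular $\mathscr{L}_a$-superparabolic. Since $\phi\ge 0$ places the constant $0$ in the lower class ${\cal L}_\phi$, comparison gives $\omega\ge\underline{H}_\phi\ge 0$. The hypothesis that $\xi_0$ is regular, applied to this $\phi$, yields $\omega(\zeta)\to\phi(\xi_0)=0$ as $\zeta\to\xi_0$, verifying condition (iii) of Definition 4.3. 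Thus (i) and (iii) are in place modulo the strict positivity of $\omega$.

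What remains is condition (ii): the strict positivity $\omega(\xi)>0$ for every $\xi\in\oz$ (which, since $\omega$ is continuous, suffices for the required liminf condition). My plan is to invoke a strong minimum principle for nonnegative $\mathscr{L}_a$-parabolic functions: if $\omega(\zeta^\ast)=0$ at some $\zeta^\ast\in\oz$, then $\omega$ vanishes on the entire parabolic past of $\zeta^\ast$ within $\oz$. Combined with the localization observation following Definition 4.3 --- which reduces the barrier question to producing a barrier on $N\cap\oz$ for a small neighborhood $N$ of $\xi_0$, on whose relative boundary $\phi$ is uniformly bounded below by a positive constant --- this forces a contradiction, because the vanishing past cone cannot reach that relative boundary while keeping $\omega\equiv 0$.

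The principal obstacle is precisely this strict positivity step in the degenerate weighted setting. The required strong minimum principle is essentially equivalent to the weighted Harnack inequality, which is only established in Section 6, so a forward reference would be involved. A self-contained alternative is to bypass the Perron construction and exhibit an explicit local barrier at $\xi_0$ built from the fundamental solution $\Gamma$ of Section 2 --- for instance a suitable cutoff of $\Gamma(\xi_0;\cdot)$ tailored to be $\mathscr{L}_a$-superparabolic, positive, and vanishing only at $\xi_0$ --- and then globalize through Lemma 4.6 and the pasting procedure indicated after Definition 4.3.
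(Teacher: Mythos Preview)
Your framework is exactly the paper's: set $\omega=\overline{H}_\phi$ for a continuous boundary function $\phi\ge 0$ vanishing only at $\xi_0$ (the paper takes $\phi=g(X,t)=|X-X_0|^2+(t-t_0)^2$), and check the three barrier conditions. Conditions (i) and (iii) go through just as you wrote. The whole content of the argument is condition (ii), the strict positivity of $\omega$ on $\oz$, and here your proposal does not close the gap.

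The paper's device for positivity is not a minimum principle but the harmonic-measure representation. Since $f\mapsto H_f(\xi)$ is a positive linear functional on $C(\pz\oz)$ (by Theorem~4.3 and linearity of the Perron construction), Riesz gives a probability measure $\mu_\xi$ on $\pz\oz$ with $H_g(\xi)=\int_{\pz\oz} g\,d\mu_\xi$. One then invokes the classical fact (Lemma~6.4.4 in Armitage--Gardiner, transported to the present harmonic-space setting) that $\mu_\xi$ does not charge the single point $\xi_0$; since $g>0$ on $\pz\oz\setminus\{\xi_0\}$, the integral is strictly positive. This avoids any forward reference to Harnack.

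Your route (a) via a strong minimum principle is in principle workable but, as you note, leans on Section~6, and more importantly your sketch is incomplete: parabolic propagation of zeros goes only into the \emph{past}, so from $\omega(\zeta^\ast)=0$ you get $\omega\equiv 0$ on a backward set, and you have not explained how this produces a contradiction. Your appeal to the localization paragraph after Definition~4.3 goes in the wrong direction --- that paragraph upgrades a \emph{local} barrier to a global one, not the reverse --- and ``the vanishing past cone cannot reach that relative boundary'' is not an argument, since $\omega$ need not attain $\phi$ continuously at irregular boundary points.

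Your route (b) is misconceived for the necessity direction: an explicit barrier built from $\Gamma$ that does not use the regularity hypothesis would, if it existed, show that \emph{every} boundary point has a barrier, hence is regular --- which is false. Any construction here must somewhere consume the assumption that $\xi_0$ is regular; the Perron solution $\overline{H}_\phi$ is the natural object that does so.
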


\begin{proof}
The sufficiency has been established in Proposition 4.2. For the necessity, let $\xi_0=(X_0,t_0)=(x'_0,x_0,t_0)$ and define
$$g(X,t)=|X-X_0|^2+(t-t_0)^2.$$
Then $g\in C(\pz \oz)$ and since $\xi_0$ is a regular, so
$$\dlim_{\xi\to\xi_0}\overline{H}_{g}(\xi)=g(\xi_0)=0.$$
Also since $g>0$ on $\pz\oz\setminus{\xi_0}$, adapting the same argument of Lemma 6.4.4 in \cite{AG}, then there exists a Borel measure $\mu$ such that
$$\overline{H}_{g}(\xi)=\dint_{\pz\oz}gd\mu(\xi)>0,\quad\xi\in \oz.$$
Hence, $\overline{H}_{g}$ is a barrier at $\xi_0$.
\end{proof}
Since the existence of a barrier is a local property, so is the regularity of a boundary point. Moreover, if $\xi_0$ is a boundary
point of $\oz$, then $\xi_0$ is regular with respect to each subdomain to whose boundary it belongs.

We next prove an auxiliary exterior ball condition. We let ${\cal B}(\zeta,R)=\{\xi\in\rr^{n+1}: |\zeta-\xi|<R\}$ denote a ball in $\rr^{n+1}$.
\begin{lemma}
Let $\xi_0=(X_0,t_0)=(x_0',x_0,t_0)\in\pz\oz$. Suppose that there exists a ball ${\cal B}={\cal B}(\xi_1,R_1),\ \xi_1=(X_1,t_1)=(x_1',x_1,t_1)$, such that
$B\cap\oz=\emptyset$ and $\xi_0\in\pz {\cal B}\cap \pz\oz$. If $X_1\not=X_0$, or if $\xi_0$ is the north pole of ${\cal B}$ $(\text{that is},~\xi_1=(X_0,t_0-R_1))$
and the additional radius condition $R_1>n+|a|$ is satisfied, then $\xi_0$ is regular with respect to $\oz$.
\end{lemma}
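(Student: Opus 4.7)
My plan is to invoke Theorem 4.2, which reduces regularity of $\xi_0$ to the existence of a barrier there, and then exploit locality of barriers (as recorded after Definition 4.2) to construct an $\mathscr{L}_a$-superparabolic $v\ge 0$ on some space-time neighborhood $N$ of $\xi_0$, vanishing only at $\xi_0$ on $\overline{\oz}\cap N$.

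The natural candidate that directly uses the exterior ball geometry is
\[
v(X,t)=|X-X_1|^2+(t-t_1)^2-R_1^2,
\]
which is nonnegative outside ${\cal B}$ and vanishes on $\partial{\cal B}\ni\xi_0$. Writing $\mathscr{L}_a\phi=|x|^a\bigl(\partial_t\phi-\Delta_X\phi-\tfrac{a}{x}\partial_x\phi\bigr)$ classically off $\{x=0\}$, I compute
\[
\mathscr{L}_a v=2|x|^a\Bigl[(t-t_1)-(n+a)+\tfrac{a\,x_1}{x}\Bigr].
\]
In the north pole case, $X_1=X_0$ and $t_1=t_0-R_1$; so at $\xi_0$ the bracket equals $R_1-n$ if $x_0\ne 0$ (after shrinking $N$ so $x$ keeps the sign of $x_0$, which makes $ax_1/x\to a$), and equals $R_1-(n+a)$ if $x_0=0$ (because then $x_1=0$, killing the last term). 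Both are strictly positive by the hypothesis $R_1>n+|a|$, so $\mathscr{L}_a v>0$ throughout a small enough $N$; the pointwise inequality is upgraded to a weak supersolution on all of $N$ by the same $\varepsilon$-truncation of $\{|x|\le\varepsilon\}$ used in the proof of Theorem 2.1(i), and Corollary 4.1 then identifies the lower semicontinuous $v$ as $\mathscr{L}_a$-superparabolic, yielding the barrier.

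For the case $X_0\neq X_1$ the pure quadratic fails, since $t_0-t_1=\sqrt{R_1^2-|X_0-X_1|^2}<R_1$ and the bracket is not forced to be positive at $\xi_0$. Here I would use the spatial exterior ball $B_{r_0}(X_1)\subset\rn$ with $r_0:=|X_0-X_1|>0$: take the time-independent profile
\[
\widetilde v(X)=r_0^{-\alpha}-|X-X_1|^{-\alpha},\qquad \alpha \text{ sufficiently large,}
\]
for which a direct weighted computation gives $-\mathrm{div}(|x|^a\nabla\widetilde v)\ge c_0|x|^a>0$ on a spatial neighborhood of $X_0$, and then add a small time penalty $c(t-t_0)^2$ with $c>0$ chosen so that the sign-indefinite correction $2c|x|^a(t-t_0)$ is absorbed into the elliptic gain on a thin time slab; the sum is $\mathscr{L}_a$-superparabolic on $N$ and vanishes only at $\xi_0$. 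The main obstacle I foresee is the subcase $x_0=0$, $x_1\ne 0$ of $X_0\ne X_1$, where $ax_1/x$ is unbounded near $\xi_0$ along the degenerate direction; in that case $\widetilde v$ must be modified (for instance by an anisotropic profile reflecting the $|x|^a$ weight), and the weak interpretation across $\{x=0\}$ again requires the $\varepsilon$-truncation argument of Theorem 2.1(i) to justify the final inequality.
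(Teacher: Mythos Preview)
Your north-pole argument is correct: for $X_1=X_0$ the quadratic $v=|X-X_1|^2+(t-t_1)^2-R_1^2$ gives $\mathscr L_a v=2|x|^a\bigl[(t-t_1)-(n+a)+ax_1/x\bigr]$, which is positive near $\xi_0$ under $R_1>n+|a|$ exactly as you compute, and the $\varepsilon$-truncation across $\{x=0\}$ goes through since $|x|^a\partial_x v=2|x|^a(x-x_0)\to0$. (You should first shrink $\mathcal B$ so that $\partial\mathcal B\cap\partial\Omega=\{\xi_0\}$, as the paper does in its opening line; otherwise $v$ vanishes at other boundary points.)

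The genuine gap is in the case $X_0\ne X_1$. Your candidate $w=\widetilde v+c(t-t_0)^2$ with $\widetilde v(X)=r_0^{-\alpha}-|X-X_1|^{-\alpha}$ is \emph{not positive} on $\Omega$ near $\xi_0$, so it cannot serve as a barrier. Concretely (take $\Omega=\rr^{n+1}\setminus\overline{\mathcal B}$ and $t_0\ne t_1$): pick $X$ on the ray from $X_1$ through $X_0$ with $|X-X_1|=r_0-\epsilon$, and $t$ just outside $\overline{\mathcal B}$. The ball constraint $|X-X_1|^2+(t-t_1)^2>R_1^2$ forces only $|t-t_0|\sim\epsilon$ (since $(t-t_1)^2-(t_0-t_1)^2=(t-t_0)(t+t_0-2t_1)\approx2(t_0-t_1)(t-t_0)$), while $\widetilde v(X)\approx-\alpha r_0^{-\alpha-1}\epsilon$. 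Hence $w\approx -C\epsilon+c\,\epsilon^2<0$ for small $\epsilon$, for every choice of $c,\alpha$. The quadratic time penalty has the wrong scaling to offset the linear spatial deficit, and a linear-in-$t$ penalty would spoil the sign of $\mathscr L_a w$ on one side of $\{t=t_0\}$. So the difficulty is not confined to the subcase $x_0=0$, $x_1\ne0$ that you flag; it is structural.

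The paper avoids this by using a single barrier depending on the full space-time radius $R=|\xi-\xi_1|$, namely $\omega=e^{-jR_1^2}-e^{-jR^2}$. This is automatically positive on $\Omega$ (since $\overline{\mathcal B}\cap\Omega=\emptyset$ forces $R>R_1$), and
\[
-\partial_t\omega+\Delta_X\omega+\tfrac{a}{x}\partial_x\omega
=2je^{-jR^2}\Bigl[n-2j|X-X_1|^2+\tfrac{a(x-x_1)}{x}-(t-t_1)\Bigr].
\]
When $X_0\ne X_1$ one has $|X-X_1|\ge\delta>0$ near $\xi_0$, so the term $-2j\delta^2$ dominates the bracket for $j$ large; in the north-pole case the bracket is already nonpositive from $t-t_1\approx R_1>n+|a|$ and no large $j$ is needed. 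The tunable parameter $j$ is precisely what your split ``elliptic profile plus time penalty'' construction lacks.
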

\begin{proof}
By choosing a smaller ball, if necessary, we may without loss of generality assume that $\pz {\cal B}\cap \pz\oz=\{\xi_0\}$. For $\xi=(X,t)$ define
$$\omega(\xi)=e^{-jR_1^2}-e^{-jR^2},$$
where $R=|\xi-\xi_1|$ and $R_1=|\xi_0-\xi_1|$, while $j$ will be chosen later. Then $\omega >0$ in $\overline{\oz}\setminus\{\xi_0\}$ and
$\dlim_{\xi\to\xi_0}\omega(\xi)=0$.  Elementary calculations show that
$$\omega_t(X,t)=2je^{-jR^2}(t-t_1),$$
$$\nabla_x\omega(X,t)=2je^{-jR^2}(x-x_1),$$
and
$$\triangle_X\omega(X,t)=2je^{-jR^2}[n-2j|X-X_1|^2].$$
Next we consider four cases for proving the existence  of the barrier $\omega$.

Case 1: $X_1\not=X_0$ and $x_1=0$. For $x\not=0$, we then have
$$-D_t\omega(X,t)+\Delta_X\omega(X,t)+\frac a xD_x \omega(X,t)=2je^{-jR^2}[n-2j|X-X_1|^2+\frac{a(x-x_1)}x-(t-t_1)].
\eqno(4.4)$$
We can assume that $(X,t)\in\oz$ satisfies $|X-X_0|, |t-t_0|<\dz:=\frac 12|X_1-X_0|$. In particular,
$|X-X_1|>\dz$ and $t_1-t<t_1+\dz$. Hence we can choose $j$ so that the bracket in (4.4) is non-positive and thus
$$-D_t\omega(X,t)+\Delta_X\omega(X,t)+\frac a xD_x \omega(X,t)\le 0$$ for all such $X$ and $t$.
In addition, notice that
$$|x|^aD_x \omega(X,t)=|x|^a x2je^{-jR^2}\to 0,\quad {\rm if}\ x\to 0.$$
From these, then there exists a neighborhood   ${\cal N}$ of  $\xi_0$ is a  $\mathscr{L}_a$-supersolution in ${\cal N}\cap \oz$. So, $\omega(X,t)$ is a  $\mathscr{L}_a$-superparabolic in ${\cal N}\cap \oz$ by Corollary 4.1.

Case 2: $X_1\not=X_0$, $x_1\not=0$. We can assume that $(X,t)\in\oz$ satisfies $|X-X_0|, |t-t_0|, |x-x_1|<\dz:=\frac 12\min\{|X_1-X_0|,|x_1|\}$. In particular,
$|X-X_1|>\dz$, $t_1-t<t_1+\dz$ and $|x-x_1|\le |x_1|/2$. Hence we can choose $j$ so that the bracket in (4.4) is non-positive and thus
$$-D_t\omega(X,t)+\Delta_X\omega(X,t)+\frac a xD_x \omega(X,t)\le 0$$ for all such $X$ and $t$. Then there exists a neighborhood   ${\cal N}$ of  $\xi_0$ such that $\omega(X,t)$ is a  $\mathscr{L}_a$-superparabolic  in ${\cal N}\cap \oz$.

Case 3: $X_1=X_0$, $t_1=t_0-R_1$ and $x_1=0$. For $x\not=0$, we can assume that $t-t_0>n+|a|-R_1$, then
\begin{align*}
&-D_t\omega(X,t)+\Delta_X\omega(X,t)+\frac a xD_x \omega(X,t)\\
&=2je^{-jR^2}[n-2j|X-X_1|^2+\frac{a(x-x_1)}x-(t-t_1)]\\
&\le 2je^{-jR^2}[n+a-(t-t_1)]\le 2je^{-jR^2}[n+a-(n+|a|)]\le 0
\end{align*}
for all such $X$ and $t$. In addition, notice that
$$|x|^aD_x \omega(X,t)=|x|^a x2je^{-jR^2}\to 0,\quad {\rm if}\ x\to 0.$$
From these, then there exists a neighborhood   ${\cal N}$ of  $\xi_0$ such that $\omega(X,t)$ is a  $\mathscr{L}_a$-superparabolic  in ${\cal N}\cap \oz$.

Case 4: $X_1=X_0$, $t_1=t_0-R_1$ and $x_1\not=0$. We can assume that $|x-x_1|\le |x_1|/2$ and $t-t_0>n+|a|-R_1$, then
\begin{align*}
&-D_t\omega(X,t)+\Delta_X\omega(X,t)+\frac a xD_x \omega(X,t)\\
&=2je^{-jR^2}[n-2j|X-X_1|^2+\frac{a(x-x_1)}x-(t-t_1)]\\
&\le 2je^{-jR^2}[n+|a|-(t-t_1)]\le 2je^{-jR^2}[n+|a|-(n+|a|)]= 0
\end{align*}
for all such $X$ and $t$. Hence, there exists a neighborhood   ${\cal N}$ of  $\xi_0$ such that $\omega(X,t)$ is a  $\mathscr{L}_a$-superparabolic  in ${\cal N}\cap \oz$.
\end{proof}

As an application of the exterior sphere condition, we prove the characterization
of $\mathscr{L}_a$-superparabolic functions, which will be used later.

\begin{proposition}Suppose that $ u: \oz\to(-\fz, \fz]$ is lower semicontinuous in $\oz\subset\rr^{n+1}$ and finite
in  a dense subset of $\oz$. Then $u$ is $\mathscr{L}_a$-superparabolic if and  only if
for each domain $\Xi$ with compact closure in $\oz$ and each $h\in C(\Xi)$, $\mathscr{L}_a$-parabolic in $\Xi$,
the condition $h \le u$ on $\pz \Xi$ implies $h \le u$ in $\Xi$.
\end{proposition}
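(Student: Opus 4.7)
The proof splits into two directions, of which the forward one is quick and the reverse one is the real work.

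For $(\Rightarrow)$, suppose $u$ satisfies Definition 4.1. Given a bounded domain $\Xi\subset\subset\oz$ and $h\in C(\Xi)$ that is $\mathscr{L}_a$-parabolic in $\Xi$ with $h\le u$ on $\partial\Xi$, I would first note that $h$ is simultaneously $\mathscr{L}_a$-subparabolic: as a weak solution it is both a super- and a sub-solution in the sense of Definition 3.1, and Corollary 4.1 applied to $-h$ gives the subparabolicity. The continuity of $h$ (hence finite and equal $\limsup$/$\liminf$ at each boundary point) together with $h\le u$ on $\partial\Xi$ produces the boundary hypothesis (4.1) of Lemma 4.4, and the lemma then delivers $h\le u$ in $\Xi$.

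For $(\Leftarrow)$, assuming the comparison condition of the proposition, I would verify condition (iii) of Definition 4.1. Fix a box $Q_{t_1,t_2}=Q\times(t_1,t_2)\subset\subset\oz$ and let $h\in C(\overline{Q_{t_1,t_2}})$ be $\mathscr{L}_a$-parabolic with $h\le u$ on the parabolic boundary $\partial_p Q_{t_1,t_2}$. The plan is to apply the assumed comparison to the time-truncated sub-box $\Xi_\delta=Q\times(t_1,t_2-\delta)$ with the shifted parabolic function $h-\epsilon$. Since $h-u$ is upper semicontinuous, the set $E_\epsilon=\{\xi\in\overline{Q_{t_1,t_2}}:(h-u)(\xi)\ge\epsilon\}$ is closed and disjoint from $\partial_p Q_{t_1,t_2}$; set $\tau_\epsilon=\inf\{t:E_\epsilon\cap(\overline{Q}\times\{t\})\ne\emptyset\}$ (with $\tau_\epsilon=t_2$ understood when $E_\epsilon=\emptyset$). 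For $\delta$ with $t_2-\delta<\tau_\epsilon$, the top slice of $\Xi_\delta$ avoids $E_\epsilon$, so $h-u<\epsilon$ there; combined with $h\le u$ on the lateral and bottom portions of $\partial\Xi_\delta$ (which sit in $\partial_p Q_{t_1,t_2}$), this gives $h-\epsilon\le u$ on all of $\partial\Xi_\delta$, and the proposition's hypothesis yields $h-\epsilon\le u$ in $\Xi_\delta$. Sending $\delta\downarrow t_2-\tau_\epsilon$ produces $h\le u+\epsilon$ on $Q\times(t_1,\tau_\epsilon)$.

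The main obstacle is pushing this past the critical time $\tau_\epsilon$ and eventually forcing $E_\epsilon=\emptyset$ for every $\epsilon>0$. The key observation is that the hypothesis rules out any strict downward jump of $u$: if $u(\xi_0)<\liminf_{\xi\to\xi_0}u(\xi)$ at some interior point $\xi_0$, one can enclose $\xi_0$ in a small ball $\mathcal{B}$ for which $\xi_0$ satisfies the exterior ball condition of Lemma 4.7 (hence $\xi_0$ is regular), choose a constant $c$ with $u(\xi_0)<c<\liminf u$, and obtain $c\le u$ on $\partial\mathcal{B}$ while $c>u$ at $\xi_0$, contradicting the hypothesis. Exploiting this quasi-continuity, one passes to the limit in $h\le u+\epsilon$ along sequences $(X,t)\to(X^{*},\tau_\epsilon)$ from inside $Q\times(t_1,\tau_\epsilon)$, using continuity of $h$ and lower semicontinuity (upgraded to no-jump) of $u$ to get $h(X^{*},\tau_\epsilon)\le u(X^{*},\tau_\epsilon)+\epsilon$ for every $X^{*}\in\overline{Q}$, contradicting the assumption that some $(X^{*},\tau_\epsilon)\in E_\epsilon$. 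Iterating forces $E_\epsilon=\emptyset$, and letting $\epsilon\downarrow 0$ gives $h\le u$ throughout $Q_{t_1,t_2}$, which is the remaining part of Definition 4.1.
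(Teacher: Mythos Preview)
Your forward direction via Lemma~4.4 is correct and coincides with the paper's route.

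The reverse direction, however, has a genuine gap. First, applying the hypothesis to the truncated boxes $\Xi_\delta=Q\times(t_1,t_2-\delta)$ with $t_2-\delta<\tau_\epsilon$ yields nothing new: by the very definition of $\tau_\epsilon$ as an infimum one already has $h-u<\epsilon$ on every slice $\overline{Q}\times\{t\}$ with $t_1<t<\tau_\epsilon$, so the conclusion $h-\epsilon\le u$ in $\Xi_\delta$ is a weakening of what was already known before invoking the hypothesis. Second, and more seriously, the limit step fails. Your no-jump property says $u(X^{*},\tau_\epsilon)=\liminf_{\xi\to(X^{*},\tau_\epsilon)}u(\xi)$, but passing the inequality $h\le u+\epsilon$ from $\{t<\tau_\epsilon\}$ up to $t=\tau_\epsilon$ would require the \emph{opposite} one-sided control, namely $\limsup_{t\uparrow\tau_\epsilon}u(X^{*},t)\le u(X^{*},\tau_\epsilon)$, and lower semicontinuity (even with no-jump) does not give that. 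For instance, the function $u=1$ on $\{t<0\}$ and $u=0$ on $\{t\ge0\}$ is lower semicontinuous and satisfies your no-jump condition at every point, yet with $h\equiv\tfrac12$ one has $h<u$ for $t<0$ and $h>u$ at $t=0$; so the implication you use is false in general. Third, even granting the limit, you would only obtain $h(X^{*},\tau_\epsilon)-u(X^{*},\tau_\epsilon)\le\epsilon$, which combined with $\ge\epsilon$ gives equality rather than a contradiction, and the ``iteration'' never moves past $\tau_\epsilon$. (Incidentally, Lemma~4.7 plays no role in your no-jump argument: comparing with the constant $c$ on a small ball needs no boundary regularity.)

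The paper does not argue by time-slicing; it follows Kilpel\"ainen--Lindqvist, replacing the tools there by Lemma~4.1 (the box comparison principle for weak sub/super\-solutions) and Lemma~4.7 (the exterior ball criterion producing a sufficiently rich supply of $\mathscr{L}_a$-regular domains). The hypothesis has to be leveraged through such regular comparison domains and the associated Dirichlet solutions, not through a one-sided continuity argument for $u$.
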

\begin{proof}
Ref the proof of Lemma 6.3 in \cite{KL} by using Lemmas 4.1 and 4.7.
\end{proof}

From Proposition 4.3, we know that the definitions of $\overline{H}_f$ and $\underline{H}_f$ coincide to classical definitions; see \cite{GL}.
Thus, it is quite classical to show that $\overline{H}_f=\underline{H}_f$ (Ref \cite{W} and \cite{AG}).

\begin{theorem}
If $\oz$ is a bounded domain, and $f\in C(\pz\oz)$, then
$$H_f=\overline{H}_f=\underline{H}_f.$$

\end{theorem}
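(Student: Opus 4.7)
The plan is to prove resolutivity of continuous boundary data. By Lemma 4.4 we already have $\underline{H}_f\le\overline{H}_f$ for bounded $f$, so only the reverse inequality remains. I would produce a common candidate solution by exhausting $\Omega$ with nice subdomains and then squeeze it between the two Perron classes using the comparison principle.

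First I would extend $f$ by Tietze to a bounded continuous $F\in C(\rr^{n+1})$ with $\|F\|_\infty=\|f\|_\infty$. Using Remark 3.1, I would build a nested sequence $\{\Omega_k\}$ of finite unions of space-time boxes with $\overline{\Omega_k}\subset \Omega_{k+1}\subset \Omega$, $\bigcup_k\Omega_k=\Omega$, and such that the earliest-boxes structure of Remark 3.1 makes the classical Dirichlet problem on each $\Omega_k$ solvable. Denote by $h_k\in C(\overline{\Omega_k})$ the $\mathscr{L}_a$-parabolic function with $h_k=F$ on the parabolic boundary of $\Omega_k$. By Lemma 4.1 applied box by box, $\|h_k\|_\infty\le\|F\|_\infty$, so $\{h_k\}$ is locally uniformly bounded in $\Omega$, and Lemma 4.2 yields a subsequence converging locally uniformly to some $\mathscr{L}_a$-parabolic $H$ on $\Omega$.

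Then I would show $H=\overline{H}_f=\underline{H}_f$. Pick any bounded $u\in\mathcal{U}_f$ and $\varepsilon>0$. Since $\liminf_{\eta\to\xi}u(\eta)\ge f(\xi)=F(\xi)$ for every $\xi\in\partial\Omega$, continuity of $h_k$ up to $\partial\Omega_k$ together with uniform continuity of $F$ implies that for $k$ large enough the boundary inequality $u\ge h_k-\varepsilon$ holds in the $\limsup/\liminf$ sense of Lemma 4.5 on $\partial\Omega_k$. Applying Lemma 4.5 in $\Omega_k$ gives $h_k\le u+\varepsilon$ throughout $\Omega_k$; passing to the limit yields $H\le u+\varepsilon$, and since $u\in\mathcal{U}_f$ and $\varepsilon$ are arbitrary, $H\le\overline{H}_f$. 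The symmetric argument with $v\in\mathcal{L}_f$ gives $\underline{H}_f\le H$, and combined with $\underline{H}_f\le\overline{H}_f$ forces all three to coincide.

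The main obstacle lies in the construction of the $\Omega_k$: because the parabolic comparison principle of Lemma 4.5 is one-sided in time, each approximating set must have a well-defined parabolic boundary of the form required by Remark 3.1, i.e., the lateral and initial parts of each constituent box must assemble coherently so that classical solvability applies. Constructing such $\Omega_k$ that still exhaust a generic bounded $\Omega$ requires a careful covering argument (choosing smaller and smaller box grids aligned to the time axis), but once this is arranged the comparison step is routine thanks to Lemmas 4.1, 4.2, and 4.5.
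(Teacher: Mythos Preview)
Your exhaustion approach differs from what the paper does: the paper gives no self-contained argument and simply notes that, once Proposition~4.3 identifies the $\mathscr{L}_a$-superparabolic functions with those of the abstract harmonic-space framework, the classical resolutivity theorems (Watson, Armitage--Gardiner) apply. Those arguments work by showing that the resolutive boundary functions form a closed linear lattice containing enough potentials to be dense in $C(\partial\Omega)$, not by exhausting $\Omega$ with regular subdomains.

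Your route can be made to work, but the comparison step has a genuine gap. You claim that ``uniform continuity of $F$'' yields $u\ge h_k-\varepsilon$ on $\partial\Omega_k$ for large $k$; this is beside the point, since the difficulty is controlling $u$, not $F$, near $\partial\Omega$. What is actually needed is: the extension $\tilde u$ of $u$ by $f$ to $\overline\Omega$ is lower semicontinuous (from the $\liminf$ hypothesis and continuity of $f$), so $\{\tilde u>F-\varepsilon\}$ is relatively open in $\overline\Omega$ and contains the compact set $\partial\Omega$, hence contains a uniform neighborhood of $\partial\Omega$, into which $\partial\Omega_k$ eventually falls. A second gap: you invoke Lemma~4.5 to obtain $h_k\le u+\varepsilon$ throughout $\Omega_k$, but on the non-parabolic (``top'') portions of $\partial\Omega_k$ one has $h_k\ne F$ in general, so the hypotheses of Lemmas~4.4--4.5 are not directly available there. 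The clean fix is to run the comparison box by box along the iterative construction of Remark~3.1, using the defining property (iii) of Definition~4.1 on each successive box (whose parabolic boundary data are either $F$ on $\partial\Omega_k$ or the already-controlled values of $h_k$ from earlier boxes). You flag the construction of the $\Omega_k$ as the main obstacle, but that is a routine covering exercise; the real delicacy you skipped is this two-part comparison argument.
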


\begin{remark} From Lemma $4.7$, we know that  parabolic boundary of a circle cylinder is regular set.  In addition, the cone $V_{(X_0,t_0)}: |X-X_0|\le \az(t_0-t)$ is also a regular set for $0\le t_0-t\le \bz$ with $\az,\bz>0$. Then Theorem $3.1$ also holds provided that space-time boxes
replaced by a circle cylinder or the cone $V_{(X_0,t_0)}$  by Theorem $4.3$.
\end{remark}

\section{$\mathscr{L}_a$-capacity and potentials}
This section collects together various facts concerning $\mathscr{L}_a$-capacity and potentials which we will need for the proof of Theorem 1.1 in Section 8.

Recall the definition (1.4) of $\mathscr{L}_a$-capacity. Henceforth $K$ and $K_i~(i=1,2,\ldots)$ will denote compact subsets of $\rr^{n+1}.$
\begin{proposition} $\mathrm{(a)}$ $\mathrm{cap}(K)<+\infty,$\\
${}$\qquad\qquad\qquad\qquad~$\mathrm{(b)}$ $\mathrm{cap}(K_1\cup K_2)\leq \mathrm{cap}(K_1)+\mathrm{cap}(K_2),$\\
${}$\qquad\qquad\qquad\qquad~$\mathrm{(c)}$ $K_1\subset K_2$ implies $\mathrm{cap}(K_1)\leq \mathrm{cap}(K_2)$,\\
${}$\qquad\qquad\qquad\qquad~$\mathrm{(d)}$ $\mathrm{cap}(K)=\displaystyle\lim_{i\to\infty}\mathrm{cap}(K_i),$\\
if $K_i\supset K_{i+1} (i=1,2,\ldots)$ and $K:=\displaystyle\bigcap^\infty_{i=1}K_i,$\\
${}$\qquad\qquad\qquad\qquad~$\mathrm{(e)}$ $\mathrm{cap}(K)=\mathrm{cap}(\hat{K}),$\\ where $$\hat{K}:=\{(X,t)\in\rr^{n+1}|~(X,-t)\in K\}.$$
${}$\qquad\qquad\qquad\qquad~$\mathrm{(f)}$ $\mathrm{cap}({\xi})=0$\quad  for all\quad $\xi\in\rr^{n+1}$.

\end{proposition}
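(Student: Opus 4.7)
The six assertions fall naturally into three tiers. Parts (b) and (c) are immediate from the definition. For (c), if $K_1\subset K_2$ then $M^+(K_1)\subset M^+(K_2)$, so the supremum defining the capacity can only grow. For (b), given $\mu\in M^+(K_1\cup K_2)$ with $P_\mu\le 1$, I would decompose $\mu=\mu_1+\mu_2$ with $\mu_1=\mu|_{K_1}$ and $\mu_2=\mu|_{K_1^c}\in M^+(K_2)$. Since $\Gamma\ge 0$, we have $P_{\mu_i}\le P_\mu\le 1$, so each $\mu_i$ is admissible for the corresponding capacity; summing masses and taking the supremum over $\mu$ yields subadditivity. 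Part (f) is also short: the only candidates in $M^+(\{\xi_0\})$ are $c\,\delta_{\xi_0}$, whose potential is $c\,\Gamma(\cdot;\xi_0)$, which is unbounded by (2.5) as one approaches $\xi_0$ from the forward-time side; hence $c=0$.

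For (a), fix a compact set $K\subset B_R(0)\times[T_1,T_2]$. I would pick an evaluation point $(X_0,t_0)$ with $t_0>T_2$ and $|X_0|$ bounded; the lower Gaussian bound in (2.5) together with the explicit form of $\Gamma$ in (2.8) gives a uniform constant $c=c(K,X_0,t_0)>0$ with $\Gamma(X_0,t_0;Y,\tau)\ge c$ on $K$. Then for any admissible $\mu$,
\[
c\,\mu(K)\le \int_{\rr^{n+1}}\Gamma(X_0,t_0;Y,\tau)\,d\mu(Y,\tau)=P_\mu(X_0,t_0)\le 1,
\]
so $\mathrm{cap}(K)\le 1/c<\infty$.

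For (d), the inequality $\mathrm{cap}(K)\le\lim_i\mathrm{cap}(K_i)$ is immediate from (c). For the reverse, I would take near-maximizers $\mu_i\in M^+(K_i)$ with $P_{\mu_i}\le 1$ and $\mu_i(\rr^{n+1})\to\lim_i\mathrm{cap}(K_i)$. These measures are uniformly bounded in total mass and all supported in the compact set $K_1$, so by the Banach--Alaoglu theorem a subsequence converges weak-$*$ to some $\mu$ whose support lies in $\bigcap_i K_i=K$. The main point to verify is $P_\mu\le 1$: since $\Gamma(X,t;\cdot,\cdot)$ is lower semicontinuous in $(Y,\tau)$, approximating it from below by continuous compactly supported functions and using weak-$*$ convergence gives $P_\mu(X,t)\le\liminf_i P_{\mu_i}(X,t)\le 1$ at every point.

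The hardest assertion is (e), and this will be the main obstacle. The starting point is the symmetry identity
\[
\Gamma(X,t;Y,\tau)=\Gamma(Y,-\tau;X,-t),
\]
visible directly from the expression (2.8) since the Gaussian factor depends only on $t-\tau$, $|X-Y|^2$ and $xy$, all of which are preserved under the swap. Pushing $\mu\in M^+(K)$ forward under $\sigma(X,t)=(X,-t)$ yields $\widetilde\mu\in M^+(\widehat K)$ with the same total mass; a change of variables together with the symmetry gives
\[
P_{\widetilde\mu}(X,t)=\int_{\rr^{n+1}}\Gamma(Y,\tau;X,-t)\,d\mu(Y,\tau)=Q_\mu(X,-t),
\]
where $Q_\mu$ denotes the adjoint potential. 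To conclude equality of capacities I would invoke the duality
\[
\sup\{\mu(K):P_\mu\le 1\}=\sup\{\mu(K):Q_\mu\le 1\},
\]
which is a standard potential-theoretic fact for nonnegative lower semicontinuous kernels (provable via the symmetry of the mutual energy $\iint\Gamma\,d\mu\,d\nu=\iint\Gamma\,d\nu\,d\mu$ composed with Fubini, together with a minimax/equilibrium measure argument). Combining the two statements gives $\mathrm{cap}(K)=\mathrm{cap}(\widehat K)$. The delicate point is justifying the duality in the present setting, since the kernel $\Gamma$ is not symmetric in its two arguments and the adjoint and forward potentials are genuinely different objects; here I expect to either adapt the Evans--Gariepy argument from \cite{EG} or invoke an abstract Fuglede-type result for the lsc kernel $\Gamma$.
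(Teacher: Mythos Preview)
Your arguments for (a)--(e) are correct and in line with what the paper does: it simply remarks that (a)--(e) follow by imitating the classical arguments in \cite{L} and \cite{B1}, and your sketches are precisely those standard arguments (monotonicity of the admissible class, restriction plus nonnegativity of $\Gamma$ for subadditivity, the lower Gaussian bound at a forward-time point for finiteness, weak-$*$ compactness plus lower semicontinuity of $\Gamma$ for continuity from above, and the time-reflection symmetry $\Gamma(X,t;Y,\tau)=\Gamma(Y,-\tau;X,-t)$ together with forward/adjoint duality for (e)). Your caveat about (e) is well placed but not a gap: the Fuglede-type duality you invoke is exactly what is established in those references for nonnegative lsc kernels.

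The one genuine divergence is (f). You dispose of it in one line by noting that the only measures supported on a point are $c\,\delta_{\xi_0}$, and $c\,\Gamma(\cdot;\xi_0)$ blows up along $(X_0,t)$ as $t\downarrow t_0$ by the two-sided bound (2.5), forcing $c=0$. This is correct and is the most economical route to (f). The paper instead proves the stronger identity $\mathrm{cap}(A\times\{\tau\})=\int_A|y|^a\,dY$ for any compact $A\subset\rr^n$ (equation (5.1)), from which (f) follows by taking $A$ a singleton. Your argument buys brevity; the paper's buys an exact formula for the capacity of spatial slices, obtained via a comparison-principle argument between $\Gamma*\mu$ and $\Gamma*\nu$ with $\nu$ the weighted Lebesgue measure on the slice.
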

\begin{proof}
(a)-(e), it follows by a step-by-step imitation of the arguments in \cite{L} and \cite{B1}.

We now prove (f). In fact, we will show a general result, that is,
let $F=A\times\{\tau\}$, where $A$ be a compact subset of $\rz$ and  $\tau\in (T_1, \fz)$,  then
$$\mathrm{cap}(F)=\dint_A|y|^adY.\eqno(5.1)$$
For any $(X,t)\in\rr^{n+1}$, let $\nu$ be the weight ($|y|^a$) measure supported on $F$, we then have
$$\Gamma*\nu(X,t)=\dint_A\Gamma(X,t; Y,\tau)|y|^adY\le 1.$$
So that
$$\mathrm{cap}(F)\ge \nu(F)=\dint_A|y|^adY.$$
Let us now prove the opposite inequality, that is,
$$\mathrm{cap}(F)\le\dint_A|y|^adY.$$
Let us consider a bounded open set $O\subset \rz$ containing $A$, and denote by $\nu$ the  weight ($|y|^a$) measure supported on $O\times \tau$.
From the proof of (ii) of Theorem 2.1, we know that
$$\dlim_{(X,t)\to (X_0,\tau),t>\tau}\Gamma*\nu(X,t)=1\quad {\rm for\ every}\ X_0\in O.$$
Let us pick $\mu\in M^+(F)$ such that
$\Gamma*\mu\le 1$ in $\rr^{n+1}$. Then $u=\Gamma*\nu-\Gamma*\mu$ is $\mathscr{L}_a$-parabolic in $\rz\times(\tau, \fz)$, and it satisfies
$$\liminf_{(X,t)\to (X_0,\tau),~t>\tau} u(X,t)\ge 0\quad {\rm for\ all}\ X_0\in O.$$
This inequality also holds at any point $X_0\not\in O$ since in this case we have
$$0\le \limsup_{(X,t)\to (X_0,\tau),~t>\tau} \Gamma*\mu(X,t)\le 0$$
and
$$u(\xi)\to 0\quad{\rm as}\ |\xi|\to \fz,$$
where we used the following fact that
$$G(X,t;Y,\tau)\le C(|X-Y|^2+|t-\tau|)^{-\frac n2}(|t-\tau|+|y|^2)^{-\frac a2}e^{-\frac{|X-Y|^2}{8(t-\tau)}},\quad -1<a\le 0, \eqno(5.2)$$ and
$$G(X,t;Y,\tau)\le C(|X-Y|^2+|t-\tau|)^{-\frac {n+a}2}e^{-\frac{|X-Y|^2}{8(t-\tau)}},\quad 0<a<1.\eqno(5.3)$$
Then, the comparison principle implies $u\ge 0$ in $\rz\times(\tau, \fz)$, that is,
$$\Gamma*\mu\le \Gamma*\nu\quad {\rm in}\ \rz\times(\tau, \fz).$$
This inequality extends to $\rr^{n+1}\setminus F$ since $\Gamma*\mu=0$ in $(\rz\times(T_1,\tau])\setminus F$. Then, we get
\begin{align*}
\mu(F)&=\dint_Fd\mu=\dint_F \l(\dint_\rn\Gamma(X,t;Y,\tau)|x|^adX\r)d\mu(Y,\tau)\\
&=\dint_\rn \l(\dint_F\Gamma(X,t;Y,\tau)d\mu(Y,\tau)\r)|x|^adX\\
&\le\dint_\rn \l(\dint_F\Gamma(X,t;Y,\tau)d\nu(Y,\tau)\r)|x|^adX\\
&=\dint_F \l(\dint_\rn\Gamma(X,t;Y,\tau)|x|^adX\r)d\nu(Y,\tau)=\nu(F).
\end{align*}
Since this holds true for every $\mu\in M^+(F)$ with $\Gamma*\mu\le 1$, we finally obtain
$$\mathrm{cap}(F)\le\nu(F)=\dint_A|y|^adY.$$
\end{proof}

Now let $K\subset \rr^{n+1}$ be compact and we next define a   balayage as follows.

$$v^{\mathscr{L}_a}_K(\xi):=\dinf\{u: u\in \Phi_k\},$$
where $\Phi_k=\{u(\xi)~|~u~ \text{is}~\mathscr{L}_a\text{-superparabolic}~ \text{in}~\rr^{n+1},~u\geq 1~ \text{on}~K,~u\geq 0\}$. Moreover, we define the balayage by
$$V^{\mathscr{L}_a}_K(\xi):=\liminf_{\zeta\to \xi}v^{\mathscr{L}_a}_K(\zeta).$$
In what follows, when no confusion will arise, we simply denote $v^{\mathscr{L}_a}_K$ and $V^{\mathscr{L}_a}_K$ by $v_K$ and $V_K$, respectively.
\begin{proposition}
$\mathrm{(a)}$ $0\leq V_K\leq v_K\le 1$ in $\rr^{n+1},$\\
${}$\qquad\qquad\qquad\qquad~$\mathrm{(b)}$ $V_{K_1\cap K_2}\le V_{K_1}+V_{K_2}$  for every pair of compact sets $K_1, K_2\subset\rr^{n+1}$,\\
${}$\qquad\qquad\qquad\qquad~$\mathrm{(c)}$ $V_K=v_K$ in $(\p K)^c,$\\
${}$\qquad\qquad\qquad\qquad~$\mathrm{(d)}$ $v_K=1$ on $K,$\\
${}$\qquad\qquad\qquad\qquad~$\mathrm{(e)}$ $\displaystyle\lim_{|z|\to \infty}v_K=\lim_{|z|\to\infty}V_K=0,$\\
${}$\qquad\qquad\qquad\qquad~$\mathrm{(f)}$ $V_K=0$ on the trip $S^{-}_K=\{(X,t): t\le \tau,~\forall\ (Y,\tau)\in K\}$,\\
${}$\qquad\qquad\qquad\qquad~$\mathrm{(g)}$ $V_K$ is a $\mathscr{L}_a$-superparabolic in $\rr^{n+1}$,  and a $\mathscr{L}_a$-parabolic in $K^c,$\\
${}$\qquad\qquad\qquad\qquad~$\mathrm{(h)}$ There exists a unique measure $\tilde{\mu}\in M^+(K)$ such that $$V_K=P_{\tilde{\mu}}:=\Gamma*\tilde{\mu}$$
and $$\tilde{\mu}(\rr^{n+1})=\mathrm{cap}(K).$$
Furthermore $$\mathscr{L}_aV_K=\tilde{\mu}~\text{in the sense of distributions on} ~\rr^{n+1}.$$
$V_K$ is called the $\mathscr{L}_a$-equilibrium potential of $K$ and $\tilde{\mu}$  the $\mathscr{L}_a$-equilibrium measure, and $P_{\tilde{\mu}}$
$\mathscr{L}_a$-equilibrium potential. \\
${}$\qquad\qquad\qquad\qquad~$\mathrm{(i)}$ If $\tilde{\mu}$ is the $\mathscr{L}_a$-equilibrium measure for $K$, then $$P_{\mu}\leq P_{\tilde{\mu}}$$
for all $\mu\in M^+(K)$ such that $P_\mu\leq 1$ in $\rr^{n+1}.$
\end{proposition}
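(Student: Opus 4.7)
The plan is to follow the classical balayage/equilibrium measure template adapted to the degenerate parabolic operator $\mathscr{L}_a$, leveraging the Perron machinery from Section~4. Parts (a)--(d) are essentially bookkeeping on the definition of $v_K$. For (a), note that the constant $1$ lies in $\Phi_K$, which gives $v_K\le 1$, while nonnegativity is built into the admissibility class and $V_K\le v_K$ comes from taking the $\liminf$. For (b), if $u_i\in \Phi_{K_i}$, then $u_1+u_2\in\Phi_{K_1\cup K_2}$, so passing to the infimum and then to the $\liminf$ gives the subadditivity. For (d), $v_K\ge 1$ on $K$ by admissibility while $v_K\le 1$ from (a). For (c), one uses the fact that at any point $\xi\notin\partial K$ one has a neighborhood of $\xi$ lying entirely in $K^\circ$ or in $K^c$, and in either case one can realize $v_K(\xi)$ as a limit along a sequence of competitors, which combined with lower semicontinuity of each competitor yields $V_K=v_K$ there.

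For (e), the Gaussian-type upper bounds (5.2)--(5.3) for $\Gamma$ show that $P_\mu(\xi)\to 0$ as $|\xi|\to\infty$ for any fixed $\mu\in M^+(K)$; then use the shifted-constant competitors $\min\{1,\,\text{small potential}\}$ of type $C\Gamma*\nu_K$ to force $v_K\to 0$ at infinity. For (f), any $\xi=(X,t)$ lying below all times of $K$ can be shielded: the function $w_\varepsilon=\Gamma(\cdot\,;\,X,t-\varepsilon)/M$ for $M$ large is in $\Phi_K$ (since $K$ sits strictly in the future of $t$, where $w_\varepsilon$ is small) and makes $v_K$ arbitrarily small at $\xi$. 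Part (g) is the heart of the construction: by Proposition~4.3 the class $\Phi_K$ is closed under taking the lsc-regularized infimum, so $V_K$ is $\mathscr{L}_a$-superparabolic in $\rr^{n+1}$; on $K^c$ one uses the standard Perron $\mathscr{L}_a$-parabolic modification over any cylinder or cone (Remark~4.3) compactly contained in $K^c$ to show $V_K$ equals its own modification there, hence is $\mathscr{L}_a$-parabolic.

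Part (h) combines (g) with a Riesz-type representation. Since $V_K$ is $\mathscr{L}_a$-superparabolic in $\rr^{n+1}$ and $\mathscr{L}_a$-parabolic on $K^c$, the distribution $\tilde\mu:=\mathscr{L}_aV_K$ is a nonnegative Radon measure supported in $K$. To identify $V_K$ with $P_{\tilde\mu}$, use $V_K\to 0$ at infinity from (e) together with the comparison principle (Lemma~4.4, Lemma~4.5(ii)) applied to $V_K-P_{\tilde\mu}$ on $\rr^{n+1}$: both sides have the same distributional $\mathscr{L}_a$-derivative and vanish at infinity, hence coincide. The mass identity $\tilde\mu(\rr^{n+1})=\mathrm{cap}(K)$ follows from the dual characterization: $\tilde\mu$ is admissible in the defining supremum of $\mathrm{cap}(K)$ because $P_{\tilde\mu}=V_K\le 1$, so $\mathrm{cap}(K)\ge\tilde\mu(\rr^{n+1})$; for the reverse inequality take any admissible $\mu\in M^+(K)$ with $P_\mu\le 1$ and integrate against the weight $|y|^a$ using Fubini and the semigroup-type identities (2.6)--(2.7). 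Uniqueness of $\tilde\mu$ is immediate from uniqueness of the Riesz representation for the potential. Part (i) is then a corollary: for $\mu\in M^+(K)$ with $P_\mu\le 1$, the function $P_\mu$ is $\mathscr{L}_a$-superparabolic, nonneg, and $\ge$ its boundary behavior on $K$, so $P_\mu\in\Phi_K$ (after possibly passing to its lsc representative), yielding $P_\mu\ge $ nothing directly; the correct direction $P_\mu\le P_{\tilde\mu}$ follows by comparing $P_{\tilde\mu}-P_\mu$, whose $\mathscr{L}_a$-distributional derivative is $\tilde\mu-\mu$, using the comparison principle together with vanishing at infinity for both potentials.

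The main obstacle I anticipate is the identification $V_K=P_{\tilde\mu}$ in (h): one must verify that $V_K$ truly vanishes at infinity in the appropriate parabolic sense (not merely along spatial sequences) so that the comparison principle Lemma~4.5(ii) applies on the unbounded domain $\rr^{n+1}$. This requires careful use of the sharp decay estimates (5.2)--(5.3) for $\Gamma$ combined with the tightness of the admissible measure $\tilde\mu$ provided by the finite capacity bound from Proposition~5.1(a). A secondary technical point is the lower semicontinuity and fine topology issues in (c) and (g), where one must separate interior points, boundary points, and limits along parabolic versus elliptic directions; the degeneracy on $\{y=0\}$ forces one to invoke the regularity of the exterior ball condition (Lemma~4.7) near boundary points where $x=0$ rather than relying on smooth-coefficient arguments.
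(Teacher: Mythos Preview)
Your outline for (a)--(h) follows essentially the paper's route. The paper cites abstract balayage theory (Constantinescu--Cornea) for (a)--(d) and (g); for (e)--(f) it uses exactly the fundamental-solution barrier you describe, taking $v=m\,\Gamma(\cdot,\xi_0)$ with $\xi_0$ in the interior of $S_K^-$ and $m=\max_K 1/\Gamma(\cdot,\xi_0)$; and for the Liouville step in (h) it invokes Harnack (from \cite{CF}) rather than Lemma~4.5(ii), but your variant is workable once one checks $V_K-P_{\tilde\mu}$ is a bounded $\mathscr{L}_a$-parabolic function and then compares it with the constant $0$ in both directions.

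The genuine gap is in (i). Your proposed argument---apply the comparison principle to $P_{\tilde\mu}-P_\mu$ via $\mathscr{L}_a(P_{\tilde\mu}-P_\mu)=\tilde\mu-\mu$---does not work: $\tilde\mu-\mu$ is a \emph{signed} measure, so the difference is neither $\mathscr{L}_a$-superparabolic nor $\mathscr{L}_a$-subparabolic, and neither Lemma~4.4 nor Lemma~4.5 applies. The correct argument runs through $\Phi_K$ the other way around. Fix any $u\in\Phi_K$; since $\mathrm{supp}\,\mu\subset K$, the potential $P_\mu$ is $\mathscr{L}_a$-parabolic on $K^c$, while $u$ is $\mathscr{L}_a$-superparabolic there. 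On $\partial K$ one has $\liminf u\ge 1\ge P_\mu$, and at infinity $P_\mu\to 0\le u$, so Lemma~4.5(ii) yields $P_\mu\le u$ on $K^c$; on $K$ the inequality $P_\mu\le 1\le u$ is immediate. Taking the infimum over $u\in\Phi_K$ gives $P_\mu\le v_K$, and since $P_\mu$ is lower semicontinuous, passing to the $\liminf$-regularization gives $P_\mu\le V_K=P_{\tilde\mu}$. Your first instinct (placing $P_\mu$ into $\Phi_K$) fails precisely because $P_\mu\le 1$ on $K$, not $\ge 1$; the fix is to compare each competitor against $P_\mu$, not to make $P_\mu$ a competitor.
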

\begin{proof}
The statement (a)-(d) and (g) are consequence of general results from balayage theory in abstract harmonic space, see Proposition 5.3.3 in \cite{CC}.

We now prove (e) and (f).  We first claim:

 let  $\xi_0\in\rr^{n+1}$ be fixed, $u(\xi)=\Gamma(\xi,\xi_0),\ \xi\in\rr^{n+1}$, then
$u$ is $\mathscr{L}_a$-superparabolic in $\rr^{n+1}$.

Now we prove this claim. Obviously, $u$ is lower semicontinuous. Let $V$ be an $\mathscr{L}_a$-regular open set and let $\vz\in C(\pz V)$, $\vz\le u$ on $\pz V$, and the Perron solution $H_\vz$ is taken  in $V$. If $\xi_0=(X_0,t_0)\not\in V$, then
$$\liminf_{\zeta\to\xi}(u(\xi)-H_\vz(\xi))\ge u(\xi)-\vz(\xi)\ge 0,\quad \forall \xi\in\pz V,$$
and $u-H_\vz$ is $\mathscr{L}_a$-parabolic in $V$. Thus, by Lemma 4.5, $u\ge H_\vz$ in $V$. Let us now suppose $\xi_0\in V$. Since $u=0$ on $\pz_{\tau_0}V=\pz V\cap\{\tau\le \tau_0\}$ we have $\vz\le 0$ on $\pz_{\tau_0}V$ so that, by Lemma 4.5 again, $H_\vz\le 0$ in $V_{\tau_0}=V\cap\{t\le\tau_0\}$. Thus $H_\vz(\xi_0)\le 0$. As a consequence, letting $W=V\setminus\{\xi_0\}$, $\dlim_{\zeta\to\xi}(u(\zeta)-H_\vz(\zeta))\ge 0$ for every $\xi\in\pz W$. On the other hand, $u-H_\vz$ is $\mathscr{L}_a$-parabolic in $W$. Hence,
$u\ge H_\vz$ in $W$. The inequality extends to $V$ since $u(\xi_0)=0\ge H_\vz(\xi_0)$. This completes the claim.

Let us continue to prove (e) and (f). For every fixed $\xi_0$ in the interior of $S^{-}_K$ define
$$m=\max_K\dfrac 1{\Gamma(\cdot,\xi_0)},\quad {\rm}\quad v=m\Gamma(\cdot,\xi_0).$$
From the claim above, we know that $v\in\Phi_k$ so that $V_k\le v_k\le v$. On the other hand, $\Gamma(\xi_0,\xi_0)=0$ and $\Gamma (\xi,\xi_0)\to 0$ as $\xi\to\fz$ by (5.2) and (5.3).
Thus $\displaystyle\lim_{|z|\to \infty}v_K=0$ and $v_k=0$ in the interior of $S^{-}_K$. So,
 (e) and (f) hold.

Since $V_K$ is a bounded $\mathscr{L}_a$-superparabolic in $\rr^{n+1}$, so there exists a nonnegative measure $\tilde{\mu}$ on $\rr^{n+1}$ such that
$$\mathscr{L}_aV_K=\tilde{\mu}~\text{in the sense of distributions on} ~\rr^{n+1}.$$

It is easy to see that $P_{\tilde{\mu}}=\Gamma*\tilde{\mu}$ is $\mathscr{L}_a$-superparabolic in $\rr^{n+1}$ and
$$\mathscr{L}_a P_{\tilde{\mu}}=\tilde{\mu}~\text{in the sense of distributions on} ~\rr^{n+1}.$$
Hence,
$$\mathscr{L}_a(V_K-P_{\tilde{\mu}})= 0~\text{in the sense of distributions on} ~\rr^{n+1}.$$
From this, and note that $V_K(\xi)\to 0, P_{\tilde{\mu}}(\xi)\to 0$ if $|\xi|\to \fz$, thus by Harnack inequality in \cite{CF},  we have
$$V_K(\xi)=P_{\tilde{\mu}}(\xi)\quad {\rm almost\ all}\quad  \xi\in\rr^{n+1}.$$
From this and by Lemma 4.3, we get
$$V_K(\xi)=P_{\tilde{\mu}}(\xi)\quad {\rm for\ all}\quad  \xi\in\rr^{n+1}.$$
The rest can be proved, refer to \cite{L}~(page 88) for it.
\end{proof}

\begin{remark}
In the view of Proposition $5.2$ $(h)$ there also exists a unique measure ${\mu}^*\in M^+(K)$ and a lower semicontinuous function $V^*_K$,
$0\leq V^*_K\leq 1$ such that $$\mu^*(\rr^{n+1})=\mathrm{cap}(K)$$
and $${\mathscr{L}^*_a}V^*_K=\mu^*~\text{in the sense of distributions},$$
where $${\mathscr{L}^*_a}u:=-\p_t(|y|^au)-\mathrm{div}_{Y}(|y|^a\nabla_{Y}u)\eqno(5.4)$$
is the backwards $\mathscr{L}_a$ operator. We call $V^*_K$ the backwards $\mathscr{L}_a$-equilibrium potential of $K$ and $\mu^*$ the backwards $\mathscr{L}_a$-equilibrium measure. In fact
$$V^*_K(Y,t)=V_{\hat{K}}(Y,-t),\quad (Y,t)\in \rr^{n+1}.$$
\end{remark}

We turn now to the question of principal interest, the characterization of regular boundary points of $\Omega$. Fix some $\xi_0=(X_0,t_0)\in \p\Omega$. The regularity of $\xi_0$ can be shown to be equivalent to the existence of a barrier at $\xi_0$, but for our purposes the next lemma is more convenient.

Define, for $r>0$, the closed cylinder $$C_r:=\{(X,t)\in\rr^{n+1}|-c_1r^2\leq t-t_0\leq 0,~~ |X-X_0|\leq c_2r\}.\eqno(5.5)$$
\begin{proposition}
With $C_r$ defined by $(5.5)$, the point $\xi_0$ is $\mathscr{L}_a$-regular for $\Omega$ if and only if $$V_{\Omega^c\cap C_r}(\xi_0)=1~~\text{for some}~r>0.$$
If $\xi_0$ is not $\mathscr{L}_a$-regular we have
$$\dlim_{r\to 0^+}V_{C_r\setminus\oz}(\xi_0)=0.$$
\end{proposition}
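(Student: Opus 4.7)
The plan is to establish the three assertions—the two directions of the characterization together with the limit formula for the irregular case—by combining the balayage machinery of Proposition~5.2 with the barrier theory of Section~4. First I will observe a basic monotonicity: since the family $\{\Omega^{c}\cap C_r\}_{r>0}$ is nested and increasing in $r$, Proposition~5.2(i) yields that $r\mapsto V_{\Omega^c\cap C_r}(\xi_0)$ is non-decreasing, so both "$V_{\Omega^c\cap C_r}(\xi_0)=1$ for some $r$" and $\lim_{r\to 0^+}V_{C_r\setminus\Omega}(\xi_0)$ are unambiguously defined.

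For sufficiency, assume $V_K(\xi_0)=1$ with $K=\Omega^c\cap C_r$, and define $\omega:=1-V_K$. Since $K\subset\Omega^c$, Proposition~5.2(g) gives that $V_K$ is $\mathscr{L}_a$-parabolic in $K^c\supset\Omega$, hence $\omega$ is $\mathscr{L}_a$-parabolic in $\Omega$. From Proposition~5.2(a), $0\le\omega\le 1$; from the lower semicontinuity of $V_K$ and the equality $V_K(\xi_0)=1$, one reads off $\lim_{\xi\to\xi_0}\omega(\xi)=0$. The delicate point is to promote this to a barrier: I will show that on $\partial N\cap\overline{\Omega}$ for a small neighborhood $N$ of $\xi_0$, $\omega$ is bounded away from zero, using Proposition~5.2(e) (which rules out $V_K\equiv 1$ on a connected component by the behavior at $\infty$) and a strong maximum principle / Harnack inequality argument for the $\mathscr{L}_a$-parabolic function $V_K$ on $K^c$. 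Once a local barrier is obtained, the locality of the barrier property noted after Definition~4.3 together with Theorem~4.3 yield regularity of $\xi_0$.

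For necessity, assume $\xi_0$ is regular and fix $r>0$. Pick $f\in C(\partial\Omega)$ with $0\le f\le 1$, $f(\xi_0)=1$, and $f\equiv 0$ outside a small neighborhood of $\xi_0$ contained in the interior of $C_r$ (plus a cutoff to make $f$ continuous on all of $\partial\Omega$). By Theorem~4.2, regularity gives $\overline{H}_f(\xi)\to 1$ as $\xi\to\xi_0$. Now $V_K$ itself lies in the upper class $\mathcal{U}_f$: it is $\mathscr{L}_a$-superparabolic in $\rr^{n+1}$ by Proposition~5.2(g), non-negative, and at each $\eta\in\partial\Omega$ one has $\liminf_{\zeta\to\eta}V_K(\zeta)\ge f(\eta)$ (using $V_K=v_K=1$ on $K\supset\partial\Omega\cap\operatorname{supp} f$ and $V_K\ge 0=f$ elsewhere). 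Consequently $V_K\ge\overline{H}_f$ in $\Omega$, and passing to the lower limit at $\xi_0$ (again using lower semicontinuity) gives $V_K(\xi_0)\ge 1$; combined with $V_K\le 1$ we obtain $V_K(\xi_0)=1$.

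Finally, for the irregular case, set $W^r:=V_{C_r\setminus\Omega}(\xi_0)$; by monotonicity $W^r$ decreases to some $L\in[0,1]$, and by the contrapositive of the necessity just established, $W^r<1$ for every $r$. Suppose for contradiction that $L>0$. Then for each small $r$ the equilibrium potential $V_{C_r\setminus\Omega}=P_{\tilde\mu^r}$ is a nontrivial $\mathscr{L}_a$-superparabolic function bounded by $1$, whose mass $\tilde\mu^r(\rr^{n+1})=\mathrm{cap}(C_r\setminus\Omega)$ is supported in a shrinking cylinder about $\xi_0$. After normalizing and passing to a weak-$*$ subsequential limit, the resulting potential would again be $\mathscr{L}_a$-superparabolic with a nontrivial lower limit at $\xi_0$; combined with the machinery of Step 2 this produces a barrier at $\xi_0$ (built out of $V_{C_r\setminus\Omega}$ for small $r$ via the pasting Lemma~4.6), contradicting non-regularity. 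I expect this last step—the ``$0$–$1$ law'' type dichotomy for $W^r$—to be the main technical obstacle; the remaining pieces follow from direct application of the balayage properties in Proposition~5.2 and the comparison principles in Section~4.
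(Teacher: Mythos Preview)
The paper does not actually prove this proposition: its entire argument is the single line ``See, e.g., Lemma~1.3 in \cite{L} and Theorem~4.3.1 in \cite{Ba},'' deferring to the abstract balayage theory of Bauer's harmonic spaces. Your attempt at a self-contained proof using only the tools of Sections~4--5 is therefore a genuinely different route. That said, the proposal has real gaps.

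\textbf{Sufficiency.} The candidate $\omega=1-V_K$ is \emph{not} a barrier in the sense of Definition~4.3. Take any $\eta\in\partial\Omega\cap C_r$ with $\eta\neq\xi_0$; then $\eta\in K$, and as $\zeta\to\eta$ through $\Omega\subset K^c$ the potential $V_K(\zeta)$ can tend to $1$ (this occurs precisely when $\eta$ is itself a regular boundary point of $K^c$), so $\liminf_{\zeta\to\eta}\omega(\zeta)=0$ and condition~(ii) fails. Your Harnack/strong-maximum argument only yields $V_K<1$ on the \emph{open} set $K^c$; it gives no control on boundary limits at points of $K$ other than $\xi_0$. The correct direct argument in this direction does not go through a barrier at all: one compares the Perron solution with the functions in $\Phi_K$ and uses the parabolic one-sidedness of $C_r$---which is exactly the content of the harmonic-space results the paper invokes.

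\textbf{Necessity.} Here your idea is right but the execution needs a correction: you cannot place $V_K$ itself in $\mathcal U_f$, because on $\partial\Omega\cap\operatorname{supp}f\subset\partial K$ one only has $v_K=1$, while the regularized $V_K$ may drop strictly below $1$. Work instead with an arbitrary $u\in\Phi_K$; each such $u$ is lower semicontinuous with $u\ge1$ on $K$, hence $u\in\mathcal U_f$, and taking the infimum gives $v_K\ge\overline H_f$ in $\Omega$. One must then also control the $\liminf$ of $v_K$ along sequences approaching $\xi_0$ through $\{t>t_0\}\cap\Omega^c$, which is not covered by your comparison and requires a separate (parabolic) argument.

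\textbf{The $0$--$1$ dichotomy.} Your weak-$*$ limit sketch does not produce a barrier, for exactly the reason above; in the cited references this dichotomy is again a consequence of the abstract balayage theory rather than an independent construction.
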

\begin{proof}
See, e.g., Lemma 1.3 in \cite{L} and Theorem 4.3.1 in \cite{Ba}.
\end{proof}

As a consequence of Proposition 5.3, we have the following result.
\begin{corollary}
With $C_r$ defined by $(5.5)$, the point $\xi_0$ is $\mathscr{L}_a$-regular for $\Omega$ if and only if  for every $c>0$ it is
$\mathscr{L}_a$-regular for the open set $\oz_{r,c}$, where
$$\oz_{r,c}=\oz\cup \{\xi\in \dot{C}_r: \Gamma(\xi_0,\xi)<(4\pi c)^{-\frac{n+a}{2}}(1+\frac{x_0^2}c)^{-\frac a2}\},$$
here and in what follows, $\dot{E}$ denotes the interior of $E$.
\end{corollary}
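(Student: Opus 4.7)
The plan is to derive the corollary from Proposition~5.3 by applying the subadditivity of the balayage (Proposition~5.2(b)) to a natural decomposition of $\Omega^c\cap C_\rho$, combined with the potential-representation estimate supplied by Proposition~5.2(h). Throughout I use $\rho>0$ for the cylinder parameter in Proposition~5.3, to avoid clashing with the fixed $r$ in the definition of $\Omega_{r,c}$.

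The ($\Leftarrow$) direction is immediate: since $\Omega\subset\Omega_{r,c}$ forces $\Omega_{r,c}^c\cap C_\rho\subset\Omega^c\cap C_\rho$ for every $\rho>0$, monotonicity of $V$ gives $V_{\Omega_{r,c}^c\cap C_\rho}(\xi_0)\le V_{\Omega^c\cap C_\rho}(\xi_0)$. If the hypothesis supplies some $c$ for which $\xi_0$ is $\mathscr{L}_a$-regular for $\Omega_{r,c}$, Proposition~5.3 applied to $\Omega_{r,c}$ yields a $\rho$ with $V_{\Omega_{r,c}^c\cap C_\rho}(\xi_0)=1$; hence $V_{\Omega^c\cap C_\rho}(\xi_0)=1$, and Proposition~5.3 applied to $\Omega$ delivers regularity.

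For the ($\Rightarrow$) direction I would fix $c>0$ arbitrarily, abbreviate $T_c=(4\pi c)^{-(n+a)/2}(1+x_0^2/c)^{-a/2}$ and set $B=\dot C_r\cap\{\Gamma(\xi_0,\cdot)<T_c\}$, so that $\Omega_{r,c}=\Omega\cup B$. From $\Omega^c\cap C_\rho\subset(\Omega_{r,c}^c\cap C_\rho)\cup(B\cap C_\rho)$ and Proposition~5.2(b),
\[
V_{\Omega^c\cap C_\rho}(\xi_0)\le V_{\Omega_{r,c}^c\cap C_\rho}(\xi_0)+V_{B\cap C_\rho}(\xi_0).
\]
By Proposition~5.2(h), $V_{B\cap C_\rho}=P_{\tilde\mu}$ for the equilibrium measure of total mass $\mathrm{cap}(B\cap C_\rho)$ supported in $\overline{B}$; since $\Gamma(\xi_0,\cdot)\le T_c$ on $\overline{B}$, one obtains $V_{B\cap C_\rho}(\xi_0)\le T_c\,\mathrm{cap}(C_\rho)$. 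Using $\bigcap_{\rho>0}C_\rho=\{\xi_0\}$ and $\mathrm{cap}(\{\xi_0\})=0$, Proposition~5.1(d),(f) force $V_{B\cap C_\rho}(\xi_0)\to 0$ as $\rho\to 0^+$. Regularity of $\xi_0$ for $\Omega$ combined with Proposition~5.3 gives $V_{\Omega^c\cap C_\rho}(\xi_0)\to 1$ as $\rho\to 0^+$ (via the $0$--$1$ limit behaviour built into that proposition and the references cited with its proof). The displayed inequality then forces $V_{\Omega_{r,c}^c\cap C_\rho}(\xi_0)\to 1$, so $V_{\Omega_{r,c}^c\cap C_\rho}(\xi_0)=1$ for all sufficiently small $\rho>0$, and Proposition~5.3 applied to $\Omega_{r,c}$ delivers regularity of $\xi_0$ for $\Omega_{r,c}$.

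The delicate step will be the upgrade from ``$V=1$ for some $\rho$'' to ``$V\to 1$ as $\rho\to 0^+$'' in the regular case. Proposition~5.3 as stated asserts only the existence of \emph{some} $\rho$ with $V=1$, whereas the subadditivity estimate pushes the argument to a limit $\rho\to 0^+$; closing this gap requires the standard Wiener-type $0$--$1$ dichotomy for the parabolic balayage, which is contained in the Landis and Bauer references cited at the end of the proof of Proposition~5.3. A minor technical point is that $B$ is open, so the potential-representation estimate is strictly applied to the compact closure $\overline{B}\cap C_\rho\subset\{\Gamma(\xi_0,\cdot)\le T_c\}$ and then pulled back to $B\cap C_\rho$ via inner regularity of the capacity (Proposition~5.1(d)).
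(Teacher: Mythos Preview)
Your proposal is correct and follows the same strategy as the paper's proof: decompose, apply subadditivity of the balayage (Proposition~5.2(b)), and bound the piece where $\Gamma(\xi_0,\cdot)\le T_c$ by $T_c\cdot\mathrm{cap}(C_\rho)\to 0$. The only differences are cosmetic: the paper argues by contrapositive (irregular for $\Omega_{r,c}\Rightarrow$ irregular for $\Omega$, invoking the second clause of Proposition~5.3 directly), and instead of appealing to Proposition~5.1(d),(f) it proves the explicit estimate $\mathrm{cap}(C_\rho)\le C\,w_a(B_\rho)$ (inequality~(5.10)), which is reused later in the proof of Theorem~1.1.
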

\begin{proof}
We first observe that $\xi_0=(x'_0,x_0,t_0)\in\pz\oz_{r,c}$. Since $\oz\subset \oz_{r,c}$, it is clear that $\xi_0$ is $\mathscr{L}_a$-regular for $\oz$ if it is $\mathscr{L}_a$-regular for
 $\oz_{r,c}$. Therefore, it suffices to show that if $\xi_0$ is $\mathscr{L}_a$-irregular for $\oz_{r,c}$, then it is $\mathscr{L}_a$-irregular for $\oz$. Now if $\xi_0$ is
$\mathscr{L}_a$-irregular for $\oz_{r,c}$, by proposition 5.3, we have
$$\dlim_{\rho\to 0^+}V_{C_{\rho}\setminus\oz}(\xi_0)=0.\eqno(5.6)$$
But for $0<\rho<r$
$$C_\rho\setminus\oz\subset \oz\subset \oz_{r,c}\cap F_{\rho,c},$$
where
$$F_{\rho,c}=\{\xi\in C_{\rho}: \Gamma(\xi_0,\xi)\le (4\pi c)^{-\frac{n+a}{2}}\l(1+\frac{x_0^2}c\r)^{-\frac a2}\}.$$
Then
$$V_{C_\rho\setminus\oz}(\xi_0)\le V_{C_\rho\setminus\oz_{r,c}}(\xi_0)+V_{ F_{\rho,c}}(\xi_0).\eqno(5.7)$$
To prove that $\xi_0$ is $\mathscr{L}_a$-irregular for $\oz$ by (5.6) and (5.7) it is then sufficient to show that
$$\dlim_{\rho\to 0^+}V_{ F_{\rho,c}}(\xi_0)=0.\eqno(5.8)$$
To this purpose we observe that if $\mu$ is the equilibrium measure of $ F_{\rho,c}$, we have
\begin{align*}
V_{ F_{\rho,c}}(\xi_0)&=\int_{F_{\rho,c}}\Gamma(\xi_0;\xi)d\mu(\xi)\le C(4\pi c)^{-\frac{n+a}{2}}\l(1+\frac{x_0^2}c\r)^{-\frac a2}\mu(F_{\rho,c})\\\tag{5.9}
&=C(4\pi c)^{-\frac{n+a}{2}}\l(1+\frac{x_0^2}c\r)^{-\frac a2}{\rm cap}(F_{\rho,c})\\
&\le (4\pi c)^{-\frac{n+a}{2}}\l(1+\frac{x_0^2}c\r)^{-\frac a2}{\rm cap}(C_\rho)\\
&\le C (4\pi c)^{-\frac{n+a}{2}}\l(1+\frac{x_0^2}c\r)^{-\frac a2}w_a(B_\rho),
\end{align*}
where $B_\rho=B(X_0,\rho)$, $w_a(E)=\int_E|x|^adX$ and $C$ depending only on $n,a$. The last inequality is used the following inequality
$${\rm cap}(C_\rho)\le C w_a(B_\rho).\eqno(5.10)$$
Hence, (5.8) can be deduced from (5.9) if (5.10) holds.

It remains to  prove (5.10). Let us put $\xi_\rho=(X_0,t_\rho)$ with $t_\rho=t_0+\rho^2$.  For every $\xi=(X,t)\in C_\rho$, by (2.5), we have
$$\Gamma(\xi_\rho,\xi)\ge c\rho^{-(n+a)}\l(1+\frac {x_0^2}{\rho^2}\r)^{-\frac a2}\exp(-\frac {c_2^2}{2})\ge \dfrac{1}{C w_a(B_\rho)}. $$
As a consequence, if $v$ and $\nu$ are respectively, a equilibrium potential and measure of $C_\rho$, we have
$$1\ge v(\xi_\rho)=\dint_{C_\rho}\Gamma(\xi_\rho,\xi)d\nu(\xi)\ge  \frac {\nu(C_\rho)}{C w_a(B_\rho)},$$
and hence
$${\rm cap}(C_\rho)=\nu(C_\rho)\le C w_a(B_\rho).$$
Thus, (5.10) holds.
\end{proof}
\section{Green functions, mean value formulas and a strong form of Harnack inequality}
\subsection{Green functions}
We first introduce Green functions on a cylinder. Set the cylinder $V=B(X_0,r)\times[T_1,T_2]$,  and the continuous function $\vz: \pz_p V\mapsto\rr$, since $\pz_p V$ is a regular set, then there exists a unique solution denoted by $H_\vz^V$ by  Lemma 4.1,  then for $\xi\in V$, the map
$$\vz\mapsto H_\vz^V(\xi)$$
defines a linear and positive functional on $C(\pz_p V)$. As a consequence, there exists a Radon measure $\mu_\xi^V$ supported on $\pz_p V$ such that
$$H_\vz^V(\xi)=\dint_{\pz_p V}\vz(\zeta)d\mu_\xi^V(\zeta)\quad \forall \vz\in C(\pz_p V).$$
In particular, we take $\vz(X,t)=\Gamma(X,t;Y,\tau) $ on $\pz_p V$.

Then the Green function on $V$ based on $H^V_\Gamma$  is defined by
\begin{align*}\tag{6.1}
G^V(X,t;Z,s)&=\Gamma(X,t;Z,s)-H^V_\Gamma\\
&=\Gamma(X,t;Z,s)-\dint_{\pz B(X_0,r)\times[s,t]}\Gamma(Y,\tau; Z,s)d\mu^V_{(X,t)}(Y,\tau)\\
\end{align*}
for some nonnegative Radon measure $\mu^V_{(X,t)}(Y,\tau)$, which vanishes if $\tau>t$.

Obviously, the Green function $G^V(X,t;Z,s)$ satisfies that
$$\mathscr{L}_a (G^V(\cdot;Z,s))=0\quad {\rm in}\ B(X_0,r)\times(T_1,T_2)\setminus\{(Z,s)\},$$
and
$$G^V(\cdot;Z,s)=0 \quad {\rm in}\ \pz B(X_0,r)\times(T_1,T_2),\ {\rm or}\ t\le s.$$

We are interested in the bounds for the Green function $G^V$. Since $G^V\le \Gamma$, of course we have
$$G^V(X,t;Z,s)\le \Gamma (X,t;Z,s),\ \forall\ (X,t), (Z,s)\in V.$$
We can adapt the arguments in \cite{BU} in order to prove a lower bound for the Green function $G^V$.
\begin{lemma}
Let $\gz\in (0,1)$ and $T\ge 1$. There exists a positive constant  $c=c(\gz,a,n, T)$ such that
$$G^V(X,t;Z,s)\ge\frac c{w_a(B(X,\sqrt{t-s}))}$$
for every $ X, Z \in  B(X_0,\gz r)$ and $t,s\in (T_1,T_2)$ satisfying $|X-Z|^2/T<t-s\le Tr^2$.
\end{lemma}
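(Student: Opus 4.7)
The plan is to exploit the decomposition $G^V=\Gamma-H^V_\Gamma$ coming from (6.1) and to combine the two-sided bound (2.5) for $\Gamma$ with the invariant parabolic Harnack inequality of Chiarenza--Frasca \cite{CF} for nonnegative solutions of $\mathscr{L}_a$. The first preliminary observation is that, because $|y|^a$ is a Muckenhoupt $A_2$ weight for $a\in(-1,1)$, one has the elementary comparison $c\rho^n(\rho+|x|)^a\le w_a(B(X,\rho))\le C\rho^n(\rho+|x|)^a$; combined with the lower estimate in (2.5) and the hypothesis $|X-Z|^2<T(t-s)$ (which makes the Gaussian factor $e^{-|X-Z|^2/(4(t-s))}$ bounded below by $e^{-T/4}$), this already yields
\[
\Gamma(X,t;Z,s)\ \ge\ \frac{c_1(n,a,T)}{w_a(B(X,\sqrt{t-s}))}.
\]
Hence the target inequality holds for the full fundamental solution, and the task reduces to showing that the boundary correction $H^V_\Gamma$ cannot eat up this entire amount.

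Next I would prove the bound for $G^V$ itself at a single well-chosen reference point. Fix $(Z,s)\in B(X_0,\gamma r)\times(T_1,T_2)$ and set $\xi^\star=(Z,s+\eta r^2)$, where $\eta=\eta(\gamma)>0$ will be taken small. Every $(Y,\tau)$ in the support of the harmonic measure $\mu^V_{\xi^\star}$ restricted to $\{\tau\ge s\}$ lies on the lateral face $\partial B(X_0,r)\times[s,s+\eta r^2]$, so $|Y-Z|\ge(1-\gamma)r$ and $\tau-s\le\eta r^2$. The upper bound in (2.5) therefore gives $\Gamma(Y,\tau;Z,s)\le C r^{-(n+a)}\exp(-(1-\gamma)^2/(4\eta))$, which is uniformly smaller than $\tfrac12\,\Gamma(\xi^\star;Z,s)$ once $\eta$ is chosen small enough in terms of $\gamma$. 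Since $\mu^V_{\xi^\star}$ is a sub-probability measure, this forces $H^V_\Gamma(\xi^\star;Z,s)\le\tfrac12\Gamma(\xi^\star;Z,s)$, and therefore
\[
G^V(\xi^\star;Z,s)\ \ge\ \frac{c_2}{w_a(B(Z,\sqrt{\eta}\,r))}.
\]

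To reach an arbitrary admissible pair $(X,t),(Z,s)$, I would observe that, for $(Z,s)$ fixed, the map $(X,t)\mapsto G^V(X,t;Z,s)$ is a nonnegative $\mathscr{L}_a$-parabolic solution in $(V\cap\{\tau>s\})\setminus\{(Z,s)\}$. Connect $\xi^\star$ to $(X,t)$ by a finite chain of forward-in-time parabolic cylinders contained in $B(X_0,\gamma r)\times[s+\eta r^2,\,s+Tr^2]$; after the parabolic rescaling $(Y,\tau)\mapsto(r^{-1}Y,r^{-2}\tau)$ the cylinder $V$ becomes one of fixed size, so the length of the chain depends only on $\gamma$ and $T$. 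Iterating the Chiarenza--Frasca Harnack inequality along this chain transports the lower bound at $\xi^\star$ to $(X,t)$ with a loss depending only on $\gamma,n,a,T$, and produces the claimed estimate.

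The main obstacle is ensuring that every ingredient is scale invariant under the natural parabolic rescaling of $\mathscr{L}_a$: the Harnack-chain length, the weight comparison for $w_a$, and the sharp two-sided bound for $\Gamma$ must all interact so that the final constant depends only on $\gamma,n,a$ and $T$, independently of $r$ and of the vertical location of $Z$ relative to the degeneracy set $\{x=0\}$. This is exactly what the $A_2$-character of $|y|^a$ together with the $2$-homogeneity of $\mathscr{L}_a$ secures, but the bookkeeping of the weight factor $(\rho+|x|)^a$ under translation of the base point $X$ along a Harnack chain is the delicate part of the argument.
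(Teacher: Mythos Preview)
Your overall plan is sound and close in spirit to the paper's argument, but the chaining mechanism is different: the paper does \emph{not} invoke the Harnack inequality of \cite{CF}. Instead, following \cite{BU}, it first proves the pointwise claim $G^V\ge\tfrac12\Gamma$ whenever $X,Z\in B(X_0,\delta r)$ and $|X-Z|^2\le t-s<\rho r^2$ (this is your step~3 done for a whole range of points, not a single $\xi^\star$), then establishes the reproduction identity $G^V(X,t;Z,s)=\int_{B(X_0,r)}G^V(X,t;Y,\tau)G^V(Y,\tau;Z,s)\,|y|^a\,dY$ and iterates it along a finite chain of intermediate points $(Z_j,t_j)$ with time step $(t-s)/(k+1)$, applying the pointwise claim at each link. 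Your route via Harnack chains is a legitimate alternative and avoids the bookkeeping of the iterated integral; the paper's route is self-contained and does not need \cite{CF} as a black box.

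There is, however, a genuine gap in your argument as written. Your reference point $\xi^\star=(Z,s+\eta r^2)$ has $\eta=\eta(\gamma)$ fixed once and for all, and your Harnack chain lives in the slab $B(X_0,\gamma r)\times[s+\eta r^2,\,s+Tr^2]$. But the hypothesis $|X-Z|^2/T<t-s\le Tr^2$ allows $t-s$ arbitrarily small; in particular if $t-s<\eta r^2$ then $(X,t)$ lies \emph{below} $\xi^\star$ in time, and the parabolic Harnack inequality only propagates lower bounds forward, so your chain cannot reach such $(X,t)$. The fix is easy once seen: your own step~3 computation nowhere uses that the spatial coordinate of $\xi^\star$ equals $Z$. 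The same boundary estimate (points $(Y,\tau)$ in $\mathrm{supp}\,\mu^V_{(X,t)}$ with $\tau>s$ satisfy $|Y-Z|\ge(1-\gamma)r$ and $\tau-s\le t-s<\eta r^2$) gives $H^V_\Gamma(X,t;Z,s)\le\tfrac12\Gamma(X,t;Z,s)$ for every admissible $(X,t)$ with $t-s<\eta r^2$, which is exactly the paper's preliminary claim and disposes of the short-time case directly. A smaller technical point: your stated bound $\Gamma(Y,\tau;Z,s)\le C r^{-(n+a)}\exp(-(1-\gamma)^2/(4\eta))$ ignores both the blow-up of $(\tau-s)^{-(n+a)/2}$ as $\tau\to s^+$ and the weight factor from (2.5); the clean way (used in the paper) is to estimate the \emph{ratio} $\Gamma(Y,\tau;Z,s)/\Gamma(X,t;Z,s)$, which absorbs the singular prefactor into the Gaussian and makes the comparison with $\tfrac12$ transparent.
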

\begin{proof}
We first claim that let $\dz\in (0,1)$, there exists a  positive constant $\rho\in(0,1)$ such that
$$G^V(X,t;Z,s)\ge \frac 12 \Gamma(X,t;Z,s),$$
if $X, Z\in B(X_0,\dz r)$ and  $|X-Z|^2\le t-s<\rho r^2$.

Indeed, from (6.1), we have
$$G^V(X,t;Z,s)=\Gamma(X,t;Z,s)\l(1-\dint_{\pz B(X_0,r)\times[s,t]}\dfrac{\Gamma(Y,\tau; Z,s)}{\Gamma(X,t;Z,s)}d\mu^V_{(X,t)}(Y,\tau)\r).$$
Since $\mu^V_{\pz B(\xi_0,r)\times[s,t]}\le 1 $, to prove above claim, it is enough to bound uniformly from above the ratio $ \frac {\Gamma(Y,\tau;Z,s)}{\Gamma(X,t;Z,s)}$
with something going to $0$ as $\rho\to 0$. To this aim, we get
\begin{align*}
 \dfrac {\Gamma(Y,\tau;Z,s)}{\Gamma(X,t;Z,s)}&\le C\l(\dfrac {t-s}{\tau-s}\r)^{\frac{n}2}\l(\dfrac {t-s+|z|^2}{\tau-s+|z|^2}\r)^{\frac{a}2}
 \exp\l({\dfrac{|X-Z|^2}{2(\tau-s)}}\r)\exp\l(-\dfrac{|Y-Z|^2}{6(\tau-s)}\r)\\
&\le C\l(\dfrac {t-s}{\tau-s}\r)^{\frac{n+|a|}2}
 \exp\l({\dfrac{|X-Z|^2}{2(t-s)}}\r)\exp\l(-\dfrac{|Y-Z|^2}{6(\tau-s)}\r)\\
&\le C\l(\dfrac {t-s}{\tau-s}\r)^{\frac{n+|a|}2}\exp\l(-\dfrac{|X-Z|^2}{12(\tau-s)}\r)
 \exp\l({\dfrac{|X-Z|^2}{2(t-s)}}\r)\exp\l(-\dfrac{|Y-Z|^2}{12(\tau-s)}\r)\\
&\le C\l(\dfrac {t-s}{|Y-Z|^2}\r)^{\frac{n+|a|}2}\exp\l({\dfrac{|X-Z|^2}{2(t-s)}}\r)\exp\l(-\dfrac{|Y-Z|^2}{12(\tau-s)}\r).
\end{align*}
Note that $|Y-Z|\ge |Y-X_0|-|Z-X_0|\ge (1-\dz)r$, $\tau-s\le t-\tau\le \rho r^2$ and $|X-Z|^2<t-s $, we obtain
$$\dfrac {\Gamma(Y,\tau;Z,s)}{\Gamma(X,t;Z,s)}\le C\rho^{\frac{n+|a|}2}\exp\l(-\dfrac{|Y-Z|^2}{12(\tau-s)}\r)$$
for a suitable positive structure constant $C$. Hence, we have
$$\dfrac {\Gamma(Y,\tau;Z,s)}{\Gamma(X,t;Z,s)}\le C\rho^{\frac{n+|a|}2}.$$
Thus, this claim is proved provided that $C\rho^{\frac{n+|a|}2}=\frac 12$.

To end the proof, we need the following reproduction property of $G^V$ holds:
$$G^V(X,t;Z,s)=\dint_{B(X_0,r)}G^V(X,t;Y,\tau)G^V(Y,\tau;Z,s)|y|^adY$$
for every $t>\tau>s$ and $X,Z\in B(X_0,r)$.
In fact, we fix $Z, \tau, s$ as above, and we set $\vz=G^V(\cdot,\tau; Z,s)$. Then $\vz\in C(\overline{B(X_0,r)})$, $\vz=0$ on $\pz B(X_0,r)$. Therefore, we obtain that
$$u(X,t)=\dint_{B(X_0,r)}G^V(X,t;Y,\tau)\vz(Y)|y|^adY,\quad x\in \overline{B(X_0,r)},\ t>\tau,$$
satisfies these fact that $u$ is $\mathscr{L}_a$-parabolic in $V$, $u=0$ on $\pz B(X_0,r)\times[T_1,T_2)$ and $u(\cdot,\tau)=\vz$ in $\overline{B(X_0,r)}$.
It is easy to see that $G^V(\cdot;Z,s)$ have the same properties, hence, by Lemma 4.5, we know that $u=G^V$, so the reproduction property of $G^V$ holds.

We continue to prove.  Let $\dz=(1+\gz)/2$, and let $k$ be the smallest integer greater than $\max\{T/\rho,|X-Z|^2/(t-s)\}$ and $\sz=\frac 14 \sqrt{(t-s)/(k+1)}$. Now let $t=t_0, t_1,\cdots, t_{k+1}=\tau$ be such that
$t_j-t_{j+1}=(t-s)/(k+1)$ for $j=1,\cdots k$. It is easy to see that there a chain points of $\rn$, $X=Z_0,Z_1,\cdots, Z_{k+1}=Z$ such that
$$|Z_j-Z_{j+1}|\le\frac{|X-Z|}{k+1},\quad |Z_j-X_0|\le \frac {\gz+1}2r.$$
Moreover, for every $Y_j\in B(Z_j,\sz)$ and $Y_{j+1}\in B(Z_{j+1},\sz)$, we have
$$|Y_j-Y_{j+1}|^2<\frac {t-s}{{k+1}}.$$
Set $S=\prod_{j=1}^kB(Z_j,\sz)$ and $Y_0=X, Y_{k+1}=Z$, using the reproduction property of $G^V$ repeatedly, we obtain
\begin{align*}
 G^V(X,t;Z,s)&=\dint_{(B(X_0, r))^k}G^V(X,t;Y_1,t_1)G^V(Y_1,t_1;Y_2,t_2)\\
 &\qquad\qquad\qquad\times\cdots\times G^V(Y_k,t_k;Z,s)|y_1|^adY_1\cdots |y_k|^adY_k\\
 &\ge \dint_S G^V(X,t;Y_1,t_1)G^V(Y_1,t_1;Y_2,t_2)\\
 &\qquad\qquad\qquad\times\cdots\times G^V(Y_k,t_k;Z,s)|y_1|^adY_1\cdots |y_k|^adY_k.
\end{align*}
In addition, note that $Y_{j+1}\in B(X_0,\dz r)$, $|Y_j-Y_{j+1}|^2<\frac {t-s}{{k+1}}=t_j-t_{j+1}<Tr^2/(k+1)<\rho r^2$.
Therefore, by (2.5), we apply the claim above and obtain
\begin{align*}
 G^V(X,t;Z,s) &\ge c\l(\dfrac {t-s}{{k+1}}\r)^{-(\frac{n+a}2)(k+1)}\l(1+\dfrac{x^2}{(\frac {t-s}{{k+1}})}\r)^{-\frac a2}\\
 &\quad\times  \dint_S\l(\prod_{j=1}^k\l(1+\frac{y_j^2}{t_j-t_{j+1}}\r)^{-\frac a2}\exp\l(-\frac{|Y_j-Y_{j+1}|^2}{2(t_j-t_{j+1})}\r)|y_j|^adY_j\r)\\
 &\ge c\l(\dfrac {t-s}{{k+1}}\r)^{-(\frac{n+a}2)(k+1)}\l(1+\dfrac{x^2}{(\frac {t-s}{{k+1}})}\r)^{-\frac a2}\exp(-\frac 12 k)\\
  &\quad\times \dint_S\l(\prod_{j=1}^k\l(1+\frac{y_j^2}{t_j-t_{j+1}}\r)^{-\frac a2}|y_j|^adY_j\r)\\
 &\ge c\l({t-s}\r)^{-\frac{n+a}2}\l(1+\dfrac{x^2}{ {t-s}}\r)^{-\frac a2}\exp(-C k).
\end{align*}
Now, if $|X-Z|^2\ge T(t-s)/\rho$, from the definition of $k$ it follows that $k\le |X-Z|^2/(t-s)$ and then
$$ G^V(X,t;Z,s)\ge c\l({t-s}\r)^{-\frac{n+a}2}\l(1+\dfrac{x^2}{ {t-s}}\r)^{-\frac a2}\exp(-C\frac{|X-Z|^2}{t-s}).$$
On the other hand, if $|X-Z|^2 <T(t-s)/\rho$, the definition of $k$ gives $k<T/\rho$ and then
$$ G^V(X,t;Z,s)\ge c\l({t-s}\r)^{-\frac{n+a}2}\l(1+\dfrac{x^2}{ {t-s}}\r)^{-\frac a2}.$$
From these, we get
$$ G^V(X,t;Z,s)\ge \frac {c}{w_a(B(X,\sqrt{t-s}))}$$
for every $X, Z \in  B(X_0,\gz r)$ and $t,s\in (T_1,T_2)$ satisfying $|X-Z|^2/T<t-s\le Tr^2$, where we used the double property of the weight $w_a$.
\end{proof}
Finally, we give a Green formula, that is,
if  $u$ is $\mathscr{L}_a$-parabolic in $V$, $u$ is continuous on $\pz_p V$, then we give the represent of $u$ in terms of  $G^V$  for $(X,t)\in B(X_0,r)\times(T_1,T_2)$,
\begin{align*}
u(X,t)&=\dint_{|Y-X_0|<r}G^V(X,t;Y, T_1)u(Y,T_1)|y|^adY\\
&\quad-\dint_{T_1}^{T_2}\dint_{|Y-X_0|=r}\frac{\pz G^V(X,t;Y,s)}{\pz v(Y)}u(Y,s)|y|^ad\sz(Y)ds.
\end{align*}

\subsection{Mean value formulas}

In this section, we will prove some mean value formulas.

We let $u, v\in {\cal H}_a$, here and in what follows,
 $${\cal H}_a:={\cal H}^1_a\cap {\cal H}_a^2\cap {\cal H}_a^3,\eqno(6.2)$$
 where
$${\cal H}^1_a= \{u: u(Y,\tau)\in C(\rr^{n+1})\cap (C^2(\rr^{n+1}\setminus \{y=0\})\cap L^2_{\loc}(\rr, H^1_{\loc}(\rz, |y|^a))\},$$
$${\cal H}^2_a= \{u: \dlim_{y\to 0^+}|y|^aD_yu(Y,\tau)=\dlim_{y\to 0^-}|y|^aD_yu(Y,\tau)\},$$
and
$${\cal H}^3_a= \{u: \dsup_{0<|y|<1}|y|^a|D_yu(Y,\tau)~|\in L_{\loc}(\rr^{n-1}\times\rr)\}.$$

As in \cite{FG}, our starting point is the formula $$v\mathscr{L}_au-u\mathscr{L}^*_av=\text{div}_{Y,t}(|y|^au\nabla_{Y}v-|y|^av\nabla_{Y}u,|y|^auv),$$
where $\mathscr{L}^*_a$ is defined by (5.4). We assume that $D\subset \rr^{n+1}$ is a bounded domain with smooth boundary and $u,v\in {\cal H}_a$. Since $u, v\in {\cal H}_a$, the divergence theorem yield $$\int_{D}(v\mathscr{L}_au-u\mathscr{L}^*_av)dYdt=\int_{\pz D}|y|^a(u\nabla_{Y}v-v\nabla_{Y}u)\cdot \overrightarrow{N}_{Y}+|y|^auv{N}_tdH_{n},\eqno(6.3)$$
where $\overrightarrow{N}=(\overrightarrow{N}_{Y},N_t)$ is the outward normal to $D$ and $dH_{n}$ denotes $n$-dimensional Hausdorff measure. If we take $v\equiv 1$ in $(6.3)$, we obtain $$\int_D \mathscr{L}_audYdt=\int_{\p D}-|y|^a\nabla_{Y}u\cdot \overrightarrow{N}_{Y}+|y|^auN_tdH_{n}.\eqno(6.4)$$

Now we fix $\xi_0=(x'_0,x_0,t_0)\in \rr^{n+1}$, $r>0$ and consider the set $\Omega_r(\xi_0)$ defined by
 $$ \Omega_r(\xi_0)=\big\{(Y,t)\in \rr^{n+1}~\big|~\Gamma(X_0,t_0;Y,t)>{(4\pi r)^{-\frac{n+a}{2}}}(1+\frac{x_0^2}r)^{-\frac a2}\big\}.$$

 For $\varepsilon\in (0,r)$, we let $D_\varepsilon=\Omega_r(\xi_0)\cap\{(Y,t)~|~t<t_0-\varepsilon\}$. We set $v(Y,t)=\Gamma(X_0,t_0;Y,t)$. Denote $$\Psi_r(\xi_0)=\p \Omega_r(\xi_0)=\big\{(Y,t)\in \rr^{n+1}~\big|~\Gamma(X_0,t_0;Y,t)={(4\pi r)^{-\frac{n+a}{2}}}(1+\frac{x_0^2}r)^{-\frac a2}\big\}\cup\{\xi_0\}.\eqno(6.5)$$
For $\varepsilon$ as above, we set $$\Psi_\varepsilon=\Psi_r(\xi_0)\cap\{(Y,t)~|~t<t_0-\varepsilon\}$$ and $$I_\varepsilon=\overline{\Omega}_r(\xi_0)\cap\{(Y,t)~|~t=t_0-\varepsilon\}.$$
With this notation we have $\p D_\varepsilon=\Psi_\varepsilon\cup I_\varepsilon.$ We let $u\in {\cal H}_a$, and apply $(6.3)$ to this function, to $v$ defined above and to the set $D_\varepsilon$.  Notice that $\mathscr{L}^*_av=0$ in $D_\varepsilon\backslash\{y=0\}$ and $\displaystyle\lim_{y\to 0^+}|y|^av_y=\displaystyle\lim_{y\to 0^-}|y|^av_y$ in $D_\varepsilon$, then we deduce
$$\int_{D_\varepsilon}\Gamma \mathscr{L}_au dYdt=\int_{\Psi_\varepsilon\cup I_\varepsilon}|y|^a(u\nabla_{Y}\Gamma-\Gamma\nabla_{Y}u)\cdot \overrightarrow{N}_{Y}+|y|^au\Gamma{N}_tdH_{n},\eqno(6.6)$$
where $\Gamma=\Gamma(X_0,t_0;Y,t).$ Since  $\Gamma={(4\pi r)^{-\frac{n+a}{2}}}(1+\frac{x_0^2}r)^{-\frac a2}$ on $\Psi_\varepsilon$ and $N_t=1$ on $I_\varepsilon,$ we obtain from (6.6)
\begin{align*}
\int_{D_\varepsilon}\Gamma \mathscr{L}_au dYdt=&{(4\pi r)^{-\frac{n+a}{2}}}(1+\frac{x_0^2}r)^{-\frac a2}\int_{\Psi_\varepsilon}|y|^auN_t-|y|^a\nabla_{Y}u\cdot\overrightarrow{N}_{Y}dH_{n}\\
&+\int_{I_\varepsilon}|y|^au\Gamma dH_{n}+\int_{\Psi_\varepsilon}|y|^au\nabla_{Y}\Gamma\cdot\overrightarrow{N}_{Y}dH_{n}.\tag{6.7}
\end{align*}
Since $\Gamma(X_0,t_0;Y,t)$ is the fundamental solution with pole at $(X_0,t_0)$ we have
$$\lim_{\varepsilon\rightarrow 0^+}\int_{I_\varepsilon}|y|^au\Gamma dH_{n}=u(X_0,t_0).$$
Letting then $\varepsilon\rightarrow 0^+$ in (6.7)  finally gives
\begin{align*}
\int_{\Omega_r(\xi_0)}\Gamma \mathscr{L}_au dYdt=&{(4\pi r)^{-\frac{n+a}{2}}(1+\frac{x_0^2}r)^{-\frac a2}}\int_{\Psi_r(\xi_0)}|y|^auN_t-|y|^a\nabla_{Y}u\cdot\overrightarrow{N}_{Y}dH_{n}\\
&+u(X_0,t_0)+\int_{\Psi_r(\xi_0)}|y|^au\nabla_{Y}\Gamma\cdot\overrightarrow{N}_{Y}dH_{n}.
\end{align*}
In virtue of (6.4) we finally conclude
$$-\int_{\Psi_r(\xi_0)}|y|^au\nabla_{Y}\Gamma\cdot\overrightarrow{N}_{Y}dH_{n}=u(X_0,t_0)+\int_{\Omega_r(\xi_0)}\mathscr{L}_au\left({(4\pi r)^{-\frac{n+a}{2}}(1+\frac{x_0^2}r)^{-\frac a2}}-\Gamma
\right)dYdt.\eqno(6.8)$$
Formula (6.8) is our starting point. For convenience,  let
$$\vz(r)=(4\pi r)^{\frac{n+a}{2}}(1+\frac{x_0^2}r)^{\frac a2}.\eqno(6.9)$$
Changing $r$ in $\rho$ in (6.8), multiplying both sides by
$$\vz'(\rho)={(4\pi \rho)^{\frac{n+a}{2}}}(1+\frac{x_0^2}\rho)^{\frac a2-1}(\frac {n+a}{2\rho}+\frac{nx_0^2}{2\rho^2})>0\eqno(6.10)$$ and integrating in $\rho$ between 0 and $r$,  recalling that $\overrightarrow{N}_{Y}=-\nabla_{Y}\Gamma/|(\nabla_{Y} \Gamma, \p_t \Gamma)|$  on $\Psi_r(\xi_0)\backslash\{\xi_0\}$, we have
\begin{align*}
&\int^r_0\vz'(\rho)\displaystyle\bigg(\int_{\Gamma={(\vz(\rho))^{-1}}}\dfrac{u|y|^a\nabla_{Y}\Gamma\cdot\nabla_{Y}\Gamma}{|\nabla_{Y,t}\Gamma|} dH_{n}\bigg)d\rho\\
&=\vz(r) u(X_0,t_0)+\int^r_0 \vz'(\rho)\bigg(\int_{\Omega_\rho(\xi_0)}\mathscr{L}_au\bigg[(\vz(\rho))^{-1}-\Gamma\bigg]dY dt\bigg)d\rho.\tag{6.11}
\end{align*}
In (6.11) we have denoted by $\nabla_{Y,t}\Gamma$ the total gradient of $\Gamma$, i.e., the $(n+1)$-dimensional vector $(\nabla_{Y}\Gamma,D_t\Gamma).$  In addition, applying Federer's co-area formula \cite {Fe}, we obtain
\begin{align*}
\int^r_0\vz'(\rho)\displaystyle\bigg(\int_{\Gamma={(\vz(\rho))^{-1}}}\dfrac{u|y|^a\nabla_{Y}\Gamma\cdot\nabla_{Y}\Gamma}{|\nabla_{Y,t}\Gamma|} dH_{n}\bigg)d\rho=\int_{\Omega_r(\xi_0)}\dfrac{u|y|^a\nabla_{Y}\Gamma\cdot\nabla_{Y}\Gamma}{\Gamma^2}dYdt. \tag{6.12}
\end{align*}
From (6.11) and (6.12), we get
\begin{align*}
&(\vz(r))^{-1}\int_{\Omega_r(\xi_0)}\dfrac{u|y|^a\nabla_{Y}\Gamma\cdot\nabla_{Y}\Gamma}{\Gamma^2}dYdt\\
&= u(X_0,t_0)+(\vz(r))^{-1}\int^r_0 \vz'(\rho)\bigg(\int_{\Omega_\rho(\xi_0)}\mathscr{L}_au\bigg[(\vz(\rho))^{-1}-\Gamma\bigg]dY dt\bigg)d\rho.\tag{6.13}
\end{align*}

In short, we have the following results.
\vspace{0.4cm}
\begin{lemma}
Let $u\in {\cal H}_a$ and $\xi_0=(X_0,t_0)$. Then we have
\begin{align*}
&-\int_{\p\Omega_r(\xi_0)}u(Y,t)|y|^a\nabla_{Y}\Gamma(X_0,t_0;Y,\tau)\cdot\overrightarrow{N}_{Y}dH_{n}\\
&=u(\xi_0)+\int_{\Omega_r(\xi_0)}\mathscr{L}_au\left({(4\pi r)^{-\frac{n+a}{2}}(1+\frac{x_0^2}r)^{-\frac a2}}-\Gamma
\right)dYdt.
\end{align*}
for a.e. $r>0$. Moreover, for every $r>0$, $(6.13)$ holds.
\end{lemma}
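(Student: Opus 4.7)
The first identity is essentially what the display calculation preceding the lemma already establishes; my plan is to justify that derivation rigorously by controlling the behavior at the singular set $\{y=0\}$ and at the pole $\xi_0$. The starting point is the Green-type identity (6.3) applied to $u$ and $v(Y,t)=\Gamma(X_0,t_0;Y,t)$ on the truncated sublevel set $D_\varepsilon=\Omega_r(\xi_0)\cap\{t<t_0-\varepsilon\}$. Because $u,v\in\mathcal{H}_a$, the divergence theorem (6.3) initially applies on the subset $D_\varepsilon\cap\{|y|>\delta\}$, producing an extra boundary piece on $\{y=\pm\delta\}$. The defining conditions of $\mathcal{H}_a^2$ and $\mathcal{H}_a^3$ (matching of $|y|^aD_y u$ across $y=0$ and local integrability of $\sup_{0<|y|<1}|y|^a|D_y u|$), together with the analogous facts for $\Gamma$ coming from (2.3) and Remark 2.1, let me pass to the limit $\delta\to 0^+$: the $y=+\delta$ and $y=-\delta$ contributions cancel and (6.3) holds on $D_\varepsilon$ itself. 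Since $\mathscr{L}_a^* v=0$ away from $\{y=0\}$, the left-hand side of (6.3) reduces to $\int_{D_\varepsilon}\Gamma\,\mathscr{L}_a u\, dY\,dt$.

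Next I split $\partial D_\varepsilon=\Psi_\varepsilon\cup I_\varepsilon$ as in the text. On $\Psi_\varepsilon$ the factor $\Gamma$ equals the constant $(4\pi r)^{-(n+a)/2}(1+x_0^2/r)^{-a/2}=(\varphi(r))^{-1}$, so two of the surface integrals combine into $(\varphi(r))^{-1}\int_{\Psi_\varepsilon}(|y|^a u N_t-|y|^a\nabla_Y u\cdot\vec{N}_Y)dH_n$. Writing the same identity (6.4) (with $v\equiv 1$) for $u$ on $D_\varepsilon$ converts this combination into $(\varphi(r))^{-1}\bigl(\int_{D_\varepsilon}\mathscr{L}_a u\,dY\,dt+\int_{I_\varepsilon}|y|^a u\,dH_n\bigr)$, leaving only the genuine surface term $\int_{\Psi_\varepsilon}|y|^a u\,\nabla_Y\Gamma\cdot\vec{N}_Y\,dH_n$ and the initial-time term $\int_{I_\varepsilon}|y|^a u\,\Gamma\,dH_n$ with non-cancelling coefficients. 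Now I let $\varepsilon\to 0^+$: Theorem 2.1(ii) together with the continuity of $u$ forces $\int_{I_\varepsilon}|y|^a u\,\Gamma\,dH_n\to u(\xi_0)$, and the remaining $I_\varepsilon$ pieces vanish because $\Gamma$ is bounded away from the pole on the relevant slice and $|I_\varepsilon|\to 0$ in the weighted sense. Rearranging gives the first displayed identity for a.e.\ $r>0$ (the a.e.\ comes from justifying that for a.e.\ $r$ the level set $\Psi_r(\xi_0)$ is smooth, so that $\vec{N}_Y$ makes classical sense, via Sard's theorem applied to $\Gamma$).

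For identity (6.13) the plan is to rename $r\to\rho$ in the first formula, multiply by $\varphi'(\rho)>0$ (see (6.10)), and integrate $\rho\in(0,r)$. On the left we obtain the iterated integral
$$\int_0^r \varphi'(\rho)\int_{\Gamma=(\varphi(\rho))^{-1}}\frac{u|y|^a\nabla_Y\Gamma\cdot\nabla_Y\Gamma}{|\nabla_{Y,t}\Gamma|}\,dH_n\,d\rho,$$
which I convert into a solid integral over $\Omega_r(\xi_0)$ via Federer's co-area formula applied to the function $\rho=(\varphi\circ\Gamma^{-1})$, giving the left-hand side of (6.13) after dividing by $\varphi(r)$. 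The term $\varphi(r)u(\xi_0)$ on the right comes from $\int_0^r \varphi'(\rho)u(\xi_0)\,d\rho$; the last term is just the integrated source contribution.

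The main obstacle is the rigorous passage to the limit across the degeneracy set $\{y=0\}$: I have to show that the vanishing-at-$y=0$ of the coefficient $|y|^a$ (for $a>0$) or its blow-up (for $-1<a<0$) is balanced, for both $u$ and $\Gamma$, by the regularity encoded in $\mathcal{H}_a^2$ and $\mathcal{H}_a^3$, so that the boundary term on the slab $\{|y|=\delta\}$ really vanishes. Once this is in place, the rest is a bookkeeping exercise combining (6.3), (6.4), Theorem 2.1(ii), Sard's theorem and the co-area formula.
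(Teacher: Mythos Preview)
Your proposal is correct and follows essentially the same route as the paper's derivation in (6.3)--(6.13): apply the Green identity (6.3) on $D_\varepsilon$, split $\partial D_\varepsilon=\Psi_\varepsilon\cup I_\varepsilon$, use (6.4), let $\varepsilon\to 0^+$ via Theorem~2.1(ii), then multiply by $\varphi'(\rho)$, integrate and apply the co-area formula. The only differences are cosmetic (you apply (6.4) on $D_\varepsilon$ before the limit rather than on $\Omega_r(\xi_0)$ after, and you make explicit the $\{|y|>\delta\}$ cutoff and the Sard argument that the paper leaves implicit); note there is a harmless sign slip in your conversion via (6.4), where the $I_\varepsilon$ contribution should enter with a minus sign.
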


\vspace{0.3cm}
\begin{lemma}
Let $u\in {\cal H}_a$ and $\xi_0=(X_0,t_0)$. For  $r>0$ let us define
$$u_r(\xi_0)=(\vz(r))^{-1}\int_{\Omega_r(\xi_0)}\dfrac{u(Y,t)|y|^a|\nabla_{Y}\Gamma(X_0,t_0;Y,t)|^2}{\Gamma^2(X_0,t_0;Y,t)}dYdt.\eqno(6.14)$$
Then
$$\frac{d}{dr}u_r(\xi_0)=(\vz(r))^{-2}\vz'(r)\int_{\Omega_r(\xi_0)}\mathscr{L}_au(Y,t)\ln\bigg[\dfrac{(\vz(r))^{-1}}{\Gamma(X_0,t_0;Y,\tau)}\bigg]dYdt,\eqno(6.15)$$
where $\vz(r), \vz'(r)$ were defined as in $(6.9)$ and $(6.10)$.
\end{lemma}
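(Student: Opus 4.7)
The starting point will be formula (6.13), which already expresses $u_r(\xi_0)$ as $u(X_0,t_0)$ plus an iterated integral. Writing $F(\rho) := \int_{\Omega_\rho(\xi_0)} \mathscr{L}_a u \bigl[(\varphi(\rho))^{-1}-\Gamma\bigr] dY dt$, formula (6.13) reads $\varphi(r) u_r(\xi_0) = \varphi(r) u(\xi_0) + \int_0^r \varphi'(\rho) F(\rho) d\rho$. Differentiating this identity in $r$ and using the product rule yields
\begin{equation*}
\frac{d}{dr} u_r(\xi_0) = \frac{\varphi'(r)}{\varphi(r)}\bigl[u(\xi_0) + F(r) - u_r(\xi_0)\bigr],
\end{equation*}
so the theorem reduces to identifying the bracketed quantity with $(\varphi(r))^{-1}\int_{\Omega_r(\xi_0)}\mathscr{L}_a u\,\ln\bigl[(\varphi(r))^{-1}/\Gamma\bigr]\,dY\,dt$.

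The key observation for this identification is that on the level sets of $\Gamma(\xi_0;\cdot)$ the function $\varphi$ acts as an inverse: $(Y,t) \in \Omega_\rho(\xi_0)$ if and only if $\varphi(\rho) > 1/\Gamma(\xi_0;Y,t)$. Defining $\psi(Y,t) := \varphi^{-1}\bigl(1/\Gamma(\xi_0;Y,t)\bigr)$, this means $\mathbf{1}_{\Omega_\rho}(Y,t) = \mathbf{1}_{\{\rho > \psi(Y,t)\}}$, together with the two elementary antiderivative identities
\begin{equation*}
\int_{\psi(Y,t)}^{r} \frac{\varphi'(\rho)}{\varphi(\rho)}\,d\rho = \ln\bigl(\varphi(r)\Gamma(\xi_0;Y,t)\bigr), \qquad \int_{\psi(Y,t)}^{r} \varphi'(\rho)\,d\rho = \varphi(r) - \frac{1}{\Gamma(\xi_0;Y,t)},
\end{equation*}
for each $(Y,t) \in \Omega_r(\xi_0)$.

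With these identities in hand, I would split $F(\rho) = (\varphi(\rho))^{-1}\int_{\Omega_\rho}\mathscr{L}_a u\,dY\,dt - \int_{\Omega_\rho}\mathscr{L}_a u\,\Gamma\,dY\,dt$ and apply Fubini's theorem to the two pieces of $\int_0^r \varphi'(\rho) F(\rho)\,d\rho$. The first piece converts to $\int_{\Omega_r} \mathscr{L}_a u \,\ln(\varphi(r)\Gamma)\,dY\,dt$ via the first identity, while the second collapses, via the second identity and the relation $\Gamma \cdot \varphi(\psi) = 1$, to $\varphi(r)\int_{\Omega_r}\mathscr{L}_a u\,\Gamma\,dY\,dt - \int_{\Omega_r}\mathscr{L}_a u\,dY\,dt$. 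Dividing by $\varphi(r)$ and recombining with $F(r) = \int_{\Omega_r}\mathscr{L}_a u\,[(\varphi(r))^{-1}-\Gamma]\,dY\,dt$, one obtains exactly
\begin{equation*}
u(\xi_0) + F(r) - u_r(\xi_0) = \frac{1}{\varphi(r)} \int_{\Omega_r(\xi_0)} \mathscr{L}_a u \,\ln\!\left[\frac{(\varphi(r))^{-1}}{\Gamma(\xi_0;Y,t)}\right] dY\,dt,
\end{equation*}
and substituting into the displayed expression for $\frac{d}{dr}u_r(\xi_0)$ gives (6.15).

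The main obstacle is a technical rather than conceptual one: verifying that the Fubini interchange is legitimate and that the $\rho$-differentiation of the outer integral in (6.13) is justified pointwise (rather than only a.e.). The singularity of $\Gamma$ at $\xi_0$ and the degeneracy of the weight $|y|^a$ make the boundary behavior delicate near $\rho = 0$; however, as noted already in the proof leading to (6.13), one has $\varphi(\rho) \to 0$ and $F(\rho) \to 0$ as $\rho \to 0^+$ (so the boundary term from integration by parts or from interchanging limits vanishes), and the assumption $u \in \mathcal{H}_a$ together with the bounds (2.5) on $\Gamma$ guarantee the required integrability for Fubini. Once these technicalities are addressed, the algebraic identity above completes the proof.
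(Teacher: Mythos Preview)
Your proposal is correct and follows essentially the same route as the paper: both start from (6.13), differentiate in $r$, apply Fubini with the lower limit $\varphi^{-1}(1/\Gamma(\xi_0;\cdot))$, and use the antiderivatives $\int \varphi'/\varphi\,d\rho=\ln\varphi$ and $\int\varphi'\,d\rho=\varphi$ to collapse the iterated integral. The only cosmetic difference is that you package the computation via the auxiliary quantity $u(\xi_0)+F(r)-u_r(\xi_0)$, whereas the paper differentiates (6.13) directly into two terms and then simplifies; the algebra is identical.
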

\begin{proof}
By (6.13), we obtain
\begin{align*}
\frac{d}{dr}u_r(z_0)=&-(\vz(r))^{-2}\vz'(r)\int^r_0 \vz'(\rho)\bigg(\int_{\Omega_\rho(\xi_0)}\mathscr{L}_au\bigg[(\vz(\rho))^{-1}-\Gamma\bigg]dY dt\bigg)d\rho\\
&+(\vz(r))^{-1}\vz'(r)\int_{\Omega_r(\xi_0)}\mathscr{L}_au\Big[(\vz(r))^{-1}-\Gamma\Big]dYdt.
\end{align*}
Let $\vz^{-1}$ be the inverse function of $\vz$.
Applying Fubini's theorem, we conclude
\begin{align*}
&\int^r_0 \vz'(\rho)\int_{\Omega_\rho(\xi_0)}\mathscr{L}_au\big[(\vz(\rho))^{-1}-\Gamma\big]dY dtd\rho\\
&=\int_{\Omega_r(\xi_0)}\mathscr{L}_au\dint_{\vz^{-1}(1/\Gamma)}^r\big[(\vz(\rho))^{-1}-\Gamma\big]dY dtd\vz(\rho)\\
&=\int_{\Omega_r(\xi_0)}\mathscr{L}_au\big[\ln(\vz(\rho))-\Gamma\vz(\rho)\big]^{r}_{\vz^{-1}(1/\Gamma)}dY dt\\
&=\int_{\Omega_r(\xi_0)}\mathscr{L}_au\ln(\vz(r)\Gamma)dY dt-\int_{\Omega_r(\xi_0)}\mathscr{L}_a u(\vz(r)\Gamma-1)dY dt.
\end{align*}
From these, then (6.15) holds.
\end{proof}

As a consequence of Lemma 6.3, we have the following result.

\begin{corollary}
Let $u\in {\cal H}_a$, $\xi_0=(X_0,t_0)$ and $\az>1$.  Suppose that $\mathscr{L}_a u\leq 0 $ in $\oz_{\az^2 r}(\xi_0)$. Then there exists a positive constant $C=C(n,\az,a)$ such that
$$u_{\az^2 r}(\xi_0)-u_{\az r}(\xi_0)\ge  C\l((\vz(\az r))^{-1}-(\vz(\az^2 r))^{-1}\r)\int_{\Omega_{ r}(\xi_0)}(-\mathscr{L}_a u)dYdt, \eqno(6.16)$$
where $\vz(r)$ was defined as in $(6.9)$.
\end{corollary}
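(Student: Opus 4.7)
The plan is to integrate the identity (6.15) of Lemma 6.3 over $\rho\in[\az r,\az^2 r]$. Under the hypothesis $\mathscr{L}_a u\le 0$ in $\Omega_{\az^2 r}(\xi_0)$, this gives
\[
u_{\az^2 r}(\xi_0)-u_{\az r}(\xi_0)=\int_{\az r}^{\az^2 r}\frac{\vz'(\rho)}{\vz(\rho)^2}\int_{\Omega_\rho(\xi_0)}\mathscr{L}_a u(Y,t)\,\ln\frac{(\vz(\rho))^{-1}}{\Gamma(\xi_0;Y,t)}\,dY dt\,d\rho.
\]
On $\Omega_\rho(\xi_0)$ one has $\Gamma>(\vz(\rho))^{-1}$, so the logarithm is negative; paired with $\mathscr{L}_a u\le 0$ the integrand is pointwise nonnegative, and the inequality therefore survives after restricting the inner integration to the smaller set $\Omega_r(\xi_0)\subset\Omega_\rho(\xi_0)$ (this inclusion is valid since $\rho\ge r$ and the $\Omega_\rho$ family is monotone in $\rho$).

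Next I would extract a scalar lower bound for the logarithm on the restricted domain. For $(Y,t)\in\Omega_r(\xi_0)$ one has $\Gamma>(\vz(r))^{-1}$, so by monotonicity of $\vz$,
\[
-\ln\frac{(\vz(\rho))^{-1}}{\Gamma}=\ln\bigl(\vz(\rho)\Gamma\bigr)\ge\ln\frac{\vz(\rho)}{\vz(r)}\ge\ln\frac{\vz(\az r)}{\vz(r)}\qquad\text{for every }\rho\in[\az r,\az^2 r].
\]
Pulling this constant outside and evaluating $\int_{\az r}^{\az^2 r}\vz'(\rho)/\vz(\rho)^2\,d\rho=(\vz(\az r))^{-1}-(\vz(\az^2 r))^{-1}$ yields
\[
u_{\az^2 r}(\xi_0)-u_{\az r}(\xi_0)\ge\ln\frac{\vz(\az r)}{\vz(r)}\,\bigl[(\vz(\az r))^{-1}-(\vz(\az^2 r))^{-1}\bigr]\int_{\Omega_r(\xi_0)}(-\mathscr{L}_a u)\,dY dt.
\]

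The main obstacle is then showing that $\ln[\vz(\az r)/\vz(r)]$ admits a strictly positive lower bound $c(n,a,\az)$ independent of $r$ and $x_0$, so that it may be absorbed into the constant $C$ of (6.16). From $\vz(r)=(4\pi r)^{(n+a)/2}(1+x_0^2/r)^{a/2}$ one computes
\[
\frac{\vz(\az r)}{\vz(r)}=\az^{(n+a)/2}\l(\frac{1+x_0^2/(\az r)}{1+x_0^2/r}\r)^{a/2},
\]
and the elementary observation that $s\mapsto(1+s/\az)/(1+s)$ decreases monotonically from $1$ to $1/\az$ on $[0,\infty)$ confines the parenthesized factor to the interval $[\az^{-|a|/2},\az^{|a|/2}]$ uniformly in $r,x_0$. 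A case split on the sign of $a$ then gives $\vz(\az r)/\vz(r)\ge\az^{n/2}$ for $a\ge 0$ and $\vz(\az r)/\vz(r)\ge\az^{(n+a)/2}$ for $-1<a<0$; both exponents are positive (since $n\ge 2$ and $a>-1$), so the ratio exceeds a constant strictly greater than $1$ and its logarithm is the desired $c(n,a,\az)>0$. All remaining bookkeeping (Fubini in $\rho$, absolute continuity of $\rho\mapsto u_\rho(\xi_0)$) is routine from the hypothesis $u\in\mathcal{H}_a$ together with the definite sign of the integrand.
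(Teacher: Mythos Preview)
Your argument is correct and follows essentially the same route as the paper: integrate (6.15) over $[\az r,\az^2 r]$, restrict the inner integral to $\Omega_r(\xi_0)$ using the sign of $\mathscr{L}_a u$, bound the logarithm from below via $\Gamma>(\vz(r))^{-1}$ on $\Omega_r$, and then control $\vz(\az r)/\vz(r)$ uniformly in $r,x_0$. The only cosmetic difference is that the paper packages the lower bound as $\ln\bigl(\az^{(n-|a|)/2}\bigr)$ in a single stroke, whereas you arrive at the equivalent (in fact slightly sharper) constants $\az^{n/2}$ and $\az^{(n+a)/2}$ via a case split on the sign of $a$.
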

\begin{proof}
(6.15) gives
\begin{align*}
u_{\az^2 r}(\xi_0)-u_{\az r}(\xi_0)&=\int^{\az^2 r}_{\az r}\dfrac{d}{d\rho}u_\rho(\xi_0)d\rho\\
&= \int^{\az^2 r}_{\az r}(\vz(\rho))^{-2}\vz'(\rho)\bigg(\int_{\Omega_{\rho}(\xi_0)}\mathscr{L}_au\ln\bigg[\dfrac{(\vz(\rho))^{-1}}{\Gamma(X_0,t_0;Y,t)}\bigg]dYdt\bigg)d\rho\\
&\geq \int_{\Omega_{ r}(\xi_0)}(-\mathscr{L}_a u)\ln(\az^{\frac {n-|a|}2})dYdt(\vz(\rho))^{-1}\big|^{\az^2 r}_{\az r}\\
&\geq C\l((\vz(\az r))^{-1}-(\vz(\az^2 r))^{-1}\r)\int_{\Omega_{ r}(\xi_0)}(-\mathscr{L}_a u)dYdt.
\end{align*}
Thus, (6.16) is proved.
\end{proof}
As a consequence of Lemma 6.2, we have the following result.

\begin{corollary}
If $u$ is $\mathscr{L}_a$-parabolic in an open set containing $\overline{\oz_r(\xi_0)} $, then
 $$u(\xi_0)=(\vz(r))^{-1}\int_{\Omega_r(\xi_0)}\dfrac{u|y|^a\nabla_{Y}\Gamma\cdot\nabla_{Y}\Gamma}{\Gamma^2}dYdt,$$
where $\vz(r)$ was defined as in $(6.9)$.
\end{corollary}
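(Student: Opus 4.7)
The corollary is an immediate consequence of the reproduction identity (6.13) in Lemma 6.2. The idea is that if $u$ is $\mathscr{L}_a$-parabolic in some open set $U\supset \overline{\Omega_r(\xi_0)}$, then $\mathscr{L}_a u\equiv 0$ on every level region $\Omega_\rho(\xi_0)\subset U$ with $\rho\in(0,r]$, so the second term on the right-hand side of (6.13) vanishes identically and the identity collapses to
\[
u(\xi_0)=(\vz(r))^{-1}\int_{\Omega_r(\xi_0)}\frac{u\,|y|^a\,\nabla_Y\Gamma\cdot\nabla_Y\Gamma}{\Gamma^2}\,dY\,dt,
\]
which is exactly the claim.

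\textbf{The only real point to verify.} Lemma 6.2 was stated and proved for functions $u$ in the class $\mathcal{H}_a=\mathcal{H}_a^1\cap\mathcal{H}_a^2\cap\mathcal{H}_a^3$ defined in (6.2), whereas here we are only given that $u$ is $\mathscr{L}_a$-parabolic. Hence the body of the argument reduces to verifying $u\in\mathcal{H}_a$ on a neighbourhood of $\overline{\Omega_r(\xi_0)}$. Continuity on $U$ and membership in $L^2_{\loc}(\rr,H^1_{\loc}(\rn,|y|^a))$ are built into Definition 3.1. Away from the degenerate hyperplane $\{y=0\}$, $\mathscr{L}_a$ is a classical uniformly parabolic operator with smooth coefficients, so interior Schauder estimates yield $u\in C^2(U\setminus\{y=0\})$, which covers $\mathcal{H}_a^1$. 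The one-sided conormal matching $\lim_{y\to 0^+}|y|^aD_y u=\lim_{y\to 0^-}|y|^aD_y u$ required by $\mathcal{H}_a^2$ is precisely the jump condition across $\{y=0\}$ enforced by the weak formulation: testing the weak equation against a smooth bump supported across $\{y=0\}$ and integrating by parts in $y$ shows that the two one-sided limits must agree. The local bound $\sup_{0<|y|<1}|y|^a|D_y u|\in L_{\loc}(\rr^{n-1}\times\rr)$ in $\mathcal{H}_a^3$ follows from a standard Caccioppoli-type gradient estimate for weak solutions of the $A_2$-weighted parabolic operator $\mathscr{L}_a$, which is available since $|y|^a\in A_2(\rr)$ for $a\in(-1,1)$.

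\textbf{Where the difficulty lies.} The algebraic content of the corollary is essentially trivial once (6.13) is on the table; all the substance is in the regularity verification above. The most delicate step is the conormal matching condition across the degenerate hyperplane $\{y=0\}$, because that is the one place where the weight degeneracy interacts with the equation. However, this is a classical feature of weak solutions to $A_2$-weighted parabolic operators and it causes no real obstruction; one may invoke the extension/regularity framework for $\mathscr{L}_a$ already discussed in Section 2 to close the argument.
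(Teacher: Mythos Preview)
Your approach is correct and matches the paper's: Corollary 6.2 is stated there simply as a consequence of Lemma 6.2, with no separate proof, since setting $\mathscr{L}_a u=0$ in (6.13) kills the integral remainder and yields the mean-value identity directly. You go further than the authors by attempting to verify $u\in\mathcal{H}_a$ (a point they leave entirely implicit); your arguments for $\mathcal{H}_a^1$ and $\mathcal{H}_a^2$ are reasonable, but be aware that a Caccioppoli estimate alone controls only the weighted $L^2$-norm of $\nabla u$, not the pointwise quantity $\sup_{0<|y|<1}|y|^a|D_y u|$ demanded by $\mathcal{H}_a^3$ --- for that one needs the finer conormal regularity theory for solutions of the degenerate extension operator (cf.\ the references \cite{NS,ST,BD1,BD2}), which gives that $|y|^a D_y u$ extends continuously across $\{y=0\}$.
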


\subsection{A strong form of Harnack inequality}
In this section we prove the strong Harnack inequality. We first consider a special case at center point of $\Omega_r(X_0,t_0)$  with $(X_0,t_0)=(x_0',0,t_0)$, without of loss generality, we can assume $x_0'=0,t_0=0$.

 Next lemma concerning the behavior of $\mathscr{L}_a$-parabolic in the region $Q(2r)$, where
$$Q(r):=\left\{(X,t)\in\rr^{n+1}|-\frac{3r}{4}<t<0,~|X|^2<2(n+a)t\log\left(-\frac{t}{r}\right)\right\}~~(r>0).\eqno(6.17)$$
This is the set $\Omega(r):=\Omega_r(0)$ with the ``lens shaped" region below the line $t=-{3r}/{4}$ removed.

\begin{theorem}
Let $u\geq 0$ is  $\mathscr{L}_a$-parabolic  in $Q(2r)$, $r>0$, and suppose that $u$ is continuous at each point of $\p Q(2r)$, except possibly at $0$. Then there exists a positive constant $C$, depending only on $n,~ a$ and not on $r$, such that
$$\mathop{\dashint}_{|X|^2\leq \frac{3(n+a)r}{4}}u\Big(X,-\frac{3r}{2}\Big)|x|^adX\leq C\inf_{\Omega(\frac{3r}{4})}u,\eqno(6.18)$$
where $\dashint_E$ denote the weighted $(|x|^a)$-average of $E$.
\end{theorem}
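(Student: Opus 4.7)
The plan is to combine the Green-function lower bound of Lemma 6.1 with the parabolic Harnack inequality from \cite{CF}. Take the cylinder $V=B(0,R)\times[-3r/2,-3r/4]$, and select $R$ so that $\{|Y|^2\le 3(n+a)r/4\}$ is contained in $B(0,\gamma R)$ for some $\gamma\in(0,1)$ while $R^2<\min_{t\in[-3r/2,-3r/4]}2(n+a)t\log(-t/(2r))=3(n+a)r\log(4/3)$. This minimum, attained at $t=-3r/2$, is $\approx 0.86(n+a)r$, exceeding $3(n+a)r/4$, so such a choice is possible. Because $0\notin\overline V$, the hypothesis that $u$ is continuous on $\partial Q(2r)\setminus\{0\}$ makes $u\in C(\overline V)$, so the Green representation of Subsection~6.1 is applicable on $V$, or, to keep the initial time in the open interval required by Lemma 6.1, on the shrunken cylinder $V_\varepsilon=B(0,R)\times[-3r/2+\varepsilon,-3r/4]$.

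Fix an interior anchor point $\xi_*=(0,t_*)\in V$ with $t_*\in(-3r/2,-3r/4)$ strictly earlier than every point of $\Omega(3r/4)$. Writing Green's formula on $V_\varepsilon$ and using that $\partial G^{V_\varepsilon}/\partial\nu$ is non-positive on the lateral boundary (because $G^{V_\varepsilon}$ vanishes there and is strictly positive inside), together with $u\ge 0$, the lateral integral is non-negative and can be dropped. Letting $\varepsilon\to 0^+$ and invoking Lemma 6.1 with a constant $T$ chosen so that $|Y|^2/T<t_*+3r/2\le TR^2$ for all $Y$ in the target ball, one obtains the uniform bound $G^V(\xi_*;Y,-3r/2)\ge c\,w_a(B(0,\sqrt{t_*+3r/2}))^{-1}\asymp r^{-(n+a)/2}$. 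Since also $w_a(\{|Y|^2\le 3(n+a)r/4\})\asymp r^{(n+a)/2}$, this yields
\begin{equation*}
u(\xi_*)\ge C_1\dashint_{|Y|^2\le 3(n+a)r/4}u(Y,-3r/2)\,|y|^a\,dY.
\end{equation*}

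Finally, iterate the parabolic Harnack inequality of \cite{CF} along a finite chain of parabolic boxes inside $Q(2r)$ connecting $\xi_*$ (earlier than all of $\Omega(3r/4)$) to an arbitrary $\xi_1\in\Omega(3r/4)$. Because $\Omega(3r/4)$ is pre-compactly contained in the domain of parabolicity (up to its boundary) and is self-similar under the natural parabolic dilations associated with $\mathscr{L}_a$, the chain length and the resulting Harnack constant depend only on $n$ and $a$, giving $u(\xi_*)\le C_2\inf_{\Omega(3r/4)}u$. Combining with the preceding display completes the proof. The main obstacle is the tightness of the geometric setup: $V\subset Q(2r)$ bounds $R^2$ above by $\approx 0.86(n+a)r$, while fitting $\{|Y|^2\le 3(n+a)r/4\}$ inside $B(0,\gamma R)$ with $\gamma<1$ bounds $R^2$ below by $3(n+a)r/(4\gamma^2)$; these are compatible only because of the specific constants in the definition of $Q(r)$. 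Controlling the length of the Harnack chain uniformly as $\xi_1$ approaches the singular vertex $0$ in the thin upper part of $\Omega(3r/4)$ is a secondary delicate point.
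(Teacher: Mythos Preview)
Your first step—deducing a lower bound at a single interior anchor $\xi_*$ from the Green-function estimate of Lemma~6.1—is correct and matches Step~1 of the paper's proof. The genuine gap lies in your second step. You propose a Harnack chain from $\xi_*$ to an arbitrary $\xi_1\in\Omega(3r/4)$ and assert that the chain length is bounded uniformly in $\xi_1$; but this is exactly what fails. The vertex $0$ lies in $\overline{\Omega(3r/4)}\cap\partial Q(2r)$, so $\Omega(3r/4)$ is \emph{not} precompact in the domain of parabolicity. Near the vertex the region has width $|X|^2\sim -t\log(-t/r)$, which forces the cubes in any Harnack chain to have side $\lesssim\sqrt{-t}$, and the number of steps to climb from a fixed time level to $t\to 0^-$ is unbounded. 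Self-similarity of $\Omega(\cdot)$ under parabolic dilations does not help, because the anchor $\xi_*$ is fixed and does not scale along with $\xi_1$. What you call a ``secondary delicate point'' is in fact the heart of the theorem.

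The paper circumvents this with three further ingredients that your outline omits. First (Step~2), the mean-value formula of Corollary~6.2 at the points $(0,t)$ on the axis yields $u(0,t)\ge\alpha_3$ for all $t\in(-1/(2e^8),0)$, because the averaging domain $\Omega((0,t),1/4)$ always retains a fixed-measure intersection with the region where Step~1 already gives control. Second (Step~3), the scale-invariant Harnack inequality is applied not with a fixed anchor but on a family of dilated cylinders $S_\lambda=\{(\lambda X,\lambda^2 t):(X,t)\in S\}$, each of which meets the axis (where Step~2 supplies a lower bound) and covers a piece of the paraboloid $|X|^2\le -8(n+a)t$; since the Harnack constant is dilation-invariant, this propagates the bound over the whole paraboloid with a single constant. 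Third (Step~4), to reach the remaining set $E=\Omega(3/4)\setminus\{|X|^2\le -8(n+a)t\}$, the paper constructs an explicit $\mathscr{L}_a$-supersolution $v=\Phi(\log[(4\pi r)^{(n+a)/2}\Gamma])$ vanishing on $\Psi_r\setminus\{0\}$ and bounded on $\partial P_\delta$, and uses the maximum principle to compare $u$ with $Cv$. This barrier argument is what delivers a uniform lower bound right up to the vertex; a naive Harnack chain cannot replace it.
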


\begin{proof} As in \cite {EG},  the hypotheses and conclusion of lemma are unchanged under the mapping $(X,t)\rightarrow \Big(\dfrac{X}{r^{1/2}},\dfrac{t}{r}\Big),$ it is enough to consider the case $r=1$. By homogeneity we may assume that $$\mathop{\dashint}_{|X|^2\leq \frac{3(n+a)}{4}}u\Big(X,-\frac{3}{2}\Big)|x|^adX=1.$$

Step 1: a lower bound in the interior. Consider the cylinder $C'\subset Q(2)$ defined by $$C'=\Big\{(X,t)\in \rr^{n+1}| -\frac{3}{2}<t<-1, ~|X|^2<-3(n+a)\log\Big(\dfrac{3}{4}\Big)\Big\}.$$
In view of the nonnegativity of $u$,   the representation of $u$ in terms of its values on the parabolic boundary of $C'$ and the Green function for $C'$, by Lemma 6.1 and Green formula, there exists a constant $\alpha_1>0$ such that
$$u(X,t)\geq \alpha_1>0~~\text{whenever}~(X,t)\in C'',\eqno(6.19)$$
$$C''=\Big\{(X,t)\in \rr^{n+1}| -\frac{5}{4}<t<-1,~|X|^2<\frac{3(n+a)}{4}\Big\}.$$
In the same way, we can conclude from (6.19) that for some $\alpha_2>0$ we have $$u(X,t)\geq \alpha_2>0~~\text{whenever}~(X,t)\in D:=\Omega(1)\cap\Big\{(X,t)~|~t<-\frac{1}{2e^8}\Big\}.\eqno(6.20)$$

Step 2: a lower bound on the line $X=0$. By Corollary 6.2, for all points $(0,t)$ on the line $L:=\{(0,t)~|-1<t<0\},$ $$u(0,t)=\frac{C_{n,a}}{4^{-\frac{n+a}{2}}}\int_{\Omega\big((0,t),\frac{1}{4}\big)}u(X,s)|x|^a\frac{|X|^2}{4(t-s)^2}dXds.\eqno(6.21)$$
For all $t\in\left(-\frac{1}{2e^8},0\right)$, the $(n+1)$-dimensional measure of $\Omega\big((0,0,t),\frac{1}{4}\big)\cap D$ is bounded away from zero. We can deduce from $(6.20)$ that $$u(0,t)\geq \alpha_3>0~~\text{whenever}~(0,t)\in L.\eqno(6.22)$$

Step  3: a lower bound on the paraboloid $|X|^2<-8(n+a)t.$ Define the truncated paraboloids $$P_1:=\Big\{(X,t)\in \rr^{n+1}|~|X|^2<-8(n+a)t, -\frac{1}{e^8}<t<0\Big\}$$
and $$P_2:=\Big\{(X,t)\in \rr^{n+1}~|X|^2<-16(n+a)t, -\frac{1}{e^8}<t<0\Big\}.$$
Notice that $$P_1\subset P_2\subset \Omega(1).$$
Let $S$ denote a closed cylinder of the form $$S:=\{(X,t)\in \rr^{n+1}|~-c_2\leq t\leq -c_1, |X|^2\leq c_3\},$$
where $c_1, c_2$ and $c_3$ are positive constants chosen so that $$\frac{1}{2e^8}<c_1<c_2<\frac{1}{e^8}$$
and
\begin{align*}
P_1\cap \{(X,t)~|-c_2\leq t\leq -c_1\}&\subset \{(X,t) ~|~ |X|^2\leq c_4, -c_2\leq t\leq -c_1\}\\
&\subset S\\
&\subset P_2\cap\{(X,t) ~|-c_2\leq t\leq -c_1\}
\end{align*}
for some constant $c_4\in (0,c_3).$ Define $$S^+:=S\cap \Big\{(X,t)~|-c_1-\frac{3(c_2-c_1)}{8}\leq t\leq -c_1-\frac{c_2-c_1}{8},~|X|^2\leq c_4\Big\}$$
and $$S^-:=S\cap \Big\{(X,t)~|-c_1-\frac{7(c_2-c_1)}{8}\leq t\leq -c_1-\frac{5(c_2-c_1)}{8}, ~|X|^2\leq c_4\Big\}.$$
According to the Harnack inequality (see \cite{CF}), there exists a positive constant $C$ such that $$\max_{S^-}u\leq C\min_{S^+}u.\eqno(6.23)$$
The constant $C$ depends only on $c_1$, $c_2$, $c_3$, $c_4$, $a$ and $n$, and remains unchanged if we change $S$, $S^+$ and $S^-$ by the parabolic dilation $(X,t)\rightarrow (\lambda X, \lambda^2 t)$, $\lambda >0.$ For $\lambda >0$, define $$S_{\lambda}:=\{(\lambda X,\lambda^2t)~|~(X,t)\in S\}$$
and define $S^+_{\lambda}$ and $S^-_{\lambda}$ similarly. Using (6.23), we obtain $$\max_{S^-_\lambda}u\leq C\min_{S^+_\lambda}u ~\quad\text{whenever}~0<\lambda\leq 1.$$
Since $L\cap S^-_\lambda\neq \emptyset$ for $0<\lambda\leq 1$, by (6.22), this implies $$\alpha_3\leq C\min_{S^+_\lambda}u~\quad\text{whenever}~ 0<\lambda\leq 1.$$
Since each point of $P_1\cap \left\{(X,t)~|-\frac{1}{2e^8}<t<0\right\}$ belongs to $S^+_\lambda$ for some $\lambda \in (0,1]$, we have $$\inf \left\{u(X,t)~|~(X,t)\in \Omega(1), ~|X|^2\leq -8(n+a)t\right\}\geq \alpha_4>0.\eqno(6.24)$$

Step 4: a lower bound on $\Omega (\frac{3}{4}).$ As in \cite{EG} we take
$$\Phi(s):=\arctan(s+16)-\arctan(16),$$ so that for $s\in \rr^+$
\begin{equation}\label{1}
    \left\{
   \begin{array}{ll}
\Phi(0)=0,~~ &0\leq \Phi(s)<\frac{\pi}{2},\tag{6.25}\\
\Phi'(s)>0, &0\leq -\Phi''(s)\leq \frac{1}{8}\Phi'(s).
 \end{array}
 \right.
\end{equation}
Now we set $$v(X,t):=\Phi\Big(\frac{|X|^2}{t}-2(n+a)\log(-t)+2a\ln\pi\Big).\eqno(6.26)$$
We claim that $\mathscr{L}_av\leq 0$ in the open set $$E:=\Omega(1)\backslash \{(X,t)~|~|X|^2\leq -8(n+a)t\}.\eqno(6.27)$$
In fact,
\begin{align*}
\mathscr{L}_av&=\Big(-\frac{|X|^2}{t^2}-\frac{4(n+a)}{t}\Big)|x|^a\Phi'-\frac{4|X|^2}{t^2}|x|^a\Phi''\\
&\leq \Big(-\frac{|X|^2}{2t^2}-\frac{4(n+a)}{t}\Big)|x|^a\Phi'\\
&\leq 0,
\end{align*}
since $\Phi''\geq 0$ and $|X|^2\geq -8(n+a)t$ in $E$, and $\dlim_{x\to 0}|x|^aD_x v(X,t)=0$.
By (6.3) it is obvious that $$v=0~~~\text{on}~\p\Omega(1)\backslash \{(0,0)\}.$$
Also by (6.25) and (6.26) $$v\leq \frac{\pi}{2}~~~\text{on}~\{(X,t)~|~|X|^2=-8(n+a)t\}.$$
Hence (6.27) and the maximum principle imply $$u\geq Cv~~~\text{in}~E,$$
for some positive constant $C$. If $(X,t)\in \Omega (\frac{3}{4}),$ then
\begin{align*}
v(X,t)&=\Phi\left(\frac{|X|^2}{t}-2(n+a)\log(-t)\right)\\
&= \Phi\left(4\ln\l[(4\pi)^{\frac{n+a}2}\Gamma(0,0;X,t)\r]\right)\\
&\ge\Phi\left(2(n+a)\log\left(\frac{4}{3}\right)\right)>0.
\end{align*}
From these and (6.24) we conclude (6.18).
\end{proof}

We now consider another case at center point of $\Omega_r(X_0,t_0)$  with $(X_0,t_0)=(x_0',x_0,t_0)$ and $x_0\not=0$.
 Before stating the theorem we introduce some  notation.  From (2.5), we know that there exist positive constants $C_1, C_2$ depending only on $n, a$ such that for every $(X,t), (Y,\tau)\in\rr^{n+1}$
$$\frac{C_1(1+\frac{y^2}{t-\tau})^{-\frac a2}}{(4\pi(t-\tau))^{\frac{n+a}2}}\exp\l(-\frac {|X-Y|^2}{2(t-\tau)}\r)\le \Gamma(X,t;Y,\tau)\le \frac{C_2(1+\frac{y^2}{t-\tau})^{-\frac a2}}{(4\pi(t-\tau))^{\frac{n+a}2}}\exp\l(-\frac {|X-Y|^2}{6(t-\tau)}\r).\eqno(6.28)
$$
 For $i=1,2$ and $C_i$ as in (2.5) we set for $r>0$
 $$\oz^2_r(X_0,t_0)=\l\{(X,t)\in\rr^{n+1}: \frac{(1+\frac{x_0^2}{t_0-t})^{-\frac a2}}{(4\pi(t_0-t))^{\frac{n+a}2}}\exp\l(-\frac {|X-X_0|^2}{6(t_0-t)}\r)>C_2^{-1} \frac{(1+\frac{x_0^2}{r})^{-\frac a2}}{(4\pi r)^{\frac{n+a}2}}\r\},\eqno(6.29)$$ and
  $$\oz^1_r(X_0,t_0)=\l\{(X,t)\in\rr^{n+1}: \frac{(1+\frac{x_0^2}{t_0-t})^{-\frac a2}}{(4\pi(t_0-t))^{\frac{n+a}2}}\exp\l(-\frac {|X-X_0|^2}{2(t_0-t)}\r)>C_1^{-1} \frac{(1+\frac{x_0^2}{r})^{-\frac a2}}{(4\pi r)^{\frac{n+a}2}}\r\}.\eqno(6.30)$$
Without of loss generality, we can assume $t_0=0$. Let $\xi_0=(X_0,0)$ and also $\Psi_r^i(\xi_0)=\pz\oz_r^i(\xi_0)$. (6.28)-(6.30) imply
 $$\oz^1_r(\xi_0)\subset\oz(\xi_0)\subset \oz^2_r(\xi_0).$$
Let $\az_1=2,\az_2=6$.  The heat spheres $\Psi_r^i, i=1,2,$ are
 $$|X-X_0|^2=R^i_r(t)=\az_i t\l(\frac {n}2\ln(-C_i^{-\frac 2n}r^{-1}t)+\frac a2\ln\l(\frac{-t+|x_0|^2}{r+|x_0|^2}\r)\r)$$
for $-t^i_{x_0}\le t<0$. The function $R^i_r(t)$ vanishes at $t=0$ and at $-t^i_{x_0}$.
It is easy to see that $r\le t^2_{x_0}\le C_2^{\frac 2{n+\bar a}}r$ and $C_1^{\frac 2{n+\bar a}}r\le t^1_{x_0}\le r$, where $\bar a=\min\{0,a\}$.
Therefore, if $r<|x_0|^2$, then there exist positive structure constants $\widehat{C}_1$ and $\widehat{C}_2$ depending only on $n,a$ such that
$$\oz^2_r(\xi_0)\subset \widehat{\oz}^2_r(\xi_0)=\l\{(X,t)\in\rr^{n+1}: \exp\l(\frac {|X-X_0|^2}{6t}\r)> \frac{\widehat{C}_2^{-1} }{(4\pi r)^{\frac{n}2}}\r\}\eqno(6.31)$$
and
$$\oz^1_r(\xi_0)\supset \widehat{\oz}^1_r(\xi_0)=\l\{(X,t)\in\rr^{n+1}: \exp\l(\frac {|X-X_0|^2}{2t}\r)> \frac{\widehat{C}_1^{-1} }{(4\pi r)^{\frac{n}2}}\r\}.\eqno(6.32)$$
The heat spheres $\widehat{\Psi}_r^i=\pz\widehat{\oz}^2_r(\xi_0), i=1,2,$ are
 $$|X-X_0|^2=\widehat{R^i_r}(t)=\frac {\az_i n}2t\ln(-\widehat{C}_i^{-\frac 2n}r^{-1}t)\eqno(6.33)$$
for $-\widehat{C}_i^{\frac 2n}r\le t<0$. The function $\widehat{R}^i_r(t)$ vanishes at $t=0$ and at $-\widehat{C}_i^{\frac 2n}r$.

We set $\eta_i=-\widehat{C}_i^{\frac 2n},\ i=1, 2$. Next we choose and fix a $\sz>0$ such that $\sz<2\eta_1\eta_2^{-1}$, and we let $\eta=(\eta_2\sz+2\eta_1)/2$, so that
$\eta_2\sz<\eta<2\eta_1$. Note that
$$\inf\{t|\ (X,t)\in \oz_{\sz r}\}\ge -\sz\eta_2 r>\eta r,\eqno(6.34)$$
since by (6.31) $\oz_{\sz r}\subset \widehat{\oz}_{\sz r}^2$ and the lowest time level of $\widehat{\oz}_{\sz r}^2$ is $-\sz\eta_2r$. For $r>0$ we set
$$Q_r(\xi_0)=\{(X,t)\in\oz_r(\xi_0)| t>-\eta r/2\}.$$
In what follows we will use the set $Q_{2r}(\xi_0)$. This is the parabolic ball $\oz_{2r}(\xi_0)$ with  the part below the hyperplane $\{t=-\eta r\}$ removed.
Because of (6.34) for every $r>0$ we have $\oz_{\sz r}\subset Q_{2r}$.

From (6.33), we see that the $\oz_r(\xi_0)$ is contained in the parabolic cylinder
$$C_r=\{(X,t)\in\rr^{n+1}: |X-X_0|^2\le 2ne^{-1} \widehat{C}_2^{\frac 2n}r, -\widehat{C}_2^{\frac 2n}\le t\le 0\}.\eqno(6.35)$$
Let $$I_r(\xi_0)=\{(X,t)\in\rr^{n+1}:\ |X-X_0|^2\le R^1_r(-\eta r),\ t=-\eta r\},$$
the two sets $I_r$ and $Q_{\sz r}$ are detached and there is a time-lag between then. We can  state the following.
\begin{theorem}
Let $u\geq 0$ is $\mathscr{L}_a$-parabolic  in $Q_{2r}(\xi_0)$ with $\xi_0=(x'_0,x_0,0)$ and $x_0\not=0$, and suppose that $u$ is continuous at each point of $\pz Q_{2r}(\xi_0)$, except possibly at $\xi_0$. Then there exist a positive constant $C$  depending only on $n$ and $a$, such that
$$\mathop{\dashint}_{I_r(\xi_0)}u\Big(X,-\eta r\Big)|x|^adX\leq C\inf_{\Omega(\frac{3\sz r}{4})(\xi_0)}u\eqno(6.36)$$
if $0<r<c_0|x_0|^2$ with $c_0=(4n \widehat{C}_2^{\frac 2n})^{-11}$.
\end{theorem}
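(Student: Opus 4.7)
The plan is to adapt the four-step argument of Theorem~6.1, exploiting the constraint $r<c_0|x_0|^2$ to reduce to an essentially uniformly parabolic situation. Since $\Omega_{2r}(\xi_0)\subset C_{2r}$, and the defining estimate (6.35) of $C_r$ together with the choice $c_0=(4n\widehat{C}_2^{2/n})^{-11}$ forces $|X-X_0|^2\le 4ne^{-1}\widehat{C}_2^{2/n}c_0|x_0|^2\ll |x_0|^2$ throughout $C_{2r}$, we have $|x|\in[|x_0|/2,3|x_0|/2]$ on the closure of $Q_{2r}(\xi_0)$. Consequently the weight $|x|^a$ is pointwise comparable to the constant $|x_0|^a$ with comparability constants depending only on $a$, so on subcylinders compactly contained in $Q_{2r}(\xi_0)$ the operator $\mathscr{L}_a$ behaves as a uniformly parabolic divergence-form operator, and the classical parabolic Harnack inequality of \cite{CF} applies with constants independent of $x_0$ and $r$.

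After normalizing $\dashint_{I_r(\xi_0)}u(X,-\eta r)|x|^a\,dX=1$, I would mirror the scheme of Theorem~6.1. \textbf{Step 1:} Using Lemma~6.1 and the Green representation on a subcylinder $C'\subset Q_{2r}(\xi_0)$ whose parabolic initial face lies in $\{t=-\eta r\}$ and contains $I_r(\xi_0)$, the normalization forces $u\ge\alpha_1>0$ on a smaller concentric cylinder $C''$ situated just above $\{t=-\eta r\}$. \textbf{Step 2:} Iterating Step~1 along a chain of overlapping subcylinders yields $u\ge\alpha_2>0$ on a set $D\subset\Omega_r(\xi_0)\cap\{t\le -\tau r\}$ whose weighted measure is comparable to that of $\Omega_r(\xi_0)$, for some $\tau>0$ depending only on $n,a,\sigma$. \textbf{Step 3:} For each $t\in(-\tau r,0)$, apply Corollary~6.2 at $(X_0,t)$ on a parabolic ball $\Omega_{\rho(t)}(X_0,t)$; the intersection with $D$ carries a positive fraction of the mean-value weight $|y|^a|\nabla_Y\Gamma|^2/\Gamma^2$, so $u(X_0,t)\ge\alpha_3$ on the segment $L=\{X_0\}\times(-\tau r,0)$. \textbf{Step 4:} Chain the Harnack inequality along a parabolically dilated family of cylinders anchored on $L$, as in (6.23)--(6.24), to propagate the bound to $u\ge\alpha_4>0$ on the inner paraboloid $\widehat{\Omega}^1_{\sigma r}(\xi_0)\cap\{-\sigma\eta_2 r/2<t<0\}$.

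The principal difficulty lies in extending the lower bound to all of $\Omega_{3\sigma r/4}(\xi_0)$ via a barrier, and it is here that the hypothesis $r<c_0|x_0|^2$ plays a decisive role. Following Evans--Gariepy, I would set
\[
v(X,t):=\Phi\Bigl(\tfrac{|X-X_0|^2}{\beta t}-2(n+a)\log\bigl(-4t/(3\sigma r)\bigr)+2a\ln\pi\Bigr),
\]
with $\Phi$ the concave truncation of the arctangent from (6.25) and $\beta>0$ chosen so that $v\equiv 0$ on $\partial\Omega_{3\sigma r/4}(\xi_0)\setminus\{\xi_0\}$ while $v\le C\alpha_4$ on the inner paraboloid of Step~4. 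A computation imitating (6.27) gives
\[
\mathscr{L}_a v=\Bigl(-\tfrac{|X-X_0|^2}{\beta^2 t^2}-\tfrac{4(n+a)}{\beta t}+\tfrac{2a(x-x_0)}{\beta t\, x}\Bigr)|x|^a\Phi'-\tfrac{4|X-X_0|^2}{\beta^2 t^2}|x|^a\Phi'',
\]
whose only new feature relative to Theorem~6.1 is the extra degeneracy term $2a(x-x_0)/(\beta t\,x)$. By the reduction of the first paragraph this term is of size $|x-x_0|/|x_0|=O(c_0^{1/2})$ and is therefore dominated by the principal negative contribution $-|X-X_0|^2/(\beta^2 t^2)\,\Phi'$ off the inner paraboloid, yielding $\mathscr{L}_a v\le 0$ there; the transmission condition $\lim_{x\to 0}|x|^aD_x v=0$ is vacuous since $|x|$ is bounded away from zero. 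Applying the comparison principle (Lemma~4.4) to $u-C v$ on $\Omega_{3\sigma r/4}(\xi_0)$ then gives $u\ge Cv$ throughout, completing the proof of (6.36).
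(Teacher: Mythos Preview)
Your Steps~1--4 are essentially what the paper does (the paper compresses them into the single sentence ``from the standard Harnack inequality for $\mathscr{L}_a$ \ldots\ we infer (6.38)'', citing \cite{GL}), and your observation that $|x|\in[|x_0|/2,3|x_0|/2]$ on $Q_{2r}(\xi_0)$ when $r<c_0|x_0|^2$ is correct and useful. The gap is in your barrier construction.

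When $x_0\neq 0$, the set $\partial\Omega_{3\sigma r/4}(\xi_0)$ is the level set $\{\Gamma(\xi_0;\cdot)=\mathrm{const}\}$ of the actual fundamental solution (2.8), which carries the Bessel-type factor $F(xx_0/t)$. It is \emph{not} described by an equation of the form $|X-X_0|^2/(\beta t)-2(n+a)\log(-t)=\mathrm{const}$, so no choice of a single constant $\beta$ makes your $v$ vanish on $\partial\Omega_{3\sigma r/4}(\xi_0)\setminus\{\xi_0\}$. (In Theorem~6.1 this worked only because at $x_0=0$ one has the exact identity $\Gamma(\xi_0;X,t)=C_{n,a}(-t)^{-(n+a)/2}e^{|X-X_0|^2/(4t)}$.) Consequently the comparison principle cannot be applied as you describe, and the argument breaks at the final step.

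The paper's remedy is to build the barrier out of $\Gamma$ itself:
\[
v(X,t)=\Phi\Bigl(\ln\bigl[(4\pi\sigma r)^{\frac{n+a}{2}}(1+\tfrac{x_0^2}{\sigma r})^{\frac a2}\,\Gamma(\xi_0;X,t)\bigr]\Bigr),
\]
so that $v=\Phi(0)=0$ on $\Psi_{\sigma r}(\xi_0)\setminus\{\xi_0\}$ is automatic. Because $\Gamma$ solves the adjoint equation, the computation of $\mathscr{L}_a v$ collapses (see (6.42)) to the single inequality $|\nabla_X\Gamma|^2/\Gamma^2\le\tfrac43\,D_t\Gamma/\Gamma$ on $W_{\delta,r}$. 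It is \emph{here}, not merely in the comparability of the weight, that the hypothesis $r<c_0|x_0|^2$ is decisive: it forces $z=xx_0/t\gg 1$, whence $F'(z)/F(z)\sim C_a z^{-1}$ and the Bessel correction terms in (6.43) are $O(1/t)$ and hence dominated by $|X-X_0|^2/t^2$ off the paraboloid $P_\delta$. Your proposed barrier misses this mechanism entirely.
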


\begin{proof}
By homogeneity we may suppose, and we do so that
$$\mathop{\dashint}_{I_r(\xi_0)}u\Big(X,-\eta r\Big)|x|^adX=1.$$
We thus want to show that there exists $\Lambda>0$ such that
$$\dinf_{\oz_{\sz r}(\xi_0)}u\ge \Lambda^{-1}.\eqno(6.37)$$
As in the proof of Theorem 1.4 in \cite{GL}, from the standard Harnack inequality for $\mathscr{L}_a$, for every $\dz>0$ we infer the existence of constant $\ez=\ez(\dz)>0$, independent of $r$, such that
$$\dinf_{\oz_{\sz r}(\xi_0)\cap P_{\dz}}u\ge \ez,\eqno(6.38)$$
where $P_{\dz}$ denotes one part of a paraboloid, that is,
$$P_{\dz}=\{(X,t)\in\rr^{n+1}: t\le -\dz |X-X_0|^2\}.$$
It remains to prove that
$$\dinf_{W_{\dz,r}}u\ge C>0$$
for a suitable constant $C$, where $W_{\dz,r}=\oz_{\sz r}(\xi_0)\setminus P_{\dz}$. Let $\Phi$ be defined as (6.25).
Now we set
$$v(X,t)=\Phi\l(\ln\l[(4\pi\sz r)^{\frac {n+a}2}\l(1+\frac{x_0^2}{\sz r}\r)^{\frac a2}\Gamma(\xi_0; X,t)\r]\r),$$
where $\Gamma(\xi_0; X,t)$ is the fundamental solution of
$\mathscr{L}_a^*=-(D_t+\Delta_X+\frac axD_x)$ with pole at $\xi_0=(X_0,0)\in \rr^{n+1}.$ It is obvious that
$$v(X,t)=0\quad {\rm for\ every}\ (X,t)\in \Psi_{\sz r}(\xi_0)\setminus\{\xi_0\}.\eqno(6.39)$$
Also by (6.25), we have
$$v(X,t)\le \pi/2\quad{\rm for}\ (X,t)\in\overline{\oz}_{\sz r}(\xi_0)\cap P_{\dz}.\eqno(6.40)$$
We now claim that
$$\mathscr{L}_a v(X,t)\le 0\quad {\rm for}\ (X,t)\in W_{\dz,r}.$$
Let us  assume for a moment the claim is true. Then by (6.38)-(6.40), the fact $u\ge 0$ in $\Psi_{\sz r}(\xi_0)\setminus\{\xi_0\}$, and the maximum principle we would infer that
$u\ge 2\ez/\pi, \ez$ as in (6.38). Since $\Gamma(\xi_0;X,t)>(3\pi\sz r)^{-\frac {n+a}2}(1+\frac{x_0^2}{(3\sz r)/4})^{-\frac a2}$ on $\oz_{(3/4)\sz r}(\xi_0)\setminus P_{\dz}$,
(6.40) would give
$$\dinf_{\oz_{(3/4)\sz r}(\xi_0)\setminus P_{\dz}}u\ge c\dinf_{\oz_{(3/4)\sz r}(\xi_0)\setminus P_{\dz}}v\ge C\Phi\l(\frac {n-|a|}2\ln\l(\frac 43\r)\r),$$
which, together with (6.38), implies (6.37).

It remains to prove this claim, which is equivalent to proving that if we set
$$w(X,t)=\Phi\l(\ln\l[(4\pi\sz r)^{\frac {n+a}2}\l(1+\frac{x_0^2}{\sz r}\r)^{\frac a2}\Gamma( X,t;\xi_0)\r]\r),$$
then
$$ -\mathscr{L}_a^*w\ge 0 \ {\rm in}\ W^*_{\dz,r}, \eqno(6.41)$$
where $W^*_{\dz,r}$ is the image of the $W_{\dz,r}$ under the time-reflection $(X,t)\to(X,-t)$.
Now a computation yields
\begin{align*}
-\mathscr{L}^*_a w&=D_t w+\Delta_X w+\frac ax D_x w\\\tag{6.42}
&=\Phi'\frac{D_t\Gamma}{\Gamma}+\Phi'\frac{\triangle_X\Gamma}{\Gamma}+(\Phi''-\Phi')\frac{\nabla_X\Gamma\nabla_X\Gamma}{\Gamma^2}+\Phi'\frac ax\frac{D_x\Gamma}\Gamma\\
&=2\Phi'\frac{D_t\Gamma}{\Gamma}+(\Phi''-\Phi')\frac{\nabla_X\Gamma\nabla_X\Gamma}{\Gamma^2},
\end{align*}
where the last equality we have used the fact that $\mathscr{L}_a\Gamma(X,t;\xi_0)=0$ for $(X,t)\not=\xi_0$.
Because of (6.25) $\Phi'-\Phi''\le 3\Phi'/2$. Therefore (6.41) will be true if
$$\frac{\nabla_X\Gamma\nabla_X\Gamma}{\Gamma^2}\le \frac 43 \frac {D_t\Gamma}{\Gamma} \ \ \  {\rm in}\ W^*_{\dz,r}, \eqno(6.43)$$
if $0<r<c_0|x_0|^2$ with $c_0=(4n \widehat{C}_2^{\frac 2n})^{-1}$ and $\dz$ is enough small.

In fact, using (2.8), by direct computation, we get
$$\frac{\nabla_X\Gamma\nabla_X\Gamma}{\Gamma^2}=\frac{|X-X_0|^2}{4t^2}+\frac{x_0^2}{t^2}\frac{F'^2}{F^2}-\frac{(x-x_0)x_0}{t^2}\frac{F'}F$$
and
$$ \frac {D_t\Gamma}{\Gamma}=\l(-\frac {n+a}{2t}+\frac{|X-X_0|^2}{4t^2}\r)-\frac{xx_0}{t^2}\frac{F'}F,$$
where $F'(z)=D_zF(z)$  for  $z=\frac{xx_0}t>0$.

Hence,
\begin{align*}
\dfrac 43 \dfrac {D_t\Gamma}{\Gamma}-\dfrac{\nabla_X\Gamma\nabla_X\Gamma}{\Gamma^2}&=
 \dfrac 43\l(-\dfrac {n+a}{2t}+\frac{|X-X_0|^2}{4t^2}\r)-\dfrac 43\dfrac{xx_0}{t^2}\frac{F'}F\\
 &\quad-\l(\dfrac{|X-X_0|^2}{4t^2}+\frac{x_0^2}{t^2}\frac{F'^2}{F^2}-\dfrac{(x-x_0)x_0}{t^2}\frac{F'}F\r)\\
 &=-\dfrac {2(n+a)}{3t}+\frac 13\frac{|X-X_0|^2}{t^2}-\l(\dfrac{x_0^2}{t^2}\l(\frac{F'}F+\frac{F'^2}{F^2}\r)-\dfrac{xx_0}{3t^2}\frac{F'}F\r).
\end{align*}

 Let $z=\frac{xx_0}t$, if
$0<r<c_0|x_0|^2$ with $c_0=(4n \widehat{C}_2^{\frac 2n})^{-11}$,  then $z=\frac{xx_0}t\ge (4n \widehat{C}_2^{\frac 2n})^{10}>>1 $ and $|x-x_0|<|x_0|/2$, and noticing that

 $$\frac{F'(z)}{F(z)}\sim C_a z^{-1},\quad {\rm if}\ z\to \fz.$$
Then,
$$\dfrac{x_0^2}{t^2}\l(\l|\frac{F'}F\r|+\l|\frac{F'^2}{F^2}\r|\r)+\dfrac{xx_0}{3t^2}\l|\frac{F'}F\r|\le C_3/t,$$
 where $C_3$ is a constant depending only on $n$ and $a$.

 From these, we then get
 \begin{align*}
\dfrac 43 \dfrac {D_t\Gamma}{\Gamma}-\dfrac{\nabla_X\Gamma\nabla_X\Gamma}{\Gamma^2}&\ge -\dfrac {2(n+a)}{3t}+\frac 13\frac{|X-X_0|^2}{t^2}-\dfrac{C_3}t\\
&=\dfrac 13\dfrac{|X-X_0|^2}{t^2}-\dfrac {2(n+a)+3C_3}{3t}\ge 0,
\end{align*}
 if $|X-X_0|^2\ge t/\dz$ with $\dz\le 1/(2(n+a)+3C_3)$.

 Thus, (6.43) is proved.
\end{proof}

\section{Smoothing of $\mathscr{L}_a$-superparabolic functions }
In this section, we generalize some mean value formulas relative to $\mathscr{L}_a$. We define a new function in $\rn\times\rr^m\times\rr$ by  setting for $(X,t)\in \rr^{n+1}$
$$\bar u(X,Z,t)=u(X,t),\quad Z\in \rr^m,$$ then we have
$$(\mathscr{L}_a+\Delta_Y)\bar u(X,Z,t)=\mathscr{L}_a u(X,t).$$
Therefore, we can apply to the function $\bar u$ the representation formulas established in section 6. Before stating the results we need to introduce some notation. If
$\xi=(X,t)=(x',x,t),\zeta=(Y,s)\in \rr^{n+1}$ and $r>0$, we set
$$E(\xi;\zeta)=\frac{|y|^a\nabla_Y\Gamma(\xi;\zeta)\nabla_Y\Gamma(\xi;\zeta)}{\Gamma^2(\xi;\zeta)},\quad E_r(\xi;\zeta)=(4\pi r)^{-\frac{n+a}{2}}(1+\frac{x^2}r)^{-\frac a2}E(\xi;\zeta).$$
Next we define for a fixed $m\in\nn$
$$\Phi(\xi;\zeta)=(4\pi(t-s))^{-\frac m2}\Gamma(\xi;\zeta),$$
$$R_r(\xi;\zeta)=\l(4(t-s)\ln\l[(4\pi r)^{\frac {n+m+a}2}(1+\frac{x^2}r)^{\frac a2}\Psi(\xi;\zeta)\r]\r)^{\frac 12}$$
and
$$\oz_r^m(\xi)=\l\{\zeta\in\rr^{n+1}: \Phi(\xi;\zeta)>(4\pi r)^{-\frac {n+m+a}2}(1+\frac{x^2}r)^{-\frac a2}\r\}.$$
\begin{lemma}
Let $u\in {\cal H}_a$ and let $\xi=(X,t)=(x',x,t)\in\rr^{n+1}$. Then for every $r>0$ we have
\begin{align*}
&\dfrac{\omega_m}{\vz_m(r)}\dint_{\oz_r^m(\xi)}u(\zeta)R^m_r(\xi,\zeta)\l[E(\xi,\zeta)+\frac m{m+2}\frac{R^2_r(\xi,\zeta)}{4(t-s)^2}\r]d\zeta\\
&=u(\xi)+\dfrac {2\omega_m}{m+2}\dint_0^r(\vz_m(l))^{-2}\vz_m'(l)\dint_{\oz_r^m(\xi)}(-\mathscr{L}_a u(\zeta))\frac{R^{m+2}_l(\xi,\zeta)}{4(t-s)}d\zeta dl,
\end{align*}
where $\omega_m$ denotes the measure of the unit ball in $\rn$, and where $\vz_m(r)$ was defined by
$$\vz_m(r)=(4\pi r)^{\frac {n+m+a}2}(1+\frac{x^2}r)^{\frac a2}.$$
\end{lemma}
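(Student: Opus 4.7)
The plan is to apply the differential mean value identity (6.15) of Lemma 6.3 to the trivially $Z$-independent extension of $u$ to $\mathbb{R}^{n+m+1}$, and then integrate out the $m$ extra variables by Fubini. Set $\bar u(X,Z,t) := u(X,t)$ for $Z \in \mathbb{R}^m$, and consider the natural extended degenerate parabolic operator $\bar{\mathscr{L}} := \mathscr{L}_a - |y|^a \Delta_Z$ on $\mathbb{R}^{n+m+1}$, whose weight remains $|y|^a$ and for which $\bar{\mathscr{L}}\bar u = \mathscr{L}_a u$ (since $\Delta_Z \bar u = 0$). By separation of variables, its fundamental solution at pole $(\xi,0)$, $\xi=(X,t)$, factors as
$$\bar\Gamma(\xi,0;\zeta,W) = \Gamma(\xi;\zeta)\,(4\pi(t-s))^{-m/2}\exp\!\left(-\frac{|W|^2}{4(t-s)}\right) = \Phi(\xi;\zeta)\exp\!\left(-\frac{|W|^2}{4(t-s)}\right),$$
and the level region $\bar\Omega_r(\xi,0) := \{\bar\Gamma > \varphi_m(r)^{-1}\}$ splits, by the definitions of $\Omega_r^m$ and $R_r$, as the ``tube''
$$\bar\Omega_r(\xi,0) = \{(\zeta,W): \zeta \in \Omega_r^m(\xi),\ W \in B(0, R_r(\xi;\zeta))\}.$$

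The proofs of Lemmas 6.2--6.3 are dimension-agnostic, so formula (6.15) applied to $\bar u$ at pole $(\xi,0)$ reads
$$\frac{d}{dr}\bar u_r(\xi,0) = \varphi_m(r)^{-2}\varphi_m'(r)\int_{\bar\Omega_r(\xi,0)} \mathscr{L}_a u(\zeta)\ln\!\left[\frac{\varphi_m(r)^{-1}}{\bar\Gamma(\xi,0;\zeta,W)}\right] dY\,dW\,ds,$$
where $\bar u_r(\xi,0)$ is the obvious $(n+m+1)$-dimensional analogue of (6.14). The product structure of $\bar\Gamma$ gives $|\nabla_{Y,W}\bar\Gamma|^2/\bar\Gamma^2 = |\nabla_Y\Gamma|^2/\Gamma^2 + |W|^2/(4(t-s)^2)$; combined with the elementary moments
$$\int_{B(0,R)}dW = \omega_m R^m, \qquad \int_{B(0,R)}|W|^2\,dW = \frac{m\omega_m}{m+2}R^{m+2}$$
applied on each slice with $R = R_r(\xi;\zeta)$, one obtains exactly
$$\bar u_r(\xi,0) = \frac{\omega_m}{\varphi_m(r)}\int_{\Omega_r^m(\xi)}u(\zeta)\,R_r^m(\xi;\zeta)\left[E(\xi;\zeta) + \frac{m}{m+2}\frac{R_r^2(\xi;\zeta)}{4(t-s)^2}\right]d\zeta,$$
i.e.\ the left-hand side of the statement. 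For the remainder integrand, the Gaussian identity
$$\ln\!\left[\frac{\varphi_m(r)^{-1}}{\bar\Gamma(\xi,0;\zeta,W)}\right] = \frac{|W|^2 - R_r^2(\xi;\zeta)}{4(t-s)}$$
(which is precisely why $\Phi$ and $R_r$ are defined as they are) collapses the inner $W$-integral on each slice to
$$\int_{B(0,R_r)}\frac{|W|^2 - R_r^2}{4(t-s)}\,dW = \frac{\omega_m R_r^{m+2}}{4(t-s)}\!\left(\frac{m}{m+2}-1\right) = -\frac{2\omega_m}{m+2}\cdot\frac{R_r^{m+2}(\xi;\zeta)}{4(t-s)}.$$

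Substituting back yields the compact differential identity
$$\frac{d}{dr}\bar u_r(\xi,0) = -\frac{2\omega_m}{m+2}\,\varphi_m(r)^{-2}\varphi_m'(r)\int_{\Omega_r^m(\xi)}\mathscr{L}_a u(\zeta)\,\frac{R_r^{m+2}(\xi;\zeta)}{4(t-s)}d\zeta,$$
and integrating in $r$ from $0^+$ up to $r$, together with the normalization $\lim_{r\to 0^+}\bar u_r(\xi,0) = \bar u(\xi,0) = u(\xi)$ (obtained by passing to the limit in (6.13) applied to $\bar u$), delivers the stated mean value identity after rearrangement. The principal obstacle is the first step: identifying an extension operator whose fundamental solution enjoys the above product form and for which the Green-type identity (6.3) and the co-area computation (6.12) transfer verbatim to $\mathbb{R}^{n+m+1}$; once the weight $|y|^a$ is recognised to live only on the $(X,t)$-factor while the extra $Z$-direction is handled by the standard heat-kernel calculus, everything else reduces to Fubini and the two Gaussian moment evaluations above.
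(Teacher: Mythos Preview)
Your proposal is correct and follows exactly the strategy the paper itself outlines in the paragraph preceding Lemma~7.1 (and which it attributes to \cite{GL}): extend $u$ trivially in $m$ extra variables, apply the Section~6 mean value calculus to the extended operator, and collapse the $Z$-integrals via the ball-moment identities. Your choice to go through the differential form (6.15) and then integrate in $r$ is precisely what produces the $(\varphi_m(l))^{-2}\varphi_m'(l)$ factor and the $R_l^{m+2}/(4(t-s))$ kernel in the remainder, so this is not merely an equivalent route but the intended one.
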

\begin{proof}
Ref the proof of Theorem 3.1 in \cite{GL}.
\end{proof}
For convenience, we set for $m\in\nn$ and $\xi,\zeta\in\rr^{n+1}$
$$E_r^{(m)}(\xi,\zeta)=\dfrac{\omega_m}{\vz_m(r)}R^m_r(\xi,\zeta)\l[E(\xi,\zeta)+\frac m{m+2}\frac{R^2_r(\xi,\zeta)}{4(t-s)^2}\r]$$
and
$$u_r^{(m)}(\xi)=\dint_{\oz_r^m(\xi)}u(\zeta)E_r^{(m)}(\xi,\zeta)d\zeta.\eqno(7.1)$$
Obvious, if $m=0$ in (7.1) reduce to (6.14).

In what follows, we choose and fix a function $\vz\in C_0^\fz(\rr^+)$ such that $\vz\ge 0, \supp\vz\subset (1,2)$ and $\int_0^\fz\vz(r)dr=1$. For $m\in\nn$ and
$\xi\in\rr^{n+1}$ we define
$$J_r^{(m)}(u)(\xi)=\dint_0^\fz u^{(m)}_l(\xi)\vz(\frac lr)\frac{dl}r,\eqno(7.2)$$
where $u\in L^\fz_{\loc}(\rr^{n+1})$ and $u^{(m)}_r$ is as in (7.1).  By (7.1) and (7.2), we get
$$J_r^{(m)}(u)(\xi)=\dint_\rn u(\zeta)M_r^{(m)}(\xi,\zeta)d\zeta,\eqno(7.3)$$
where we have set for $\xi=(X,t),\zeta=(Z,s)$
$$M_r^{(m)}(\xi,\zeta)=\dint_{\vz_m^{-1}(1/\Phi(\xi,\eta))}^\fz E_r^{(m)}(\xi,\zeta)\vz(\frac lr)\frac{dl}r\eqno(7.4)$$
if $t>s$, whereas $M_r^{(m)}(\xi,\zeta)=0$ for $t\le s$, and $\vz_m^{-1}$ is the  inverse function of $\vz_m$.

\begin{proposition}
Let $u: \rr^{n+1}\to\rr$ be a semicontinuous function, and let $m\in \nn\cup\{0\}$. The following statements are equivalent:
\begin{enumerate}
\item[$\mathrm{(i)}$] $u$ is $\mathscr{L}_a$-superparabolic in $\rr^{n+1}$;
\item[$\mathrm{(ii)}$] For every $\xi\in\rr^{n+1}$ and $r>0$
$$u(\xi)\ge u_r^{(m)}(\xi).$$
\end{enumerate}
\end{proposition}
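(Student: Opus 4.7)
The plan is to exploit the mean value identity of Lemma 7.1, rewritten as
\begin{equation*}
u(\xi)-u_r^{(m)}(\xi)=-\frac{2\omega_m}{m+2}\int_0^r(\varphi_m(l))^{-2}\varphi_m'(l)\int_{\Omega_l^m(\xi)}(-\mathscr{L}_au(\zeta))\frac{R_l^{m+2}(\xi,\zeta)}{4(t-s)}\,d\zeta\,dl.
\end{equation*}
All factors $\varphi_m(l),\varphi_m'(l),R_l^{m+2}(\xi,\zeta),(t-s)$ are non-negative on the relevant domain of integration, so the sign of the right hand side is opposite to the distributional sign of $\mathscr{L}_au$. This identity drives both implications.

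For the forward direction $\mathrm{(i)}\Rightarrow\mathrm{(ii)}$, I first handle the smooth case: if $u\in\mathcal{H}_a$ is a classical supersolution, then $\mathscr{L}_au\ge 0$ as a distribution on $\rr^{n+1}$, whence the right hand side is non-positive and $u(\xi)\ge u_r^{(m)}(\xi)$ follows immediately. For a general lower semicontinuous $\mathscr{L}_a$-superparabolic $u$, I would approximate from below by a monotone increasing sequence of bounded smooth supersolutions. Concretely, set $u_k=\min(u,k)$, which by the pasting Lemma 4.6 (applied with the $\mathscr{L}_a$-parabolic constant function $k$) is still $\mathscr{L}_a$-superparabolic, and being bounded is a supersolution by Proposition 4.1. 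Next regularize via the intrinsic convolution $J_\varepsilon^{(m)}(u_k)$ defined in (7.2)--(7.4), which yields a smooth supersolution (this is the main smoothing result of Section 7). Applying the smooth case and passing to the limit via monotone convergence, first in $\varepsilon\to 0^+$ and then in $k\to\infty$, preserves the inequality on both sides because the kernel $E_r^{(m)}(\xi,\cdot)$ is non-negative.

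For the reverse direction $\mathrm{(ii)}\Rightarrow\mathrm{(i)}$, I verify the defining comparison property in Definition 4.1(iii). Let $Q_{t_1,t_2}=Q\times(t_1,t_2)\Subset\Omega$ and let $h\in C(\overline{Q_{t_1,t_2}})$ be $\mathscr{L}_a$-parabolic with $h\le u$ on $\partial_p Q_{t_1,t_2}$. Set $v:=u-h$. By Corollary 6.3 applied to $h$ we have $h(\xi)=h_r^{(m)}(\xi)$ for every admissible $r$, so $v$ also satisfies $v(\xi)\ge v_r^{(m)}(\xi)$; moreover $v$ is lower semicontinuous and non-negative on $\partial_p Q_{t_1,t_2}$. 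If $v$ were negative somewhere, then $m_0:=\inf v<0$ is attained (by l.s.c. on $\overline{Q_{t_1,t_2}}$) at some interior point $\xi_0$, which I choose to have maximal time-coordinate among minimisers. Using that the kernel integrates to one (take $u\equiv 1$ in Lemma 7.1, recalling $\mathscr{L}_a 1=0$), the chain
\begin{equation*}
m_0=v(\xi_0)\ge\int_{\Omega_r^m(\xi_0)}v(\zeta)E_r^{(m)}(\xi_0,\zeta)\,d\zeta\ge m_0
\end{equation*}
forces $v=m_0$ almost everywhere on $\Omega_r^m(\xi_0)$, and lower semicontinuity together with the density of finiteness upgrades this to points of $\{v=m_0\}$ lying strictly earlier in time than $\xi_0$, contradicting the maximality of $t_0$. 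Letting $r$ grow until $\Omega_r^m(\xi_0)$ reaches $\partial_p Q_{t_1,t_2}$ contradicts $v\ge 0$ on the parabolic boundary.

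The main obstacle is the reverse implication, specifically the propagation-of-minimum step. The set $\Omega_r^m(\xi_0)$ is a shifted heat-ball for the augmented operator $\mathscr{L}_a+\Delta_Y$, whose geometry is distorted both by the anisotropic time-scaling of $\mathscr{L}_a$ and by the singular/degenerate weight $|y|^a$; one must check carefully that the family $\{\Omega_r^m(\xi_0)\}_{r>0}$ sweeps past-time slices covering the full parabolic past of $\xi_0$ inside $Q_{t_1,t_2}$, and that the equality ``$v=m_0$ a.e. on $\Omega_r^m(\xi_0)$'' can be promoted via l.s.c. to the existence of strictly earlier minimisers. A secondary difficulty is the smoothing step in the forward direction: the standard Euclidean mollifier does not preserve the $\mathscr{L}_a$-supersolution property across $\{y=0\}$, which is why one must rely on the intrinsic convolution $J_\varepsilon^{(m)}$ built from $\Gamma$ itself and invoke the corresponding regularity/monotonicity properties established earlier in Section 7.
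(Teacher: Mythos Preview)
Your forward direction $\mathrm{(i)}\Rightarrow\mathrm{(ii)}$ is circular. You regularize $u_k$ by the intrinsic convolution $J_\varepsilon^{(m)}$ and invoke ``the main smoothing result of Section 7''; but both Proposition~7.2 (monotonicity and convergence of $J_r^{(m)}$) and Theorem~7.1 are proved \emph{using} Proposition~7.1. Moreover, those results are only established for equilibrium potentials $u=P_{\tilde\mu}$, not for arbitrary $\mathscr{L}_a$-superparabolic functions, so even ignoring the circularity you cannot apply them to a general $u$. The paper (following Garofalo--Lanconelli) avoids this entirely: it uses Lemma~7.2, which says that the heat sphere $\Psi_r(\xi)$ has at most two irregular points, so one may solve the Dirichlet problem on $\Omega_r^{m}(\xi)$ with continuous boundary data $\varphi_j\uparrow u$. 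The Perron solutions $h_j$ are $\mathscr{L}_a$-parabolic, satisfy $h_j\le u$ by the comparison built into the definition of superparabolicity (via Proposition~4.3), and obey the mean value \emph{equality}; passing to the limit in $j$ yields $u_r^{(m)}(\xi)\le u(\xi)$. This is why the paper singles out Lemma~7.2, which you do not use at all.

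Your reverse direction $\mathrm{(ii)}\Rightarrow\mathrm{(i)}$ has the time orientation backwards. The level set $\Omega_r^{m}(\xi_0)$ lies entirely in the past $\{s<t_0\}$ of the pole $\xi_0$, so discovering new minimizers there with strictly earlier time does \emph{not} contradict maximality of $t_0$ among minimizers. You must instead take $\xi_0$ with \emph{minimal} time in the (closed) minimum set $\{v=m_0\}$; then the heat ball produces minimizers with time $<t_0$, forcing the minimum set to reach the initial surface of $Q_{t_1,t_2}$, where $v\ge 0>m_0$---a contradiction. Your alternative sentence ``letting $r$ grow until $\Omega_r^{m}(\xi_0)$ reaches $\partial_pQ_{t_1,t_2}$'' does not work either, since $v=u-h$ only satisfies the mean value inequality while the heat ball remains inside $Q_{t_1,t_2}$ (the function $h$ is parabolic only there). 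With the time direction corrected, this half of your argument is essentially the standard one.
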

To prove Proposition 7.1 we will need the following result.

\begin{lemma}
If $\xi=(x',x,t)\in \rr^{n+1}, r>0$, and $\oz_r(\xi)$ is the $\mathscr{L}_a$-parabolic ball $(6.5)$, then at most two  points of
$\Psi_r(\xi)$ are not $\mathscr{L}_a$-regular.
\end{lemma}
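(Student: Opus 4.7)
The plan is to establish $\mathscr{L}_a$-regularity at generic points of $\Psi_r(\xi)$ via the exterior-ball criterion of Lemma 4.7, and then to show that at most two points can escape this argument.

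First, off the pole $\xi$ the function $\Gamma(\xi;\cdot)$ is $C^\infty$ and strictly positive, so $\Psi_r(\xi)\setminus\{\xi\}$ is a $C^\infty$ hypersurface wherever $\nabla_{(Y,\tau)}\Gamma(\xi;\cdot)\ne 0$. Fix such a smooth point $\zeta=(Y_*,\tau_*)$ and suppose in addition that $\nabla_Y\Gamma(\xi;\zeta)\ne 0$. Then the outward unit normal to $\Psi_r(\xi)$ at $\zeta$---parallel to $-\nabla_{(Y,\tau)}\Gamma$, with $\Omega_r(\xi)=\{\Gamma>c\}$ on the interior side---has a nontrivial $Y$-component. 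Rolling a small ball from the outside, tangent to $\Psi_r(\xi)$ at $\zeta$ along this normal, produces an exterior ball $\mathcal{B}(\xi_1,R)\subset\Omega_r(\xi)^c$ with $\zeta\in\partial\mathcal{B}$ and center $\xi_1=(X_1,t_1)$ satisfying $X_1\ne Y_*$. Lemma 4.7 (first case) then gives that $\zeta$ is $\mathscr{L}_a$-regular.

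It thus remains to count points $\zeta\in\Psi_r(\xi)\setminus\{\xi\}$ where $\nabla_Y\Gamma(\xi;\zeta)=0$. From the explicit form (2.8), $\Gamma(X_0,t_0;y',y,\tau)$ depends on $y'$ only through the Gaussian factor $\exp(-|y'-x_0'|^2/(4(t_0-\tau)))$, so $\nabla_{y'}\Gamma=-\frac{y'-x_0'}{2(t_0-\tau)}\,\Gamma$ vanishes iff $y'=x_0'$. The question reduces to the planar problem of counting solutions of $\partial_y g(y,\tau)=0$ that lie on the level curve $g(y,\tau)=(4\pi r)^{-(n+a)/2}(1+\frac{x_0^2}{r})^{-a/2}$, where $g(y,\tau):=\Gamma(X_0,t_0;x_0',y,\tau)$. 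From (2.8) the function $g$ is $C^\infty$ away from its unique pole $(x_0,t_0)$, blows up there, and has a unique maximum in $y$ for every fixed $\tau<t_0$: the Gaussian factor $e^{-(x_0-y)^2/(4(t_0-\tau))}$ is log-concave in $y$, and the modulation $F(x_0y/(t_0-\tau))$ only shifts the location of this maximum. Consequently $\{g=c\}$ is a simple teardrop-shaped curve, smooth except for the cusp at $(x_0,t_0)$, and $\partial_y g=0$ along it is realized at precisely one smooth point, the bottom of the teardrop. Together with the cusp $\xi$ this accounts for at most two possibly non-regular points of $\Psi_r(\xi)$.

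The principal obstacle is the qualitative description of the planar level curve in the third step when the degenerate factor $F$ is present. Controlling the derivative of $F'/F$ relative to the log-concavity of the Gaussian so as to guarantee a unique $y$-maximum of $g(\cdot,\tau)$ for every admissible $\tau$ (and thus to rule out extra horizontal tangencies of $\{g=c\}$) can be done via the asymptotics of the Bessel-type series of Section 2, but requires care when $x_0\ne 0$ and $a\ne 0$, where the $y\leftrightarrow -y$ symmetry of the classical heat case is broken.
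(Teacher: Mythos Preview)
Your reduction to the exterior-ball criterion and to the planar problem along $y'=x_0'$ is correct, but the key unimodality claim in the third step is false in the degenerate case, and this breaks the count.

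Concretely: when $a<0$ and $x_0\neq 0$, the function $g(y,\tau)=\Gamma(\xi;x_0',y,\tau)$ has a critical point at $y=0$ for \emph{every} $\tau<t_0$. Indeed, writing $\Gamma=C(t_0-\tau)^{-(n+a)/2}e^{-|X_0-Y|^2/4(t_0-\tau)}F\bigl(\tfrac{x_0y}{t_0-\tau}\bigr)$ one computes $F'(0)/F(0)=-\tfrac12$ (the odd part $|z|^{1-a}$ has exponent $>1$), so
\[
\partial_y g\big|_{y=0}=g\cdot\Big[\tfrac{x_0}{2(t_0-\tau)}+\tfrac{x_0}{t_0-\tau}\cdot\bigl(-\tfrac12\bigr)\Big]=0.
\]
Thus $g(\cdot,\tau)$ has at least two critical points (one near $y=x_0$ from the Gaussian, one at $y=0$), is not unimodal, and the level curve $\{g=c\}$ is not a simple teardrop. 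The line $\{y'=x_0',\,y=0\}$ can meet $\Psi_r(\xi)$ in two points, giving up to \emph{three} points of $\Psi_r(\xi)\setminus\{\xi\}$ with $\nabla_Y\Gamma=0$, not one. Your count therefore does not yield the bound ``at most two''.

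The paper's proof closes this gap in two ways you omit. First, instead of a soft unimodality argument, it uses the ODE satisfied by $F$ (equation (2.2)) together with the PDE for $\Gamma$ to show algebraically that $\nabla_Y\Gamma=0$ with $y\neq 0$ forces $y=x_0$; this pins down the $y\neq 0$ critical locus exactly. Second, and crucially, it invokes the \emph{north-pole} case of Lemma~4.7: at the bottom point $Y=X_0$ one has $D_\tau\Gamma>0$, and at the lower of the two $y=0$ intersections one also has $D_\tau\Gamma>0$, so both are regular via an exterior ball with purely temporal normal. Only the upper $y=0$ intersection (where $D_\tau\Gamma<0$) escapes, and together with $\xi$ this gives the stated bound of two. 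Your argument uses only the first case of Lemma~4.7 and so cannot dispose of the extra critical points.
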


\begin{proof}
We consider three cases.

Case 1: $x=0$. Then
$$\Gamma(x',0,t;y',y,\tau)=C_{n,a}(t-\tau)^{-\frac {n+a}2}\exp\l({-\frac{|X-Y|^2}{4(t-\tau)}}\r).$$
From this, it is easy to see that
$$|\nabla_Y\Gamma(x',0,t;y',y,\tau)|\not=0\quad {\rm or}\quad D_\tau \Gamma(x',0,t;y',y,\tau)>0.$$
Therefore, every point of $\Psi_r(\xi)\setminus\{\xi\}$ satisfies the exterior ball condition (see Lemma 4.7), so is $\mathscr{L}_a$-regular.

Case 2: $x\not=0$ and $y\not=0$. If $x'\not=y'$, then $|\nabla_Y\Gamma(x',x,t;y',y,\tau)|\not=0$, so such points
$(Y,\tau)$ satisfy the exterior ball condition,  are also $\mathscr{L}_a$-regular.

We now claim that if $x'=y'$ and $|\nabla_Y\Gamma(x',0,t;x',y,\tau)|=0$, then $x=y$.

In fact, if $|\nabla_Y\Gamma(x',0,t;x',y,\tau)|=0$, then
$$\nabla_Y\Gamma(x',0,t;x',y,\tau)=D_y\l( C_{n,a}(t-\tau)^{-\frac {n+a}2}\exp({-\frac{|x-y|^2}{4(t-\tau)}})F(\frac{xy}{t-\tau})\r)=0.\eqno(7.5)$$
Let $z=\frac{xy}{t-\tau}$, by (7.5), we  then have
$$\frac{x-y}{2(t-\tau)} F(z)+\frac x{t-\tau} F'(z)=0.\eqno(7.6)$$
In addition, $F(z)$ also satisfies (2.2), that is,
$$zF''+(z+a)F'+\frac a{2}F=0.\eqno(7.7)$$
Notice that  $$u(x,t;y)=t^{-\frac {a+1}2}e^{-\frac{(x-y)^2}{4(t-\tau)}}F\l(\frac{xy}{t-\tau}\r)$$
is a solution of
$$D_\tau u+u_{yy}+\frac ay u_y=0\quad {\rm for }\ y\not= 0, ~t>\tau.\eqno(7.8)$$
From (7.5), we know that $u_y=0$. Then (7.8) is turn into
$$D_\tau u=-u_{yy}\quad {\rm for }\ y\not= 0, ~t>\tau.\eqno(7.9)$$

Therefore, from (7.9), we have
$$\l(-\frac{1+a}{2(t-\tau)}+\frac{|x-y|^2}{4(t-\tau)^2}\r)F-\frac{xy}{(t-\tau)^2}=\l(-\frac{1}{2(t-\tau)}+\frac{|x-y|^2}{4(t-\tau)^2}\r)F+\frac {x^2F''}{(t-\tau)^2}.\eqno(7.10)$$
Taking (7.6) and (7.7) in (7.10), we deduce $x=y$. This claim is proved.

From this claim, we know that if  $X\not=Y$ and $y\not=0$, then
 $|\nabla_Y\Gamma(X,t;Y,\tau)|\not=0$, so such points
$(Y,\tau)$ satisfy the exterior ball condition,  are also $\mathscr{L}_a$-regular.

In addition, if  $X=Y$, it is easy to see that
$$D _\tau \Gamma(X,t;X,\tau)>0.$$
Therefore, the north pole   $(X,\tau) $ satisfies the exterior ball condition, so is $\mathscr{L}_a$-regular.

In short, every point of $\Psi_r(\xi)\setminus\{\xi\}$  is $\mathscr{L}_a$-regular in this case.

Case 3: $x\not=0$ and $y=0$.

Subcase 1: If $x'\not=y'$, then $|\nabla_Y\Gamma(x',x,t;y',0,\tau)|\not=0$, so such points
$(Y,\tau)$ satisfy the exterior ball condition,  are also $\mathscr{L}_a$-regular.

Subcase 2: If $x'=y', y=0$ and $a\ge 0$, then $|\nabla_Y\Gamma(x',x,t;x',0,\tau)|\not=0$, so such points
$(Y,\tau)$ satisfy the exterior ball condition,  are also $\mathscr{L}_a$-regular.

Subcase 3: If $x'=y', y=0$ and $a< 0$, it is obvious that
$$ |\nabla_Y\Gamma(x',x,t;x',0,\tau)|=0.$$
By direct computation, $\tau=t_M:=t-\frac{x^2}{2(n+a)}$ satisfies the following
$$\ D_\tau\Gamma(x',x,t;x',0,\tau)=0.\eqno(7.11)$$
Obvious, from (7.11), we know that
$\tau=t_M$ is a unique maximum point of the function $\Gamma(x',x,t;x',0,\tau)$ about the variable $\tau$.

If the point $(x',0,t_M)$ is out of $\Psi_r(\xi)$, then, we know that
the straight line $(x',0,s)$ is also out of $\Psi_r(\xi)$.

If the point $(x',0,t_M)$ is on $\Psi_r(\xi)$, then, it is easy to see  that
 $\{y=0\}\cap \Psi_r(\xi)=(x',0,t_M)$. So $(x',0,t_M)$ satisfies the exterior ball condition,  is also $\mathscr{L}_a$-regular.

If the point $(x',0,t_M)$ is  in $\oz_r(\xi)$, then, we know that
the straight line $(x',0,s)$ intersects $\Psi_r(\xi)$ two points, that is, $(x',0,t_1)$ and $(x',0,t_2)$ with $t_1<t_M<t_2$.
Obvious, $(x',0,t_1)$ satisfies the exterior ball condition,  is also $\mathscr{L}_a$-regular. But, $(x',0,t_2)$ may be not $\mathscr{L}_a$-regular.

In short, at most two  points of
$\Psi_r(\xi)$ are not $\mathscr{L}_a$-regular, one point is $\xi$, another point is $(x',0,t_2)$ in this case.
\end{proof}

\begin{remark} Lemma $7.1$ can be improved provided that $r<c_0|x|^2$ $($ $c_0$ defined in Theorem $6.2$$)$. More precisely,
if $r<c_0|x|^2$, then $|y-x|<|x|/2$, so $\{y=0\}\cap \Psi_r(\xi)=\emptyset$. Thus, every point of $\Psi_r(\xi)\setminus\{\xi\}$ is $\mathscr{L}_a$-regular.
\end{remark}
Let us continue to prove Proposition 7.1.

\begin{proof}
Ref the proof of Proposition 5.1 in \cite{GL} by using Lemma 7.2.
\end{proof}

\begin{proposition}
Let $u=P_{\tilde{\mu}}$ be a
$\mathscr{L}_a$-equilibrium potential on $\rr^{n+1}$. Then
\begin{enumerate}
\item[$\mathrm{(i)}$] $J^{(m)}_\rho u\le J^{(m)}_ru$ for every $r<\rho$.
\item[$\mathrm{(ii)}$] $J^{(m)}_ru\uparrow u$ as $r\to 0^+$.
\item[$\mathrm{(iii)}$] $J^{(m)}_ru$ is $\mathscr{L}_a$-superparabolic in $\rr^{n+1}$ for every $r>0$
\end{enumerate}
\end{proposition}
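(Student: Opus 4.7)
\medskip

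\noindent\textbf{Proof plan.} The unifying tool will be the mean value identity of Lemma 7.1, which for the equilibrium potential $u = P_{\tilde\mu}$ (so that $\mathscr{L}_a u = -\tilde\mu \le 0$ in the sense of distributions) can be rewritten as
$$u(\xi) - u^{(m)}_r(\xi) = \frac{2\omega_m}{m+2}\int_0^r (\vz_m(l))^{-2}\vz_m'(l)\int_{\Omega^m_r(\xi)} \frac{R^{m+2}_l(\xi,\zeta)}{4(t-s)}\, d\tilde\mu(\zeta)\, dl \ge 0,$$
and this expression is nondecreasing in $r$ for each fixed $\xi$. Consequently, $r \mapsto u^{(m)}_r(\xi)$ is a nonincreasing function, bounded above by $u(\xi)$, that converges to $u(\xi)$ as $r \to 0^+$ by monotone convergence in $l$.

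\medskip

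\noindent For part (i), the plan is to perform the change of variables $l = rs$ in (7.2) to obtain
$$J^{(m)}_r u(\xi) = \int_0^\infty u^{(m)}_{rs}(\xi)\,\vz(s)\,ds,$$
where $\supp\vz \subset (1,2)$ and $\int_0^\infty \vz(s)\,ds = 1$. Since $u^{(m)}_l(\xi)$ is nonincreasing in $l$, for any $r < \rho$ and $s \in (1,2)$ we have $u^{(m)}_{\rho s}(\xi) \le u^{(m)}_{rs}(\xi)$; integrating against $\vz(s)\,ds$ yields $J^{(m)}_\rho u \le J^{(m)}_r u$. Part (ii) then follows immediately: as $r \to 0^+$, the integrand $u^{(m)}_{rs}(\xi)\vz(s)$ increases monotonically to $u(\xi)\vz(s)$ for each $s > 0$, and monotone convergence gives $J^{(m)}_r u(\xi) \uparrow u(\xi)$.

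\medskip

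\noindent For part (iii), I plan to exploit the linear structure of the potential. Writing $u(\xi) = \int \Gamma(\xi, \eta) d\tilde\mu(\eta)$ and applying Fubini (justified by the nonnegativity of the kernel $M^{(m)}_r$ and the local integrability of $\Gamma$), we obtain
$$J^{(m)}_r u(\xi) = \int_{\rr^{n+1}} \bigl(J^{(m)}_r \Gamma(\cdot, \eta)\bigr)(\xi)\, d\tilde\mu(\eta).$$
For each fixed $\eta$, the function $\xi \mapsto \Gamma(\xi, \eta)$ is $\mathscr{L}_a$-superparabolic. I will then verify that $J^{(m)}_r\Gamma(\cdot,\eta)$ is itself $\mathscr{L}_a$-superparabolic, using the characterization of Proposition 7.1: its lower semicontinuity (in fact smoothness, because $M^{(m)}_r(\xi,\zeta)$ is $C^\infty$ in $\xi$ away from $\zeta$ thanks to the mollifying role of $\vz$), finiteness off $\{\eta\}$, and the mean value inequality. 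The mean value inequality $(J^{(m)}_r \Gamma(\cdot,\eta))^{(m)}_\rho(\xi) \le (J^{(m)}_r \Gamma(\cdot,\eta))(\xi)$ will be proved by a Fubini exchange that reduces it to $(\Gamma(\cdot,\eta))^{(m)}_{\rho'}(\xi') \le \Gamma(\xi',\eta)$ (a consequence of Proposition 7.1 applied to $\Gamma(\cdot,\eta)$). Superposition against $\tilde\mu$ then preserves each of these three properties, yielding $\mathscr{L}_a$-superparabolicity of $J^{(m)}_r u$.

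\medskip

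\noindent \textbf{Main obstacle.} The trickiest step will be part (iii), specifically verifying the mean value inequality for the smoothed fundamental solution $J^{(m)}_r\Gamma(\cdot,\eta)$. The kernel $M^{(m)}_r$ couples the spatial variable $\xi$ with the radius $l$ of the level sets $\Omega_l$ of $\Gamma(\xi,\cdot)$ through $\vz_m^{-1}(1/\Phi(\xi,\zeta))$, so the Fubini exchange and the verification that the two smoothing operators $(\cdot)^{(m)}_\rho$ and $J^{(m)}_r$ interact favorably (in the sense that the former composed with the latter remains bounded by the latter) will require careful bookkeeping of domains of integration. Once this is established for a single pole $\eta$, the passage to a general potential is routine.
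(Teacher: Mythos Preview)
Your approach is essentially the one the paper invokes (it simply refers to Corollary~5.2 of \cite{GL}, to be carried out via Proposition~7.1), and your outline of (i)--(ii) via monotonicity of $r\mapsto u^{(m)}_r(\xi)$ followed by the change of variables $l=rs$ is the standard route. Two small points deserve care. First, a sign slip: by Proposition~5.2(h) one has $\mathscr L_a P_{\tilde\mu}=+\tilde\mu$, not $-\tilde\mu$; your displayed identity for $u(\xi)-u^{(m)}_r(\xi)$ is nonetheless correct once this is fixed. Second, Lemma~7.1 is stated for $u\in\mathcal H_a$, while $P_{\tilde\mu}$ need not be smooth; the clean justification is to write $u=\int\Gamma(\cdot,\eta)\,d\tilde\mu(\eta)$, establish the representation and the monotonicity of $r\mapsto(\Gamma(\cdot,\eta))^{(m)}_r(\xi)$ for a single fundamental solution (where Lemma~7.1 applies away from the pole and the pole contribution can be computed explicitly), and then integrate in $\eta$ by Fubini.

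For (iii) your self-identified obstacle is real, and the Fubini reduction you sketch does not close as written: interchanging the outer $(\cdot)^{(m)}_\rho$ average with the inner $J^{(m)}_r$ smoothing does not produce an inequality of the form $(\Gamma(\cdot,\eta))^{(m)}_{\rho'}(\xi')\le\Gamma(\xi',\eta)$, because neither operator is a convolution and the two averages do not commute. What one actually needs (and what is implicit in the \cite{GL} argument) is the intermediate fact that $\xi\mapsto u^{(m)}_l(\xi)$ is itself $\mathscr L_a$-superparabolic for each fixed $l$; once that is known, $J^{(m)}_r u=\int_1^2 u^{(m)}_{rs}\,\varphi(s)\,ds$ is a positive superposition of superparabolic functions and Proposition~7.1 applies termwise. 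This intermediate fact does not follow from Proposition~7.1 alone; you should plan to extract it either from the identity in Lemma~7.1 (which, for potentials $u=\Gamma*\tilde\mu$, expresses $u-u^{(m)}_l$ explicitly as a nonnegative integral against $\tilde\mu$, so that $u^{(m)}_l$ is again a potential), or from the $\rr^{n+m+1}$ lifting described at the beginning of Section~7.
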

\begin{proof}

Ref the proof of Corollary 5.2 in \cite{GL} by using Proposition 7.1.
\end{proof}

Finally, we give the following result about regularizing $\mathscr{L}_a$-superparabolic function.
\begin{theorem}
Let $u=P_{\tilde{\mu}}$ be a
$\mathscr{L}_a$-equilibrium potential on $\rr^{n+1}$. Then there exists  a sequence of functions $(u_j)_{j\in\nn}$ such that
\begin{enumerate}
\item[$\mathrm{(i)}$] $u_j\in {\cal H}_a, \ j\in\nn$,
\item[$\mathrm{(ii)}$] $u_j$ is $\mathscr{L}_a$-superparabolic in $\rr^{n+1}$ for  $j\in\nn$,
\item[$\mathrm{(iii)}$] $u_j\le u_{j+1},\ j\in\nn$,
\item[$\mathrm{(iv)}$] $u_j(\xi)\to u(\xi)$ as $j\to\fz$ for every $\xi\in\rr^{n+1}$,
\item[$\mathrm{(v)}$] If for a given compact $K\subset \rr^{n+1}$, $u$ is $\mathscr{L}_a$-parabolic in $\rr^{n+1}\setminus K$, then for every $\dz>0$ there exists a
$j_0\in\nn$ such that $u_j$ is $\mathscr{L}_a$-parabolic in $K_\dz$ for every $j\ge j_0$, where
$$K_\dz=\{\xi\in\rr^{n+1}: {\rm dist}(\xi,K)\ge \dz\}.$$
\end{enumerate}

\end{theorem}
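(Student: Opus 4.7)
The natural candidate is
$$u_j := J^{(m)}_{r_j}(u), \qquad r_j := 1/j,$$
for a fixed integer $m$ (any $m \geq 2$ will do) and with $J^{(m)}_r$ defined in (7.2)--(7.4). With this choice, items (ii)--(iv) are almost immediate from Proposition 7.2 proved just before: (ii) is statement (iii) of that proposition, (iii) follows from (i) of that proposition combined with $r_{j+1} < r_j$, and (iv) is (ii) of that proposition. So the bulk of the work concerns (i) and (v).

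For (v), I would argue as follows. Suppose $u$ is $\mathscr{L}_a$-parabolic in $\rr^{n+1} \setminus K$ and fix $\xi \in K_\delta$. The kernel $M^{(m)}_{r}(\xi,\zeta)$ is supported in the region swept out by $\Omega^m_l(\xi)$ as $l$ runs through $\supp \varphi(\cdot/r) \subset [r, 2r]$; by the Gaussian shape of $\Gamma$ and the definition of $\Omega^m_l$, this support shrinks in the parabolic sense to $\xi$ as $r \to 0$. Hence there is $r_0 = r_0(\delta) > 0$ such that for every $\xi \in K_\delta$ and every $l \in [r, 2r]$ with $r < r_0$, $\overline{\Omega^m_l(\xi)} \subset \rr^{n+1} \setminus K$. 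On that region $u$ is $\mathscr{L}_a$-parabolic, so by the $m$-dimensional extension of the mean value identity (Corollary 6.2 applied to $\bar u(X,Z,t) = u(X,t)$, as used in Proposition 7.1), $u^{(m)}_l(\xi) = u(\xi)$ for each such $l$. Averaging against $\varphi(l/r)/r$ gives $u_j(\xi) = u(\xi)$ for every $j$ with $r_j < r_0$. Since $u$ is $\mathscr{L}_a$-parabolic on the open set $K_\delta$, so is $u_j$.

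For (i), I would exploit the explicit formula $u_j(\xi) = \int u(\zeta) M^{(m)}_{r_j}(\xi,\zeta)\,d\zeta$. The kernel $M^{(m)}_{r_j}$ is built from $\Gamma$ and its $Y$-gradient integrated in a ``shell'' $l \in [r_j, 2r_j]$, with the parametrizing cutoff $\varphi$ supplying an additional smoothing in $l$. Since $\Gamma(X,t;Y,\tau)$ is $C^\infty$ in $(X,t)$ away from $\{(Y,\tau)\}\cup \{x=0\}$ and the Gaussian factor gives dominated-convergence control, differentiation under the integral yields $u_j \in C^2(\rr^{n+1}\setminus \{y=0\})$ and continuity on all of $\rr^{n+1}$. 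The weighted Sobolev condition $u_j \in L^2_{\loc}(\rr, H^1_{\loc}(\rn, |y|^a))$ is obtained from the $L^2(|x|^a\,dX\,dt)$ bounds for $\Gamma$ and $\nabla_X \Gamma$ together with the fact that $u = P_{\tilde\mu}$ is locally bounded as an equilibrium potential. The transmission identity in $\mathcal{H}_a^2$, namely $\lim_{y \to 0^+} |y|^a D_y u_j = \lim_{y \to 0^-} |y|^a D_y u_j$, is inherited from the corresponding property of the fundamental solution encoded in (2.9): the integrand in the representation of $D_y u_j$ across $\{y=0\}$ carries the same $|y|^a$-weighted continuity, and this passes to the integral. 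The local bound on $\sup_{0 < |y| < 1}|y|^a|D_y u_j|$ required by $\mathcal{H}_a^3$ follows from the explicit estimates on $F'$ derived in Section 3 (see the bounds on $|H|$ used there) together with the compact $l$-support from $\varphi$.

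The main obstacle will be the $\mathcal{H}_a^2$ transmission condition in (i): the degeneracy $|y|^a$ makes a direct pointwise computation of the one-sided limits delicate, and one must argue that the structural property (2.9) of $\Gamma$ across the hyperplane $\{y=0\}$ is preserved by both the $Y$-gradient operator inside $E^{(m)}_r$ and the double integration defining $J^{(m)}_r$. The cleanest route is to verify it first for $u$ replaced by a smooth compactly supported test function (where interchange of limits is justified by dominated convergence with the estimates of Section 3) and then extend by the monotone convergence $u_j \uparrow u$ together with the lower semicontinuity of $u$.
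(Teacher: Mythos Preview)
Your construction $u_j=J^{(m)}_{r_j}(u)$ and the verification of (ii)--(v) are exactly the paper's approach: (ii)--(iv) come straight from Proposition~7.2, and (v) is the same localization argument (the support of $M^{(m)}_{r}(\xi,\cdot)$ shrinks parabolically to $\xi$, so for $\xi\in K_\delta$ and $r$ small the mean value identity of Lemma~7.1 gives $u^{(m)}_l(\xi)=u(\xi)$, hence $u_j(\xi)=u(\xi)$). For (i) the paper is equally terse---it simply points to the explicit formulas (7.3)--(7.4) and to the analogous computation in \cite{GL}---and your sketch (differentiate under the integral using the bounded equilibrium potential $0\le u\le1$ and the $C^\infty$ regularity of $\Gamma$ in the pole variable away from the diagonal and $\{x=0\}$) is the right mechanism.

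One small clean-up: the transmission condition in $\mathcal{H}_a^2$ concerns the \emph{pole} variable $x$ of $u_j(x',x,t)$, not the integration variable $y$; what you need is the $x$-counterpart of the even/odd structure of $\Gamma$ (which follows from the symmetric role of $x$ and $y$ in (2.8)), not (2.9) directly. Also, your final paragraph's ``extend by monotone convergence $u_j\uparrow u$'' is not the right closure step for proving $\mathcal{H}_a^2$ at fixed $j$: since $u$ is already bounded, dominated convergence applied directly to $\int u(\zeta)\,\partial_x M^{(m)}_{r_j}(\xi,\zeta)\,d\zeta$ suffices, and no approximation of $u$ is needed.
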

\begin{proof}
For (i),  applying (7.3) and (7.4), by direct computation, we can obtain the desired result. We omit the details here,
ref the proof of (i) in  Theorem 6.1 in \cite{GL}.

For (ii)-(v), it is simple. In fact, let $(r_j)_{j\in\nn}$ be a consequence of positive numbers such that $r_j\to 0$ as $j\to\fz$ and $r_{j+1}\le r_{j}\le 1$ for
$j\in\nn$. For every $j\in\nn$ we set
$$u_j=J_{r_j}^{(m)}(u),$$
where $m$ is a positive integer and $J_r^{(m)}$ is defined as in (7.3). By Proposition 7.2, it immediately follows that the sequence $(u_j)_{j\in\nn}$ verifies (ii), (iii) and (iv)
above. We now prove (v). By (3.35), we can find a positive constant $C$ depending only on $n$ and $a$, such that
$$\oz_r^{(m)}(\xi)\subset\{(\xi,\zeta)\in\rr^{n+1}: |X-Y|^2\le Cr, 0<t-s<Cr\}$$
for every $\xi=(X,t)\in\rr^{n+1}$ and $r>0$. Thus, for given $\dz>0$ there exists a $j_0\in\nn$ such that
$$\oz_r^{(m)}(\xi)\subset\rr^{n+1}\setminus K$$
for every $l\le 2r_j, j\ge j_0$ and $\xi\in\rr^{n+1}$  such that ${\rm dist}(\xi,K)\ge \dz$. For such $\xi$'s and $l$'s, by Lemma 7.1, we then have
$$u^{(m)}_l(\xi)=u(\xi).$$
By (7.2), note that $\supp\vz\subset(1,2)$, we obtain
$$u_j(\xi)=u(\xi)$$
for every $j\ge j_0$ and $\xi\in\rr^{n+1}$ with ${\rm dist}(\xi,K)\ge \dz$. Thus, (v) is proved.
\end{proof}

\section{Proof of Theorem 1.1}
In what follows we will proceed along the lines of the proof of Wiener's criterion in \cite{EG, GL}.
\begin{proof}
Necessity. This is the easy part of the proof. Assume for any $\lz\in(0,1)$
 $$\sum^\infty_{k=1}\lz^{{-k(n+a)}/{2}}\l(1+\frac{x_0^2}{\lz^k}\r)^{-\frac a2}\mathrm{cap}(\Omega^c\cap A(\lz^k))<+\infty \eqno(8.1)$$
{for}~$A(\lz^k):=A(\xi_0,\lz^k)$.
We will show that (8.1) implies $\xi_0$ is not a $\mathscr{L}_a$-regular point for $\Omega.$

According to Proposition 5.3 it suffices to show $$V_{\Omega^c\cap C_r}(\xi_0)<1~~\text{for~some}~r>0.\eqno(8.2)$$
Hence fix some $r>0$, recall (5.5), and define
$$
    \left\{
   \begin{array}{ll}
B_k(r):=A(\lz^k)\cap \Omega^c\cap C_r\qquad\qquad k=1,2,\ldots,\nonumber\\
B_0(r):=\overline{(\Omega^c\cap C_r)\backslash\displaystyle\bigcup^\infty_{k=1}B_k(r)}.
 \end{array}
 \right.
$$
Note that $$\Omega^c\cap C_r=\bigcup^\infty_{k=0}B_k(r)\subset C_r.$$

Let $\tilde{\mu}$ denote the $\mathscr{L}_a$-equilibrium measure for $\Omega^c\cap C_r$, define
$$\nu_k:=\tilde{\mu}|_{B_k(r)}\qquad k=0,1,2,\ldots,$$
and let $\nu_k'$ denote the $\mathscr{L}_a$-equilibrium measure for $B_k(r)$. According to Proposition 5.2 (i), $\nu_k\in M^+(B_k(r))$ and $P_{\nu_k}\leq P_{\tilde{\mu}}\leq 1,$ then $$P_{\nu_k}\leq P_{\nu_k'}\qquad k=0,1,2,\ldots.$$
Hence $$V_{\Omega^c\cap C_r}\leq \sum^\infty_{k=0}P_{\nu_k}\leq \sum^\infty_{k=0}P_{}\nu_k'~~~\text{in}~\rr^{n+1}.\eqno(8.3)$$
Applying (1.3) and (1.6), we deduce
\begin{align*}
P_{\nu_k'}(\xi_0)&=\int_{\rr^{n+1}}F(\xi_0;\zeta)d\nu_k'(\zeta)=\int_{B_k(r)}F(\xi_0;\zeta)d\nu_k'(\zeta)\\
&\leq (4\pi\lz^{k+1})^{{-(n+a)}/{2}}\l(1+\frac{x_0^2}{\lz^{k+1}}\r)^{-\frac a2}\nu_k'(B_k(r))\qquad\quad k=0,1,2,\ldots.
\end{align*}
In addition, by (8.1) there exists a $p\in\nn$ such that
$$\sum^\infty_{k=p}\lz^{{-k(n+a)}/{2}}\l(1+\frac{x_0^2}{\lz^{k+1}}\r)^{-\frac a2}\mathrm{cap}(B_k(r))<\lz^{(n+a)/2}. \eqno(8.4)$$
Now if $c=\lz^p$ we have $C_r\setminus D_{r,c}\subset\cup_{k=p}^\fz B_k(r)$ ($D_{r,c}$ defined as in Corollary 5.1).
Therefore (8.3) and (8.4) imply
\begin{align*}
V_{\Omega^c\cap C_r}(\xi_0)\le \dsum_{k=p}^\fz {\rm cap}(B_k(r)) (4\pi\lz^{k+1})^{{-(n+a)}/{2}}\l(1+\frac{x_0^2}{\lz^{k+1}}\r)^{-\frac a2}
<(4\pi)^{-\frac {n+a}2}<1.
\end{align*}
By Proposition 5.3  we conclude that $\xi_0$ is $\mathscr{L}_a$-irregular for $\oz_{r,c}$. By Corollary  5.1 it follows that $\xi_0$ is $\mathscr{L}_a$-irregular for $\oz$.

Sufficiency. We assume that (1.7) holds. To begin with, since by (1.7) at least one of the four series must diverge:
$$\sum^{\infty}_{k=1}\lz^{-(4k+i)(n+a)/2}\l(1+\frac{x_0^2}{\lz^{k+i}}\r)^{-\frac a2}\mathrm{cap}(\Omega^c\cap A(\lz^{(4k+i)}))~\quad i=0,1,2,3,$$ we may assume that $$\sum^\infty_{k=1}\lz^{-2(n+a)k}\l(1+\frac{x_0^2}{\lz^{k}}\r)^{-\frac a2}\mathrm{cap}(\Omega^c\cap A(\lz^{4k}))=+\infty.\eqno(8.5)$$
Since ${\rm cap}(C_r)\to 0$ as $r\to 0^+$ by (5.8), for every $k\in\nn$ we can choose $r_k>0$ such that replacing $ A(\lz^{4k})$ by $ A(\lz^{4k})\cap(\rr^{n+1}\setminus C_{r_k})=
E_{4k}$, (8.5) still holds. We will show $\xi_0$ is $\mathscr{L}_a$-regular
$$\widehat{\oz}=\dot{C}_1\setminus\l(\cup_{k=1}^\fz E_{4k}\cup\{\xi_0\}\r)=\dot{C}_1\setminus E.$$
Since $\oz\cap\dot{ C}_1\subset\widehat{\oz}$, it will sufficient to conclude that $\xi_0$ is $\mathscr{L}_a$-regular for $\oz$. We denote by $V$ the equilibrium of $E$ and set
$$W=1-V.$$

Claim. $W(\xi_0)=0.$
It is clear that by Proposition 5.3 the claim implies the $\mathscr{L}_a$-regularity of $\xi_0$.
For any $\varepsilon>0$ and any $k\in \mathbb{N}$ we choose a compact set $E^\varepsilon_{4k}$ such that
$$E_{4k}\subset \dot{E}^\varepsilon_{4k},~~E^\varepsilon_{4k}\subset  E^{\varepsilon'}_{4k}\quad \text{if}~ 0<\varepsilon<\varepsilon',\quad \bigcap_{\varepsilon>0}\Bigg(\bigcup^\infty_{k=1}E^\varepsilon_{4k}\cup\{\xi_0\}\Bigg)=E.\eqno(8.6)$$
Moreover, we request that
$$E^\varepsilon_{4k}\subset \left\{\xi\in\rr^{n+1}|~( 4\pi\lz^{k+2})^{-\frac{n+a}{2}}\l(1+\frac{x_0^2}{\lz^{k+2}}\r)^{-\frac a2}
\geq \Gamma(\xi_0;\xi)\geq (4\pi\lz^{k-1})^{-\frac{n+a}{2}}\l(1+\frac{x_0^2}{\lz^{k-1}}\r)^{-\frac a2}\right\}$$
and that  $$E^\varepsilon=\{\xi_0\}\cup\Bigg[\bigcup^\infty_{k=1}E^\varepsilon_{4k}\Bigg]\eqno(8.7)$$
be compact. Let $V^\ez$ and $\mu^\ez$ be respectively the equilibrium potential and the equilibrium measure of $E^\ez$. From now on, to simplify the notation we drop the subscript $k$. We assume that $k\in\nn$ has been fixed throughout the discussion and we simply write $V^*$ and $\nu^*$ to denote respectively
the equilibrium potential and measure of $E_{4k}$ with respect to the operator $\mathscr{L}^*_a$ in (5.4). By Theorem 7.1, we can find two sequences
$(V_j^\ez)_{j\in\nn}$
and $(V_j^*)_{j\in\nn}$ satisfying (i)-(v) in Theorem 7.1 ($V_j^*$ is defined as in
$V_j^\ez$ replacing $\mathscr{L}_a$ with its adjoint $\mathscr{L}^*_a$) such that
$$0\le V_j^\ez\uparrow V^\ez,\quad 0\le V_j^*\uparrow V^*;\eqno(8.8)$$
for every $\dz>0$, $\exists\ p\in\nn: V_j^\ez(\xi)=V^\ez(\xi)$ for $j\ge p$, for every $\xi\in\rr^{{n+1}}$ such that ${\rm dist}(\xi; E^\ez)\ge \dz$;
moreover, $V_j^*(\xi)=V^*(\xi)$ for $j\ge p$ and every $\xi\in\rr^{n+1}$ such that ${\rm dist}(\xi; E_{4k})\ge \dz$.
In particular, we have
$$\mathscr{L}^*_a  V_j^*=-\nu_j^*,\ \text{in the sense of distributions on} ~\rr^{n+1}.\eqno(8.9)$$
By (8.8) we know that for every Borel subset $A\subset\rr^{n+1}$, such that $\dot{A}\supset E_{4k}$
$$\nu_j^*(A)\to V^*(A),\quad {\rm as} \ j\to\fz,\eqno(8.10)$$
where $v_j^*(A)=\int_Av_j^*(\xi)d\xi$. Moreover, from (8.6) and (8.7), we know that $V^\ez=1$ on $\dot{E}^\ez_{4k}$, it is not restrictive  to suppose that
$$V^\ez(\xi)=1\quad\text{ for every }\ \xi\in\supp \nu_j^*.\eqno(8.11)$$
Next we only consider the case $x_0\not =0$, another case $x_0=0$ is similar, more simple.

We first fix $c_0$ and $\sz$ as in the discussion following (6.36). We then choose $\lz\in (0,1)$ such that $\lz\le \min\{\frac 38\sz, c_0x_0^2\}$, and set $r=\frac 12\lz^{4k-2}$ for $k\ge 1$. With this choice $\lz^{4k-1}<\sz r$ and $r< c_0x_0^2$, we then have
$$E_{4k}\subset\oz_{\lz^{4k-1}}(\xi_0)\subset \oz_{\sz r}(\xi_0)\subset Q_{2r}(\xi_0):=Q(2r).\eqno(8.12)$$
For convenience, we omit $\xi_0$ in (8.12).
 In $Q(2r)$ we can write
$$W^\varepsilon_j=P^\varepsilon_j-S^\varepsilon_j,\eqno(8.13)$$
where $P^\varepsilon_j$ solves the problem
$$
    \left\{
   \begin{array}{ll}
\mathscr{L}_aP^\varepsilon_j=0\qquad &\text{in}~Q(2r),\\
P^\varepsilon_\delta=W^\varepsilon_j&\text{on}~\p Q(2r)\backslash\{\xi_0\}
 \end{array}
 \right.
\eqno(8.14)$$
(note that $r< c_0x_0^2$, from Remark 7.1, we know that every point of $\Psi_r(\xi_0)\setminus\{\xi_0\}$ is $\mathscr{L}_a$-regular), and
$$
    \left\{
   \begin{array}{ll}
\mathscr{L}_aS^\varepsilon_j=-\mathscr{L}_aW^\varepsilon_j\geq 0\quad &\text{in}~Q(2r),\\
S^\varepsilon_j=0&\text{on}~\p Q(2r)\backslash\{\xi_0\}.
 \end{array}
 \right.\eqno(8.15)
$$

Following the argument in \cite{EG},  we obtain
\begin{align*}
\bigg(\mathop{\inf}_{\Omega(3\sz r/4)}P^\varepsilon_j\bigg)^2\nu^*_j\bigg(\Omega\bigg(\frac{3\sz r}{4}\bigg)\bigg)&\leq \int_{\Omega(3\sz r/4)}(P^\varepsilon_j)^2\nu^*_jdXdt\leq \int_{Q(2r)}(P^\varepsilon_j)^2\nu^*_jdXdt\quad(\text{by}~ (8.12))\\
&=\int_{Q(2r)}(S^\varepsilon_j)^2\nu^*_jdXdt\qquad (\text{by}~ (8.11) ~\text{and}~ (8.13))\\
&=\int_{Q(2r)}(S^\varepsilon_j)^2(\mathscr{L}^*_aV^*_j)~dXdt\\
&=\int_{Q(2r)}[\mathscr{L}_a(S^\varepsilon_j)^2]V^*_j~dXdt\qquad (S^\varepsilon_j=0~\text{on}~\p Q(2r)\backslash\{\xi_0\})\\
&=2\int_{Q(2r)}S^\varepsilon_j V^*_j \mathscr{L}_aS^\varepsilon_j dXdt-2\int_{Q(2r)}|y|^a V^*_j|\nabla_X S^\varepsilon_j|^2~dXdt\\
&\leq C\int_{Q(2r)}-\mathscr{L}^*_aW^\varepsilon_j~dXdt\qquad (\text{by}~(8.14)~\text{and}~ (8.15))\\
&\leq C\int_{\Omega(2r)}-\mathscr{L}^*_aW^\varepsilon_j~dXdt.
\end{align*}
Hence $$\bigg(\inf_{\Omega(3\sz r/4)}P^\varepsilon_j\bigg)^2\nu^*_j\bigg(\Omega\bigg(\frac{3\sz r}{4}\bigg)\bigg)\leq C\int_{\Omega(2r)}-\mathscr{L}^*_aW^\varepsilon_j~dXdt.$$

From this and by Theorem 6.2 and Corollary 6.1, we obtain
\begin{align*}
&\left(\mathop{\dashint}_{I_r}P^\varepsilon_j\bigg(X,-\eta r\bigg)|x|^adXdt\right)^2\nu^*_j\bigg(\Omega\bigg(\frac{3\sz r}{4}\bigg)\bigg)r^{-\frac{n+a}{2}}\l(1+\frac{x^2_0}r\r)^{-\frac a2}\tag{8.16}\\
&\leq C[W^\varepsilon_{j,2\az^2r}(\xi_0)-W^\varepsilon_{ j,2\az r}(\xi_0)],
\end{align*}
where $\az>1$ has to chosen.  Now we take $\az=\lz^{-1}$ so that $2\az r=\lz^{4k-3}$ and $2\az^2 r=\lz^{4k-4}$, by Corollary (6.1), then
$$W^\varepsilon_{j,2\az^2 r}(\xi_0)-W^\varepsilon_{ j,2\az r}(\xi_0)=W^\varepsilon_{j,\lz^{4k-4}}(\xi_0)-W^\varepsilon_{j,\lz^{4k-3}}(\xi_0)
\leq\dsum_{j=1}^4\l(W^\varepsilon_{j,\lz^{4k-i}}(\xi_0)-W^\varepsilon_{j,\lz^{4k-i+1}}(\xi_0)\r).\eqno(8.17)$$
By (8.9) and (8.11), note that $E_{4k}\subset \oz_{3\sz r/4}$, we have $j\to\fz$
$$W^\ez_{j,\rho}\to W^\ez_\rho\quad {\rm for}\quad \rho>0,\quad \nu_j^*(\oz_{3\sz r/4})\to \nu^*(\oz_{3\sz r/4})={\rm cap}(E_{4k}).\eqno(8.18)$$
Since $E\cap\{(X,t)\in Q(2r): t\le -\eta r\}=\emptyset$, we have $s_j^\ez(X,t)=0$, for $j$ large enough, if $(X,t)\in Q(2r)$ and $t\le -\eta r$. By (8.18) and (8.17),
and letting $j\to\fz$ we get
\begin{align*}\tag{8.19}
&\left(\dint_{|X-X_0|^2\leq R^1_r(-\eta r)}W^\ez\bigg(X,-\eta r\bigg)|x|^adXdt\right)^2\mathrm{cap}(E_{4k})~r^{-\frac{n+a}{2}}\l(1+\frac{x^2_0}r\r)^{-\frac a2}\\
&\leq C\dsum^{4}_{i=1}[W^\ez_{\lz^{4k-i}}-W^\ez_{\lz^{4k-i+1}}].
\end{align*}
Now $W^\ez\uparrow \widehat{W}$ as $\ez\downarrow 0$, where $\widehat{W}=1-V$ in an open neighborhood of the set $\{(X,-\eta r)~|~|X-X_0|^2\le R^1_r(-\eta r)\}$. Therefore,
letting $\ez\to 0^+$ in (8.19) we get
\begin{align*}\tag{8.20}
&\left(\dint_{|X-X_0|^2\leq R^1_r(-\eta r)}W\bigg(X,-\eta r\bigg)|x|^adXdt\right)^2\mathrm{cap}(E_{4k})~r^{-\frac{n+a}{2}}\l(1+\frac{x^2_0}r\r)^{-\frac a2}\\
&\leq C\dsum^{4}_{i=1}[W_{\lz^{4k-i}}-W_{\lz^{4k-i+1}}].
\end{align*}

Recalling now that $r=\frac 12\lz^{4k-2},$
 and setting $$\beta_k=\mathop{\dashint}_{|X-X_0|^2\leq R^1_r(-\frac 12\eta\lz^{4k-2} )}W(X,-\frac 12\eta\lz^{4k-2})|x|^2dX\quad k=1,2,\ldots,$$
by (8.20) and the fact that $\displaystyle\sum^\infty_{k=1}\sum^{4}_{i=1}[W_{\lz^{4k-i}}-W_{\lz^{4k-i+1}}]$ is a telescopic series, we conclude that
$$\sum^\infty_{k=1}(\beta_k)^2\lz^{-2(n+a)k}\l(1+\frac{x^2_0}{\lz^{4k}}\r)^{-\frac a2}\mathrm{cap}(E_{4k})<\infty.$$
By (8.5) we must have $$\liminf_{k\rightarrow\infty}\beta_k=0.\eqno(8.21)$$
With (8.21) in hand the rest of the proof of the claim $(W(\xi_0)=0)$ follows exactly as in the case of the heat equation. We leave out the details referring the reader to \cite{EG}.
\end{proof}

LMAM, School of Mathematical Sciences,
 Peking University, Beijing, 100871,
 P. R. China

Xi Hu,\quad
E-mail address: huximath1994@163.com

 Lin Tang,\quad
 E-mail address:  tanglin@math.pku.edu.cn

\end{document}